\documentclass[11pt]{article}
\usepackage{latexsym,amsmath,amsthm,amssymb,amscd,amsfonts,enumitem,bm}
\usepackage{epsfig}
\usepackage{nicefrac}
\usepackage[all]{xy}
\usepackage{graphicx}
\usepackage[dvipsnames]{xcolor}

\setlength{\textwidth}{6.0in} \setlength{\evensidemargin}{0.25in}
\setlength{\oddsidemargin}{0.25in} \setlength{\textheight}{9.0in}
\setlength{\topmargin}{-0.5in} \setlength{\parskip}{2mm}
\setlength{\baselineskip}{1.7\baselineskip}

\newtheorem{lemma}{Lemma}[section]
\newtheorem{prop}[lemma]{Proposition}
\newtheorem{rem}[lemma]{Remark}
\newtheorem{rems}[lemma]{Remarks}
\newtheorem{theorem}[lemma]{Theorem}
\newtheorem{definition}[lemma]{Definition}
\newtheorem{claim}[lemma]{Claim}
\newtheorem{corollary}[lemma]{Corollary}
\newtheorem{example}[lemma]{Example}

\newtheorem{question}{Question}

\newtheorem{thm}{Theorem}
\newtheorem{proposition}[thm]{Proposition}
\newtheorem{thm-repeat}{Theorem}

\newtheorem{conj}[thm]{Conjecture}

\newtheorem*{defn*}{Definition}

\newtheorem*{thm*}{Theorem}
\newtheorem*{exs*}{Examples}
\newtheorem*{question*}{Question}

\newcommand{\R}{\mathbb{R}}
\newcommand{\C}{\mathbb{C}}

\renewcommand{\P}{\mathbb{P}}
\newcommand{\N}{\mathbb{N}}
\newcommand{\Z}{\mathbb{Z}}

\newcommand{\T}{\mathbb{T}}

\newcommand{\cc}{\mathbb{\mathcal C}}
\newcommand{\cb}{\mathbb{\mathcal B}}
\newcommand{\ca}{\mathcal A}
\newcommand{\cw}{\mathcal W}

\newcommand{\ce}{\mathcal E}
\newcommand{\cd}{\mathcal D}

\newcommand{\ci}{\mathcal I}
\newcommand{\cm}{\mathcal M}

\newcommand{\cf}{\mathcal{F}}

\newcommand{\cj}{\mathcal{J}}
\newcommand{\cp}{\mathcal{P}}

\newcommand{\id}{\textnormal{Id}\,}

\newcommand{\nbd}{neighbourhood }
\newcommand{\nbds}{neighbourhoods }
\newcommand{\fonction}[5]
{$$
\begin{array}{rcccl}
 #1 & : & #2 & \longrightarrow &#3 \\
    &   & #4 & \longmapsto &#5
\end{array}
$$}

\newcommand{\diff}{\textnormal{Diff}\,}

\newcommand{\reg}{\textnormal{\tiny Reg}}

\newcommand{\loc}{\textnormal{loc}}

\newcommand{\priv}{\backslash}
\newcommand{\lra}{\longrightarrow}
\newcommand{\hra}{\hookrightarrow}

\newcommand{\om}{\omega}
\newcommand{\eps}{\varepsilon}
\renewcommand{\phi}{\varphi}

\newcommand{\st}{\textnormal{st}}

\newcommand{\supp}{\text{Supp}\,}

\newcommand{\wdt}[1]{\widetilde{#1}}
\newcommand{\cqfd}{\hfill $\square$ \vspace{0.1cm}\\ }
\newcommand{\sbull}{{\tiny $\bullet$ }}
\newcommand{\ds}{\displaystyle}
\newcommand{\im}{\textnormal{Im}\,}

\renewcommand{\bar}[1]{\overline{#1}}

\newcommand{\nf}[2]{{\nicefrac{#1}{#2}}}

\renewcommand{\Im}{\textnormal{Im}\,}
\newcommand{\Om}{\Omega}

\newcommand{\its}{\item[\sbull]}

\newcommand{\cyl}{\textnormal{Cyl}}

\newcommand{\op}{\textnormal{Op}\,}

\newcommand{\leb}{{\cal L}eb}
\newcommand{\crit}{\textnormal{Crit}}

\newcommand{\met}{{\cal M}\textnormal{et}}

\newcommand{\cz}{\textnormal{CZ}}

\newcommand{\cliff}{\textnormal{Cliff}}

\def\eps{\varepsilon}

\makeatletter \@addtoreset {equation}{section}

\renewcommand\theequation
  {\ifnum \c@subsection>\z@ \arabic{section}.\arabic{subsection}.\arabic{equation}
  \else \arabic{section}.\arabic{equation} \fi}
\makeatother


\begin{document}

\title{\vspace*{-2cm}$\cc^0$-rigidity of Lagrangian submanifolds and punctured\\
holomorphic discs in the cotangent bundle}

\renewcommand{\thefootnote}{\arabic{footnote}}

\author{Cedric Membrez\footnote{Partially supported by the Swiss National Science
Foundation grant 155540 and the European Research Council Advanced grant 338809}
\, and Emmanuel Opshtein}

\date{\today}

\maketitle

\begin{abstract}
Our main result is the $\cc^0$-rigidity of the area spectrum and the Maslov class of
Lagrangian submanifolds. This relies on the existence of punctured pseudoholomorphic
discs in cotangent bundles with boundary on the zero section, whose boundaries
represent any integral homology class. We discuss further applications
of these punctured discs in symplectic geometry.
\end{abstract}


\section{Introduction}
The main theorem of this paper is a $\cc^0$-rigidity result for the area spectrum and Maslov class of
Lagrangian submanifolds. It can be stated in the language of $\cc^0$-symplectic geometry.
We recall the following definition due to Eliashberg-Gromov:
\begin{defn*}
A symplectic homeomorphism $h: (M, \omega) \to (M', \omega')$ is a homeomorphism which
is a $\cc^0$-limit of symplectic diffeomorphisms.
\end{defn*}
The Eliashberg-Gromov $\cc^0$-rigidity Theorem shows 
that a smooth symplectic homeomorphism is a symplectic diffeomorphism. This definition
raises the question of determining how close symplectic homeomorphisms are to their
smooth cousins. In \cite{buop} it was suggested to study this question from the perspective
of the action of symplectic homeomorphisms on submanifolds.
\begin{question}\label{q:rigsubm}
Let $N\subset (M,\om)$ be a submanifold and assume that its image by a symplectic
homeomorphism is a smooth submanifold $N'\subset (M',\om')$. Is $N'$ symplectomorphic
to $N$? Which classical symplectic invariants must coincide for $N$ and $N'$?
\end{question}
The question is far from having an easy and definitive answer as demonstrated by the
collection of results showing $\cc^0$-rigidity (e.g. \cite{lasi,opshtein,hulese,hulese2})
or in contrast $\cc^0$-flexibilty (e.g. \cite{buop,buhuse}). In the large the picture
suggested by these works is that $\cc^0$-rigidity prevails for coisotropic submanifolds
(although several of their invariants have not yet undergone investigation), while it
fails completely for most others.
For instance in \cite{buop} a symplectic homeomorphism of $\R^6$ is constructed that maps
an open symplectic disk to another smooth symplectic disk with half the symplectic area.
On the other hand, from \cite{lasi} we know that if
$L\subset M$ is a closed Lagrangian submanifold and $h(L)$ is smooth for a
 symplectic homeomorphism $h$, then $h(L)$ is Lagrangian. Our main
result is that in this situation the area spectrum and the Maslov class of $L$ and $h(L)$ coincide.
More precisely, given a Lagrangian submanifold $L\subset (M,\om)$, define its
area homomorphism
\fonction{\ca_\om^L}{H_2(M,L)}{\R}{\sigma}{\int_\Sigma \om, \;\text{ where }\;
[\Sigma]=\sigma,}
and its Maslov index $\mu_L:\pi_2(M,L)\to \Z$ (see \S \ref{sec:defmaslov} for the definitions of Maslov indices).
\begin{thm}\label{thm:rigspec}
Let $h: (M, \omega) \to (M', \omega')$ be a symplectic homeomorphism that sends a
closed Lagrangian submanifold $L$ to a smooth (hence Lagrangian) submanifold $L'$.
Then \vspace{-,2cm}
\begin{itemize}
\item[a)] $\ca_{\om'}^{L'}=h_*\ca_{\om}^{L}$,\vspace{-,2cm}
\item[b)] $\mu_{L'}=h_*\mu_L$.
\end{itemize}
\end{thm}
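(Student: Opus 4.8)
The plan is to approximate $h$ by symplectic diffeomorphisms $h_k\to h$ in the $\cc^0$-topology and to exploit that the area homomorphism and the Maslov index are \emph{exactly} preserved by each $h_k$: $\ca^{h_k(L)}_{\om'}\circ(h_k)_*=\ca^L_\om$ and $\mu_{h_k(L)}\circ(h_k)_*=\mu_L$. Since $h_k\to h$ uniformly and $h_k(L)\to L'$ in $\cc^0$, for $k$ large one has $(h_k)_*=h_*$ on $H_2(M,L)$ and $\pi_2(M,L)$, together with $h^*[\om']=[\om]$ and $h^*c_1(M')=c_1(M)$. The problem then reduces to a $\cc^0$-continuity statement: as the Lagrangians $L_k:=h_k(L)$ degenerate onto $L'$, the germ of symplectic data carried along them---a Liouville primitive of $\om'$ for part a), the tangent Lagrangian distribution for part b)---limits onto that of $L'$. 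The obstruction is that $L_k\to L'$ only in $\cc^0$, not in $\cc^1$, and it is here that the punctured holomorphic discs are indispensable.

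For part a), I would first localise. Fix Weinstein neighbourhoods $U\cong D_\rho T^*L$ and $U'\cong D_{\rho'}T^*L'$, with primitives $\lambda,\lambda'$ of $\om,\om'$ vanishing on $L,L'$, and let $r$ be the retraction of $U'$ onto $L'$. Given $\sigma\in H_2(M,L)$, pick a smooth representative $\Sigma$ with embedded boundary $\gamma\subset L$. For $k$ large, $h_k(\gamma)\subset U'$, and contracting it radially onto $L'$ traces out an annulus $A_k\subset U'$ with $\partial A_k=r(h_k(\gamma))-h_k(\gamma)$. A routine check---homotoping $h_k$ to an honest map of pairs $(M,L)\to(M',L')$ by means of $r$---shows that $h_k(\Sigma)+A_k$ is a relative cycle representing $h_*\sigma$ with boundary on $L'$. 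Hence, using $h_k^*\om'=\om$ and $\int_{A_k}\om'=\int_{\partial A_k}\lambda'=-\int_{h_k(\gamma)}\lambda'$ (as $\lambda'|_{L'}=0$), one gets for all large $k$
\[\ca^{L'}_{\om'}(h_*\sigma)-\ca^L_\om(\sigma)=\int_{h_k(\Sigma)+A_k}\om'-\int_\Sigma\om=\int_{A_k}\om'=-\int_{h_k(\gamma)}\lambda'.\]
Writing $\theta_k:=h_k^*\lambda'-\lambda$, a closed $1$-form near $L$, and $\beta_k:=[\theta_k]\in H^1(L;\R)$, one has $\int_{h_k(\gamma)}\lambda'=\langle\beta_k,[\gamma]\rangle$; so part a) amounts to proving that $\langle\beta_k,a\rangle=0$ for $k$ large, for every $a\in H_1(L;\Z)$ arising as $\partial\sigma$, i.e. that $\beta_k\to 0$ in $H^1(L;\R)$.

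The core of the argument---and the main obstacle---is exactly this vanishing, where the existence result for punctured holomorphic discs comes in. Given an integral class $a\in H_1(L;\Z)$, apply it in $T^*L$ to $a$: it yields a punctured $J$-holomorphic disc with boundary on the zero section representing $a$ and asymptotic at its puncture to a closed Reeb orbit, i.e. a closed geodesic. Truncating it at small fibre-radius $\rho_0$ (and, if needed, compressing with the Liouville flow) produces an honest $J$-holomorphic surface $S\subset U$ of \emph{arbitrarily small} area, with one boundary loop on $L$ in the class $a$ and one small boundary loop near a short closed geodesic. Pushing $S$ forward by $h_k$ gives, near $L'$, honest holomorphic curves of small area and convergent boundaries; letting $k\to\infty$ and then $\rho_0\to 0$, a compactness and monotonicity argument for these curves is meant to pin $\langle\beta_k,a\rangle$ to $0$ for $k$ large. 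Making this rigorous---controlling the curves under the non-smooth, $k$-dependent change of almost complex structure induced by $h_k$, and extracting an exact vanishing rather than a mere estimate---is the technical heart; running it for every integral $a=\partial\sigma$ concludes part a).

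Part b) follows the same pattern: by the reduction above it suffices to show that, for $k$ large, the Maslov index of $L_k=h_k(L)$ coincides with that of $L'$ under the canonical identifications, i.e. $\cc^0$-continuity of the Maslov class along the degenerating family. Here the punctured discs are used through a Fredholm index computation: the existence result supplies, for each integral class, a punctured holomorphic disc whose asymptotic orbit has controlled Conley--Zehnder index, so the index formula---relating the Fredholm index of the disc to the Maslov index of its boundary loop and the Conley--Zehnder index of the asymptotic orbit---pins down the Maslov index of that boundary loop; since such a disc persists under the $h_k$ (via the same compactness argument), the integer $\mu_{L_k}\circ h_*$ cannot jump, hence equals $\mu_{L'}\circ h_*$. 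As in part a), the delicate step is the control of these holomorphic curves under the merely $\cc^0$ diffeomorphisms $h_k$.
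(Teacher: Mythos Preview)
Your reduction is correct and matches the paper exactly: approximate $h$ by symplectomorphisms $f_n$, identify $(f_n)_*$ with $h_*$ on the relevant (co)homology for $n$ large, and reduce part a) to the estimate $\bigl|\int_{f_n(\gamma)}\lambda'\bigr|\to 0$ for every loop $\gamma\subset L$. The identity $\ca^{L'}_{\om'}(h_*\sigma)-\ca^L_\om(\sigma)=-\int_{f_n(\gamma)}\lambda'$ is exactly the paper's computation, and since the left-hand side is independent of $n$, any estimate going to $0$ with a parameter you control already gives exact vanishing (so your worry about ``exact vanishing vs.\ mere estimate'' is not a real issue).

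The genuine gap is in the core step, and you have correctly located it: you push the truncated punctured disc $S\subset T^*L$ forward by $h_k$ and then want compactness or monotonicity for the curves $h_k(S)$. These curves are $(h_k)_*J$-holomorphic, and since $h_k\to h$ only in $\cc^0$, the structures $(h_k)_*J$ have no reason to converge or even to satisfy uniform $\cc^0$ bounds; neither Gromov compactness nor the monotonicity lemma is available in that regime. The same obstruction blocks your part b), which relies on the same ``persistence under $h_k$''.

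The paper's way around this is to never push holomorphic curves through the approximating diffeomorphisms. Instead one pushes only the \emph{hypersurface}: for $n$ large one has the sandwich $L'\subset f_n(\cw(L,g,\eps))\subset\cw(L',g',\eps')$ inside $T^*L'$. Now produce the punctured discs directly in $\cw(L',g',\eps')$ (by the existence theorem applied to $L'$, not $L$), for almost complex structures on $T^*L'$ that stretch the neck along the contact-type hypersurface $f_n(\partial\cw(L,g,\eps))$. SFT compactness then yields a limit building containing a punctured disc in $f_n(\cw(L,g,\eps))$ with boundary on $L'$ and one positive asymptotic on $f_n(\partial\cw(L,g,\eps))$; comparing areas via Stokes gives $\bigl|f_n^*\lambda'(\beta)\bigr|\le\eps'\,\ell_{g'}^{\min}(h_*\beta)$, and $\eps'\to 0$ finishes a). For b), the same neck-stretching isolates a component in the cobordism $\cw(L',g',\eps')\setminus f_n(\cw(L,g,\eps))$; genericity of $J$ (which you are free to choose in this fixed region) forces its Fredholm index to be $\ge 0$, and the index formula together with Polterovich's covering trick forces the Maslov class of $f_n(L)$ in $T^*L'$ to vanish. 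In both parts the holomorphic analysis happens for a \emph{converging} family of almost complex structures on $T^*L'$, and $f_n$ enters only through the position of a contact hypersurface.
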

It is not hard to see that theorem \ref{thm:rigspec}.a) provides a positive answer to 
question \ref{q:rigsubm} for Lagrangian submanifolds, when the Nearby Lagrangian 
conjecture holds for $T^*L$ (see proposition \ref{prop:nlctoriglag}). This conjecture is 
known to hold in  $T^*\T^2$ \cite{digoiv}, so symplectic homeomorphisms act on 
$2$-dimensional tori in the same way as symplectic diffeomorphisms. For instance, 
a symplectic homeomorphism of $\C\P^2$ cannot take the Chekanov torus to the 
Clifford torus.

The $\cc^0$-rigidity of the area homomorphism  of Lagrangian tori has already been 
proved in \cite{buop}, but even follows from a theorem by Benci-Sikorav (see \cite{sikorav}). 
In its general form, a proof of theorem \ref{thm:rigspec} can be obtained as a consequence 
of deep results by Abouzaid on Lagrangian submanifolds in cotangent bundles \cite{abouzaid}. 
We briefly discuss this approach. Given a closed manifold $L$ endowed with a Riemannian 
metric $g$, we define $\ell_g^{\min}(\beta)$ to be the length of the minimizing geodesic in a 
class $\beta\in H_1(L;\Z)$. In the cotangent bundle $\pi: T^*L \to L$  we also define
$$
\cw(L,g,r) := \{\,\Vert p\Vert_g < r \,\} := \{\,(q,p) \in T^*L \;|\;
\Vert p\Vert_{g_q} < r \,\} \,\subset\, T^*L,
$$
where $\Vert \cdot\Vert_{g_q}$ is the natural dual norm on $T_q^*L$.
\begin{thm}\label{thm:closeclose++}
Let $\iota: L' \hra (T^*L, d\lambda)$ be a Lagrangian embedding in the cotangent
bundle. Assume $\pi\circ\iota : L' \to L$ induces an isomorphism in homology and
$L'\subset \cw(L,g,\eps)$ for some $\eps > 0$ and choice of metric $g$ on $L$. 
Then for all $\beta' \in H_1(L';\Z)$ we have \vspace{-,2cm}
\begin{itemize}
\item[a)] $\left| \iota^*\lambda(\beta') \right|\leq \eps \ell_g^{\min}(\pi_*\circ\iota_*\beta')$
 \;\;(\cite{paposi,abouzaid,amohol}),\vspace{-,2cm}
\item[b)] $\mu_{\iota(L')}(\iota_*\beta')=0$ \;\;(\cite[Appendix E by Abouzaid]{kragh}).
\end{itemize}
\end{thm}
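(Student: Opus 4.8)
My intention is to deduce a) from the holomorphic‑curve estimates of \cite{abouzaid, amohol} (equivalently, from the Aubry--Mather theory of \cite{paposi}), and b) from \cite[Appendix E by Abouzaid]{kragh}. Since a) is the more substantial part, let me describe how its proof runs, in the spirit of this paper's methods. First I would normalise: rescaling the cotangent fibres by $1/\eps$ turns $\cw(L,g,\eps)$ into the open unit codisc bundle and multiplies $\iota^*\lambda$ by $1/\eps$, so it suffices to treat $L'\subset\{\,\|p\|_g<1\,\}$ and prove $|\iota^*\lambda(\beta')|\le\ell_g^{\min}(\alpha)$ with $\alpha:=\pi_*\iota_*\beta'$; by compactness one may even assume $L'\subset\{\,\|p\|_g\le 1-\delta_0\,\}$. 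Since $L'$ is Lagrangian, $\iota^*\lambda$ is closed and descends to $H^1(L';\R)$, and since $\ell_g^{\min}(-\alpha)=\ell_g^{\min}(\alpha)$, replacing $\beta'$ by $-\beta'$ reduces the task to the one‑sided bound $\iota^*\lambda(\beta')\le\ell_g^{\min}(\alpha)$. The naive estimate $|\iota^*\lambda(\beta')|=\bigl|\int_{c'}\iota^*\lambda\bigr|\le\ell_g(\pi\circ\iota\circ c')$ for a loop $c'$ representing $\beta'$ (Cauchy--Schwarz, using $\|p\|_g<1$ along $L'$) is \emph{not} enough, being sharp only for monotone projected loops: already for $\dim L=1$ one must use in addition that embeddedness of $c'$ forces the $p$‑coordinates of $c'$ along each cotangent fibre to alternate in sign, which is precisely what makes the bound sharp there. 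The higher‑dimensional substitute for this combinatorial fact is a holomorphic curve.

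The holomorphic input I would use is as follows. Complete $T^*L$ to $\widehat{T^*L}$ by attaching the positive symplectization of the unit cosphere bundle $S^*_gL$, whose Reeb flow is the cogeodesic flow, so that closed Reeb orbits have period equal to the length of the underlying closed geodesic; fix a generic compatible almost complex structure, cylindrical at infinity. Running a variant of this paper's punctured‑disc construction --- now with boundary condition on $L'$ rather than on the zero section, the homology‑isomorphism hypothesis on $\pi\circ\iota$ being exactly what lets the boundary circle represent the prescribed class $-\beta'\in H_1(L';\Z)$ --- should produce a finite‑energy $J$‑holomorphic disc $u$ with $\partial u\subset L'$, $[\partial u]=-\beta'$, and a single positive puncture asymptotic to a closed Reeb orbit $\gamma$ of $S^*_gL$ whose underlying geodesic lies in the class $\alpha$. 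Stokes' theorem then gives
\[
0\le\int u^*d\lambda=\ell_g(\gamma)+\int_{\partial u}\iota^*\lambda=\ell_g(\gamma)-\iota^*\lambda(\beta'),
\]
so $\iota^*\lambda(\beta')\le\ell_g(\gamma)$; applying this to $-\beta'$ as well and undoing the rescaling proves a), provided the disc can be arranged with $\ell_g(\gamma)=\ell_g^{\min}(\alpha)$.

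That last proviso is where I expect the real difficulty to lie. The asymptotic orbit of $u$ represents $\pm\alpha$, so automatically $\ell_g(\gamma)\ge\ell_g^{\min}(\alpha)$; the point is the existence of a disc realising equality, i.e. whose positive end sits on a length‑minimizing geodesic in its class. This is not a matter of area bookkeeping but of SFT compactness and neck‑stretching for punctured discs in cotangent bundles --- exactly the technology of \cite{abouzaid, amohol} (with the action‑minimizing measures of \cite{paposi} playing the analogous role on the dynamical side). The remaining steps in a) are formal.

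For b) I would simply invoke \cite[Appendix E by Abouzaid]{kragh}: when $\pi\circ\iota$ induces an isomorphism in homology, the Maslov class of $\iota(L')\subset T^*L$ vanishes on $H_1(L';\Z)$. Heuristically, $\mu_{\iota(L')}$ restricted to $H_1$ is the obstruction to an integer grading of $\iota(L')$ --- equivalently the winding number of its (stable) Lagrangian Gauss map $\iota(L')\to U/O$ along loops --- and since $\pi\circ\iota$ is a homotopy equivalence (\cite{abouzaid}) while the Gauss map of the zero section is null‑homotopic, so is that of $\iota(L')$, giving $\mu_{\iota(L')}(\iota_*\beta')=0$ for every $\beta'\in H_1(L';\Z)$. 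Alternatively, once the punctured discs of a) are available in every homology class, one reads off $\mu_{\iota(L')}(\iota_*\beta')$ from the index--action formula applied to the boundary loop of a disc asymptotic to a contractible geodesic, and it must vanish.
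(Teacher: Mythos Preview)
The paper does \emph{not} prove this theorem: it is stated with citations to \cite{paposi,abouzaid,amohol} for a) and to \cite[Appendix~E]{kragh} for b), and the paper explicitly says that its own pseudoholomorphic methods ``do not permit us to prove theorem~\ref{thm:closeclose++} though, but only a weaker version'' (namely theorem~\ref{thm:closeclose}, which requires the additional hypothesis that a Weinstein neighbourhood of $L'$ contains the zero section). The proof route the paper describes for theorem~\ref{thm:closeclose++}.a) is via the \emph{graph selector}: Abouzaid's computation of the wrapped Floer homology of $L'$ with a cotangent fibre \cite{abouzaid} is used in \cite{amohol} to construct a graph selector for $L'$, and then the argument of \cite[Theorem~1.10]{paposi} yields the bound. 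For b) one simply invokes \cite[Appendix~E]{kragh}.

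Your proposal for b) is fine: you cite exactly the source the paper does. The issue is with a). You propose to ``run a variant of this paper's punctured-disc construction --- now with boundary condition on $L'$ rather than on the zero section''. This is precisely the step that fails in general and is the reason the paper can only prove the weaker theorem~\ref{thm:closeclose}. The paper's existence result (theorem~\ref{thm:existence-intro}) is specific to the zero section: the seed curve $u_{\gamma,g}$, its uniqueness, and the transversality computation all rely on the explicit product geometry over a geodesic, with boundary on $L$. There is no analogue of this for an arbitrary Lagrangian $L'\subset T^*L$. Your suggestion that ``SFT compactness and neck-stretching'' supply the missing discs conflates two different mechanisms: neck-stretching only \emph{transfers} existing curves between cobordism pieces, it does not \emph{create} discs with boundary on $L'$ out of discs with boundary on $L$ unless you already know something like $L\subset\ci(\cw(L',g',r'))$ --- which is exactly the extra hypothesis of theorem~\ref{thm:closeclose}. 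Moreover, the references \cite{abouzaid,amohol} you invoke for this step do not proceed via punctured discs with Lagrangian boundary on $L'$ asymptotic to closed geodesics; they use wrapped Floer strips between $L'$ and a cotangent fibre, packaged into a graph selector. So either you cite the graph-selector argument (which is what the paper does) or you must supply a genuinely new existence theorem for punctured discs with boundary on $L'$ --- and that is not in the paper, nor is it a routine variant of its construction.
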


By the Weinstein neighborhood theorem, this local result implies that if $L,L'$ 
are two Lagrangian submanifolds of a symplectic manifold $M$ satisfying the assumptions of the theorem,
the area homomorphisms of $L$ and $L'$ are $\eps$-close
(see \cite[Lemma 5.1]{buop}) and their Maslov classes coincide, which in turn implies 
theorem \ref{thm:rigspec}. In \cite[Theorem 1.10]{paposi} theorem
\ref{thm:closeclose++}.a) is proved for Lagrangian submanifolds which are
Lagrangian isotopic to the zero section in $T^*L$. It is deduced from the
existence of a {\it graph selector} obtained via generating functions. 
Theorem \ref{thm:closeclose++}.a) would therefore follow from the Nearby Lagrangian conjecture and this result. 
In general, using a result of Abouzaid \cite{abouzaid} that describes the wrapped Floer homology
of $L'$ with a fiber of the cotangent bundle, a graph selector for
$L'\subset T^*L$ is constructed in \cite{amohol} whenever $\pi:L'\to L$
induces an isomorphism in homology. This graph selector can therefore be
used exactly as in \cite[Theorem 1.10]{paposi} to prove theorem
\ref{thm:closeclose++}.a) and hence theorem \ref{thm:rigspec}.a).
This approach puts the $\cc^0$-rigidity of the area homomorphism in the framework of
Abouzaid's work on wrapped Floer homology and Fukaya categories.

In the present paper we choose to explain another  approach, that
lies more within the classical framework of pseudoholomorphic techniques
in symplectic geometry. It does not permit us to prove theorem
\ref{thm:closeclose++} though, but only a weaker version, which is still sufficient
for our purpose of proving theorem \ref{thm:rigspec}.
\begin{thm}\label{thm:closeclose}
Let $(L, g)$ and $(L', g')$ be two closed Riemannian manifolds and
$\iota: L' \hra (T^*L, d\lambda)$ a Lagrangian embedding such that
$\pi_* \circ \iota_*: H_1(L') \to H_1(L)$ is an isomorphism. Assume
that $\iota$ extends to a symplectic embedding $\ci$ of a neighborhood
of $L'$ such that
\[
L \subset \ci(\cw(L',g',\eps')) \subset \cw(L,g,\eps) \subset T^*L
\]
for some $\eps, \eps' > 0$. Then for all $\beta' \in H_1(L';\Z)$ we have\vspace{-,2cm}
\begin{itemize}
\item[a)] $|\iota^*\lambda(\beta')| \leq \eps \ell_g^{\min}(\pi_*\circ \iota_* \beta')$,\vspace{-,2cm}
\item[b)] $\mu_{\iota(L')}(\iota_*\beta')=0$.
\end{itemize}
\end{thm}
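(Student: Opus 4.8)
The plan is to prove Theorem~\ref{thm:closeclose} by constructing, for each class $\beta' \in H_1(L';\Z)$, a punctured pseudoholomorphic disc in $T^*L$ with boundary on the zero section $L$, whose boundary loop represents the class $\pi_* \circ \iota_* \beta' =: \beta \in H_1(L;\Z)$, and which has the minimum possible symplectic area and Maslov index. Concretely, I would use a suitable almost complex structure $J$ on $T^*L$ that is compatible with $d\lambda$, is of contact type near the boundary $\partial \cw(L,g,R)$ for large $R$, and is cylindrical/SFT-adapted near the Lagrangian $L'$ (or rather near $\ci(\partial \cw(L',g',\eps'))$ so that the ``puncture'' of the disc occurs there). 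The existence of such a punctured disc --- announced in the abstract and presumably established in the body of the paper --- is the key input; here I would invoke it to get a map $u : \Sigma \to T^*L$ from a punctured disc with $u(\partial \Sigma) \subset L$, $[u(\partial\Sigma)] = \beta$, with Maslov index contribution $0$ and with the puncture asymptotic to a Reeb-type orbit on the hypersurface $\ci(\partial\cw(L',g',\eps'))$, or alternatively a disc with boundary partly on $L$ and partly on $\ci(L')$.

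The second step extracts the two inequalities from the geometry of such a disc. For part~(a): by Stokes' theorem applied to $u$, the symplectic area $\int_\Sigma u^*d\lambda$ equals $\int_{\partial\Sigma} u^*\lambda$ minus the action picked up at the puncture, which is the $\lambda$-integral of the asymptotic orbit on $\ci(\partial\cw(L',g',\eps'))$; since that orbit lies in the region where $\ci$ conjugates things to the standard cotangent geometry of $L'$ scaled by $\eps'$, this action is controlled by $\eps' \cdot (\text{something})$ and, after letting $\eps' \to 0$ or using that $L \subset \ci(\cw(L',g',\eps'))$ forces the action term to be exactly $\iota^*\lambda(\beta')$ up to sign, one identifies $\int_{\partial\Sigma} u^*\lambda = \ca$-type quantity. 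Positivity of area, $\int_\Sigma u^*d\lambda \ge 0$, then gives $|\iota^*\lambda(\beta')|$ bounded above; to get the factor $\eps\,\ell_g^{\min}(\beta)$ one bounds the area of the punctured disc from above by $\eps$ times the length of its boundary loop in $(L,g)$, using $u(\Sigma) \subset \cw(L,g,\eps)$ and the standard ``area $\le$ radius $\times$ length'' estimate for curves in $\cw(L,g,\eps)$ (this is the monotonicity/isoperimetric-type bound coming from $\lambda = p\,dq$ and $\|p\| < \eps$), then choosing the boundary loop to be a minimizing geodesic in class $\beta$, which is possible because we are free to choose the Riemannian metric $g$ and the class of the boundary. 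For part~(b): the Maslov index of $\iota(L')$ on $\iota_*\beta'$ is computed as the Maslov index of the boundary loop of $u$ along $L$ (which vanishes, $L$ being the zero section, a graph, hence with trivial Maslov class for any loop bounding in $T^*L$ in the relevant sense) plus the Conley--Zehnder--type contribution at the puncture, and the index formula / dimension count for the moduli space of such punctured discs forces the total to be zero; transporting this back through $\ci$ and $\iota$ gives $\mu_{\iota(L')}(\iota_*\beta') = 0$.

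The main obstacle, and where I expect most of the work to go, is the compactness and existence of the punctured holomorphic disc representing an arbitrary prescribed integral class $\beta$ --- this is really the technical heart announced in the abstract, and in this proof I am assuming it as a black box (it should be proved elsewhere in the paper, e.g. by a Gromov--Floer / SFT compactness argument starting from closed geodesics or from the known wrapped Floer theory of the fiber, combined with a gluing/neck-stretching construction along $\ci(\partial\cw(L',g',\eps'))$). Assuming that input, the remaining delicate point is bookkeeping: making sure the asymptotic action term at the puncture is exactly $\pm\iota^*\lambda(\beta')$ (so that the homological hypothesis $\pi_*\circ\iota_*$ iso is used to match the puncture's homology class with $\beta'$), and making sure the energy/area of the disc does not escape through the outer boundary $\partial\cw(L,g,\eps)$ --- this is where the nesting hypothesis $L \subset \ci(\cw(L',g',\eps')) \subset \cw(L,g,\eps)$ is essential, both to trap the disc inside $\cw(L,g,\eps)$ (so the ``radius $\le \eps$'' estimate applies) and to ensure $\ci$ is defined on all of the relevant region. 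One then concludes part~(a) by combining the Stokes identity with the area--length estimate and the choice of a minimizing geodesic boundary, and part~(b) by the index computation, completing the proof.

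\medskip
\noindent\textbf{Remark on the reduction.} Note that, as already indicated in the excerpt, Theorem~\ref{thm:closeclose} differs from Theorem~\ref{thm:closeclose++} only in requiring the \emph{two-sided} containment via the extension $\ci$; in the proof this is exactly what allows the pseudoholomorphic disc argument (which needs a genuine neighborhood of $L'$ on which a cylindrical end can be built) to go through, at the cost of a slightly stronger hypothesis that is nonetheless automatically satisfied in the application to Theorem~\ref{thm:rigspec} after rescaling, since there $L$ and $L'$ come from a Weinstein neighborhood picture where both nested inclusions can be arranged simultaneously.
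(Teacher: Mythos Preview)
Your overall architecture matches the paper's: produce punctured holomorphic discs with boundary on $L$ via the existence theorem, stretch the neck along $\ci(\partial\cw(L',g',\eps'))$, and read off (a) and (b) from area and index constraints on the limit building. But two of your detailed steps do not go through as written.

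For part (a), your mechanism for the upper bound $\eps\,\ell_g^{\min}(\beta)$ is incorrect. You propose an ``area $\le$ radius $\times$ length of boundary'' estimate together with ``choosing the boundary loop to be a minimizing geodesic''. Neither works: the boundary $u(\partial D)\subset L$ is determined by the holomorphic equation, not prescribed, and is typically \emph{longer} than the minimal geodesic; your isoperimetric estimate would then overshoot. In the paper the bound arises differently and this is exactly where the content of the existence theorem enters: the punctured disc produced has its puncture (at infinity in $T^*L$) asymptotic to the lift of the \emph{minimal} geodesic $\gamma(\beta)$, so by Stokes its area is \emph{exactly} $\eps\,\ell_g^{\min}(\beta)$. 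After neck-stretching, the subbuilding $B$ (a disc with boundary on $L$ and one positive puncture on $\ci(\partial\cw')$) has $\ca(B)\le\eps\,\ell_g^{\min}(\beta)$ by non-negativity of the other pieces. Computing $\ca(B)$ via a concatenation of two cylinders --- one inside $\ci(\cw')$ of non-negative area and one straight cylinder in $T^*L$ of area $\iota^*\lambda(\beta')$ --- yields $\ca(B)\ge\iota^*\lambda(\beta')$, and the inequality follows.

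For part (b), ``the index formula forces the total to be zero'' is too optimistic and misses the actual endgame. The paper analyses the \emph{top-floor} component $v$ of the limit building (a punctured sphere in $\cw\setminus\cw'$, not the disc with boundary on $L$), bounds the total length of its negative asymptotics via the area estimate so that they lie in a finite set of geodesic multi-curves for generic $g'$, restricts to primitive $\beta$ to ensure somewhere injectivity, and then invokes transversality to get $\mathrm{ind}(v)\ge 0$. Unwinding the index formula gives only $\mu_{\iota(L')}(\iota_*\beta')\le 2$ (and $|\mu|\le 2$ after replacing $\beta$ by $-\beta$). The final step, which you have not anticipated, is Polterovich's covering trick: passing to the $3$-fold cover $\hat L\to L$ associated to $\beta$ lifts the embedding and gives $2\ge|\mu_{\hat\iota(\hat L')}(\hat\iota_*\hat\beta')|=3|\mu_{\iota(L')}(\iota_*\beta')|$, forcing $\mu=0$.
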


We deduce theorem \ref{thm:rigspec} from theorem \ref{thm:closeclose} in 
section \ref{sec:rigspec}.
We obtain theorem \ref{thm:closeclose} and hence theorem \ref{thm:rigspec}
from a technical result about the existence of punctured holomorphic discs
with boundary on the zero section of a cotangent bundle, whose boundaries
represent any given non-zero homology class.
More precisely, we prove the following statement (see section \ref{sec:existence}
for the relevant definitions).
\begin{thm}\label{thm:existence-intro} Let $(L,g)$ be a closed Riemannian manifold
and $\beta \in H_1(L;\Z)$. Assume that $g$ is generic, in the sense that it
has a unique minimizing geodesic $\gamma(\beta)$ in class $\beta$. Then,
for every almost complex structure $J$ on $T^*L$ compatible with $d\lambda$
and $g$-cylindrical at infinity, there exists a $J$-holomorphic map
$u:(D\priv \{0\},\partial D)\to (T^*L, L)$ asymptotic to a lift of
$\gamma(\beta)$ at $0$.
\end{thm}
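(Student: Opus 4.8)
The plan is to produce the punctured disc as a limit of honest $J$-holomorphic discs obtained from a monotonicity/energy argument applied to the Weinstein geodesic flow picture. Here is the strategy in more detail.

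The starting point is the observation that the cotangent bundle $(T^*L,d\lambda)$, with a $J$ that is $g$-cylindrical at infinity, is geometrically a symplectization outside a compact set: away from the zero section it looks like $(S^*L\times(0,\infty),d(r\alpha))$ where $\alpha$ is the canonical contact form on the unit cosphere bundle $S^*L$ and the Reeb flow of $\alpha$ is the cogeodesic flow of $g$. A lift of the minimizing geodesic $\gamma(\beta)$ is precisely a Reeb orbit in a fixed free homotopy/homology class. So Theorem~\ref{thm:existence-intro} is morally an existence statement for a finite-energy plane (a punctured disc with one end) asymptotic to a chosen Reeb orbit, with boundary on the Lagrangian $L$ — this is exactly the kind of object produced by SFT-type compactness, and the novelty is the control on the homology class of the boundary.

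First I would set up a family of compact problems. For $R>0$ large, cap off the symplectization end at radius $R$ or, more robustly, solve a Gromov–Witten–type problem on the disc bundle $\cw(L,g,R)$ with boundary on $L$ representing the class $\beta$. Concretely: fix a point $q_0$ on $\gamma(\beta)$ and its cotangent fiber direction, and look at the moduli space of $J$-holomorphic discs $u:(D,\partial D)\to(\cw(L,g,R),L)$ with $[\partial u]=\beta\in H_1(L;\Z)$ and a point constraint forcing the disc to pass near the ``high'' part of the fiber over $q_0$ (or through the capping region). One shows this moduli space is nonempty for every $R$ by a continuation/degree argument: start from a standard model where $L$ is the zero section of $T^*L$ (or from a product/plumbing neighborhood where the disc can be written down explicitly — a half-cylinder over a geodesic segment glued to a fiber disc), note that a disc in class $\beta$ meeting the constraint exists there, and propagate it along the path of data to the given $(J,g)$ using the fact that the relevant index is the right one and no bubbling off $L$ in class $0$ can occur (monotonicity of $L$ in its Weinstein neighborhood, or the exactness $\lambda|_L=0$, kills constant/trivial boundary bubbles, and areas are controlled). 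The Maslov-index bookkeeping needed to make the dimension count work is where I'd use that $\mu_L$ of classes with trivial projection vanishes on the zero section.

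Second, let $R\to\infty$ and apply SFT/Hofer-type compactness (or Gromov compactness with a neck-stretching along $S^*L$). The area of the part of $u_R$ inside $\cw(L,g,1)$ stays bounded — this is exactly the content one extracts from the hypothesis that there is a minimizing geodesic, since $\int u_R^*d\lambda$ is controlled by $\ell_g^{\min}(\beta)$ via the action of the limiting Reeb orbit — so in the limit the disc breaks into a nonconstant piece $u_\infty$ in $T^*L$ with boundary on $L$ and one positive puncture asymptotic to a closed Reeb orbit of $\alpha$, together possibly with cylinders in the symplectization. The homological constraint $[\partial u_R]=\beta$ is closed under this limit, so $u_\infty$ still has boundary in class $\beta$; and the point constraint (or the energy bound) forces the asymptotic orbit to lie in the homology class $\beta$ as well. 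Since $\gamma(\beta)$ is the unique minimizing geodesic in class $\beta$ and the limiting orbit has action $\le \ell_g^{\min}(\beta)$ with the correct homology class, it must be (a lift of) $\gamma(\beta)$; reparametrizing, $u_\infty$ is the desired map $u:(D\setminus\{0\},\partial D)\to(T^*L,L)$.

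The main obstacle, and where most of the work lies, is the \emph{non-vanishing} of the compact moduli spaces for all large $R$ together with the \emph{a priori area bound} that prevents the limit from being constant. In a general cotangent bundle one cannot write the disc down explicitly, so establishing nonemptiness requires either a robust cobordism argument (a parametrized moduli space over the space of admissible $(J,g)$, with a careful exclusion of boundary degenerations — the usual headaches with boundaries on Lagrangians, where discs can bubble and sphere bubbles can appear in $[\beta]\ne 0$ is impossible but in nearby classes one must use that $T^*L$ is exact) or an appeal to a known open Gromov–Witten invariant of $(\cw(L,g,R),L)$ in class $\beta$ being nonzero. I expect to handle the exactness-based exclusion of bubbling cleanly, but the genericity of $J$ needed to make the moduli spaces cut out transversally, \emph{uniformly in} $R$, and compatible with $g$-cylindricity at infinity, is the delicate technical point; some version of domain-dependent almost complex structures or Hofer–Wysocki–Zehnder-style perturbations may be needed, and one must check the chosen perturbation does not destroy the asymptotic behavior at the puncture. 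Once nonemptiness and the energy bound are in hand, the identification of the limiting orbit with $\gamma(\beta)$ is immediate from the uniqueness hypothesis on the minimizing geodesic.
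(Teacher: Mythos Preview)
Your proposal has a fundamental gap at the very first step: the ``compact problems'' you want to solve on $\cw(L,g,R)$ are empty. The zero section $L$ is an exact Lagrangian in the exact symplectic manifold $(T^*L,d\lambda)$ (indeed $\lambda|_L=0$), so by Stokes any compact $J$-holomorphic disc $u:(D,\partial D)\to(\cw(L,g,R),L)$ has area $\int_D u^*d\lambda=\int_{\partial D}u^*\lambda=0$ and is therefore constant. There is no ``half-cylinder over a geodesic segment glued to a fiber disc'' that is a compact holomorphic disc with boundary on $L$; the half-cylinder over the geodesic is already the punctured disc you are trying to produce, and it genuinely has infinite $d\lambda$-area. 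Your suggestion to ``cap off the symplectization end at radius $R$'' would only help if you symplectically compactified $T^*L$ in a way that destroys exactness and creates genuine holomorphic discs through the cap, but you do not construct such a cap, nor argue why any open Gromov--Witten invariant of the capped manifold would be nonzero in class $\beta$; this is an entirely separate and nontrivial problem.

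The paper circumvents this by working with punctured discs from the outset. It builds an explicit almost complex structure $J_g$ (determined by the Levi-Civita connection) for which one can write down a canonical punctured disc $u_{\gamma,g}$ over the minimizing geodesic, and proves that it is the \emph{unique} element of $\cm(J_g,\beta)$ up to reparametrization. After perturbing the metric so that the linearized operator at $u_{\gamma,g}$ is surjective (this requires an explicit computation in Fermi coordinates), the usual Fredholm/cobordism package for punctured curves shows $\cm(J,\beta)\neq\emptyset$ for generic $J$, and an SFT-compactness argument (which does enter, but only at the end, to pass between metrics and to non-generic $J$) gives the result for all $J\in\cj^\infty_{\cyl,g}$. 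The key difference in philosophy is that the paper \emph{seeds} the cobordism with an explicit regular solution rather than trying to manufacture one from compact discs that do not exist.
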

This theorem might be interesting in its own right and we present further
direct applications in section \ref{sec:applications}. In particular we define
a Poisson bracket invariant for Lagrangian embeddings and compute this
invariant in a special case. Theorem \ref{thm:existence-intro} raises several 
questions such as the existence of other holomorphic curves in cotangent 
bundles, uniruledness, etc., which could lead to further applications.

More interest in the present approach might come from the existence of a 
relevant assumption for questions about Lagrangian rigidity in cotangent bundles.
Theorem \ref{thm:closeclose++} is a deep result that concerns any Lagrangian 
submanifold in a cotangent bundle, but is not easy to prove. On the other hand, 
for those submanifolds $L'\subset T^*L$ that contain the zero section in some 
Weinstein neighbourhood, theorem \ref{thm:closeclose} provides the same conclusion 
as theorem \ref{thm:closeclose++}, but with a relatively easy proof. Hence we 
would like to propose the following weakened version of the Nearby Lagrangian 
conjecture.
\begin{conj}\label{conj:nl-}
Let $K\subset T^*L$ be an exact Lagrangian submanifold. Assume that some 
Weinstein \nbd of $K$ contains the zero section $L$. Then $K$ is Hamiltonian 
isotopic to $L$.
\end{conj}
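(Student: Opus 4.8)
The plan is to transfer the problem into the cotangent bundle $T^*K$, where the roles of $K$ and $L$ become exchanged, and then to try to promote the quantitative rigidity supplied by Theorems~\ref{thm:closeclose} and \ref{thm:existence-intro} into an honest Hamiltonian isotopy. First I would set up the reduction: fix a Weinstein neighbourhood $W$ of $K$ with $L\subset W$ and a symplectomorphism $(W,d\lambda)\cong(D^*_1K,d\lambda_K)$ sending $K$ to the zero section, so that $L$ becomes a closed Lagrangian submanifold of $T^*K$ lying inside $D^*_1K$. Then I would record two structural facts. First, $L$ is exact in $T^*K$: the closed $1$-form $\mu:=\lambda_K-\lambda$ on $W$ restricts to $-\lambda|_K$ on the zero section $K$ (since $\lambda_K$ vanishes there), and $[\lambda|_K]=0$ because $K$ is exact in $T^*L$; as $K$ is a deformation retract of $W$ this forces $\mu=dg$ on $W$, whence $\lambda_K|_L=(\lambda+dg)|_L=d(g|_L)$, using that $\lambda|_L=0$ since $L$ is the zero section of $T^*L$. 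Second, $\pi_K|_L\colon L\to K$ induces an isomorphism on $H_1$: indeed $K\hookrightarrow T^*L$ induces a homology isomorphism by \cite{abouzaid} and $K$ is a deformation retract of $W$, so $W\hookrightarrow T^*L$ induces a homology isomorphism; since $L\hookrightarrow T^*L$ is a homotopy equivalence, the inclusion $L\hookrightarrow W$ must then induce a homology isomorphism, and $\pi_K|_L$ is this inclusion followed by the retraction $W\to K$. Thus the pair $(L\subset T^*K)$ inherits the same kind of hypothesis as $(K\subset T^*L)$.

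The heart of a proof would then be to run the punctured holomorphic disc machinery of the paper on both sides at once. By Theorem~\ref{thm:existence-intro} applied to $(T^*K,g')$, for every $\beta\in H_1(K)$ and every suitable cylindrical $J$ there are punctured $J$-holomorphic discs with boundary on $K$ asymptotic to a lift of the minimizing geodesic $\gamma(\beta)$; applied to $(T^*L,g)$ the same theorem produces such discs with boundary on the zero section $L$. Choosing $J$ adapted to both cotangent structures over the overlap region and letting the Weinstein scales degenerate, one would hope that intersection positivity between the $K$-discs and $L$, and between the $L$-discs and $K$, forces the ``width'' of $L$ over $K$ and of $K$ over $L$ down to zero, i.e.\ produces a Hamiltonian isotopy making $L$ and $K$ arbitrarily $\cc^1$-close. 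A Lagrangian $\cc^1$-close to the zero section $K$ is the graph of an exact $1$-form $d\phi$, hence Hamiltonian isotopic to $K$ via $\mathrm{graph}(t\,d\phi)$; transporting this back through the Weinstein identification, and recalling $W\subset T^*L$, would finish the argument.

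The step I expect to be the main obstacle is precisely this last transition. The pseudoholomorphic techniques used here, like Gromov's in general, yield sharp control on symplectic areas, Maslov indices, and homological data --- which is all that Theorems~\ref{thm:closeclose}--\ref{thm:existence-intro} extract --- but they do not by themselves manufacture isotopies; converting a width estimate into an actual Hamiltonian isotopy is a genuine enhancement, and would presumably need either a uniqueness or foliation statement for the punctured discs strong enough to sweep out a neighbourhood, or a quantitative Weinstein-type argument fed by the shrinking-scale compactness. A secondary difficulty is that the hypothesis only supplies the one-sided nesting $L\subset W(K)$; one must first extract from it, or find a workable substitute for, the symmetric control of $K$ near $L$ that is needed in order to pinch the two families of discs together.
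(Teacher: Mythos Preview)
The statement you are attempting to prove is a \emph{conjecture} in the paper, not a theorem: the authors explicitly propose it as a weakened form of the Nearby Lagrangian conjecture and do not claim a proof. There is therefore no proof in the paper to compare your attempt against. What the paper does prove is Proposition~\ref{prop:nlctoriglag}, which \emph{assumes} the conjecture and derives the $\cc^0$-rigidity consequence.

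Your setup is sound: the transfer to $T^*K$, the verification that $L$ is exact there, and the $H_1$-isomorphism are all correct and natural first moves. But the gap you yourself identify is genuine and, as far as is currently known, fatal. The punctured-disc machinery of Theorems~\ref{thm:existence-intro} and \ref{thm:closeclose} delivers exactly the quantitative invariants (area class, Maslov class) and no more; there is no mechanism in the paper, or in the literature short of the full Nearby Lagrangian conjecture, for upgrading this to a Hamiltonian isotopy. Your proposed ``intersection positivity'' step is not available: the punctured discs have boundary on $L$ (or $K$), not transverse intersections with the other Lagrangian, and positivity of intersections is a codimension-two phenomenon that does not apply to Lagrangian constraints in general dimension. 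The ``shrinking width'' heuristic would, if it worked, essentially prove the Nearby Lagrangian conjecture itself, so the argument as sketched is circular in spirit.

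In short: your reduction is correct, your identification of the obstacle is accurate, and that obstacle is precisely why the statement is posed as a conjecture rather than proved.
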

This conjecture implies the strongest possible $\cc^0$-rigidity for Lagrangian 
submanifolds. 
\begin{proposition}\label{prop:nlctoriglag}
Let $h:M\to M'$ be a symplectic homeomorphism that takes a Lagrangian 
submanifold $L$ to a smooth (hence Lagrangian) submanifold $L'$. 
Assume that conjecture \ref{conj:nl-} holds for $T^*L$. 
Then there exists a symplectic 
diffeomorphism $f:M\to M'$ that takes $L$ to $L'$. 
\end{proposition}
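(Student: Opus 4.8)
The plan is to use the hypothesis that $h$ is a uniform limit of symplectomorphisms $h_k\colon M\to M'$ in order to replace $h$ by an honest symplectomorphism that differs from the sought $f$ only by a Hamiltonian diffeomorphism supported near $L$, the latter produced by conjecture \ref{conj:nl-}. Fix, via the Weinstein neighbourhood theorem, a symplectomorphism $\Phi$ from a neighbourhood of $L$ in $M$ onto a neighbourhood of the zero section in $(T^*L,d\lambda)$ with $\Phi(L)=0_L$, together with a Weinstein neighbourhood $\cv'$ of $L'$ in $M'$. Since $h(L)=L'$ and $h_k^{-1}\to h^{-1}$ uniformly, for $k$ large the smooth Lagrangian $h_k^{-1}(L')\subset M$ is $\cc^0$-close to $L$; set $K_k:=\Phi\big(h_k^{-1}(L')\big)\subset T^*L$. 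This is a smooth Lagrangian, diffeomorphic to $L'$, lying $\cc^0$-close to $0_L$, and its Weinstein neighbourhood — the image of $\cv'$ under $\Phi\circ h_k^{-1}$ — contains the zero section precisely because $K_k$ is $\cc^0$-close to it.

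The step I expect to be the main obstacle is checking that $K_k$ is \emph{exact}. This does not follow from $\cc^0$-proximity: the Liouville class of $K_k$ in $H^1(K_k;\R)$ records the symplectic areas of the thin annuli joining the loops of $K_k$ to the corresponding loops of $0_L$, and $\cc^0$-closeness (theorem \ref{thm:closeclose}.a)) merely bounds these areas. They do vanish, however, by the $\cc^0$-rigidity of the area homomorphism: the symplectic homeomorphism $h^{-1}\circ h_k$ of $M$ carries $h_k^{-1}(L')$ onto $L$, so theorem \ref{thm:rigspec}.a) forces those areas to coincide with the ones attached to the exact zero section $L=0_L$, hence to be zero; equivalently, one first replaces the $h_k$ by $\cc^0$-small perturbations chosen to kill the relevant flux. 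Granting exactness, $K_k$ satisfies the hypotheses of conjecture \ref{conj:nl-}, so $K_k$ is Hamiltonian isotopic to $0_L$ in $T^*L$; in particular $L'$ is diffeomorphic to $L$.

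It remains to realise such an isotopy inside $M$. Viewing $0_L$ as an exact Lagrangian in the Weinstein neighbourhood $T^*K_k$ of $K_k$ — where its own Weinstein neighbourhood contains the zero section $K_k$, and where conjecture \ref{conj:nl-} again applies since $K_k\cong L$ — one obtains a Hamiltonian isotopy carrying $0_L$ to $K_k$; using that $K_k$ lies arbitrarily $\cc^0$-close to $0_L$, this isotopy can be taken compactly supported in that neighbourhood. Transporting its time-one map through $\Phi$ and extending by the identity produces a Hamiltonian diffeomorphism $\psi$ of $M$ with $\psi(L)=\Phi^{-1}(K_k)=h_k^{-1}(L')$, and then $f:=h_k\circ\psi$ is a symplectomorphism $M\to M'$ with $f(L)=h_k\big(h_k^{-1}(L')\big)=L'$. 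The two delicate points are thus the exactness of $K_k$, where theorem \ref{thm:rigspec}.a) genuinely enters, and the compact support of the Hamiltonian isotopy inside a Weinstein neighbourhood, which is what allows it to be glued into a global diffeomorphism of $M$.
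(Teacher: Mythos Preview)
Your overall strategy matches the paper's (with the roles of $L$ and $L'$ interchanged: the paper works with $L_n:=f_n(L)\subset M'$ near $L'$), but the exactness argument has a genuine gap. Theorem \ref{thm:rigspec}.a) equates the area homomorphisms $\ca^{K_k}:H_2(M,K_k)\to\R$ and $\ca^L:H_2(M,L)\to\R$; it says nothing direct about the Liouville class $[\lambda|_{K_k}]\in H^1(K_k)$, whose values are areas of cylinders with one boundary on $K_k$ and one on $L$---these are not relative $2$-cycles for either Lagrangian alone. At best a short diagram chase shows that $[\lambda|_{K_k}]$ vanishes on $\textnormal{Im}\,\delta$, where $\delta:H_2(M,K_k)\to H_1(K_k)$ is the connecting map, and this is generally a proper subspace. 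Concretely: take $M=M'=T^*S^1$, $L=L'=0_{S^1}$, $h=\textnormal{id}$, $h_k(q,p)=(q,p+1/k)$; then $K_k=\{p=-1/k\}$ has nonzero Liouville class, while $H_2(T^*S^1,S^1)=0$ so the area-homomorphism constraint is vacuous.

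Your ``kill the relevant flux'' aside is the correct repair, but it is not equivalent to the direct argument and itself needs work. One must first show that $[\lambda|_{K_k}]$ extends to a class in $H^1(M)$: this is exactly what vanishing on $\textnormal{Im}\,\delta$ (hence theorem \ref{thm:rigspec}.a)) buys, via the exact sequence $H^1(M)\to H^1(K_k)\to H^2(M,K_k)$. The paper then represents the extended class by a closed $1$-form $\theta$ on $M$, chosen small on the Weinstein neighbourhood, and runs the symplectic---in general \emph{not} Hamiltonian---flow of $\theta$ to carry $K_k$ to an exact Lagrangian; only then does conjecture \ref{conj:nl-} apply. The remaining steps (compact support of the Hamiltonian isotopy, composition to obtain $f$) are as you describe; your second invocation of the conjecture is unnecessary once the standard cut-off argument produces compact support.
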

We wish to make a final remark in this introduction. The problem of
$\cc^0$-flexibility/rigidity of submanifolds was introduced in
\cite{opshtein,hulese,buop}. At least for the second author of the
present paper, the formalization of these questions in the framework
of $\cc^0$-symplectic geometry came with the hope of starting a study
of a new type of geometry.  Ideally, the  $\cc^0$-rigid properties of
$\cc^0$-rigid submanifolds (e.g. the coisotropic ones) would provide interesting 
invariants. For instance, the fact that the characteristic foliation of a hypersurface 
is $\cc^0$-rigid \cite{opshtein} led to conjecturing that topological 
hypersurfaces in a symplectic
manifold are covered by ``characteristic sets", which would be invariant
under symplectic homeomorphisms. In \cite{hulese}, the concept of a
$\cc^0$-Lagrangian submanifold was introduced: these are topological
$n$-dimensional submanifolds of symplectic manifolds which can be
locally straightened  via symplectic
homeomorphisms to $\R^n\subset \C^n$. In this perspective theorem
\ref{thm:rigspec} raises the question whether $\cc^0$-Lagrangian
submanifolds can be equipped with an area homomorphism invariant
by $\cc^0$-symplectic homeomorphisms.
This is however not the case,
starting from dimension $6$. Let indeed $L := \partial D \times
\partial D \times \partial D \subset \C^3$, where $D$ is the closed
Euclidean disc of radius $1$ centered at $0$ in $\C$. Then $D' :=
D \times \{1\}\times \{1\}$ is a disc of area $\pi$ attached to this
Lagrangian submanifold. By \cite{buop} there exists a symplectic
homeomorphism $h$ of $\C^3$ such that $h(D') = \frac 12 D' :=
D(0,\tfrac 12)\times\{1\}\times \{1\}$. Then $h(L)$ is a
$\cc^0$-Lagrangian submanifold of $\C^3$, but the area of $h(D')$
does not coincide with the area of $D'$. Even worse, since the
homeomorphism $h$ can have support localized in an arbitrary \nbd
of $D'$, the area of smooth discs with boundary on $h(L)$ do not
depend only on the relative homology class of these discs.

\noindent\textbf{Organisation of the paper.} In \S \ref{sec:discussion} we
present yet another short proof of theorem \ref{thm:rigspec} for Lagrangian 
tori based on holomorphic discs, which illustrates our motivation for
establishing theorem \ref{thm:existence-intro}. \S \ref{sec:existence}
is the technical core of the paper and provides the proof of theorem
\ref{thm:existence-intro}. Although this result might not be a surprise
to experts in the field, we have not been able to find details in the
literature and we give a detailed account here. The Fredholm theory for
punctured holomorphic discs in symplectic cobordisms with boundary on
a Lagrangian submanifold is provided in appendix \ref{sec:fredholm} for
the sake of completeness. In \S \ref{sec:rigspec} we establish our
main applications, theorems \ref{thm:rigspec} and \ref{thm:closeclose}.
In \S \ref{sec:applications} we discuss some further, rather
straightforward, applications of theorem \ref{thm:existence-intro},
for instance 
calculations of Poisson bracket invariants of the zero section in
Weinstein neighborhoods.

\noindent\textbf{Aknowledgements.} E.O. wishes to thank Nicolas Vichery and
Frol Zapolsky for their suggestion of using the work of Abouzaid to prove
theorem \ref{thm:rigspec}. He also wishes to very warmly thank Charles
Boubel for many discussions on Riemannian geometry. Part of this work
was realized during a visit of E.O. to the Technion and Tel Aviv University
in fall 2014. He thanks Misha Entov and Lev Buhovski for the invitation
and for many stimulating discussions, in particular on $pb_4$-type
invariants, which ultimately resulted in the present proof of theorem
\ref{thm:rigspec}. C.M. would like to thank Paul Biran and Leonid
Polterovich for enlightening discussions and stimulating questions related
to the work. Both authors would like to thank Chris Wendl very warmly for
help on issues of transversality for punctured holomorphic disks.

\noindent\textbf{Notation.} We work in the smooth category. Unless explicitly
stated, all our manifolds and their structures are smooth. We adopt the
following notation in this paper. It will be regularly recalled in the
course of the paper. Let $(L,g)$ be a closed Riemannian manifold.
\begin{itemize}
\its $L \subset (T^*L, d\lambda)$ denotes the zero section of the cotangent
bundle with its standard Liouville form $\lambda$ and $L$ is a Lagrangian
submanifold.
\its $T_r^*L = \cw(L,g,r) := \{\,(q,p)\in T^*L\,|\, \Vert p\Vert_g<r \,\}$ and
$\cw_g := \cw(L,g,1) = \{\,(q,p)\in T^*L\, |\, \Vert p\Vert_g < 1 \,\}$ is the
unit (co)disk bundle.
\its $M := \partial \cw_g$ is the contact boundary of $\cw_g$ with the contact form
$\alpha := \lambda_{|M}$.
\its $\gamma: [0,\ell] \to L$ is a geodesic with unit speed with respect to $g$
and $\ell_g(\gamma) = \ell$ is the length of $\gamma$.
\its $\ell_g^{\min}(\beta)$ is the minimal length of a closed curve in a class $\beta
\in H_1(L;\Z)$. We call a closed geodesic with this length a minimizing geodesic in
class $\beta$.
\its $\tilde \gamma: [0,\ell] \to M$ is the lift of a unit speed geodesic $\gamma$
to $M$ given by $\tilde \gamma(t) = (\gamma(t), g(\dot\gamma(t), \cdot))$. Note that
\[
\int_{\tilde \gamma} \alpha = \ell_g(\gamma) = \ell.
\]
\its $D = \{ z \in \C \,|\, |z| \leq 1 \}$ is the closed unit disk and $\partial D =
\{ z \in \C \,|\, |z| = 1 \}$. $D(z,R)$ denotes the closed disk of radius $R$ centered
at $z$.
\its $\mathrm{int}\,\Om$ denotes the interior of a set $\Om$.
\its $\mathrm{Op}(X, Y)$ or $\mathrm{Op}(X)$ is an open neighborhood of $X \subset Y$.
\end{itemize}


\section{A source of motivation for theorem \ref{thm:existence-intro}}\label{sec:discussion}

This section is independent from the rest of the paper. It aims at explaining at least
one source of motivation for establishing theorem \ref{thm:existence-intro}
(existence of punctured holomorphic disks). We start the discussion with a proof of
theorem \ref{thm:rigspec}.a) for tori, which we think illustrates our purpose well. As
explained in \cite[lemma 5.1]{buop} it follows directly from the following theorem.

\begin{theorem}
For $ \eps >0$ let $\iota:L\hra \big(S^1\times(-\eps,\eps)\big)^n \subset
(T^*\T^n, d\lambda)$ be a Lagrangian embedding such that $\pi\circ \iota:L\to \T^n$
induces an isomorphism in homology. Let $e_j:=(\pi\circ \iota)^*\big([ 0_{\T^{j-1}}
\times S^1 \times 0_{\T^{n-j}}]\big) \in H_1(L)$. Then
$$
\left| \iota^*\lambda(e_j) \right| \leq \eps.
$$
\end{theorem}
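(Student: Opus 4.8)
The strategy is to reduce the statement to the existence result of Theorem \ref{thm:existence-intro} applied to the standard flat torus $\T^n$ with its flat metric $g_0$. For the class $e_j$, the minimizing geodesic $\gamma(e_j)$ in $(\T^n, g_0)$ is the horizontal circle $0_{\T^{j-1}}\times S^1\times 0_{\T^{n-j}}$, of length exactly $\ell^{\min}_{g_0}(e_j)=1$ (the flat metric is not quite generic in the sense required, but one either perturbs slightly and passes to a limit, or works directly with the family of minimizing geodesics — I would remark on this). Choose an almost complex structure $J$ on $T^*\T^n$ compatible with $d\lambda$, cylindrical at infinity with respect to $g_0$, and adapted so that $\iota(L)$ is $J$-holomorphic-friendly for intersection purposes; by Theorem \ref{thm:existence-intro} there is a punctured $J$-holomorphic disc $u:(D\setminus\{0\},\partial D)\to (T^*\T^n, \T^n)$ asymptotic at the puncture to the lift $\tilde\gamma(e_j)$ of the horizontal circle to the cotangent sphere bundle $M=\partial\cw_{g_0}$.

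The core of the argument is then an energy/Stokes computation. On the one hand, by Stokes' theorem applied to the exact form $d\lambda$ on the punctured disc (removing a small circle around the puncture and letting its radius shrink to the asymptotic limit), the ``area'' of $u$ equals $\int_{\tilde\gamma(e_j)}\alpha$ minus $\int_{\partial D} u^*\lambda$; the asymptotic term is $\ell_{g_0}(\gamma(e_j))=1$, and the boundary term is $\pm\iota^*\lambda(\beta')$ where $\beta'\in H_1(L)$ is the class of $u|_{\partial D}$. On the other hand, $\int u^*d\lambda\ge 0$ since $J$ is $d\lambda$-compatible. Combining, $\pm\iota^*\lambda(\beta')\le 1$. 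The remaining point is to identify the boundary class: the projection $\pi\circ u|_{\partial D}$ is a loop in $\T^n$, and since $u$ is asymptotic to the horizontal circle at the puncture, a homological argument (capping off, using that $\pi\circ u$ extends over the puncture to a map of the full disc representing a relative class whose boundary is $[\pi\circ u|_{\partial D}]$ and whose ``other boundary'' is $[\gamma(e_j)]$) shows $\pi_*\circ\iota_*\beta'=e_j$'s image, i.e.\ $(\pi\circ\iota)_*\beta'=[\text{horizontal circle}]$. Because $\pi\circ\iota$ induces a homology isomorphism, $\beta'=e_j$, so $|\iota^*\lambda(e_j)|\le\eps$ after rescaling the cotangent coordinates by $\eps$ (the embedding lands in $(S^1\times(-\eps,\eps))^n$, i.e.\ in $\cw(\T^n,g_0,\eps)$, so the Liouville form picks up the factor $\eps$, or equivalently one works in $\cw(\T^n,g_0,1)$ after rescaling and multiplies back).

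To get the sharp constant and both signs one runs the argument for $e_j$ and for $-e_j$ (the minimizing geodesic and its length are the same), obtaining $\iota^*\lambda(e_j)\le\eps$ and $-\iota^*\lambda(e_j)\le\eps$; the final conclusion follows. The main obstacle I anticipate is \emph{not} the algebra above but ensuring that Theorem \ref{thm:existence-intro} genuinely applies here: the flat torus fails the genericity (unique minimizing geodesic) hypothesis, so one must either perturb $g_0$ to a nearby metric $g$ with a unique minimizer in class $e_j$ — then $\ell^{\min}_g(e_j)\to 1$ and the disc's energy estimate degrades by $o(1)$ — and pass to the limit, or invoke a version of Theorem \ref{thm:existence-intro} for families; I would go the perturbation route and note that since the inequality we want is an inequality (not an equality), the limit causes no loss. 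A secondary technical point is the rigorous justification of the Stokes computation near the puncture, which requires the exponential convergence of $u$ to the asymptotic Reeb orbit — this is standard in the punctured holomorphic curve literature and will be set up in the relevant section, so I would simply cite it.
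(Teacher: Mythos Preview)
There is a genuine gap in your Stokes computation. The punctured disc furnished by Theorem~\ref{thm:existence-intro} has its boundary on the \emph{zero section} $\T^n\subset T^*\T^n$, not on the embedded Lagrangian $\iota(L)$. Since the Liouville form $\lambda=p\,dq$ vanishes identically on the zero section, the boundary term $\int_{\partial D}u^*\lambda$ is zero, not $\pm\iota^*\lambda(\beta')$. Your identity therefore reduces to $0\le \int u^*d\lambda=\eps\cdot\ell_{g_0}^{\min}(e_j)$, which is true but carries no information whatsoever about $\iota(L)$ or about $\iota^*\lambda$. The disc $u$ never interacts with $\iota(L)$ in your argument; you have conflated the abstract Lagrangian $L$ with the zero section $\T^n$.

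A punctured-disc approach \emph{can} be made to work, but it requires an additional step: one must stretch the neck along the boundary of a Weinstein neighbourhood $\ci(\cw(L,g',r'))$ of $\iota(L)$ and extract from the limiting building a piece whose area bounds $\iota^*\lambda(e_j)$; this is exactly the mechanism of the proof of Theorem~\ref{thm:closeclose}.a). However, that argument needs the extra hypothesis that the zero section $\T^n$ lies inside such a Weinstein neighbourhood of $\iota(L)$, which is \emph{not} assumed in the present statement. The paper's proof is entirely different and avoids punctured discs altogether: it constructs a carefully chosen symplectic embedding of $(S^1\times(-\eps,\eps))^n$ into $\C^n$ sending the zero torus to a product of circles $S^1(a_1)\times\cdots\times S^1(a_n)$, arranges the radii so that all discs with boundary on $\Phi\circ\iota(L)$ have area a multiple of $\kappa=1+\eps\lfloor\Delta_1\rfloor$, and then invokes Chekanov's theorem comparing the minimal disc area to the displacement energy $e_d\le a_1+\eps\le 1+\eps$ to force $\lfloor\Delta_1\rfloor\le 1$.
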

\noindent {\it Proof:} We prove that $\left|\iota^*\lambda (e_1) \right|
\,\leq \,\eps$. Without loss of generality we can assume that $\iota^*\lambda
(e_1) \geq 0$ and $\eps<1$. Put $\iota^*\lambda (e_j) = \eps \Delta_j$ with $\Delta_j
\in \R$ and $\Delta_1 \in [0,\infty)$. Denote by $\lfloor\Delta_1\rfloor \in \N$ the
integer part and by $\text{Frac}(\Delta_1):=\Delta_1-\lfloor\Delta_1\rfloor \in [0,1)$ its fractional part.
Define
\begin{align*}
\kappa & := 1 + \eps \lfloor\Delta_1\rfloor, \\
a_1 & := 1-\eps\text{Frac}(\Delta_1) > 0, \\
a_j & := N\kappa-\eps\Delta_j,
\end{align*}
with $N$ chosen large enough such that $a_j > a_1 > 0$ and $a_j + \eps \Delta_j \geq 2 \kappa$
for $j = 2, \ldots, n$. Consider now
the standard symplectic embedding $\Phi: \big((S^1\times(-\eps,\eps))^n , d\lambda \big)
\hra (\C^n,\om_\st)$ with $\Phi(0_{\T^n}) = S^1(a_1) \times \dots \times S^1(a_n)
\subset \C^n$ ($S^1(a)\subset \C$ being the circle that encloses a disk of area $a$). Then
$\cw:= \im \Phi = \{\bigcup\, S^1(t_1)\times \ldots \times S^1(t_n) \;|\; |t_i-a_i| < \eps \,\}$.
Obviously,
$$
e_d(\cw)\leq a_1+\eps\leq 1+\eps,
$$
and equality in the second inequality can hold only if $a_1 = 1$, i.e. $\Delta_1 \in \N$
($e_d$ denotes the displacement energy in $(\C^n,\om_\st)$). On the other hand, the
discs with boundaries on $\Phi\circ \iota(L)$ representing the classes $e_1, \dots, e_n$
have symplectic areas $a_1+\eps\Delta_1 = 1 + \eps \lfloor\Delta_1\rfloor = \kappa$, $a_2 + \eps \Delta_2
= N\kappa, \dots, a_n + \eps \Delta_n = N\kappa$. Since $\pi \circ \iota$ is an isomorphism
in homology, we therefore see that every disc with boundary on $\Phi\circ \iota(L)$ and positive area has area
equal to a multiple of $\kappa$. By a theorem of Chekanov \cite{chekanov}, we infer that
$$
1 + \eps \lfloor\Delta_1\rfloor \;=\; \kappa \;\leq\; e_d(\Phi\circ \iota (L)) \;\leq\;
e_d(\cw) \;\leq\; 1 + \eps.
$$
We conclude that $\eps \lfloor\Delta_1\rfloor \, \leq \, \eps$, so $\Delta_1 \,\leq\, 1$, 
with equality only if $\Delta_1\in \N$. Thus $\iota^*\lambda (e_1) \leq \eps$. Applying the same 
procedure to the classes $e_2, \ldots, e_n$ we obtain the statement of the theorem.
\cqfd

What makes this proof easy (although it ultimately relies on a deep theorem by Chekanov
\cite{chekanov}), is that we have a very large sample of Lagrangian embeddings of tori in
$\C^n$ 
with holomorphic discs of arbitrary relative areas, so that we can choose which is 
responsible for the displacement energy of the Lagrangian submanifold.
This situation is unfortunately
very specific to tori: there are very few examples of Lagrangian embeddings of a given
manifold into a symplectic manifold, even without speaking of the freedom to choose the area
of holomorphic discs with boundary on this submanifold. On the other hand, a 
manifold always embeds into its cotangent bundle
as the zero section, which is Lagrangian. In this setting there are no symplectic discs with
boundary on the zero section, but punctured holomorphic discs may replace the compact discs
with a similar benefit.


\section{Punctured holomorphic curves in cotangent bundles}\label{sec:existence}

Let $L$ be a closed manifold and $(T^*L\overset{\pi}\to L,\lambda)$
be its cotangent bundle equipped with the Liouville form $\lambda=pdq$.
The cotangent bundle is endowed with an $\R^+_*$-action given by
$\tau\cdot(q,p):=(q,\tau p)$. Let $g$ be a  Riemannian metric on $L$
and $\beta \in H_1(L{;\Z})$. If $g$ is generic, $\beta$ has exactly
one connected representative of minimal length (hence a geodesic),
whose arc-length parametrization is henceforth denoted by $\gamma(\beta)$
(or even $\gamma$ when there is no risk of confusion). For general $g$
there may be several connected minimal representatives and $\gamma(\beta)$ is then
one of them. The metric $g$ induces an isomorphism $\sharp: TL \to T^*L,
v \mapsto v^{\sharp} = g(v, \cdot)$, hence an inner product (still denoted $g$)
and a norm $\Vert \cdot\Vert_g$ on the fibers of $T^*L$. We write for the
unit disk bundle and its boundary	
$$
\cw_g := \cw(L,g,1) := \{\, (q,p) \;|\; q\in L,p\in T_q^*L, \Vert p\Vert_g < 1\,\}
\;\text{ and }\; M:=\partial \cw_g.
$$
Then $\alpha:=\lambda_{|M}$ is a contact form on $M$, whose Reeb vector
field is denoted by $R$. It is well-known that $R$ generates the cogeodesic
flow on $M$: its trajectories are the lifts $\tilde\gamma(t) :=
(\gamma(t),\dot \gamma(t)^\sharp)$ of the unit speed geodesics $\gamma$.
Moreover, $(T^*L, d\lambda)$ is a symplectic cobordism with one positive
end given by the identification
$$
( [1, \infty) \times M,r\alpha) \overset \sim\lra (T^*L \priv \cw_g,\lambda),
\qquad \big(r,(q,p)\big)  \longmapsto  (q,rp).
$$
Note that this map extends to $\big((0, \infty) \times M,r\alpha\big) \simeq
(T^*L \priv L,\lambda)$. The function $r = \Vert p \Vert_g \in [0,\infty)$ is
called the radial coordinate in $T^*L$.  The image of the vector field
$r\frac \partial{\partial r}$ under this identification is Liouville.

An almost complex structure $J$ on $T^*L$ is compatible with $\om:=d\lambda$
if $\om(\cdot,J\cdot)$ is a positive definite quadratic form. An almost complex
structure $J$ on $ {(0,\infty)}\times M$ is compatible with $\alpha$ if $J$
preserves $\ker \alpha_{| \{r\}\times M}$ and is compatible with $d\alpha=\om$
on this subbundle, sends $r \frac\partial{\partial r}$ to the Reeb vector
field $R$ of $(\{r\}\times M,\alpha)$ and is invariant by the $\R^+_*$-action
on $(0, \infty) \times M$. Hence a metric $g$ on $L$ defines on $T^*L$ a
{\it fiberwise convex hypersurface} $M$, which in turn determines a $1$-form
$\alpha$, and {\it in fine} a class of almost complex structures on ${(0,\infty)}
\times M\simeq T^*L\priv L$, namely those which are compatible with $\alpha$.
Let  $\gamma$ be a closed geodesic with unit speed and length $\ell := \ell_g(\gamma)$
and let $\tilde\gamma$ be its lift to $M$. Then for any almost complex structure
$J$ compatible with $\alpha$ it can be shown that
\begin{equation}\label{eq:geod-cyl}
v_{\gamma, g}: \R \times \R / \ell \Z \longrightarrow (0, \infty) \times M,
\quad (s, t) \longmapsto (e^s, \tilde\gamma(t)),
\end{equation}
is $J$-holomorphic for the complex structure $j \partial_s = \partial_t$ on
$\R \times \R / \ell \Z$.

\begin{definition}
An almost complex structure $J$ on $T^*L$ compatible with $d\lambda$ is
\emph{cylindrical at infinity} (with respect to $g$) if it is compatible
with $\alpha$ outside a compact \nbd of the zero section. We denote by
$\cj^\infty_{\cyl,g}$ the set of all such almost complex structures,
where the $\infty$ of the notation aims at reminding that these
structures are constrained only near infinity. A $J$-holomorphic curve $u$
defined on $U\priv\{z_0\}\subset \C$ (where $U$ is an open subset and
$z_0\in U$) is said to be asymptotic to $\tilde\gamma$ at $z_0$
if there exists a biholomorphic
identification of a punctured neighborhood of $z_0$ with $[0, \infty)
\times \R / \ell \Z$ that provides coordinates $(s,t)$ on $U\priv\{z_0\}$ such that
$d(u(s,t),v_{\gamma{,g}}(s,t))\underset{s\to \infty}{\lra} 0$.
Here, $d$ stands for any distance which is $\R^+_*$-invariant on the
cylindrical end.
\end{definition}

The aim of this section is to prove the following result. We say that
a metric $g$ has discrete length spectrum in the class $\beta$ if
the lengths of the geodesics representing this class form a discrete set.
\begin{theorem}\label{thm:existence}
Let $(L,g)$ be a closed Riemannian manifold and $\beta \in H_1(L;\Z)$ a
homology class in which $g$ has discrete length spectrum.
Then for  all  $J \in \cj^\infty_{\cyl,g}$ there exists a map $u_J :
(D\priv \{0\},\partial D) \to (T^*L,L)$ which solves the following problem:
\begin{equation}\label{eq:pj}
\left\{
\begin{array}{l}
du_J\circ j = J(u_J)\circ du_J, \\
{u_{J*}[\partial D]}= \beta, \\
u_J \text{ is asymptotic to  $\tilde\gamma(\beta)$  at $0$.}
\end{array}\right. \tag{$\cp(J,\beta)$}
\end{equation}
In other terms the set ${\widehat\cm(J,\beta)}$ of solutions of $\cp(J,\beta)$
is non-empty for all $J \in \cj^\infty_{\cyl,g}$.
\end{theorem}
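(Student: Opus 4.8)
The plan is to obtain $u_J$ by a continuation/compactness argument: we know an explicit holomorphic curve for a \emph{special} almost complex structure near infinity (the cylinder $v_{\gamma,g}$ of \eqref{eq:geod-cyl}), and we want to propagate its existence to every $J\in\cj^\infty_{\cyl,g}$. First I would set up the right moduli space: punctured $J$-holomorphic maps $u:(D\priv\{0\},\partial D)\to(T^*L,L)$ with $u_*[\partial D]=\beta$ and asymptotic at $0$ to the Reeb orbit $\tilde\gamma(\beta)$, quotiented by the automorphisms of $D$ fixing $0$ (or by fixing a boundary marked point). The Fredholm theory for exactly this class of curves — punctured discs in a symplectic cobordism with Lagrangian boundary and a fixed nondegenerate (in the Morse–Bott sense, along the $S^1$ of geodesics) asymptotic — is what the paper defers to Appendix~\ref{sec:fredholm}; I would use it to compute the expected dimension. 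Because $L$ is the zero section of $T^*L$, which is exact and displaceable-free of bubbles in the relevant sense, and because $c_1$ vanishes and the Maslov index of the cylinder is controlled, the index should come out to $0$ (after dividing by reparametrisations), so generically $\widehat\cm(J,\beta)$ is a compact $0$-manifold and one can hope to count it mod $2$.

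\emph{Existence at a starting point.} I would first produce one element of $\widehat\cm(J_0,\beta)$ for a conveniently chosen $J_0$. Take $J_0$ to be cylindrical not just near infinity but on all of $T^*L\priv L$, i.e. genuinely $\R^+_*$-invariant outside the zero section, adapted to $\alpha$. Then the map $v_{\gamma,g}$ of \eqref{eq:geod-cyl}, reparametrised from $\R\times\R/\ell\Z$ to the punctured disc $D\priv\{0\}$ via $(s,t)\mapsto$ a point with $|z|=e^{-s}$, extends across $\partial D$: as $r=e^s\to 0$ the curve limits onto the zero section along the Lagrangian $L$ — in fact it sweeps out (the lift of) a neighbourhood of $\gamma$ in $T^*L$, and after a conformal change the boundary circle $\{r=1\}$, say, becomes $\partial D$ while the closed end $r\to 0$ becomes the puncture $0$. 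Actually the cleanest model is: $v_{\gamma,g}$ already \emph{is} a proper holomorphic half-cylinder with boundary on $M$; to land on $L$ instead one uses that in the cylindrical end $(0,\infty)\times M$ the locus $\{r\le 1\}\cup L$ is exactly $\cw_g$, and the disc with boundary on $L$ and asymptotic to $\tilde\gamma$ is obtained by taking the "tube" over $\gamma$, namely $u_0(z)=(\gamma(\text{arg-type coordinate}),\,|z|\cdot\dot\gamma(\cdot)^\sharp)$ suitably holomorphically parametrised; one checks it solves $\cp(J_0,\beta)$ by the same computation that proves \eqref{eq:geod-cyl} holomorphic, now extended to $r\in[0,1]$. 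This gives $\widehat\cm(J_0,\beta)\ne\varnothing$, and with a signed/mod-$2$ count one shows the relevant invariant is nonzero.

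\emph{Continuation.} Given an arbitrary $J_1\in\cj^\infty_{\cyl,g}$, pick a smooth path $(J_\tau)_{\tau\in[0,1]}$ in $\cj^\infty_{\cyl,g}$ from $J_0$ to $J_1$; note all these agree near infinity with structures compatible with the \emph{same} $\alpha$, so the asymptotic model $\tilde\gamma(\beta)$ and its Morse–Bott nondegeneracy are unchanged along the path. The parametrised moduli space $\bigcup_\tau\{\tau\}\times\widehat\cm(J_\tau,\beta)$ is, for generic path, a compact $1$-manifold with boundary over $\tau\in\{0,1\}$, \emph{provided} we have SFT-type compactness. The main obstacle — and the reason the paper calls this "the technical core" — is precisely this compactness: one must rule out (i) boundary bubbling off of holomorphic discs with boundary on $L$, which is excluded because $L$ is exact ($\lambda|_L=0$, so any such disc has zero area), (ii) sphere bubbling, excluded because $[\omega]$ vanishes on $\pi_2(T^*L)$ (indeed $T^*L$ is exact), (iii) breaking along the cylindrical end into a building with extra Reeb-orbit levels — here the discreteness of the length spectrum in class $\beta$ (the hypothesis) and an action/area argument force the limit building to have a single nontrivial level asymptotic to a geodesic of the \emph{same} length as $\gamma(\beta)$, hence (if $\gamma(\beta)$ is the unique minimiser, as in the statement of Theorem~\ref{thm:existence-intro}, or after the Morse–Bott analysis along the $S^1$-family in the general case of Theorem~\ref{thm:existence}) no genuine breaking occurs, and (iv) nodal degeneration in the interior, again killed by exactness. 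Granting this compactness, the cobordism argument shows the mod-$2$ count of $\widehat\cm(J_\tau,\beta)$ is constant in $\tau$, hence nonzero at $\tau=1$, so $\widehat\cm(J_1,\beta)\ne\varnothing$. I expect steps (iii) and the Morse–Bott Fredholm bookkeeping to absorb the bulk of the work; the exactness-based exclusion of bubbling is routine, and the explicit starting solution $u_0$ is essentially a restatement of \eqref{eq:geod-cyl}.
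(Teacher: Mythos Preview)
Your overall architecture --- build an explicit solution for a special $J_0$, verify a nontrivial mod-$2$ count there, then run a cobordism/compactness argument along a path to an arbitrary $J_1$ --- is exactly the paper's strategy. But you have misidentified where the work lies, and there are two genuine gaps.

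\textbf{The starting point is the hard part, not the compactness.} You write ``with a signed/mod-$2$ count one shows the relevant invariant is nonzero'' as though this were routine. It is not: to get an odd count you need both (a) \emph{uniqueness} of the solution at $J_0$ (so the count is $1$, not $0$ or $2$), and (b) \emph{transversality} at that solution (so the count is even defined). The paper devotes \S3.2 to (a), proving via a delicate $d\alpha$-area argument and an anti-holomorphic reflection trick that $u_{\gamma,g}$ is the \emph{only} element of $\cm(J_g,\beta)$. For (b), the linearised operator $\mathbf{D}_{u_{\gamma,g}}$ is \emph{not} automatically surjective: the paper computes it explicitly in Fermi coordinates (\S3.4) and can only verify surjectivity under a special curvature hypothesis on $g$ near $\gamma$ (Proposition~3.9). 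This forces them to first replace $g$ by a $\cc^0$-close perturbation $g_\eps$ engineered to satisfy that hypothesis (\S3.5), run the entire cobordism argument for $g_\eps$, and only then pass back to $g$ by a separate neck-stretching argument (\S3.7, Step~2). You have no analogue of this two-metric structure. There is also a somewhere-injectivity issue (\S3.3) needed to make generic transversality work along the path, which requires reducing to a primitive geodesic.

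\textbf{The Morse--Bott framing is a detour.} The paper does not work Morse--Bott along an $S^1$-family of geodesics; instead it perturbs the metric so that the minimising geodesic is unique and isolated, making the asymptotic operator nondegenerate in the ordinary sense. Your anticipated ``Morse--Bott Fredholm bookkeeping'' is thus replaced by metric surgery. Conversely, the SFT compactness you flag as ``the bulk of the work'' is, once the action bound $\ce(u)\le 3\ell_g(\gamma)$ is in hand (Lemma~3.12), a fairly direct application of \cite{boelhowize}: exactness kills bubbles as you say, and minimality of $\gamma(\beta)$ forces all symplectisation levels to be trivial cylinders (Theorem~3.14). Your description of the explicit starting curve is also somewhat garbled --- $v_{\gamma,g}$ of \eqref{eq:geod-cyl} lives in $(0,\infty)\times M$ and has no boundary on $L$; the correct map $u_{\gamma,g}$ is the reparametrisation \eqref{eq:defv} using $G^{-1}$, which does extend smoothly across $\{s=0\}$ to land on $L$.
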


\begin{rems}\label{rk:existence}
\begin{itemize}
\item[(i)] $S^1$ acts on $\widehat \cm(J,\beta)$ by holomorphic source
reparametrizations (of $D\priv \{0\}$). We denote in the sequel
$\cm(J,\beta):=\widehat \cm(J,\beta)_{/S^1}$, the space of
unparametrized solutions of  $\cp(J,\beta)$.
\item[(ii)] As already mentioned, when a metric has several minimizing
geodesics in the class $\beta$, $\gamma(\beta)$ stands for some
minimizing geodesic.
\item[(iii)] If the length spectrum is not discrete, the proof of theorem
\ref{thm:existence} will provide for all $\eps>0$ a map $u_{J,\eps}$
asymptotic to a lift of a geodesic in the class $\beta$ of length at most
$\ell_g(\gamma(\beta))+\eps$.
\end{itemize}
\end{rems}

The scheme of proof of theorem \ref{thm:existence} is mostly standard and 
we explain it in the present paragraph, under the  assumption that 
$\gamma(\beta)$ is a primitive geodesic, i.e. not a multiple-cover.
Let $g$ be a metric with discrete length spectrum in the class $\beta$, 
and assume that $\gamma(\beta)$ is primitive.
We first define an almost complex structure $J_h \in \cj^\infty_{\cyl,h}$
for an arbitrary metric $h$ on $L$ (\S \ref{sec:jpart}) which has a
unique $J_{h}$-holomorphic punctured disc $u_{\gamma, h} \in \widehat
\cm(J_h,\beta)$ modulo reparametrization (\S \ref{sec:cmuniqueness}).
We also prove that
 all elements of $\cm(J,\beta)$ for $J\in \cj_{\cyl,h}^\infty$
are somewhere injective (\S \ref{sec:somewhereinj}).
For a suitable perturbation $g_\eps$ of our metric $g$ we prove
the surjectivity of the linearization of the $\bar \partial_{J_{g_\eps}}$-operator at
$u_{\gamma,g_\eps}$ (\S \ref{sec:transversality} and \S \ref{sec:goodmetric}).
In \S \ref{sec:compactnessI} we show compactness: if a sequence of almost 
complex structures $J_n  \in \cj_{\cyl,g_\eps}^\infty$ coincide with $J_{g_\eps}$ 
outside a compact set and converge to $J\in \cj_{\cyl,g_\eps}^\infty$
in the $\cc^\infty$-topology, then a sequence $u_n$ of solutions of $\cp(J_n,\beta)$
has a subsequence that converges to a solution of $\cp(J,\beta)$.
Classical arguments then show that if $\{J_t\}_{t\in[0,1]}$ is a smooth
generic path of almost complex structures from $J_{g_\eps}$ to a generic
$J_n \in \cj_{\cyl, g_\eps}^\infty$
that coincide with $J_{g_\eps}$ outside a compact set, the set
$\cup \{t\}\times \cm(J_t,\beta)$ is a one-dimensional compact cobordism
between the point $\{0\}\times \cm(J_{g_\eps},\beta)$ and $\{1\}\times
\cm(J_n,\beta)$, hence the latter space is non-empty. Applying again compactness
to a generic sequence $J_n$ of approximations of $J$ that coincide with
$J_{g_\eps}$ outside a compact set, we find a solution of
$\cp(J,\beta)$ for all $J\in \cj^\infty_{\cyl, g_\eps}$ that coincide
with $J_{g_\eps}$ at infinity. We finally use an SFT-compactness argument
to obtain the theorem for the metric $g$ itself (\S \ref{sec:proofexistence}).


\subsection{A particular almost complex structure}\label{sec:jpart}
Let $g$ be an arbitrary Riemannian metric on $L$. In this section we describe a
particular almost complex structure on $T^*L$ that is determined by the metric $g$.

The Levi-Civita connection $\nabla$ of $g$ induces a natural connection $\nabla^*$ on
$T^*L$ by the identity
$$
d(\sigma(X)) = \nabla^*\sigma\,(X) + \sigma(\nabla X)
$$
for any 1-form $\sigma$ and vector field $X$ on $L$.
This connection provides a splitting $T(T^*L)=H\oplus F$, where $H$ is the horizontal
distribution of $\nabla^*$ and $F$ is the distribution of the tangent spaces to
the fibers. Note that we have a canonical identification $F_{(q,p)}\simeq T^*_qL$.
We call $\pi:T^*L\to L$ the natural projection and, by abuse of notation, $\pi:H_{(q,p)}
\to T_qL$  the isomorphism  $d\pi_{|H}$, at least when there is no chance of confusion.
In order to define our almost complex structure we first need to explain some relations
between the Riemannian geometry on $L$ and the contact structure on $M$.
Recall that
$R$ denotes the Reeb vector field on $M$, also seen as a vector field on $T^*L\priv
L\simeq (0,\infty)\times M$, and that $r\frac\partial{\partial r}=p$ is the infinitesimal
generator of the $\R^+_*$-action on $T^*L\priv L$. Also notice that $g$ induces an
inner-product on $T_q^*L$, still denoted $g$, by $g_q(u^\sharp,v^\sharp) := g_q(u,v)$
for $u, v \in T_qL$. By definition $M=\{ (q,p) \,|\, g_q(p,p) = 1 \}$.
\begin{lemma}\label{le:riemancontact}  On $T^*L\priv L \simeq (0, \infty)\times M$
we have
\begin{enumerate}
\item $R(q,p)\in H_{(q,p)}$ and $g_q(\pi(R(q,p)),\cdot)=\frac p{\Vert p \Vert_g}
= \frac\partial{\partial r}$ (in other terms, $\pi(R)^\sharp=\frac\partial{\partial r}$).
\item $H$ is a subspace of $T(\{r\}\times M)$ for every $r \in {(0,\infty)}$
and is Lagrangian with respect to $d\lambda$.
\item The map $\sharp\circ \pi$ is an isomorphism between $\ker \alpha\cap H_{(q,p)}$
and $\ker \alpha\cap F_{(q,p)}$. Moreover,
\begin{align*}
\ker \alpha\cap H_{(q,p)} & = \{ \, V\in H_{(q,p)}\; |\; p(\pi(V))=0 \,\}\\
\ker \alpha\cap F_{(q,p)} & = \{ \, V\in F_{(q,p)}\;|\; g_q(V,p)=0 \,\}.
\end{align*}
\item $\ker \alpha=(\ker \alpha\cap H_{(q,p)})\oplus (\ker \alpha\cap F_{(q,p)})$.
\end{enumerate}
\end{lemma}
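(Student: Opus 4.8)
The plan is to reduce all four assertions to linear algebra on the connection splitting $T(T^*L)=H\oplus F$, once two structural facts are isolated. The first concerns the interaction of $\lambda$ and $d\lambda$ with the splitting. Since $\lambda_{(q,p)}(W)=p(d\pi_{(q,p)}W)$, the form $\lambda$ annihilates $F$ and only sees the horizontal component of a vector. Applying the Cartan formula $d\lambda(U,V)=U(\lambda(V))-V(\lambda(U))-\lambda([U,V])$ to the horizontal lifts $H(X),H(Y)$ of two vector fields $X,Y$ on $L$, and using that motion along $H(X)$ parallel transports the covector, one gets $d\lambda(H(X),H(Y))=p\big(\nabla_X Y-\nabla_Y X-[X,Y]\big)=0$ because $\nabla$ is torsion-free; evaluating at a point and extending arbitrary horizontal vectors to horizontal lifts of vector fields, this shows $d\lambda$ vanishes on $H\times H$. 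A one-line computation in canonical coordinates on $T^*L$ gives the usual pairing $d\lambda(H(X),\xi)=\pm\,\xi(X)$ for a vertical vector $\xi\in F_{(q,p)}\cong T^*_qL$. The second fact is that parallel transport of $\nabla^*$ is a $g$-isometry on the fibres, so the radial function $r=\Vert p\Vert_g$ is constant in horizontal directions, i.e. $H_{(q,p)}\subseteq\ker(dr)_{(q,p)}=T_{(q,p)}(\{r\}\times M)$, while in a vertical direction $dr(\xi)=g_q(\xi,p)/\Vert p\Vert_g$, so $F_{(q,p)}\cap\ker dr=\{\xi\in F_{(q,p)}\,|\,g_q(\xi,p)=0\}$.

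Granting these, item $2$ is immediate: $H\subseteq T(\{r\}\times M)$ was just shown, and $H$ is isotropic for $d\lambda$ and of dimension $\tfrac12\dim T^*L$, hence Lagrangian. For items $3$ and $4$ I would first record, by a dimension count using $H\subseteq\ker dr$, the splitting $T_{(q,p)}M=H_{(q,p)}\oplus(F_{(q,p)}\cap\ker dr)$. On $M$ one has $\alpha=\lambda_{|M}$; since $\lambda$ kills $F$, $\ker\alpha\cap F_{(q,p)}=F_{(q,p)}\cap\ker dr=\{V\in F_{(q,p)}\,|\,g_q(V,p)=0\}$, whereas on $H$ one has $\alpha(W)=p(\pi(W))$, so $\ker\alpha\cap H_{(q,p)}=\{V\in H_{(q,p)}\,|\,p(\pi(V))=0\}$; combined with the splitting of $T_{(q,p)}M$ this yields both the explicit descriptions and the decomposition $\ker\alpha=(\ker\alpha\cap H_{(q,p)})\oplus(\ker\alpha\cap F_{(q,p)})$ of item $4$. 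For the isomorphism in item $3$, $\sharp\circ\pi:H_{(q,p)}\to T^*_qL=F_{(q,p)}$ is a composite of isomorphisms, and if $p(\pi(W))=0$ then, writing $X_p\in T_qL$ for the $g$-dual of $p$ (so $g_q(X_p,\cdot)=p$), we get $g_q\big((\sharp\circ\pi)(W),p\big)=g_q(\pi(W),X_p)=p(\pi(W))=0$; hence $\sharp\circ\pi$ carries the $(n-1)$-dimensional space $\ker\alpha\cap H_{(q,p)}$ into the $(n-1)$-dimensional space $\ker\alpha\cap F_{(q,p)}$, therefore isomorphically onto it.

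For item $1$ I would simply exhibit the Reeb field. On $M$ set $\rho:=H_{(q,p)}(X_p)$, with $X_p$ the $g$-dual of $p$ as above (so $\Vert p\Vert_g=1$). Then $\rho\in H_{(q,p)}\subseteq T_{(q,p)}M$; $\alpha(\rho)=p(\pi(\rho))=p(X_p)=g_q(X_p,X_p)=\Vert p\Vert_g^2=1$; and for $W\in T_{(q,p)}M$ written as $W=W_H+\xi$ with $W_H\in H_{(q,p)}$, $\xi\in F_{(q,p)}\cap\ker dr$, the first structural fact gives $d\alpha(\rho,W)=d\lambda(\rho,W_H)+d\lambda(\rho,\xi)=0\pm\xi(X_p)=\pm g_q(\xi,p)=0$. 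Thus $\rho$ satisfies the defining properties of the Reeb vector field, so $R=\rho$ on $M$; in particular $R(q,p)\in H_{(q,p)}$ and $\pi(R)^\sharp=(X_p)^\sharp=p=\partial/\partial r$ there. The statement on all of $T^*L\priv L\simeq(0,\infty)\times M$ then follows because horizontal lifts are unchanged under the fibrewise rescaling $\tau\cdot(q,p)=(q,\tau p)$ (parallel transport being linear in $p$), so $R(q,p)=H_{(q,p)}(X_p/\Vert p\Vert_g)$ at every point, whence $g_q(\pi(R(q,p)),\cdot)=p/\Vert p\Vert_g$.

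I do not expect a genuine obstacle here: the lemma is bookkeeping on the splitting, and the only real inputs are that $\nabla$ is torsion-free (giving the isotropy of $H$) and metric (giving $H\subseteq\ker dr$). The single thing to watch is the interplay of $\sharp$ with the identification $F_{(q,p)}\cong T^*_qL$ and the induced inner product on covectors; the sign ambiguity in $d\lambda(H(X),\xi)=\pm\xi(X)$ is harmless throughout, since it always multiplies a quantity that vanishes.
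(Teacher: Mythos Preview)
Your proof is correct. The overall architecture matches the paper's, but there are two genuine differences worth noting. For item~1, the paper simply invokes the fact, stated as well-known just before the lemma, that the Reeb vector field generates the cogeodesic flow; from this, $R$ being horizontal with $\pi(R)^\sharp=p/\Vert p\Vert_g$ follows immediately. You instead construct the candidate $\rho=H_{(q,p)}(X_p)$ and verify the Reeb conditions $\alpha(\rho)=1$, $\iota_\rho d\alpha=0$ directly from the structural facts about $d\lambda$ on the splitting. Your route is more self-contained and in effect reproves the cogeodesic-flow characterisation; the paper's is shorter because it outsources that step. For item~2, the paper gives a one-line argument (parallel transport preserves the metric, hence $H\subset T(\{r\}\times M)$; torsion-freeness makes $H$ Lagrangian, with a citation to Grigoryan), whereas you run the Cartan formula on horizontal lifts to get $d\lambda|_{H\times H}=0$ explicitly. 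For items~3 and~4 the arguments are essentially the same; your derivation of item~4 via the preliminary splitting $T_{(q,p)}M=H_{(q,p)}\oplus(F_{(q,p)}\cap\ker dr)$ is slightly more systematic than the paper's bare dimension count, but amounts to the same thing.
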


\noindent{\it Proof:} This is all well-known, but we provide some indications.
\begin{enumerate}
\item Since $R$ generates the cogeodesic flow of speed $1$ on
$M$ (hence on each $\{r\}\times M$ since $R$ does not depend on the $\R^+_*$-coordinate),
$R$ is horizontal, $g_q(\pi(R(q,p)),\cdot) = p$ when $\Vert p \Vert_g = 1$
and equals $\nf p{ \Vert p\Vert_g}$ in general.
\item $H\subset T(\{r\}\times M)$ because parallel transport preserves the metric.
It is also Lagrangian since the Levi-Civita connection has no torsion \cite{grigoryants}.
\item The fact that $\ker \alpha \cap H_{(q,p)} = \{\,V\in H_{(q,p)}\, |\, p(\pi(V))=0\,\}$
and $\ker \alpha\cap F_{(q,p)} = F_{(q,p)} \cap$ $T_{(q,p)}M=\{\,V\in F_{(q,p)}\,|\,
g_q(V,p) =0 \,\}$ are immediate by definition.
Now let $V \in \ker \alpha \cap H_{(q,p)}$. Since $\sharp$ is an isometry by definition
of $g_q$ on $T_q^*L$ and $p(\pi(V))=0$, we have the chain of equalities
$$
g_q(\pi(V)^\sharp,p) = \Vert p\Vert_g \,g_q(\pi(V)^\sharp,\pi(R)^\sharp)
= \Vert p\Vert_g \,g_q(\pi(V),\pi(R)) = p(\pi(V)) = 0,
$$
which means that $\pi(V)^\sharp\in \ker \alpha\cap F_{(q,p)}$. By dimension
considerations we see that $\sharp\circ \pi$ is indeed an isomorphism.
\item This is obvious by dimension considerations, since by (3), we
have $\dim \ker \alpha\cap F_{(q,p)}=n-1=\dim \ker \alpha\cap H_{(q,p)}$. \cqfd
\end{enumerate}

We now choose a smooth non-decreasing function $\chi: (0,\infty) \to (1,\infty)$
with $\chi(r)= 1$ near $0$ and $\chi(r)= r$ near $\infty$. Lemma~\ref{le:riemancontact}
implies that
$$
T_{(q,p)}T^*L =\left\langle \frac\partial{\partial r}, R(q,p) \right\rangle
\oplus \ker \alpha\cap H_{(q,p)}\oplus \ker \alpha\cap F_{(q,p)} \hspace{,5cm}
\forall\, q\in L, p\neq 0.
$$
We define the almost complex structure $J_g$ on $T^*L\priv L$ by
$$
\left\{
\begin{array}{l}
\ds J_g(R)=-\chi(r)\frac\partial {\partial r}\\
\begin{array}{lccc}
J_g: & \ker \alpha\cap H_{(q,p)}& \lra &\ker \alpha\cap F_{(q,p)}\\
 & V & \longmapsto & -g_q(\pi(V),\cdot)
\end{array}\\
J_g^2=-\id.
\end{array}
\right.
$$
By lemma \ref{le:riemancontact}, $J_g$ indeed exchanges $\ker \alpha\cap
H_{(q,p)}$ and $\ker \alpha\cap F_{(q,p)}$: if $V \in \ker \alpha\cap H_{(q,p)}$,
$$
-g_q(J_gV,p) = g_q(\pi(V)^\sharp,p)= \Vert p\Vert_g\, g_q(\pi(V)^\sharp,\pi(R)^\sharp) = \Vert p\Vert_g
\, g_q(\pi(V),\pi(R)) = p(\pi(V))=0.
$$
Since $\chi=1$ near $0$, the first point of lemma~\ref{le:riemancontact} shows that
$J_g$ has the following alternative definition near the zero section: for $V\in H$,
$J_gV=-g(\pi(V),\cdot)\in F$. This obviously shows that $J_g$ extends smoothly to
the zero section, because $H_{(q,p)} \lra T_qL$ as $p\to 0$.
On the zero section we have a canonical isomorphism $T(T^*L)_{|L} \simeq TL
\oplus T^*L$ and $J_g=-\sharp$.

\begin{lemma}
$J_g\in \cj_{\cyl,g}^\infty$.
\end{lemma}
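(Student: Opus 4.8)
The plan is to split the claim into its two literal requirements: that $J_g$ be compatible with $d\lambda$ on all of $T^*L$, and that on the region $\{\chi(r)=r\}$ (which, since $\chi(r)=r$ for $r$ large, is the complement of a compact \nbd of $L$) it be compatible with $\alpha$, i.e.\ cylindrical.

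First I would prove $d\lambda$-compatibility. On $T^*L\priv L$ I would work in the $J_g$-invariant splitting $T(T^*L)=\langle\partial_r,R\rangle\oplus(\ker\alpha\cap H)\oplus(\ker\alpha\cap F)$ of Lemma \ref{le:riemancontact} and check that $d\lambda(\cdot,J_g\cdot)$ is block-diagonal, symmetric and positive definite. The blocks decouple because $\langle\partial_r,R\rangle$ is $d\lambda$-orthogonal to $\ker\alpha$: since $r\partial_r$ is Liouville one has $\iota_{r\partial_r}d\lambda=\lambda$, which restricts to $\alpha$ on $TM$ and hence vanishes on $\ker\alpha$; and $\iota_R d\lambda=\iota_R d\alpha=0$ on $TM$ because $R$ is the Reeb field. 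On $\langle\partial_r,R\rangle$ one has $d\lambda(\partial_r,R)=1$, and together with $J_g\partial_r=\tfrac1{\chi(r)}R$, $J_gR=-\chi(r)\partial_r$ this gives $d\lambda(a\partial_r+bR,J_g(a\partial_r+bR))=\tfrac{a^2}{\chi(r)}+b^2\chi(r)>0$. On $\ker\alpha$, both $\ker\alpha\cap H$ and $\ker\alpha\cap F$ are isotropic ($H$ is Lagrangian by Lemma \ref{le:riemancontact}(2), $F$ trivially so), so $d\lambda$ restricts there to the canonical perfect pairing $d\lambda(V,W)=-W(\pi V)$ between the horizontal and vertical factors; substituting $J_gV=-(\pi V)^\sharp$ for $V\in\ker\alpha\cap H$ yields $d\lambda(V,J_gV)=\|\pi V\|_g^2$, symmetrically $d\lambda(W,J_gW)=\|W\|_g^2$ for $W\in\ker\alpha\cap F$, and the cross-terms within $\ker\alpha$ vanish by isotropy, so this block too is symmetric and positive definite. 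Finally, near $L$ we have $\chi\equiv1$ and, using Lemma \ref{le:riemancontact}(1), $J_g$ is simply $V\mapsto-(\pi V)^\sharp$ on all of $H$, i.e.\ the standard Sasaki structure $-\sharp$ on $T(T^*L)_{|L}\simeq TL\oplus T^*L$; the same pairing computation (now on $TL\oplus T^*L$) shows it is $d\lambda$-compatible there, and combined with the smoothness of $J_g$ across $L$ noted just above, compatibility holds on all of $T^*L$.

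Next I would verify cylindricity on $\{\chi(r)=r\}$ by checking the four defining conditions. From $J_gR=-r\partial_r$ and $J_g^2=-\id$ we get $J_g(r\partial_r)=R$. Since $J_g$ interchanges $\ker\alpha\cap H$ and $\ker\alpha\cap F$, whose direct sum is $\ker\alpha$ (Lemma \ref{le:riemancontact}(3)--(4)), $J_g$ preserves $\ker\alpha$, and its compatibility with $d\alpha=d\lambda_{|\ker\alpha}$ is the $\ker\alpha$-block from the previous step. For $\R^+_*$-invariance I would observe that the connection splitting $H\oplus F$, the Reeb field $R$, the radial function $r$, the musical isomorphism $\sharp$ and $d\pi_{|H}$ are all equivariant (with their natural weights) under $\tau\cdot(q,p)=(q,\tau p)$, and that on $\{\chi(r)=r\}$ the formulas defining $J_g$ combine these in a way that is scaling-invariant.

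The step I expect to demand the most care is the sign bookkeeping in the $\ker\alpha$-block: one must fix the sign of the canonical horizontal/vertical pairing $d\lambda(V_H,W_F)=-W_F(\pi V_H)$ (which depends on the convention $\lambda=p\,dq$) and check that it cancels the minus sign in $J_gV=-(\pi V)^\sharp$, so that $d\lambda(\cdot,J_g\cdot)$ comes out positive rather than negative. Everything else follows formally from Lemma \ref{le:riemancontact} and the explicit description of $J_g$.
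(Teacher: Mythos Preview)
Your proposal is correct and follows essentially the same route as the paper: use the $J_g$-invariant splitting $\langle\partial_r,R\rangle\oplus(\ker\alpha\cap H)\oplus(\ker\alpha\cap F)$ from Lemma~\ref{le:riemancontact}, verify compatibility blockwise, then check the cylindricity conditions. The one genuine difference is in the positivity computation on $\ker\alpha$: the paper invokes diffeomorphism-invariance of $d\lambda$ to reduce to the case $g_q=\sum dq_i^2$ and computes $d\lambda(V,J_gV)=\sum dq_i(\pi V)^2$ in coordinates, whereas you use the intrinsic horizontal/vertical pairing $d\lambda(V_H,W_F)=-W_F(\pi V_H)$ to get $d\lambda(V,J_gV)=\|\pi V\|_g^2$ directly. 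Your version is slightly more thorough in that you explicitly check the $\langle\partial_r,R\rangle$ block and the decoupling (the paper leaves these implicit), and your flagged sign check is exactly the one non-formal point; your computation of it is correct.
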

\noindent{\it Proof:}
By definition,  $J_gR(q,p)=-r\frac \partial{\partial r}$ when $\Vert p\Vert_g\gg 1$
and $J_g$ preserves $\ker \alpha$. Since {$H_{(q,p)}$ is Lagrangian, showing
that $J_{g|\ker \alpha}(q,p)$ is compatible with $d\lambda(q,p)$ amounts to proving
that $d\lambda(V,J_g V) > 0$ for $V \in \ker \alpha\cap H_{(q,p)}$}. Moreover, since
$d\lambda$ is invariant by the lifts of diffeomorphisms of $L$ to $T^*L$, a
straightforward computation shows that we can assume $g_{q}=\sum dq_i^2$. Then,
for $V\in \ker \alpha\cap H_{(q,p)}$,
\begin{align*}
d\lambda(V,J_gV)
&= \sum dp_i\wedge dq_i (V,J_gV)\\
& {=-\sum dp_i(J_gV)dq_i(\pi(V))}& {\text{(because $J_gV \in F_{(q,p)}$, so $dq_i(J_gV)=0$)}}\\
& =\sum dp_i(g(\pi(V),\cdot))dq_i(\pi(V))\\
& =\sum dq_i(\pi(V))^2>0& \text{ (because $g_q=\sum dq_i^2$).}
\end{align*}
Finally, to show that $J_g$ is dilation-invariant at infinity,
it is enough to work on $\ker \alpha \cap H$ (because $H=(\ker \alpha \cap H)\oplus
\langle R \rangle$). If $V\in \ker \alpha \cap H$, $J_gV\in F\cap TM$, so the
dilations on $(0,\infty)\times M$ act by the identity on both $V$ and $J_gV$. \cqfd \newline
We will also need the following properties of the almost complex structure $J_g$.
\begin{prop}\label{prop:propJg}
The automorphism of $T^*L$ defined by  $\sigma(q,p):=(q,-p)$ is $J_g$-anti\-ho\-lo\-mor\-phic.
Moreover, the sub-levels $\{\|p\|_g\leq r\}$ are $J_g$-pseudoconvex domains for $r>0$.
\end{prop}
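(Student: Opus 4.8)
The first assertion, that $\sigma(q,p)=(q,-p)$ is $J_g$-antiholomorphic, i.e. $d\sigma\circ J_g=-J_g\circ d\sigma$, I would prove by unwinding the definition of $J_g$ summand by summand with respect to the splitting $T(T^*L\priv L)=\langle\partial_r\rangle\oplus\langle R\rangle\oplus(\ker\alpha\cap H)\oplus(\ker\alpha\cap F)$. The key structural observation is that $\sigma$ is a vector-bundle automorphism covering the identity on $L$, so it preserves the fiberwise structure: it acts as $-\id$ on each fiber $F_{(q,p)}\simeq T_q^*L$ (mapping $F_{(q,p)}$ to $F_{(q,-p)}$), it preserves horizontality (parallel transport for $\nabla^*$ commutes with fiberwise scalar multiplication since $\nabla^*$ is linear), and on the base it fixes everything. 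One then checks: $\sigma$ sends the radial direction $\partial_r$ at $(q,p)$ to $\partial_r$ at $(q,-p)$ (both are the outward radial direction, $r$ is $\sigma$-invariant), and it sends the Reeb direction $R(q,p)=\pi^{-1}(p/\Vert p\Vert_g)^{\sharp\text{-horizontal lift}}$ to $R(q,-p)=\pi^{-1}(-p/\Vert p\Vert_g)^{\ldots}=-R(q,-p)$ — wait, more carefully: $R$ is the horizontal lift of $(p/\Vert p\Vert)$ thought of via $g_q(\pi(R),\cdot)=p/\Vert p\Vert$, so $d\sigma$ sends $R(q,p)$ to the horizontal vector over $(q,-p)$ projecting to the same base vector $\pi(R(q,p))$, which is $-R(q,-p)$ since at $(q,-p)$ the Reeb vector projects to $-p/\Vert p\Vert$. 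Thus $d\sigma(\partial_r)=+\partial_r$ but $d\sigma(R)=-R$, and since $J_g(R)=-\chi(r)\partial_r$ with $\chi$ an even-in-sign (it only depends on $r=\Vert p\Vert$) function, the relation $d\sigma J_g=-J_g d\sigma$ holds on the $\langle\partial_r,R\rangle$ block. On the contact-hyperplane block, $d\sigma$ acts as $+\id$ (horizontal lift, identifying $H_{(q,p)}$ with $H_{(q,-p)}$ over the same base vector) on $\ker\alpha\cap H$ and as $-\id$ on $\ker\alpha\cap F$; since $J_g$ swaps these two via $V\mapsto -g_q(\pi(V),\cdot)$ which does not see the sign of $p$, one again gets the anti-commutation. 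Finally one invokes that $J_g$ extends smoothly across $L$ where it equals $-\sharp$ and $\sigma=-\id$ on the fibers, so the relation persists by continuity.

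For the pseudoconvexity of $\{\Vert p\Vert_g\le r\}$, I would use the standard criterion: a hypersurface $\{\rho=\text{const}\}$ bounding $\{\rho\le c\}$ is $J$-pseudoconvex (as the boundary of its domain) iff the $1$-form $-d\rho\circ J$ restricts to a contact form on the level set with the correct co-orientation, equivalently $-dd^c\rho(v,Jv)>0$ for $v$ in the contact hyperplane, where $d^c\rho=d\rho\circ J$. Take $\rho=\Vert p\Vert_g=r$. Outside a compact neighborhood of $L$ (in particular for all large $r$, but here we want all $r>0$) $J_g$ is $\alpha$-cylindrical, $-d\rho\circ J_g = -\chi(r)^{-1}\cdot(\text{something})$... actually the cleanest route: on the cylindrical end the level sets $\{r=c\}$ are contact-type hypersurfaces by construction of $\cj^\infty_{\cyl,g}$, and this is exactly the statement that $-d(\log r)\circ J$ restricts to $\alpha$ (Reeb-direction part: $J_g(r\partial_r)=R$ means $d(\log r)\circ J_g = -\alpha$ there), which gives pseudoconvexity for $r$ large. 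For small $r$, where $\chi\equiv 1$ and $J_g$ has the alternative description $J_g V=-g(\pi(V),\cdot)$ for $V\in H$, I would compute $-dd^c(\tfrac12 r^2)$ or equivalently $-dd^c(\rho)$ directly. Here $\rho=r$, and near $L$ one can use the model $g_q=\sum dq_i^2$ as in the lemma's proof (legitimate since $d\lambda$ and the $J_g$-relevant data are natural under lifts of diffeos of $L$, and pseudoconvexity is a pointwise condition); then $\Vert p\Vert_g^2=\sum p_i^2$, $d^c(\Vert p\Vert^2)=d(\Vert p\Vert^2)\circ J_g$, and the computation reduces to the Levi form of $\sum p_i^2$ for the standard $J_0$ on $\C^n=T^*\R^n$ (which is positive), up to curvature correction terms that vanish to the right order at $p$ — this is the only place where the identification $J_g\simeq J_{\text{std}}$ is not exact, only first-order, so one must argue the sign of the Levi form is preserved.

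The main obstacle will be the pseudoconvexity on the region where $\chi(r)\ne r$ and $\chi(r)\ne 1$, i.e.\ the transition zone, and near the zero section where $J_g$ genuinely differs from a flat model by curvature terms. The transition zone is handled by noting $\chi$ is non-decreasing: I expect $-dd^c(r)$ to equal $\chi'(r)/\chi(r)$ times a positive form plus $d\alpha$-type positive contributions on $\ker\alpha$, so monotonicity ($\chi'\ge 0$) suffices. Near $L$ the issue is genuinely that $\{r\le\delta\}$ being pseudoconvex is equivalent to plurisubharmonicity of $r$ (or $r^2$) near $L$; I would establish that $\Vert p\Vert_g^2$ is a $J_g$-plurisubharmonic function on all of $T^*L$ by showing its Levi form with respect to $J_g$ is positive, using the splitting and the fact that on the $F$-directions (fiber directions) $\Vert p\Vert_g^2$ is literally a positive-definite quadratic form while the cross and horizontal terms contribute the Reeb/contact-form positivity from Lemma~\ref{le:riemancontact}; a clean way to organize this is to show directly that the Liouville form $\lambda$ (whose $d$ is $\omega$) satisfies $-d(\lambda\circ J_g)=\omega(\cdot,J_g\cdot)>0$ off $L$ together with $\lambda\circ J_g$ being, up to the factor $\chi(r)$, the primitive of the level-set contact structure — this is essentially the content of $J_g$ being compatible with $d\lambda$ (already proved in the lemma above) plus cylindricity, and pseudoconvexity of all sublevels follows. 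I would present the plurisubharmonicity of $\Vert p\Vert_g^2$ (equivalently, of a suitable function of $r$) as the single clean lemma and deduce both that all $\{r\le c\}$ are pseudoconvex and, as a bonus, reconfirm compatibility; the antiholomorphy of $\sigma$ then makes all the sublevels $\sigma$-invariant pseudoconvex domains, which is the form in which the proposition will be used.
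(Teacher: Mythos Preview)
Your antiholomorphy argument is correct and matches the paper's in spirit: the paper uses local canonical coordinates and the explicit horizontal lifts $\tfrac{\partial}{\partial q_i}^H=\tfrac{\partial}{\partial q_i}+\sum_{k,l}\Gamma^k_{il}p_k\tfrac{\partial}{\partial p_l}$ to see that $d\sigma$ preserves $H$ and negates $F$, while you carry out the same check directly in the invariant splitting $\langle\partial_r\rangle\oplus\langle R\rangle\oplus(\ker\alpha\cap H)\oplus(\ker\alpha\cap F)$. Your observation $d\sigma(\partial_r)=\partial_r$, $d\sigma(R)=-R$, $d\sigma|_H=+\id$, $d\sigma|_F=-\id$ is exactly what drives both arguments.

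For the pseudoconvexity, however, you are making this much harder than necessary and your worries about the transition zone of $\chi$ and the curvature corrections near $L$ are misplaced. The paper's proof is a single sentence: on $T^*L\setminus L$, the maximal $J_g$-complex subspace of $T(\{r=c\})$ is exactly $\ker\alpha$ (since $J_gR\propto\partial_r$), and by the very definition of $J_g$ it preserves $\ker\alpha$ and is compatible with $d\alpha$ there; this is precisely the Levi-form criterion for strict pseudoconvexity. The crucial point you are missing is that the action of $J_g$ on $\ker\alpha$---namely $V\mapsto -g_q(\pi(V),\cdot)$ on $\ker\alpha\cap H$ and its inverse---does not involve $\chi$ at all; $\chi$ only enters the $\langle R,\partial_r\rangle$ block. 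So the compatibility $d\alpha(V,J_gV)>0$ on $\ker\alpha$ holds uniformly for every $r>0$, and there is nothing to check separately in the region $\chi\equiv1$, the transition zone, or the cylindrical end. Your proposed route through plurisubharmonicity of $\Vert p\Vert_g^2$ and Levi-form computations in local models would eventually work, but it is a significant detour around a one-line observation.
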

\noindent{\it Proof:}
We consider canonical coordinates $(q,p) \in T^*L$ determined by a local choice of coordinates
$q = (q_1, \ldots, q_n)$ on $L$. One can show that the horizontal distribution $H_{(q,p)}$
of the connection $\nabla^*$ is spanned by the vectors
\begin{equation}\label{eq:hor_lift_vectors}
\frac{\partial}{\partial q_i}^H = \frac{\partial}{\partial q_i} + \sum\limits_{k,l}
\Gamma^k_{il}(q)\, p_k \, \frac{\partial}{\partial p_l}, \quad i = 1, \ldots, n,
\end{equation}
where $\Gamma^k_{il}$ are the Christoffel symbols of the metric $g$. Since $d\sigma
\tfrac{\partial}{\partial p_l} =  -\tfrac{\partial}{\partial p_l}$, we see that
$\sigma$ preserves the lifts $\tfrac{\partial}{\partial q_i}^H$ and hence the
horizontal distribution. A brief calculation then reveals that $\sigma$ is
$J_g$-antiholomorphic.

In $T^*L \priv L$ one easily sees that $J_g$ preserves the contact distribution
$\ker \alpha$ and is compatible with $d\alpha$, hence the sets $\{\|p\|_g\leq r\}$
are $J_g$-pseudoconvex for $r > 0$.
\cqfd


\subsection{Computation of $\cm(J_g,\beta)$}\label{sec:cmuniqueness}

The almost complex structure $J_g$ sends $R$ to a multiple of the radial vector field
$\tfrac{\partial}{\partial r}$, hence for any closed unit speed geodesic
$\gamma: \R / \ell \Z \to L$ the image of the map $v_{\gamma,g}$ of \eqref{eq:geod-cyl}
is $J_g$-holomorphic (its tangent planes are $J_g$-invariant).
We now need a holomorphic parametrization of this set. Recall that
$$
J_g R = -\chi(r)\frac\partial{\partial r},
$$
where $\chi$ is an increasing, weakly convex function with $\chi(r)=1$ for $r$ close to
$0$ and $\chi(r)=r$ outside a compact set. Then, \label{page:defG}
$$
G(u):=\int_{0}^u\frac{dr}{\chi(r)}
$$
is a well-defined strictly increasing function $G: [0,\infty) \to [0, \infty)$ with $G(0)=0$.
Its inverse defines a function $G^{-1}:
[0,\infty) \to [0, \infty)$ that satisfies the differential equation $h'=\chi\circ h$.
A straightforward computation shows that
\begin{equation}\label{eq:defv}
u_{\gamma,g}: (0, \infty)\times \R / \ell \Z \longrightarrow (0, \infty)\times M, \quad
(s,t) \longmapsto (G^{-1}(s), \tilde\gamma(t))
\end{equation}
 is a $J_g$-holomorphic parametrization of
$\im {v_{\gamma,g}}$. One can see that this map extends as a map from the half-cylinder
$[0, \infty)\times \R / \ell\Z$ to $T^*L$, which we also call $u_{\gamma,g}$. The
half-cylinder $([0, \infty)\times \R / \ell\Z, j)$ is conformally equivalent to the
closed punctured disk $(D\priv\{0\}, i)$ via the map $(s,t) \mapsto \exp(-2\pi(s+it)/ \ell)$.
In general we will use the half-cylinder and closed punctured disk interchangeably.
Hence we can consider $u_{\gamma,g}: (D\priv\{0\}, \partial D) \to (T^*L, L)$. It is
asymptotic to $\tilde\gamma$ at the puncture and we have $u_{\gamma,g}(e^{-2\pi i t / \ell})
= \gamma(t) \subset L \subset T^*L$. In other words, $u_{\gamma,g}\in \widehat{\cm}(J_g,\beta)$.
The aim of this section is to prove that $\cm(J_g,\beta)$ consists of the unique
element $\{[u_{\gamma,g}]\}$. We require some preliminary lemmas.

\begin{lemma}\label{le:goodintersectionJg} Let $u:(D\priv\{0\},\partial D)\to (T^*L,L)$
be a $J_g$-holomorphic map asymptotic to $\tilde\gamma$ at $0$. Then
\begin{itemize}
\item[(i)] $u$ is smooth up to $\partial D$,
\item[(ii)] $\crit (u)$ is a finite set of $D\priv\{0\}$,
\item[(iii)] $u^{-1}(L)\cap \mathrm{int}\,D\priv\{0\}$ is countable and can only accumulate at
critical points of $u$. 
\end{itemize}
\end{lemma}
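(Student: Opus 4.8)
The plan is to handle the three assertions in turn, the main tool being the antiholomorphic involution $\sigma(q,p)=(q,-p)$ of Proposition~\ref{prop:propJg}, whose fixed‑point set is exactly the zero section $L$. For assertion~(i): since $\lambda|_L=0$ the zero section is $J_g$‑totally real, so this is just boundary regularity for pseudoholomorphic maps with totally real boundary, which I would run through a Schwarz reflection. Near a point of $\partial D$ choose a biholomorphism of a half‑disk neighbourhood onto $\{\,\zeta:|\zeta|<1,\ \Im\zeta\ge 0\,\}$ carrying $\partial D$ to the real axis, and extend the resulting map $\tilde u$ over the lower half‑disk by $\tilde u(\zeta):=\sigma(\tilde u(\bar\zeta))$. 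The two formulas agree on the real axis because $\tilde u$ takes values in $L=\mathrm{Fix}(\sigma)$ there, and the extension is $J_g$‑holomorphic because $\sigma$ is $J_g$‑antiholomorphic while $\zeta\mapsto\bar\zeta$ reverses the conformal structure of the source; interior elliptic regularity for the extended (at least $W^{1,p}_{\mathrm{loc}}$) $J_g$‑holomorphic map then shows that $u$ is smooth up to $\partial D$.

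For assertion~(ii), $u$ is non‑constant since it is asymptotic to the nontrivial Reeb orbit $\tilde\gamma$, so by the local structure theory of pseudoholomorphic curves (Micallef--White) its critical points are isolated in $\mathrm{int}\,D\priv\{0\}$, and the same holds along $\partial D$ after passing to the reflected map of step~(i). Near the puncture, the asymptotic condition together with interior elliptic estimates on the cylindrical end (where $J_g$ is translation invariant) upgrades the $C^0$‑convergence $u\to v_{\gamma,g}$ to $C^1_{\mathrm{loc}}$‑convergence of the translated curves $u(\,\cdot+s_k,\cdot)$, for every sequence $s_k\to\infty$, to a cover of $v_{\gamma,g}$; since $v_{\gamma,g}$ is an immersion, $du$ is nowhere zero in a punctured neighbourhood of $0$. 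Covering the compact region $\{\,\eps\le|z|\le 1\,\}$ by finitely many charts in which critical points are isolated then shows that $\crit(u)$ is finite.

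For assertion~(iii), note first that $u^{-1}(L)$ avoids a neighbourhood of the puncture, because there $u$ is close to $v_{\gamma,g}$, whose image lies in $\{r\ge 1\}\subset T^*L\priv L$. The key local fact is a dichotomy at a point $z_0\in\mathrm{int}\,D\priv\{0\}$ with $z_0\notin\crit(u)$ and $u(z_0)\in L$: near $z_0$ the map $u$ is an embedding onto a $J_g$‑holomorphic disk $\Sigma$, and $\sigma(\Sigma)$ is again a $J_g$‑holomorphic disk (the antiholomorphic image of a $J_g$‑curve is a $J_g$‑curve) passing through $\sigma(u(z_0))=u(z_0)$; since every $x\in\Sigma\cap L$ satisfies $x=\sigma(x)\in\sigma(\Sigma)$, we get $\Sigma\cap L\subseteq\Sigma\cap\sigma(\Sigma)$. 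Hence either $\Sigma\ne\sigma(\Sigma)$ as germs at $u(z_0)$, in which case $\Sigma\cap\sigma(\Sigma)$, and therefore $u^{-1}(L)$, is discrete near $z_0$ by the isolation of intersection points of distinct $J_g$‑holomorphic curves; or $\Sigma=\sigma(\Sigma)$ near $u(z_0)$, in which case $\sigma|_\Sigma$ is an antiholomorphic involution of the Riemann surface $\Sigma$ with $\mathrm{Fix}(\sigma|_\Sigma)=\Sigma\cap L$ a smooth arc, so that $u^{-1}(L)$ is a smooth arc through $z_0$.

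It remains to exclude the arc alternative globally. Let $V:=u^{-1}(L)\cap\mathrm{int}\,D\priv\{0\}$ and let $V_{\mathrm{arc}}\subset V\priv\crit(u)$ be the open locus where $V$ is locally an arc; by the dichotomy it is an embedded $1$‑manifold. Its closure in $D$ adds only points of $\crit(u)$: it cannot accumulate at the puncture, nor at a non‑critical boundary point (there the local picture of the reflected map forces $\partial D$ to be the entire fixed arc, so $V$ is locally empty in the interior), nor at a non‑critical interior point outside $V_{\mathrm{arc}}$, while near a critical point $c$ the branched‑cover normal form $u=\varphi\circ(z\mapsto z^{k})$ shows that either $c$ is isolated in $V$ or an even number $2k$ of arcs of $V_{\mathrm{arc}}$ emanate from $c$. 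Hence $\Gamma:=\overline{V_{\mathrm{arc}}}$ is a finite graph all of whose vertices have even degree, so if $V_{\mathrm{arc}}\ne\emptyset$ then $\Gamma$ contains an embedded loop $\ell\subset u^{-1}(L)$ sitting in $D\priv\{0\}$ and avoiding the puncture (a closed component of $V_{\mathrm{arc}}$, or else a cycle of $\Gamma$). Such an $\ell$, together with $\partial D$ if necessary, bounds a compact region $R\subset D\priv\{0\}$ not containing the puncture and with $u(\partial R)\subset L$; then $\int_R u^*d\lambda=\int_{\partial R}u^*\lambda=0$ because $\lambda|_L=0$, and since $u^*d\lambda\ge 0$ by compatibility of $J_g$ with $d\lambda$ this forces $R\subset\crit(u)$, contradicting (ii). Therefore $V_{\mathrm{arc}}=\emptyset$, so $V$ is discrete away from the finite set $\crit(u)$, hence countable and accumulating only at critical points. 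I expect this last step to be the main obstacle: one must organise $u^{-1}(L)$ into a finite graph with even vertex degrees, and it is precisely here that exactness ($\lambda|_L=0$) together with positivity of $u^*d\lambda$ is used to kill the resulting cycle.
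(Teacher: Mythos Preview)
Your argument is correct. Parts (i) and (ii) are essentially the paper's proof: both reflect $u$ across $\partial D$ via $\sigma$ to obtain a $J_g$-holomorphic map on $\C\priv\{0\}$, then invoke interior regularity and the standard finiteness of critical points.

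Part (iii) is where your route genuinely diverges from the paper's. The paper argues by contradiction as follows: assuming $u^{-1}(L)\cap\mathrm{int}\,D\priv\{0\}$ accumulates at a non-critical point $z$, it introduces the coincidence set $\Omega=\{\xi\in\mathrm{int}\,D\priv\{0\}:\exists\,\xi'\in\C\priv D,\ u(\xi)=v(\xi')\}$ (where $v(z)=\sigma\circ u(1/\bar z)$), shows via unique continuation that $\Omega$ is open near $z$, and then runs a path argument from $z$ to the puncture to trap a region $U$ with $\partial U\subset u^{-1}(L)$ not containing $0$. It concludes using pseudoconvexity of the sublevels $\{\|p\|_g\le r\}$ (Proposition~\ref{prop:propJg}) rather than a direct Stokes computation, which sidesteps any regularity issues for $\partial U$.

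Your approach instead establishes a local dichotomy (isolated point versus smooth arc) for $u^{-1}(L)$ at non-critical interior points by comparing the image $\Sigma$ with $\sigma(\Sigma)$, organises the arc locus into a finite graph with even vertex degrees, extracts a cycle, and applies Stokes to the enclosed region. This is more explicitly geometric about the structure of $u^{-1}(L)$ and avoids the coincidence-set machinery. The cost is the analysis at critical points: your assertion that an even number of half-arcs emanate from each $c\in\crit(u)$ implicitly requires applying the same dichotomy to the \emph{embedded image} of $u$ near $u(c)$ (which is still a $J_g$-holomorphic disk by Micallef--White and continuity of the tangent plane), and then pulling back through the $k$-fold branched cover. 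This step is correct but is the place where a reader might want one more sentence. The paper's approach bypasses this entirely by never needing the local structure of $u^{-1}(L)$ near critical points, at the price of invoking pseudoconvexity.
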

\noindent{\it Proof:} Point (i) is a general and classical fact that relies on the
ellipticity of the $\bar \partial_{J_g}$-operator on the space of curves with boundary
on totally real submanifolds.
Points (ii) and (iii) are specific to our situation. They rely on the special form
of $J_g$ that makes the involution $\sigma(q,p):=(q,-p)$ anti-holomorphic, see
proposition \ref{prop:propJg}.

Let $u:(D\priv\{0\},\partial D)\to (T^*L,L)$  be a $J_g$-holomorphic map asymptotic
to $\tilde\gamma$ at $0$. The map $v:\C\priv \mathrm{int}\,D \to T^*L$ defined by 
$v(z):=\sigma\circ u(\nf 1{\bar z})$ is $J_g$-holomorphic and coincides with $u$ on
$\partial D$ (because $u(\partial D)\subset L=\text{Fix}\, (\sigma)$). Thus, the map
\fonction{w}{\C\priv\{0\}}{T^*L}{z}{\left\{\begin{array}{l} u(z)
\;\text{ if } z\in  D\priv\{0\} \\ v(z) \;\text{ if } z \in \C\priv D \end{array} \right.}
is holomorphic on $\C\priv \partial D$, continuous on $\partial D$, and since $u$
and $v$ are smooth up to $\partial D$, $w$ is $J_g$-holomorphic on $\partial D$ as well.
Thus, $w: \C\priv\{0\}\to T^*L$ is $J_g$-holomorphic. Since $w$ is
asymptotic to $\tilde\gamma$ at $0$, it has no critical points near $0$ and
\cite[Lemma 2.4.1]{mcsa} guarantees that $w$ has only finitely many critical points in
$D \priv \{0\}$, where it coincides with $u$. This proves (ii).

We prove (iii) by contradiction. First assume there exists a sequence $z_n\in
\mathrm{int}\, D\priv\{0\}$ that converges to a point $z \in\mathrm{int}\, D\priv\{0\}$,
which is not a critical point of $u$, and such that $u(z_n) \in L$ for all $n$
(hence $u(z) \in L$).
We define
$$
\Om:=\{\, \xi\in \mathrm{int}\,D\priv\{0\} \;|\; \exists\,\xi'\in \C \priv D :
u(\xi)=v(\xi')\,\}.
$$
In particular, $\Om\supset u^{-1}(L)\cap \mathrm{int}\,D\priv\{0\} \ni z_n$.
By \cite[Lemma 2.4.3]{mcsa}, $\Om$ contains a \nbd of each of its accumulation points in
$\mathrm{int}\,D\priv\{0\}$ that are not critical points of $u$.
It therefore contains a \nbd of $z$
by assumption. Notice that the asymptotic behaviour of $u$ at $0$ implies that $\Om$ does
not meet some \nbd of the puncture $0$. Let now $c:[0,1]\to D$ be a continuous curve between
$c(0)=z$ and $c(1)=0$ that avoids the (finitely many) critical points of $u$.
Let ${t_*} :=\sup\{\,t\;| \; c([0,t])\subset \Om\,\}$. Since $z\in {\mathrm{int}\,\Om}$,
and $c(\op(1))\cap \Om = \emptyset$, we have $0<{t_*}<1$. Let now $t_n < {t_*}$ be any
sequence that converges to $t_*$. By assumption $c(t_n)\in \Om$, so there exists $\xi_n'\in
{\C \priv D}$ such that $v(\xi_n')=u(c(t_n))$. Obviously $\xi_n'$ cannot accumulate at $\infty$
because $v(\xi_n') = u(c(t_n))\to u(c(t_*))\in T^*L$. It can neither accumulate at a point
$\xi'\in {\C \priv D}$ because then, by \cite[Lemma 2.4.3]{mcsa}, $c(t_*)$ would be an
accumulation point of $\Om$, so would be contained in ${\mathrm{int}\,\Om}$. Thus $c(t)$ would belong
to $\Om$ for $t<{t_*}+\eps$, $\eps>0$, which contradicts our definition of ${t_*}$. We therefore
conclude that, after extraction, $\xi_n'\to \xi'\in \partial {D}$, so $u(c(t_*))=\lim u(c(t_n))
=\lim v(\xi_n')=v(\xi')\in v(\partial D)\subset L$. This shows that any curve that joins
$z$ to $0$ in ${D} \priv \crit (u)$ intersects $u^{-1}(L)$. Since $\crit(u)$ is finite,
we conclude that the connected component $U$ of $z$ in ${D}\priv u^{-1}(L)$ does not contain $0$.
Thus $u_{|U}:(U,\partial U)\to (T^*L,L)$ is $J_g$-holomorphic.
If $\partial U$ is smooth, we can conclude by Stokes Theorem.
In general, we can invoke the pseudoconvexity of the hypersurfaces $\{\|p\|_g=r\}$ for $r > 0$
(see proposition \ref{prop:propJg})
to conclude that $u_{|U}$ takes values in $L$. Since $U$ is open and $L$ is totally real,
we conclude that $u_{|U}$ is constant, which implies that $u$ is constant. But this
contradicts the fact that $u$ is asymptotic to $\tilde\gamma$ at $0$.

This contradiction shows that $u^{-1}(L)$ can only accumulate {at} $\crit(u)\cup \partial {D}$.
On the other hand, since $L$ is totally real, any non-critical point of $u$ in $\partial {D}$ is
a point where the intersection of $u$ with $L$ is clean. These points can therefore not
be accumulated by points of $u^{-1}(L)\cap {\mathrm{int}\, D\priv\{0\}}$. This completes
the proof of (iii).\cqfd

In the next lemma, ${D}^+ := {D}\cap\{\im z\geq 0\}$. Recall that $\alpha$ is the
one-form obtained by ${\alpha} := \lambda_{|M}$ on $M$ and is extended by invariance
under the ${\R_*^+}$-action on $T^*L\priv L$. It is not defined on $L$.
\begin{lemma}\label{le:limalpha}
Let $u:(D^+,[-1,1])\to (T^*L,L)$ be a $J_g$-holomorphic map. Assume that $\eps_n\to 0$
verifies $u(D^+\cap \{\im z=\eps_n\})\cap L=\emptyset$. Then,
\begin{equation}\label{eq:limalpha}
\lim_{n\to \infty} \int_{D^+\cap \{\im z=\eps_n\}}u^*\alpha=-\ell_g(u_{|[-1,1]}),
\end{equation}
where $D^+\cap\{\im z=\eps_n\}$ is oriented from left to right.
\end{lemma}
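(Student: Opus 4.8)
\noindent\emph{Proof plan.} The idea is to compute the $1$-form $u^*\alpha$ along the slice $I_\eps:=D^+\cap\{\im z=\eps\}$ to first order in $\eps$, recognise the limit from the special shape of $J_g$ along the zero section, and conclude by dominated convergence. Write $z=x+iy$ and parametrise $I_{\eps_n}$ by $x\mapsto x+i\eps_n$, with $|x|\le\sqrt{1-\eps_n^2}$ and increasing $x$ giving the stated orientation. Under the identification $\big((0,\infty)\times M,r\alpha\big)\simeq(T^*L\priv L,\lambda)$ one has $\lambda=r\alpha$ with $r=\Vert p\Vert_g$, so wherever $u$ avoids $L$ we have $u^*\alpha=\Vert u\Vert_g^{-1}\,u^*\lambda$; the hypothesis $u(I_{\eps_n})\cap L=\emptyset$ is exactly what makes this a smooth form along $I_{\eps_n}$, so that \eqref{eq:limalpha} is meaningful. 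By boundary elliptic regularity for $\bar\partial_{J_g}$ with totally real boundary condition (as in Lemma~\ref{le:goodintersectionJg}(i)) $u$ is smooth up to $[-1,1]$; I will argue as if $u$ were $C^2$ up to the closed half-disc $D^+$, the behaviour near the corners $\pm1$ being the only point that needs care (see below). Since the statement is local along the compact curve $u([-1,1])$, after splitting $[-1,1]$ into finitely many subintervals I may work in a single chart of canonical coordinates $(q,p)$, writing $u(x+iy)=(Q(x,y),P(x,y))$ with $P(x,0)\equiv0$.

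The heart of the matter is to extract the transverse behaviour of $u$ near $[-1,1]$ from the Cauchy--Riemann equation $\partial_y u=J_g(u)\,\partial_x u$ at $y=0$. There $\partial_x u(x,0)=(\partial_x Q(x,0),0)$ is tangent to $L$, hence horizontal, so by the very definition of $J_g$ on the zero section (where $J_g=-\sharp$, see \S\ref{sec:jpart}) its $J_g$-image is the purely fibrewise vector $-(\partial_x Q(x,0))^\sharp$; comparing components gives
\[
\partial_y Q(x,0)=0,\qquad \partial_y P(x,0)=-(\partial_x Q(x,0))^\sharp\qquad(x\in(-1,1)).
\]
Taylor-expanding in $y$ about $0$ and using these identities gives, uniformly in $x$, $P(x,\eps)=-\eps\,(\partial_x Q(x,0))^\sharp+O(\eps^2)$ and $\partial_x Q(x,\eps)=\partial_x Q(x,0)+O(\eps^2)$. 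Since $u^*\lambda|_{I_\eps}=P(x,\eps)\big(\partial_x Q(x,\eps)\big)\,dx$ and $\Vert u\Vert_g|_{I_\eps}=\Vert P(x,\eps)\Vert_g$, and since $\sharp$ is a $g$-isometry with $v^\sharp(w)=g(v,w)$, I obtain
\[
(u^*\alpha)\!\Big(\tfrac{\partial}{\partial x}\Big)\Big|_{(x,\eps)}=\frac{-\eps\,\Vert\partial_x Q(x,0)\Vert_g^{2}+O(\eps^2)}{\eps\,\Vert\partial_x Q(x,0)\Vert_g+O(\eps^2)}\ \xrightarrow[\eps\to 0]{}\ -\Vert\partial_x Q(x,0)\Vert_g
\]
for every $x$ outside the finite set $\{x:\partial_x Q(x,0)=0\}$ (finite because $u$ reflects across $(-1,1)$ to a $J_g$-holomorphic map via the antiholomorphic involution $\sigma$, exactly as in the proof of Lemma~\ref{le:goodintersectionJg}, so its critical points are isolated by \cite[Lemma 2.4.1]{mcsa}). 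Moreover Cauchy--Schwarz for the dual pairing gives the $x$- and $\eps$-uniform bound $\big|(u^*\alpha)(\partial_x)\big|\le\Vert\partial_x Q(x,\eps)\Vert_g\le\sup_{D^+}\Vert\partial_x Q\Vert_g<\infty$.

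Dominated convergence then finishes the argument: since $\sqrt{1-\eps_n^2}\to1$,
\[
\int_{I_{\eps_n}}u^*\alpha=\int_{-\sqrt{1-\eps_n^2}}^{\sqrt{1-\eps_n^2}}(u^*\alpha)\!\Big(\tfrac{\partial}{\partial x}\Big)\Big|_{(x,\eps_n)}\,dx\ \xrightarrow[n\to\infty]{}\ -\int_{-1}^{1}\Vert\partial_x Q(x,0)\Vert_g\,dx=-\ell_g\big(u_{|[-1,1]}\big),
\]
the last equality because the curve $x\mapsto u(x,0)=(Q(x,0),0)$ in $L$ has velocity of $g$-norm $\Vert\partial_x Q(x,0)\Vert_g$; summing the contributions of the finitely many coordinate patches yields \eqref{eq:limalpha}. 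The only genuinely substantive input is the first-order transverse expansion of $u$ read off from the shape of $J_g$ along $L$; the main obstacle I anticipate is purely technical, namely justifying enough regularity of $u$ up to the corners $\pm1$ of $D^+$. This can be dealt with either by observing that in the intended applications $D^+$ embeds into a larger domain on which $u$ is holomorphic (so $u$ is $C^\infty$ up to $D^+$), or by first establishing the limit on $[-1+\delta,1-\delta]$ and letting $\delta\to0$, using the uniform bound above to control the contribution of the two end segments.
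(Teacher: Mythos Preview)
Your proof is correct and follows essentially the same approach as the paper: both localize in a chart, use the Cauchy--Riemann equation together with the formula $J_g=-\sharp$ on the zero section to obtain the first-order expansion $P(x,\eps)=-\eps(\partial_xQ(x,0))^\sharp+O(\eps^2)$, and then pass to the limit using the uniform bound $|(u^*\alpha)(\partial_x)|\le\Vert\partial_xQ\Vert_g$ coming from $\Vert p/\Vert p\Vert_g\Vert=1$. The only organizational difference is that you invoke dominated convergence directly, whereas the paper hand-rolls the same argument via a $\delta$-exhaustion (covering the finite critical set and a neighbourhood of the corners by an open set $J_\delta$ of length $\delta$, computing on the complement, and letting $\delta\to0$); the two are equivalent and your packaging is arguably cleaner.
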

Notice that both sides of equality \eqref{eq:limalpha} are well-defined. The assumption
guarantees that $u^*\alpha$ is well-defined on $D^+\cap \{\im z=\eps_n\}$. Moreover,
by ellipticity of $\bar \partial_{J_g}$, $u$ is smooth up to $[-1,1]$, so $u([-1,1])$
has finite length.

\noindent{\it Proof:} Since both sides of the equality (\ref{eq:limalpha}) are
additive with respect to a partition of $\op([-1,1],D^+)$, we can freely assume that $u$ takes
values in a small chart of $T^*L$ localized on $L$, which amounts to assuming that
$L=\R^n$. Let $\gamma:=u_{|[-1,1]}$  and $\gamma_\eps:=u_{|D^+\cap \{\Im z=\eps\}}$.
Taking into account that for the coordinates $x + iy \in D^+$,
$$
\partial_y u(t,0) = J_g \partial_x u(t,0) = J_g \dot \gamma (t)
=-g_{\gamma(t)}(\dot \gamma(t),\cdot)\in T^*_{\gamma(t)}\R^n,
$$
we have
\begin{align*}
& \dot \gamma_\eps(t)=\dot \gamma (t)+O(\eps),\\
& \gamma_\eps(t)=\big(\gamma(t),-\eps g_{\gamma(t)}(\dot \gamma(t),\cdot)\big)+O(\eps^2).
 \end{align*}
Thus  $p(\gamma_\eps(t)):= p_{\gamma_\eps(t)}
= -\eps g_{\gamma(t)}(\dot \gamma(t),\cdot)+O(\eps^2)$ and $\|p(\gamma_\eps(t))\|_g=\eps\|\dot \gamma(t)\|_g+O(\eps^2)$. 
If $\dot \gamma(t)\neq 0$, this implies
$$
\alpha(\gamma_\eps(t)) := \alpha_{\gamma_\eps(t)} = \frac p{\|p\|_g}(\gamma_\eps(t)) =
-g_{\gamma(t)}\left(\frac {\dot \gamma(t)}{\|\dot \gamma(t)\|_g},\;\cdot\; \right)+O(\eps).
$$
Notice now that, using the reflection argument as in lemma \ref{le:goodintersectionJg},
one immediately sees that the critical set of the restriction of $u$ to any compact set
of $(-1,1)$ is a finite set, so the Lebesgue measure $\leb(\crit(\gamma))=0$. Thus, one
can cover $\crit(\gamma)$ by an open set $J_\delta\subset (-1,1)$ of total length $\delta$.
We can also assume that $J_\delta$ contains some \nbd of $\{-1,1\}$, so that for some
$\eps(\delta)>0$, $\gamma_\eps(t)$ is well-defined for $t\in (-1,1)\priv J_\delta$ for
all $\eps < \eps(\delta)$.
Then, for $\eps_n<\eps(\delta)$, we have
\begin{align*}
\ds\int_{(-1,1)\priv J_\delta+i\eps_n} u^*\alpha \ds & = \int_{(-1,1)\priv J_\delta}
{\alpha_{\gamma_{\eps_n}(t)} (\dot \gamma_{\eps_n}(t))} dt \\
& = -\int_{(-1,1)\priv J_\delta} {g_{\gamma(t)}}\left(\frac{\dot\gamma(t)}{{\|\dot\gamma(t)\|_g}},
\dot\gamma(t)+O(\eps_n) \right) + O(\eps_n)dt\\
& =-\int_{(1,-1)\priv J_\delta}\|\dot\gamma(t)\|_g + O(\eps_n)dt.
\end{align*}
Notice also that our $O(\eps_n)$ depends on $u$ and $\delta$, but is uniform in $t$, so
$$
\lim_{n\to \infty}\int_{(-1,1)\priv J_\delta+i\eps_n} u^*\alpha =-\ell_g\big(\gamma((-1,1)
\priv J_\delta)\big).
$$
Letting now $\delta$ go to $0$, the right hand side of this equality obviously converges
to $-\ell_g(\gamma)$ because $\gamma$ is smooth. On the other hand,
$$
\left|\int_{J_\delta+i\eps_n}u^*\alpha\right|
\leq  \int_{J_\delta}\left| \frac p{{\|p\|_g}}{\bigg|_{\gamma_{\eps_n}(t)}}(\dot\gamma_{\eps_n}(t))\right|dt
\leq \int_{J_\delta} {\| \dot \gamma_{\eps_n}(t)\|_g}\,dt
\leq C\delta,
$$
where $C$ is an upper bound for ${\|du\|_g}$. This shows equality \eqref{eq:limalpha}. \cqfd

\begin{lemma}\label{le:uniquenessjg}
If $\gamma$ is a geodesic of minimal length in its homology class $\beta$, then
$u_{\gamma,g}$ is the only $J_g$-holomorphic punctured disc with boundary on $L$ and
asymptotic to {$\tilde\gamma$} at 0 up to $S^1$-reparametrization. In
other terms,
$$
\cm(J_g,\beta)= \{[u_{\gamma,g}]\}.
$$
\end{lemma}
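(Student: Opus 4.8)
The plan is to show that every $u\in\widehat{\cm}(J_g,\beta)$ equals $u_{\gamma,g}$ up to a rotation of the source. First I would fix such a $u$, use Lemma~\ref{le:goodintersectionJg} to work with $u$ smooth up to $\partial D$, with finitely many critical points and with $u^{-1}(L)\cap\mathrm{int}\,D\priv\{0\}$ discrete away from $\crit(u)$, and assume — as in the scheme of proof of Theorem~\ref{thm:existence} — that $\gamma$ is primitive, so $\tilde\gamma$ is an embedded Reeb orbit and $u_{\gamma,g}$ is a $J_g$-holomorphic immersion, injective off $\partial D$.

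The key quantity is a $d\alpha$-energy. On $T^*L\priv L$ the structure $J_g$ preserves the splitting $\ker\alpha\oplus\langle R,\partial/\partial r\rangle$ (by Lemma~\ref{le:riemancontact} and the definition of $J_g$) and is compatible with $d\alpha$ on $\ker\alpha$ (Proposition~\ref{prop:propJg}), while $\iota_R d\alpha=\iota_{\partial/\partial r}d\alpha=0$; hence $u^*d\alpha\ge 0$, with equality at a point exactly when $du$ is tangent there to the integrable distribution $\langle R,\partial/\partial r\rangle$, whose leaves, over closed geodesics $\sigma$, are the cylinders $\mathrm{im}(v_{\sigma,g})$. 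I would check that $\mathcal E:=\int_{D\priv\{0\}}u^*d\alpha$ is finite, since $u^*d\alpha$ decays exponentially to $0$ near the puncture (the model satisfies $v_{\gamma,g}^*d\alpha=d(dt)=0$) and has an integrable singularity along $u^{-1}(L)$. Then comes a Stokes' theorem computation: split $D\priv\{0\}$ along a generic level set $u^{-1}\{\|p\|_g=\eps\}$ into the compact part $\{\|p\|_g\circ u\ge\eps\}$ containing the puncture and the part $\{\|p\|_g\circ u<\eps\}$, a collar of $\partial D$ together with small discs around the (finitely many) interior intersection points; apply Stokes to each piece after deleting tiny discs around the puncture and around those intersection points; the integrals of $u^*\alpha$ along $u^{-1}\{\|p\|_g=\eps\}$ then cancel between the two pieces, the tiny circles around interior points contribute $0$ in the limit, the circle around the puncture contributes $\int_{\tilde\gamma}\alpha=\ell_g(\gamma)$ by the asymptotic condition, and the slices tending to $\partial D$ contribute $-\ell_g(u_{|\partial D})$ by Lemma~\ref{le:limalpha}. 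With the correct orientations this yields $\mathcal E=\ell_g(\gamma)-\ell_g(u_{|\partial D})$.

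From $\mathcal E\ge 0$ I get $\ell_g(u_{|\partial D})\le\ell_g(\gamma)$; but $u_{|\partial D}$ is a loop in $L$ representing $\beta$ (as $u_*[\partial D]=\beta$), so $\ell_g(u_{|\partial D})\ge\ell_g^{\min}(\beta)=\ell_g(\gamma)$, whence $\ell_g(u_{|\partial D})=\ell_g(\gamma)$ — so $u_{|\partial D}$ is a length-minimizing loop in its class, i.e. a closed geodesic traversed once — and $\mathcal E=0$. The vanishing of $\mathcal E$ forces $du$ to be tangent to the foliation $\langle R,\partial/\partial r\rangle$ on the connected dense open set $D\priv(\{0\}\cup u^{-1}(L))$, so $u$ maps this set into a single leaf and, by continuity, $u(D\priv\{0\})$ lies in the closure of that leaf; the asymptotics at the puncture identify the leaf as $\mathrm{im}(v_{\gamma,g})$, giving $u(D\priv\{0\})\subseteq\overline{\mathrm{im}(v_{\gamma,g})}=\mathrm{im}(u_{\gamma,g})$. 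Finally, since $u_{\gamma,g}$ is an immersion injective off $\partial D$, the map $u$ factors as $u=u_{\gamma,g}\circ\phi$ for a proper holomorphic self-map $\phi$ of $D\priv\{0\}$ matching the two ends (defined first on $D\priv(\{0\}\cup u^{-1}(L))$, then extended over the isolated removable singularities and to $\partial D$); the constraints $u_*[\partial D]=\beta$ and ``$u$ asymptotic to $\tilde\gamma$'' (not to a multiple cover) force $\deg\phi=1$, so $\phi$ is a rotation of $D$ and $[u]=[u_{\gamma,g}]$ in $\cm(J_g,\beta)$.

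I expect the main obstacle to be the orientation and limit bookkeeping in the energy identity: the cancellation of the $\eps$-dependent boundary terms, the exponential decay of $u^*d\alpha$ and the evaluation of the puncture circle integral (both of which rely on the precise asymptotic behaviour of $u$ along $\tilde\gamma$), and the correct sign in the application of Lemma~\ref{le:limalpha} along $\partial D$. A secondary point is the connectedness of $D\priv(\{0\}\cup u^{-1}(L))$ and the single-valuedness of the lift $\phi$, which again rest on Lemma~\ref{le:goodintersectionJg}.
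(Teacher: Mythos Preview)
Your strategy is the paper's: show $u^*d\alpha\ge 0$, compute $\int u^*d\alpha=\ell_g(\gamma)-\ell_g(u_{|\partial D})$ by Stokes, use minimality of $\gamma$ to force equality, conclude $du\in\langle R,\partial/\partial r\rangle$ and hence $\im u=\im u_{\gamma,g}$. The implementation differs in two places worth noting. First, the paper excises in the \emph{source} rather than along a target level set: it applies Stokes on $\Om_\eps:=D(0,1-\eps)\setminus\big(D(0,\eps)\cup\bigcup_j D(\zeta_j,\eps_j)\big)$, where the $D(\zeta_j,\eps_j)$ are finitely many small disks covering $\ci:=u^{-1}(L)\cap\mathrm{int}\,D\setminus\{0\}$. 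Note that $\ci$ is only countable, not finite as you wrote; but by Lemma~\ref{le:goodintersectionJg} it accumulates only at the finitely many critical points, so a finite cover by disks exists. With this excision there is no level-set boundary to cancel: $\partial\Om_\eps$ consists just of $\partial D(0,\eps)$ (limit $\ell_g(\gamma)$), $\partial D(0,1-\eps)$ (limit $-\ell_g(u(\partial D))$ by Lemma~\ref{le:limalpha}), and the circles $\partial D(\zeta_j,\eps_j)$ (limit $0$ since $\alpha=p/\|p\|_g$ is a bounded $1$-form and $u$ is smooth there). This also sidesteps your need to interpret $\int_{D\setminus\{0\}}u^*d\alpha$ globally across $u^{-1}(L)$. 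Second, the paper's endgame is a single line: once $\im u=\im u_{\gamma,g}$ and both maps are punctured disks asymptotic to $\tilde\gamma$, they agree up to $S^1$-reparametrization; your explicit factorization $u=u_{\gamma,g}\circ\phi$ and degree argument are correct but unnecessary.
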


\noindent{\it Proof:} Let $u \in \widehat{\cm}(J_g,\beta)$. By lemma \ref{le:goodintersectionJg} we have
$u^{-1}(L)=\ci\cup \partial {D}$, where $\ci \subset {\mathrm{int}\,D\priv\{0\}}$ can only accumulate at 
critical points of $u$. On $\Om := {D\priv (u^{-1}(L) \cup\{0\})}$ we can write $u=(a,\tilde u)\in {(0, \infty)}
\times M$. Calling $\pi:TM\to \ker \alpha$ the projection along the Reeb vector field and putting
$V=\langle V,R\rangle R+\pi(V)$ for $V\in TM$, the equations for a $J_g$-holomorphic map can be
written
\begin{align*}
& d a = \chi( a)\langle d\tilde u\circ j, R\rangle, \\
& J_g(u) \pi (d\tilde u) = \pi(d\tilde u\circ j).
\end{align*}
As a result, if ${s + it}$ are local holomorphic coordinates near some point in $\Om$,
$$
u^*d\alpha\big(\tfrac{\partial}{\partial s},\tfrac{\partial}{\partial t}\big)
= \tilde u^*d\alpha\big(\tfrac{\partial}{\partial s},\tfrac{\partial}{\partial t}\big)
= d\alpha\left(\frac{\partial \tilde u}{\partial s},\frac{\partial \tilde u}{\partial t}\right)
= d\alpha\left(\pi\frac{\partial \tilde u}{\partial s},\pi \frac{\partial \tilde u}{\partial t}\right)
= d\alpha\left(\pi\frac{\partial \tilde u}{\partial s}, J_g\pi\frac{\partial \tilde u}{\partial s}\right)
\geqslant 0,
$$
and equality holds if and only if $\pi\frac{\partial \tilde u}{\partial s} =\pi\frac{\partial
\tilde u}{ \partial t}=0$.

Fix now $\eps\ll 1$ such that $\partial D(0,1-\eps)$ avoids $\ci$. 
Since $\ci$ can only accumulate at critical points of $u$ and $\crit(u)$ is finite,
it is possible to find a finite number of disjoint disks $D(\zeta_j, \eps_j) \Subset 
D(0,1-\eps)$ such that $\ci \cap D(0,1-\eps) \subset \bigcup_j D(\zeta_j, \eps_j)$.
Here the $\eps_j$ are chosen small enough and the number of disjoint disks $k$
depends on these choices.
Let also $\gamma_\eps^{j} := \partial D(\zeta_{j},\eps_{j})$, $\gamma_\eps := \partial D(0,\eps)$,
$\gamma_{1-\eps}:=\partial D(0,1-\eps)$, all those circles being oriented as the boundary of
$\partial \Om_\eps:= \partial\big(D(0, 1-\eps) \backslash (D(0,\eps) \cup \bigcup_j D(\zeta_j, \eps_j) ) \big)$. 
Then,
$$
{0 \leq \int_{\Om_\eps}u^*d\alpha}
= \int_{\partial \Om_\eps} u^*\alpha
= \sum_{j=1}^k \int_{\gamma_\eps^j}  u^*\alpha+\int_{\gamma_\eps} u^*\alpha
+\int_{\gamma_{1-\eps}}u^*\alpha.
$$
Since $u$ is asymptotic to ${\tilde\gamma}$ at $0$, a straightforward computation shows that
$$
\lim_{\eps\to 0}\int_{\gamma_\eps}u^*\alpha=\ell_g(\gamma).
$$
On the other hand, since $\alpha=\nf p{{\| p \|_g}}$ is a bounded
$1$-form on $T^*L\priv L$, its integral over the small loops $u\circ \gamma_\eps^j$
tend to $0$ when $\eps_j\to 0$ (recall that $u$ is smooth near the $\zeta_j$).
Since $u$ is smooth on int\,$D\priv \{0\}$ and $\ci$ does not approach $0$, 
we can  assume that
\[
\left|\sum_{j = 1}^k \int_{\gamma_\eps^{j}} { u^*\alpha }\right| < \eps
\]
by decreasing $\eps_j$ and increasing $k$ if necessary.
Finally, taking orientation into account, lemma~\ref{le:limalpha} shows that
$$
\lim_{\eps\to 0} \int_{\gamma_{1-\eps}}u^*\alpha=-\ell_g(u(\partial {D})).
$$
Putting all these estimates together {we get}
$$
0\leq \int_\Om u^*d\alpha=\ell_g(\gamma) - {\ell_g}(u(\partial {D})).
$$
However the reverse inequality holds as well, because the projection of $u$ to $L$
provides a singular chain between $\gamma$ and $u(\partial {D})$ and $\gamma$
is the geodesic of minimal length in its homology class. We therefore conclude
that $u^*d\alpha=\tilde u^*d\alpha=0$ on $\Om = {D}\priv(\ci \cup \{0\})$,
so $\pi\circ d\tilde u=0$ on $\Om$, which means that $du(z)\in \langle R,
\frac \partial{\partial r}\rangle$ $\forall z\in \Om$. Since $u$ is asymptotic
to ${\tilde\gamma}$ at $0$, we see that $\im u=\im u_{\gamma,g}$. But then
$u=u_{\gamma,g}$ modulo source reparametrization. \cqfd


\subsection{Transversality} \label{sec:transversality}
The object of this paragraph is to study the surjectivity of the linearized Cauchy-Riemann
operator at $u_{\gamma,g}$. We recall that
\begin{itemize}
\its $(L,g)$ determines $M=\{g=1\}, \alpha, R$,
\its $T_{(q,p)}(T^*L) = \left\langle \frac \partial{\partial r}, R(q,p) \right\rangle
\oplus \ker \alpha \cap F_{(q,p)} \oplus \ker \alpha \cap H_{(q,p)} \;\; \forall\,
q \in L, p \neq 0$,
\its $J_gR=-\chi(r)g_q(\pi(R),\cdot)=-\chi(r)\frac\partial{\partial r}$, where $\chi(r)=1$
near $r=0$ and $\chi(r)=r$ near $\infty$,
\its $J_gV=-g_q(\pi(V),\cdot)\in \ker \alpha\cap F_{(q,p)}$  for $V\in \ker \alpha\cap H_{(q,p)}$,
\its $\gamma$ is a minimizing geodesic in the class $\beta$ whose length $\ell_g(\gamma)$
will be denoted $\ell$, 
\its $u_{\gamma,g}(s,t) = (f(s), \tilde \gamma(t)) \in (0, \infty)\times M\subset T^*L$
and $f(s):=G^{-1}(s)$ with $G(r)=\int_0^r \frac{dr}{\chi(r)}$. In fact $f$ solves the
differential equation $f' = \chi\circ f$.
\end{itemize}

The functional analytic setup is as follows. Since this approach is fairly standard,
we refer the reader to the appendix (page \pageref{p:bkpd}) for the precise definitions
and recall the main points here. We define a Banach manifold of maps $\cb^{k,p,\delta}$ that contains $u_{\gamma,g}$.
We consider a Banach space bundle $\ce^{k-1,p,\delta} \to \cb^{k,p,\delta}$ whose fibers
are spaces of complex antilinear bundle maps. The non-linear Cauchy Riemann operator
$\bar \partial_{J_g}$ defines a smooth section of this bundle by
\[
\bar \partial_{J_g}(u) = du + J_g(u) \circ du \circ j
\]
and we have $\bar \partial_{J_g}(u_{\gamma,g}) = 0$. The linearization of $\bar \partial_{J_g}$
at $u_{\gamma,g}$ is given by
\fonction{ \mathbf{D}_{u_{\gamma,g}} }{ T_{u_{\gamma,g}}\cb^{k,p,\delta} }{ \ce^{k-1,p,
\delta}_{u_{\gamma,g}}}{\xi}{ \nabla \xi + J_g \circ \nabla \xi \circ j + (\nabla_{\xi}J_g)
\circ d u_{\gamma,g} \circ j,}
where $\nabla$ is any symmetric connection on $T^*L$. Using local coordinates in a neighborhood
of our geodesic our operator takes the form
\[
\mathbf{D}_{u_{\gamma,g}}(\xi) = \bar \partial_{J_g}\xi + (dJ_g(\xi))du_{\gamma,g} \circ j 
:= d\xi + J_g(u)\circ d\xi \circ j+(dJ_g(\xi))du_{\gamma,g} \circ j.
\]
In the appendix we show that this operator is Fredholm and has index $1$ if $\gamma$ is
minimal (see corollary \ref{cor:index}).
By elliptic regularity the kernel of $\mathbf{D}_{u_{\gamma,g}}$, which will be our only concern,
does not depend on $k,p$ and $\delta$ provided $kp>2$ and $\delta > 0$ small enough.

The idea is clear: the linearized operator $\mathbf{D}_{u_{\gamma,g}}$ is Fredholm and
has index $1$. Establishing surjectivity therefore amounts to proving that its kernel has dimension $1$.
Introducing appropriate coordinates we manage to compute the operator explicitly. Under
certain conditions on the metric we then compute its kernel and show that it has the right
dimension. In the next section we will show that the metrics that satisfy these conditions
are $\cc^0$-dense in the set of Riemannian metrics on $L$.

We first need to introduce good coordinates, called Fermi coordinates, in a \nbd of our
closed geodesic $\gamma$. Unfortunately, under no assumption on the parallel transport along
$\gamma$, these coordinates are multivalued (equivalently, they only give local coordinates
on some cover of $L$), so we need to introduce a new set of notation. Let $(\dot\gamma(0),
v_1,\dots, v_{n-1})$ be an orthonormal basis of $T_{\gamma(0)}L$, and let $(\dot \gamma(t),
V_1(t),\dots,V_{n-1}(t))$ be the orthonormal basis obtained by parallel transport of
$(\dot \gamma(0), v_1,\dots,v_{n-1})$ along $\gamma_{|[0,t]}$. We define
\fonction{\phi}{\R\times (-1,1)^{n-1}}{L}{(x_n,x')}{\ds \exp_{\gamma(x_n)}\left(\sum_{i=1}^{n-1}
x_iV_i(x_n)\right) \qquad (x'=(x_1,\dots,x_{n-1})).}
By restricting this map to $\R\times (-\delta,\delta)^{n-1}$ for $\delta > 0$ small enough
we get an immersion. This can alternatively be achieved on $\R\times (-1,1)^{n-1}$ by
rescaling the metric. Such a rescaling obviously does not affect the generality of our
argument, so we assume that $\phi$ itself is an immersion. When the parallel transport
$O$ along $\gamma_{|[0,\ell]}$ is the identity, $\phi$ induces an embedding
$S^1\times (-1,1)^{n-1}\hra L$, which provides coordinates near $\gamma$. In general
however, this immersion only provides multivalued coordinates (it satisfies
$\phi(x_n+\ell,x')=\phi(x_n,Ox')$). The map $\phi$ naturally lifts to a map
$$
\Phi:T^*(\R\times (-1,1)^{n-1})\looparrowright T^*L
$$
such that $\Phi(x_n+\ell,y_n,x',y')=\Phi(x_n,y_n,Ox',{}^tO^{-1}y')=\Phi(x_n,y_n,Ox',Oy')$.
Here $(y_n, y') \in \R^n$ are the coordinates in the fiber.
We define $\hat g,\hat J_g,\hat \alpha,\dots$ to be the pull-backs of their corresponding
objects in $T^*L$ by $\Phi$. {By construction, $\hat g_{ij}=\delta_{ij}+O(\|x'\|^2)$,
where $\|\cdot\|$ is any norm on $\R^{n-1}$ (equivalently, $\hat g_{ij}(x_n,0)=\delta_{ij}$ and
$\frac{\partial \hat g_{ij}}{\partial x_l}(x_n,0)=0$).}
Notice that by functoriality of the Liouville form, we get $\hat
\alpha=\alpha_{\hat g}$, so $\hat R=R_{\hat g}$, \dots, and finally $\hat J_g=J_{\hat g}$
(provided we use $\hat \chi=\Phi^*\chi$ in the definition of $J_{\hat g}$).
Also, $\gamma$ pulls-back to $\hat \gamma(t)=(t,0)\in \R\times (-1,1)^{n-1}$, $\tilde
\gamma$ to $\tilde{\hat\gamma}$, and $u_{\gamma,g}$ to the map
\fonction{\hat u_{\gamma,g}=u_{\hat \gamma,\hat g}}{[0, \infty) \times \R}{T^*\big(\R\times (-1,1)^{n-1}\big)
\simeq \R^2_{(x_n,y_n)}\times (-1,1)^{n-1}_{x'}\times \R^{n-1}_{y'}}{(s,t)}{(t,f(s),0,0)}
This map is $J_{\hat g}$-holomorphic for the standard holomorphic structure $j\partial_s
=\partial_t$
on $[0, \infty) \times \R$
(recall that $J_{\hat g}(x_n, y_n, 0,0)\frac\partial{\partial x_n}=-\chi(y_n)\frac \partial{\partial y_n}$).
Finally, $\xi$ lifts to a vector field $\hat \xi$ along $u_{\hat \gamma,\hat g}$
\fonction{\hat \xi}{[0,\infty)\times \R}{T_{\hat u_{\gamma, g}}\big(T^*(\R\times (-1,1)^{n-1})\big)
\simeq  \R^2\times \R^{2n}}{(s,t)}{(t,f(s),\hat z(s,t))}
(the tangent bundle to $T^*(\R\times (-1,1)^{n-1})$ is trivial). The vector space $\R^{2n}$
naturally splits into $\R^2\times \R^{2(n-1)}$ tangent to $T^*\R$ and to $T^*(-1,1)^{n-1}$,
respectively.  Both factors $\R^2$ and $\R^{2(n-1)}$ further split as $\R\times \R$ and $\R^{n-1}
\times \R^{n-1}$, where the second factors are tangents to the fibers of the cotangent bundles,
while the first factors are non-canonical "horizontal spaces"
{(not to be confused with the subbundle $H$)},
which are tangent to $L$ at the zero-section. We thus write
$$
\begin{array}{ll}
\hat z(s,t) & = (\hat z_n(s,t),\hat z'(s,t))\\
 & = (\hat a_n(s,t),\hat b_n(s,t),\hat a'(s,t),\hat b'(s,t))\in \R^2\times \R^{2(n-1)}\\
 & = (\hat a_n(s,t),\hat b_n(s,t),\hat a_1(s,t), \dots, \hat a_{n-1}(s,t), \hat b_1(s,t),\dots, ,\hat b_{n-1}(s,t)).
\end{array}
$$
Then, $\hat z(s,t)$ has the following properties (see the appendix \ref{app:func_an_setup}):
\begin{enumerate}
\item  $\hat z\in W^{k,p}_\loc( [0,\infty) \times\R,\R^{2n})$,
\item $\hat z(0,t)$ is tangent to the zero section ({\it i.e.} $\hat b_i(0,t)=0$ $\forall\,i$)
\item $\Vert \hat z'(s,t)\Vert\leq Ce^{-\delta s}$ for some constant $C$. In particular, $\hat a'$ and $\hat b'$ tend to $0$
uniformly when $s$ tends to $+\infty$,
\item $\hat a_n(s,\cdot)$ and $\hat b_n(s,\cdot)$ tend to constants uniformly when $s$ tends
to $+\infty$
\item Finally, $\hat z$ satisfies a certain pseudo-periodicity with regard to the parallel
transport map $O$. Recall that $\Phi(x_n+\ell,y_n,x',y')=\Phi(x_n,y_n,Ox',Oy')$, thus we have
$$
\hat z_n(s,t+\ell) = \hat z_n(s,t),\quad \hat z'(s,t+\ell) = O\hat z'(s,t) := (O\hat a'(s,t),
O\hat b'(s,t)).
$$
\end{enumerate}
We call $W$ the space of maps
$\hat z:[0,\infty)\times \R\to \R^{2n}$ that satisfy
the four properties listed above. For $\hat z\in W$ and
$\hat \xi=(t,f(s),\hat z(s,t))$, we then define
$$
\hat D\hat \xi:=\bar \partial_{\hat J_g}\hat \xi + (d \hat J_g(\hat \xi))d\hat u_{\gamma,g}\circ j.
$$
The naturality of the lift $\Phi$ of $\phi$ readily implies that for $\Phi_* \hat \xi = \xi$
$$
\Phi_* \hat D \hat \xi = \mathbf{D}_{u_{\gamma,g}}(\xi),
$$
so if $\xi$ lies in the kernel of $\mathbf{D}_{u_{\gamma,g}}$, $\hat D\hat \xi=0$ as well.
Thus, $\mathbf{D}_{u_{\gamma,g}}$ is surjective as soon as $\dim  \ker \hat D=1$.
We can get a more explicit expression for $\hat D$ by noticing that
\begin{itemize}
\its $\hat D\hat \xi$ is a $(0,1)$-operator, so it is determined by its
action on $\frac\partial{\partial s}$. We will therefore identify $\hat D\hat \xi$ with
$\hat D\hat\xi(\frac\partial{\partial s})$. Moreover,
$d u_{\hat \gamma,\hat g}\circ j \frac\partial{\partial s} =
\frac\partial{\partial x_n}$
\its $J_{\hat g}$ is a tensor field on $T^*(\R\times (-1,1)^{n-1})\simeq \R^2\times
(-1,1)^{n-1}\times \R^{n-1}$, whose tangent bundle is trivial. It can therefore be seen
as a map $J_{\hat g}:\R^2\times (-1,1)^{n-1}\times \R^{n-1}\to M_{2n}(\R)$. Via
this identification,  $dJ_{\hat g}(\hat \xi)
= dJ_{\hat g}(\hat z)$ and
$\bar \partial_{J_{\hat g}}\hat \xi\,\frac \partial{\partial s}= \frac{\partial \hat z}
{\partial s}+J_{\hat g} \frac{\partial \hat z}{\partial t}$.
\end{itemize}
Thus, putting $\hat J(s,t):=J_{\hat g}(u_{\hat \gamma,\hat g}(s,t))
= J_{\hat g}(t,f(s),0,0)$, we get
$$
\begin{array}{ll}
\hat D\hat \xi\,(\frac\partial{\partial s}) & =\frac{\partial \hat z}{\partial s}+\hat
J(s,t)\frac{\partial \hat z}{\partial t}
+dJ_{\hat g}(\hat z)\frac\partial{
\partial x_n}\\
& =\frac{\partial \hat z}{\partial s}+\hat J(s,t)\frac{\partial \hat z}{\partial t}
+d\big(J_{\hat g}\frac\partial{\partial x_n}\big)\hat z. \\
\end{array}
$$
Let us also finally simplify our notation. In the remainder of this section no further
reference to $\hat g,\hat J_g,\hat u_{\gamma,g},\hat \xi$ will be made. We only pay
attention to our operator
$\hat D$ that lives on the pull-back space. In order to keep the notation as light as
possible, we therefore suppress all hats $\hat {}$ from our letters, remembering that all
objects correspond to their pull-backs by $\Phi$. In other terms, in the rest of this
section, $\gamma(t)=(t,0)\in \R\times (-1,1)^{n-1}$, $u_{\gamma,g}=
(t,f(s),0,0)) \in T^*\R \times T^*(-1,1)^{n-1}$, $g=\Phi^*g$,
$J_g=J_{\hat g}$ \dots We only call $\hat L:=\R \times (-1,1)^n$ in order to keep the
reader aware that we work in our cover.

Summarizing our discussion we obtain the following.

\begin{prop}\label{prop:DtoDhat} Our initial linearized Cauchy-Riemann operator is
surjective as soon as the kernel of the operator
\fonction{D}{W}{W^{k-1,p}([0,\infty)\times \R,\R^{2n})}{z}{\ds \frac{\partial z}{\partial s}
+ J_g(s,t)\frac{\partial z}{\partial t} + d\left(J_g\frac\partial{\partial x_n}
\right) z}
has dimension $1$.
\end{prop}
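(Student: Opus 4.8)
The plan is simply to assemble the ingredients prepared in the preceding discussion into one clean deduction. The three inputs are: (i) the linearized operator $\mathbf D_{u_{\gamma,g}}\colon T_{u_{\gamma,g}}\cb^{k,p,\delta}\to\ce^{k-1,p,\delta}_{u_{\gamma,g}}$ is Fredholm of index $1$, which is corollary \ref{cor:index} in the appendix (this is where minimality of $\gamma$ enters); (ii) pulling back by the Fermi lift $\Phi$ identifies $\ker\mathbf D_{u_{\gamma,g}}$ with the kernel of the operator $D$ on $W$; and (iii) the elementary fact that a Fredholm operator of index $1$ is surjective if and only if its kernel is one-dimensional, since $\dim\ker-\dim\mathrm{coker}=1$.

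First I would invoke elliptic regularity, as already noted, to fix once and for all convenient exponents $k,p,\delta$ with $kp>2$ and $\delta>0$ small; the kernel of $\mathbf D_{u_{\gamma,g}}$ does not depend on this choice. The key step is then to make the correspondence $\xi\leftrightarrow z$ precise. The image of $u_{\gamma,g}(s,t)=(f(s),\tilde\gamma(t))$ lies in a tubular neighbourhood of $\tilde\gamma$ inside $T^*L$ over which $\Phi\colon T^*(\R\times(-1,1)^{n-1})\looparrowright T^*L$ is an immersion and a covering; hence any vector field $\xi$ along $u_{\gamma,g}$ lifts uniquely to a vector field $\hat\xi=(t,f(s),\hat z(s,t))$ along $u_{\hat\gamma,\hat g}$ over $[0,\infty)\times\R$, and conversely. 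One then checks that the conditions characterising membership $\xi\in T_{u_{\gamma,g}}\cb^{k,p,\delta}$ translate exactly into properties (1)--(5) defining $W$: local $W^{k,p}$-regularity; the boundary condition that $\xi$ be tangent to the zero section along $\partial D$; the weighted-Sobolev exponential decay of $\hat z'$ together with $\hat a_n,\hat b_n$ converging to constants (encoding the asymptotics to $\tilde\gamma$); and the pseudo-periodicity $\hat z'(s,t+\ell)=O\hat z'(s,t)$ (encoding that $\hat\xi$ descends through the cover). By naturality of the whole construction, $\Phi_*\hat D\hat\xi=\mathbf D_{u_{\gamma,g}}(\xi)$, and the computation just preceding the statement rewrites $\hat D\hat\xi$, evaluated on $\partial_s$, as $\partial_s\hat z+\hat J(s,t)\,\partial_t\hat z+d(J_{\hat g}\partial_{x_n})\hat z$ --- which, after suppressing the hats, is exactly the operator $D$. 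Therefore $\xi\in\ker\mathbf D_{u_{\gamma,g}}$ if and only if the corresponding $z$ lies in $\ker D$, and this identification is linear and bijective, so $\dim\ker\mathbf D_{u_{\gamma,g}}=\dim\ker D$.

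Finally I would conclude: if $\dim\ker D=1$ then $\dim\ker\mathbf D_{u_{\gamma,g}}=1$, and since $\mathrm{ind}\,\mathbf D_{u_{\gamma,g}}=1$ this forces $\dim\mathrm{coker}\,\mathbf D_{u_{\gamma,g}}=0$, i.e. $\mathbf D_{u_{\gamma,g}}$ is surjective. (The converse implication holds as well, but is not needed.) I do not expect any serious obstacle inside this proposition itself: the index statement is quoted from the appendix, and the genuinely hard analytic content --- computing $\ker D$ explicitly for a suitable class of metrics --- is deferred to the sections that follow. The only point requiring real care is the bookkeeping in the middle step, namely verifying that the function-space conditions on $\xi$ and on $\hat z$ match up, in particular that the descent/pseudo-periodicity condition under the possibly nontrivial parallel transport $O$ is accounted for correctly, so that $\xi\leftrightarrow z$ is an isomorphism of the relevant Banach spaces and not merely of dense subspaces of smooth objects.
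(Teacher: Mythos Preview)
Your proposal is correct and follows essentially the same route as the paper: the proposition is stated there as a summary of the preceding discussion, and the argument is precisely that $\mathbf D_{u_{\gamma,g}}$ is Fredholm of index $1$ (corollary~\ref{cor:index}), that the lift via $\Phi$ sends $\ker\mathbf D_{u_{\gamma,g}}$ into $\ker D$, and that therefore $\dim\ker D=1$ forces surjectivity. The paper in fact only uses the injection $\ker\mathbf D_{u_{\gamma,g}}\hookrightarrow\ker D$ (which already gives $\dim\ker\mathbf D_{u_{\gamma,g}}\le 1$, and the index bound gives $\ge 1$), whereas you argue for a bijection; your stronger claim is true and harmless, but only the inequality is actually needed.
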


We now wish to explicitly compute our operator $D$ in the coordinates $(x_n,y_n,x',y')$
that we have introduced in the \nbd of $\im u_{\gamma,g}$. Recall that $g$ is flat of
order $1$ near $\gamma$ in these coordinates ($g_{ij}=\delta_{ij}+O(\Vert x'\Vert^2)$).

\begin{lemma}\label{le:compJ}
On $T^*\hat L$, endowed with the coordinates $(x_n, y_n, x_1, \ldots, x_{n-1}, y_1,
\ldots, y_{n-1})$ defined above, we have at $(x_n,y_n,0,0)$, $y_n\geq 0$,
\begin{itemize}
\item  $\ds J_g\frac\partial{\partial x_i} = - \frac\partial{\partial y_i}$ $\;\forall\, i<n\;\;$
and $\;\;\ds J_g\frac\partial{\partial x_n}=-\chi(y_n)\frac\partial{\partial y_n}$,
\item $\ds \frac{\partial J_g}{\partial y_i}\left(\frac\partial{\partial x_n}\right)=
{\frac{1-\chi(y_n)}{y_n}\frac\partial{\partial y_i}}$ $\;\forall\, i < n \;\;$
and $\;\; \ds \frac{\partial J_g}{\partial y_n}\left(\frac\partial{\partial x_n}\right) =
-\chi'(y_n)\frac\partial{\partial y_n}$,
\item {$\ds \frac{\partial}{\partial x_i}\left(J_g\frac{\partial}{\partial x_n}\right)
=-\frac{y_n}2 \sum_{l\neq n} \frac{\partial^2g_{nn}}{\partial x_i\partial x_l}
\frac{\partial}{\partial x_l}$} $\;\forall\, i < n \;\;$ and $\;\; \ds \frac{\partial}{
\partial x_n}\left(J_g\frac{\partial}{\partial x_n}\right) = 0.$
\end{itemize}

\end{lemma}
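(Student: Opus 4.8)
\noindent\emph{Proof strategy.} Everything is a pointwise computation at points $(x_n,y_n,0,0)$ with $y_n\ge 0$, and the plan is to assemble it from three inputs: the explicit formula for $J_g$ through the splitting $T(T^*\hat L)=\langle\partial/\partial r,R\rangle\oplus(\ker\alpha\cap H)\oplus(\ker\alpha\cap F)$ together with Lemma~\ref{le:riemancontact}, the horizontal lift formula \eqref{eq:hor_lift_vectors}, and the Fermi normalisation $g_{ij}=\delta_{ij}+O(\|x'\|^2)$. I would first extract the two consequences of that normalisation that get used repeatedly: at $x'=0$ every Christoffel symbol vanishes, so the horizontal lift of $\partial/\partial x_j$ is $\partial/\partial x_j$, and hence $H=\langle\partial/\partial x_1,\dots,\partial/\partial x_n\rangle$, $F=\langle\partial/\partial y_1,\dots,\partial/\partial y_n\rangle$ there; and $g$ is Euclidean to first order in \emph{all} coordinates $x_1,\dots,x_n$, so $\sharp$ is the identity on $\{x'=0\}$ up to $O(\|x'\|^2)$ and $\|p\|_g=y_n$ along $\{x'=y'=0\}$.

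For the first bullet I fix $P_0=(x_n,y_n,0,0)$. Lemma~\ref{le:riemancontact}(1) gives $\pi(R)^\sharp=p/\|p\|_g=dx_n$, so $R=\partial/\partial x_n$ at $P_0$; the radial field $r\,\partial/\partial r=\sum_j y_j\,\partial/\partial y_j$ equals $y_n\,\partial/\partial y_n$ there, so $\partial/\partial r=\partial/\partial y_n$; and $\alpha(\partial/\partial x_i)=p(\partial/\partial x_i)/\|p\|_g=0$ for $i<n$, so $\partial/\partial x_i\in\ker\alpha\cap H$. Plugging these into the definition of $J_g$ yields $J_g\,\partial/\partial x_n=J_gR=-\chi(r)\,\partial/\partial r=-\chi(y_n)\,\partial/\partial y_n$ and, for $i<n$, $J_g\,\partial/\partial x_i=-g(\pi(\partial/\partial x_i),\cdot)=-dx_i=-\partial/\partial y_i$ under $F\simeq T^*\hat L$. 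Combined with $J_g^2=-\id$ this also records $J_g\,\partial/\partial y_i=\partial/\partial x_i$ for $i<n$, which is used below.

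The second bullet is a substitution plus a first-order Taylor expansion. Since $J_g\,\partial/\partial x_n=-\chi(y_n)\,\partial/\partial y_n$ identically on $\{x'=y'=0\}$, its $x_n$-derivative is $0$ and its $y_n$-derivative is $-\chi'(y_n)\,\partial/\partial y_n$. For the $y_i$-derivative ($i<n$) I would evaluate at $P=(x_n,y_n,0,\dots,y_i,\dots)$ with $y_i$ small: there $\partial/\partial x_n,\partial/\partial x_i$ are still horizontal, $R(P)=(y_n\,\partial/\partial x_n+y_i\,\partial/\partial x_i)/r$ with $r=\sqrt{y_n^2+y_i^2}$, and writing $\partial/\partial x_n=a(P)R(P)+W(P)$ with $a(P)=\alpha(\partial/\partial x_n)=y_n/r$ and $W(P)\in\ker\alpha\cap H$ gives $W(P)=-\tfrac{y_i}{y_n}\,\partial/\partial x_i+O(y_i^2)$. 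Applying $J_g$ through $J_gR(P)=-\chi(r)\,\partial/\partial r|_P$ and $J_gW(P)=-g(\pi(W(P)),\cdot)$ and expanding $\chi(r)$ and $1/r^2$ to first order in $y_i$, the $\partial/\partial y_i$-component of $J_g\,\partial/\partial x_n$ at $P$ comes out as $\tfrac{y_i}{y_n}\bigl(1-\chi(y_n)\bigr)+O(y_i^2)$, giving the stated value of $\tfrac{\partial J_g}{\partial y_i}(\partial/\partial x_n)$; there is no singularity near small $y_n$ since $\chi\equiv1$ there.

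The third bullet carries the actual content and is the step I expect to be delicate, since here one leaves the geodesic. The $x_n$-derivative is again $0$. For $i<n$, evaluate at $Q=(x_n,y_n,0,\dots,x_i,\dots,0,0)$ with base coordinate $x_i$ small: $\partial/\partial x_n$ is no longer horizontal, so by \eqref{eq:hor_lift_vectors} I write $\partial/\partial x_n=(\partial/\partial x_n)^H-V$ with $V=\sum_l\Gamma^n_{nl}(x)\,y_n\,\partial/\partial y_l$. The crucial observation is that the first-order flatness forces $\|p\|_g=y_n+O(x_i^2)$ and $\pi(R)^\sharp=dx_n+O(x_i^2)$, hence $(\partial/\partial x_n)^H=R+O(x_i^2)$ and $J_g(\partial/\partial x_n)^H|_Q=-\chi(y_n)\,\partial/\partial y_n+O(x_i^2)$, so that piece does not contribute to the $x_i$-derivative: the whole first-order variation is $-J_gV$. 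As $V=O(x_i)$ I may evaluate $J_g$ at $Q_0=(x_n,y_n,0,0)$, where $J_g\,\partial/\partial y_l=\partial/\partial x_l$ for $l<n$; and $\Gamma^n_{nl}(x_n,0)=0$ while $\partial_{x_i}\Gamma^n_{nl}(x_n,0)=\tfrac12\,\partial^2_{x_ix_l}g_{nn}(x_n,0)$ (every remaining term in the Christoffel formula carries either an undifferentiated $\Gamma$ or $\partial_{x_i}g^{nm}$, both zero at $x'=0$), with the $l=n$ contribution vanishing because $g_{nn}=1+O(\|x'\|^2)$. Assembling, $\partial_{x_i}(J_g\,\partial/\partial x_n)=-\tfrac{y_n}{2}\sum_{l\ne n}\partial^2_{x_ix_l}g_{nn}\,\partial/\partial x_l$. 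The point requiring care throughout is precisely this bookkeeping of which first-order transverse contributions survive: one needs the ``Reeb part'' of $\partial/\partial x_n$ to stay radial to \emph{second} order, so that the normal curvature of $\gamma$ enters only through the $\Gamma^n_{nl}$, i.e. only through the second derivatives of $g_{nn}$.
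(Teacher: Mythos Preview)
Your proof is correct, and for the first two bullets it follows essentially the same route as the paper. For the third bullet you take a genuinely different decomposition. The paper's proof begins with a ``Preliminary step'' computing the Reeb vector field explicitly at $(x_n,y_n,x',0)$ via the Hamiltonian vector field of $H(x,y)=\sum g_{ij}^\sharp(x)y_iy_j$, obtaining $R\propto\partial/\partial x_n-\tfrac{y_n}{2}\sum_l(\partial_{x_l}g_{nn}^\sharp)\,\partial/\partial y_l+O(\|x'\|^2)$; it then writes $\partial/\partial x_n$ as this $R'$ plus a vertical remainder, applies $J_g$ term by term, and at the end invokes $g_{nn}^\sharp=1/g_{nn}+O(\|x'\|^4)$ to pass from derivatives of $g^\sharp$ to derivatives of $g_{nn}$. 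You instead split $\partial/\partial x_n=(\partial/\partial x_n)^H-V$ via the connection formula~\eqref{eq:hor_lift_vectors}, argue abstractly that $(\partial/\partial x_n)^H=R+O(x_i^2)$ (both are horizontal with base projection $\partial/\partial x_n+O(x_i^2)$, by the Fermi normalisation of $g^{ij}$), so the entire first-order variation sits in the vertical piece $V$ and reduces to the Christoffel computation $\partial_{x_i}\Gamma^n_{nl}|_{x'=0}=\tfrac12\,\partial^2_{x_ix_l}g_{nn}$. Your route sidesteps the explicit Hamiltonian calculation and the inverse-metric identity, trading them for the cleaner observation that the horizontal lift matches $R$ to second order. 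Both arguments isolate the same mechanism (only transverse second derivatives of $g_{nn}$ survive), but your packaging through the connection is somewhat more structural while the paper's is more hands-on.
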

\noindent{\it Proof:} Until now, we have not made a distinction between the metric
$g$ on the manifold and the induced metric on the cotangent bundle. In this proof alone,
we need to make a clear distinction, so the metric in the cotangent bundle will be
denoted $g^\sharp$. By definition of the Fermi coordinates the vector fields
$(\nf\partial{\partial x_i})$ are parallel along the geodesic $\gamma$, so the vectors
$(\nf\partial{\partial x_i})$ belong to the horizontal space at every point of the lift
of $\gamma$, which are precisely the points $(x_n,y_n,0,0)$. Since $\alpha =
\textnormal{sign}(y_n)dx_n$ at these points, we thus see that $\frac\partial{\partial x_i}
\in \ker \alpha\cap H$ for $i\neq n$, so
$$
J_g\frac\partial{\partial x_i}(x_n,y_n,0,0)=-\frac\partial{\partial y_i} \hspace{,5cm}
\forall\, i\neq n.
$$
Since $R$ generates the cogeodesic flow, it is also clear that $R(x_n,y_n,0,0)=\nf\partial{
\partial x_n}$, so
$$
J_g\frac\partial{\partial x_n}(x_n,y_n,0,0)=-\chi(y_n)\frac\partial{\partial y_n}.
$$
This proves the first assertion of the lemma and shows as well
$$
\frac\partial{\partial x_n}\left(J_g\frac\partial{\partial x_n}\right)(x_n,y_n,0,0)=0,
\quad\frac\partial{\partial y_n}\left(J_g\frac\partial{\partial x_n}\right)(x_n,y_n,0,0)
=-\chi'(y_n)\frac\partial{\partial y_n}.
$$
In order to compute the other derivatives, we need to compute $J_g\frac \partial{\partial x_n}
(x_n,y_n,x',y')$ at the first order, which requires to decompose $\nf \partial{\partial x_n}$
along $\langle R, \frac\partial{\partial r}\rangle\oplus  \ker \alpha \cap H
\oplus \ker \alpha \cap F$.\\\newline
{\bf Preliminary step: computation of $R$.} $R$ is colinear to the Hamiltonian vector field
associated to the function $H(x,y) := \|y\|^2_{g^\sharp_x} = \sum g_{ij}^\sharp(x)y_iy_j$,
because both generate the cogeodesic flow on the level sets of $H$, which coincide with the
hypersurfaces $\{r\}\times M$.
This shows that for $r > 0$, $R$ is colinear to
$$
\sum_l\Bigg( \Big(\sum_{i,j}\frac{\partial g_{ij}^\sharp}{\partial x_l}y_iy_j\Big)\frac\partial{
\partial y_l}-2\Big(\sum_jg_{lj}^\sharp y_j\Big)\frac{\partial}{\partial x_l} \Bigg).
$$
We specialize to two situations. When $x'=0$, we have $\frac {\partial g_{ij}^\sharp}{\partial
x_l}=0$ and $g_{lj}^\sharp =\delta_{lj}$, so for $y_n > 0$
$$
R(x_n,y_n,0,y')\propto R':=\frac\partial{\partial x_n}+\sum_{{l\neq n}}
\frac{y_l}{y_n}\frac\partial{\partial x_l}.
$$
Taking again into account that $g_{ij}(x_n,0)=\delta_{ij}$, we thus get
$$
\frac{\partial}{\partial r}(x_n,y_n,0,y')\propto \pi(R')^\sharp=g(\pi(R'),\cdot)=\frac\partial{
\partial y_n}+\sum_{l\neq n}\frac{y_l}{y_n}\frac\partial{\partial y_l}.
$$
When $y'=0$, we get
$$
R(x_n,y_n,x',0)\propto \sum_l \frac{\partial g_{nn}^\sharp }{\partial x_l}y_n^2\frac\partial{
\partial y_l} - {2}\sum_l g_{ln}^\sharp y_n\frac\partial{\partial x_l}
$$
Taking into account that $g_{ln}^\sharp (x_n,x')=\delta_{ln}+O(\|x'\|^2)$, we therefore get
$$
R(x_n,y_n,x',0)\propto R'=\frac\partial{\partial x_n}-{\frac{y_n}2}\sum_l \frac{\partial g_{nn}^\sharp }{
\partial x_l}\frac\partial{\partial y_l}+O(\|x'\|^2),
$$
and
$$
\frac{\partial}{\partial r}(x_n,y_n,x',0)\propto \pi(R')^\sharp=g\left(\frac{\partial}{
\partial x_n},\cdot\right)+O(\|x'\|^2)=\frac{\partial}{\partial y_n}+O(\|x'\|^2).
$$ \newline
{\bf Computation of $\frac{\partial}{\partial y_i}\big(J_g\frac{\partial}{\partial x_n}\big)$ for
$i < n$.}
Using the preliminary step we see that
$$
J_g\frac{\partial}{\partial x_n}(x_n,y_n,0,y')=J_g \left(R'-\sum_{l\neq n}\frac{y_l}{y_n}\frac{\partial}{
\partial x_l}\right).
$$
Taking into account that $y_l\in O(\|y'\|)$ and $J_g\frac{\partial}{\partial x_l} = -\frac{\partial}{
\partial y_l} $ for $l\neq n$ (using equation \eqref{eq:hor_lift_vectors} and the fact
that $x' = 0$), we thus
get
$$
J_g\frac{\partial}{\partial x_n}(x_n,y_n,0,y') = -\chi(\|y\|_{g^\sharp})\pi(R')^\sharp+\sum_{l\neq n}\frac{y_l}{
y_n}\frac{\partial}{\partial y_l} .
$$
Since $y_n>0$, $\chi(\|y\|_{g^\sharp})=\chi(y_n)+O(\|y'\|^2)$, so
$$
J_g\frac{\partial}{\partial x_n}(x_n,y_n,0,y')=-\chi(y_n)\frac{\partial}{\partial y_n}+\sum_{l
\neq n}\frac{y_l}{y_n}(1-\chi(y_n))\frac{\partial}{\partial y_l}+O(\Vert y'\Vert^2).
$$
Differentiating with respect to $y_l$, we get the announced formula for $\frac{\partial}{\partial
y_l}\big(J_g\frac{\partial}{\partial x_n}\big)$. \\ \newline
{\bf Computation of $\frac{\partial}{\partial x_i}\big(J_g\frac{\partial}{\partial x_n}\big)$ for
$i < n$.}
Using the preliminary step, we can write
$$
\frac{\partial}{\partial x_n}(x_n,y_n,x',0)=R' + {\frac{y_n}2}\sum_l \frac{\partial g_{nn}^\sharp }{
\partial x_l}\frac{\partial}{\partial y_l}+O(\|x'\|^2).
$$
Noticing that $\frac{\partial g_{nn}^\sharp }{\partial x_l} \in O(\|x'\|)$ for all $l$,
$J_g\frac{\partial}{\partial y_l}=\frac{\partial}{\partial x_l}+O(\|x'\|)$ for all $l\neq n$,
and $J_g\frac{\partial}{\partial y_n}=\frac 1{\chi(y_n)}\frac{\partial}{\partial x_n}+O(\|x'\|)$,
we conclude that
$$
\begin{array}{ll}
J_g\frac{\partial}{\partial x_n}(x_n,y_n,x',0)&=\ds-\chi(y_n)\pi(R')^\sharp+\frac{y_n}{{2}
\chi(y_n)}\frac{\partial g_{nn}^\sharp }{\partial x_n}\frac{\partial}{\partial x_n}+
\frac{y_n}{2}\sum_l \frac{\partial g_{nn}^\sharp }{\partial x_l}\frac{\partial}{\partial x_l}+O(\|x'\|^2), \\
&\ds=-\chi(y_n)\frac{\partial}{\partial y_n}+\frac{y_n}{{2}\chi(y_n)}\frac{\partial g_{nn}^\sharp }{
\partial x_n}\frac{\partial}{\partial x_n}+ \frac{y_n}{2}\sum_{l\neq n} \frac{\partial g_{nn}^\sharp }{
\partial x_l}\frac{\partial}{\partial x_l}+O(\|x'\|^2).
\end{array}
$$
Since $\sum_{j = 1}^n g_{nj}^\sharp (x_n, x') g_{jn} (x_n, x') = 1$ and each factor for $j\neq n$ is
$O(\|x'\|^2)$, we see that $g_{nn}^\sharp =\frac 1{g_{nn}} + O(\Vert x'\Vert^4)$. Using
$g_{nn}(x_n,x')=1+O(\|x'\|^2)$ we obtain $\frac{\partial
g_{nn}^\sharp}{\partial x_l} = -\frac 1{g_{nn}^2}\frac{\partial g_{nn}}{\partial x_l} + O(\|x'\|^3)
= - \frac{\partial g_{nn}}{\partial x_l}+O(\|x'\|^2)$. Differentiating with respect to $x_j$
we get
$$
\frac\partial{\partial x_j}\left(J_g\frac\partial{\partial x_n}\right)(x_n,y_n,0,0)
= -\frac{y_n}{2\chi(y_n)}\frac{\partial^2 g_{nn}}{\partial x_n\partial x_j}
\frac\partial{\partial x_n}
-\frac{y_n}2\sum_{l\neq n}\frac{\partial^2 g_{nn}}{\partial x_j\partial x_l}
\frac\partial{\partial x_l}.
$$
Notice finally that $\frac{\partial g_{nn}}{\partial x_j}(x_n,0)\equiv 0$, so
$\frac{\partial^2 g_{nn}}{\partial x_n\partial x_j}=0$. We thus get
$$
\frac\partial{\partial x_j}\left(J_g\frac\partial{\partial x_n}\right)(x_n,y_n,0,0)
=-\frac{y_n}2\sum_{l\neq n}\frac{\partial^2 g_{nn}}{\partial x_j \partial x_l}
\frac\partial{\partial x_l},
$$
which is the third point of the lemma. \cqfd

We are now in position to compute the kernel of our operator
$D:W\to W^{k-1,p}([0, \infty)\times \R,\R^{2n})$.
Recall that $z=\sum a_i\frac \partial{\partial x_i}+b_i\frac\partial {
\partial y_i}$ and that
$$
Dz = \ds \frac{\partial z}{\partial s} + J_g(s,t)\frac{\partial z}{\partial t}
+ d\left(J_g\frac\partial{\partial x_n}\right) z.
$$
Then,
\begin{align*}
\ds \frac{\partial z}{\partial s} + J_g(s,t)\frac{\partial z}{\partial t}
& = \ds \sum_{l} \frac{\partial a_l}{\partial s}\frac{\partial }{\partial x_l}
+\frac{\partial b_l}{\partial s}\frac{\partial }{\partial y_l}
+\sum_{l\neq n}-\frac{\partial a_l}{\partial t}\frac{\partial }{\partial y_l}
+\frac{\partial b_l}{\partial t}\frac{\partial }{\partial x_l} \\
& \phantom{==} \ds -\chi\circ f\frac{\partial a_n}{\partial t}\frac{\partial}{\partial y_n}
+\frac 1{\chi\circ f}\frac{\partial b_n}{\partial t}\frac{\partial}{\partial x_n} \\
&\ds = \sum_{l\neq n} \left(\frac{\partial a_l}{\partial s}+\frac{\partial b_l}{\partial t}\right)
\frac{\partial }{\partial x_l} + \sum_{l\neq n} \left(\frac{\partial b_l}{\partial s}
-\frac{\partial a_l}{\partial t}\right)\frac{\partial }{\partial y_l} \\
& \phantom{==} \ds+ \left(\frac{\partial a_n}{\partial s}+\frac 1{\chi\circ f}
\frac{\partial b_n}{\partial t} \right)\frac{\partial}{\partial x_n} +
\left(\frac{\partial b_n}{\partial s}- \chi\circ f\frac{\partial a_n}{\partial t} \right)
\frac{\partial}{\partial y_n}.
\end{align*}
On the other hand,
\begin{align*}
d\left(J_g\frac\partial{\partial x_n}\right) z
& = \ds \sum a_i\frac{\partial J_g}{\partial x_i} \left(\frac{\partial}{\partial x_n}\right)
+\sum b_i\frac{\partial J_g}{\partial y_i}\left(\frac{\partial}{\partial x_n}\right) \\
& = \ds -\sum_{l\neq n}  \left(\sum_{i\neq n}a_i(s,t){\frac{f(s)}2}\frac{\partial^2g_{nn}}{
\partial x_i\partial x_l}(t)\right) \frac\partial{\partial x_l} \\
& \phantom{==} + \frac{1-\chi\circ f(s)}{f(s)}\sum_{l\neq n}b_l(s,t) \frac\partial{\partial y_l}
- b_n(s,t) \chi'\circ f(s)\frac\partial {\partial y_n},
\end{align*}
(we put $\frac{\partial^2g_{nn}}{\partial x_i\partial x_j}(t) :=
\frac{\partial^2 g_{nn}}{\partial x_i
\partial x_j}(t,0)$).
Taking into account the boundary conditions and the pseudo-periodicity, we finally obtain
the following proposition.

\begin{prop}
$z=\sum a_i\frac{\partial }{\partial x_i}+b_i\frac{\partial }{\partial y_i}\in \ker D$ if
and only if
\begin{align}\label{eq:system1}
\begin{split}
\ds \frac{\partial a_l}{\partial s} + \frac{\partial b_l}{\partial t}
- \frac{f(s)}{2} \sum_{i\neq n}a_i\frac{\partial^2 g_{nn}}{\partial x_i\partial x_l}(t)
& = 0 \qquad \forall\,l\neq n \\
\ds \frac{\partial b_l}{\partial s}-\frac{\partial a_l}{\partial t}
+ \frac {1-\chi\circ f(s)}{f(s)} b_l & = 0 \qquad \forall\,l \neq n \\
\ds \frac{\partial b_n}{\partial s}-\chi\circ f(s)\frac{\partial a_n}{\partial t}
-\chi'\circ f(s)b_n & = 0 \\
\ds \frac{\partial a_n}{\partial s}+\frac 1{\chi \circ f(s)}\frac{\partial b_n}{\partial t} & = 0
\end{split}
\end{align}
and satisfies the boundary and pseudo-periodicity conditions
\begin{align*}
& b_j(0,t)\equiv 0\;\;\forall\,j ,\quad a_j(\infty,t)
= b_j(\infty,t)=0 \;\;\forall\,j < n, \\
& a_n(s,\cdot) \text{ and } b_n(s,\cdot) \text{ converge uniformly to constants }
\nu,\mu \text{ when }s\to \infty,\\
& a'(s,t+\ell) = Oa'(s,t),\quad b'(s,t+\ell) = Ob'(s,t),\\
& a_n(s,t+\ell) = a_n(s,t),\quad b_n(s,t+\ell) = b_n(s,t).
\end{align*}
\end{prop}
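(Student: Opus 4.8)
The plan is to simply add the two displayed expressions just obtained for $\partial_s z + J_g(s,t)\,\partial_t z$ and $d(J_g\,\partial_{x_n})z$, and to read off the coefficient of each of the basis vector fields. Since $Dz$ is a section of the trivial bundle $\R^{2n}$ over $[0,\infty)\times\R$ with respect to the frame $(\partial/\partial x_1,\dots,\partial/\partial x_n,\partial/\partial y_1,\dots,\partial/\partial y_n)$, the equation $Dz=0$ is equivalent to the simultaneous vanishing of all $2n$ scalar coefficients, and these split naturally into four families, according to whether the basis vector is horizontal ($\partial/\partial x_l$) or vertical ($\partial/\partial y_l$) and whether $l\neq n$ or $l=n$. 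Every ingredient is already at hand: the first point of Lemma~\ref{le:compJ} (together with $J_g^2=-\mathrm{Id}$) controls $\partial_s z + J_g\,\partial_t z$ along $u_{\gamma,g}$, while the second and third points of Lemma~\ref{le:compJ}, restricted to the point $(x_n,y_n,0,0)=(t,f(s),0,0)$ of $\im u_{\gamma,g}$, control $d(J_g\,\partial_{x_n})z$.

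First I would isolate the $l\neq n$ part. The coefficient of $\partial/\partial x_l$ receives $\partial_s a_l+\partial_t b_l$ from the first expression — using $J_g\,\partial_{y_l}=\partial_{x_l}$ along $u_{\gamma,g}$ — and $-\tfrac{f(s)}{2}\sum_{i\neq n}a_i\,\frac{\partial^2 g_{nn}}{\partial x_i\partial x_l}(t)$ from the $a_i$-part of $d(J_g\,\partial_{x_n})z$ (third point of Lemma~\ref{le:compJ}, with $y_n=f(s)$); the coefficient of $\partial/\partial y_l$ receives $\partial_s b_l-\partial_t a_l$ from the first expression (via $J_g\,\partial_{x_l}=-\partial_{y_l}$) and $\tfrac{1-\chi\circ f(s)}{f(s)}\,b_l$ from the $b_i$-part of $d(J_g\,\partial_{x_n})z$ (second point of Lemma~\ref{le:compJ}). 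Setting these to zero yields the first two lines of \eqref{eq:system1}.

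Next I would treat the two components along the geodesic. For $\partial/\partial x_n$ the only contribution is $\partial_s a_n+\tfrac{1}{\chi\circ f(s)}\partial_t b_n$, since $d(J_g\,\partial_{x_n})z$ produces no $\partial/\partial x_n$-term: here one uses $\partial_{x_n}(J_g\,\partial_{x_n})=0$ and that $\partial_{x_i}(J_g\,\partial_{x_n})$ has no $\partial/\partial x_n$-component for $i\neq n$, both from the third point of Lemma~\ref{le:compJ}. For $\partial/\partial y_n$ one collects $\partial_s b_n-\chi\circ f(s)\,\partial_t a_n$ from the first expression together with $-\chi'\circ f(s)\,b_n$ from the $i=n$ term $b_n\,\partial_{y_n}(J_g\,\partial_{x_n})$ (second point of Lemma~\ref{le:compJ}). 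Setting both to zero gives the last two lines of \eqref{eq:system1}. Finally I would observe that the boundary, asymptotic and pseudo-periodicity conditions in the statement are nothing but properties (2)--(5) characterising membership of $z$ in $W$, rewritten in the current notation: $b_j(0,t)\equiv0$ is (2); the exponential decay of $z'$ forces $a_j(\infty,t)=b_j(\infty,t)=0$ for $j<n$, which is (3); (4) is the uniform convergence of $a_n(s,\cdot),b_n(s,\cdot)$ to constants $\nu,\mu$; (5) is the pseudo-periodicity with respect to $O$. These hold by hypothesis for any $z\in W$, so $z\in\ker D$ if and only if, in addition, $Dz=0$, i.e.\ if and only if \eqref{eq:system1} holds.

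As for the main obstacle: there is essentially none — the proposition is a pure bookkeeping consequence of Lemma~\ref{le:compJ} and the already-displayed form of $\partial_s z+J_g\,\partial_t z+d(J_g\,\partial_{x_n})z$. The only points requiring care are the systematic substitution of $f(s)$ for $y_n$ (and of $\chi\circ f$ for $\chi$) when restricting Lemma~\ref{le:compJ} to the curve $u_{\gamma,g}(s,t)=(t,f(s),0,0)$, and keeping the $i=n$ terms of $d(J_g\,\partial_{x_n})z$ — which contribute only $-\chi'\circ f(s)\,b_n$ along $\partial/\partial y_n$ — cleanly separated from the $i\neq n$ terms.
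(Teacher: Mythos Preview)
Your proposal is correct and matches the paper's approach exactly: the paper displays the two expressions for $\partial_s z + J_g(s,t)\,\partial_t z$ and $d(J_g\,\partial_{x_n})z$ immediately before the proposition and then simply states the result, treating it as the obvious consequence of summing these and reading off coefficients, with the boundary and pseudo-periodicity conditions coming from the defining properties (2)--(5) of $W$. Your write-up is in fact more explicit than the paper's, which offers no proof beyond the sentence ``Taking into account the boundary conditions and the pseudo-periodicity, we finally obtain the following proposition.''
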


It will be convenient to notice that this computation did not really involve the precise
formula for our Fermi coordinates, but only depended on some of its properties, namely that
$\phi:\R\times(-1,1)^{n-1}\to L$ is a covering of a \nbd of $\gamma$ (so that we can
lift everything), and that {$g_{ij}=\delta_{ij}+O(\|x'\|^2)$} along $\hat\gamma=\R\times \{0\}$.
The pseudo-periodicity will soon turn out to be important. This justifies the following.
\begin{definition}\label{def:genfermi} A map $\phi:\R\times (-1,1)^{n-1}\looparrowright L$
provides {\bf generalized Fermi coordinates} near $\gamma$ if $\phi(t,0)=\gamma(t)$,
$(\phi^*g)_{ij}(x_n,x')=\delta_{ij}+O(\Vert x'\Vert^2)$, and if $\phi(x_n+\ell,x')=
\phi(x_n,Ox')$ for some matrix $O\in O_{n-1}^+(\R)$ (then $O$ must correspond to the
parallel transport along $\gamma_{|[0,\ell]}$).
\end{definition}

In general, we do not know how to solve the system of equations \eqref{eq:system1}.
We now impose some assumptions on the metric that allow us to explicitly compute the
solutions and check that the kernel is indeed $1$-dimensional.

\begin{prop}\label{prop:transversality} Let $g$ be a Riemannian metric on $L$ with
a closed geodesic $\gamma$. We assume that the metric has the following expansion in
some generalized Fermi coordinates:
$$
\hat g(x_n,x')=(1+{k}\Vert x'\Vert^2)\delta_{ij}+o(\Vert x'\Vert^2), \quad k>0.
$$
 Then the operator $\mathbf{D}_{u_{\gamma,g}}$ is surjective.
\end{prop}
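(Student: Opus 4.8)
The plan is to invoke Proposition~\ref{prop:DtoDhat}, which reduces the statement to showing that the explicit operator $D:W\to W^{k-1,p}([0,\infty)\times\R,\R^{2n})$ has a one-dimensional kernel, and then to solve the kernel equations \eqref{eq:system1} explicitly, which the hypothesis on $g$ makes possible.

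The first step is to feed the hypothesis into \eqref{eq:system1}. From $\hat g_{ij}(x_n,x')=(1+k\|x'\|^2)\delta_{ij}+o(\|x'\|^2)$ one gets $g_{nn}(x_n,x')=1+k\|x'\|^2+o(\|x'\|^2)$, hence $\frac{\partial^2 g_{nn}}{\partial x_i\partial x_l}(t,0)=2k\delta_{il}$ for $i,l<n$, so the first line of \eqref{eq:system1} becomes $\frac{\partial a_l}{\partial s}+\frac{\partial b_l}{\partial t}-kf(s)a_l=0$. Thus \eqref{eq:system1} splits into a \emph{tangential} subsystem — the last two equations, in $(a_n,b_n)$ — and a \emph{normal} subsystem, in $(a',b')=(a_1,\dots,a_{n-1},b_1,\dots,b_{n-1})$:
$$
\frac{\partial a_l}{\partial s}+\frac{\partial b_l}{\partial t}-kf(s)a_l=0,\qquad
\frac{\partial b_l}{\partial s}-\frac{\partial a_l}{\partial t}+\frac{1-\chi\circ f(s)}{f(s)}b_l=0\qquad(l<n),
$$
and the boundary, asymptotic and pseudo-periodicity conditions split accordingly, so that $\ker D=\ker D_{\mathrm{tang}}\oplus\ker D_{\mathrm{norm}}$.

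For the tangential subsystem the idea is to absorb the coefficients into the unknown. Using $f'=\chi\circ f$ one checks that $\tilde b_n:=b_n/\chi\circ f(s)$ turns the last two equations of \eqref{eq:system1} into the plain Cauchy--Riemann system $\partial_s a_n=-\partial_t\tilde b_n$, $\partial_s\tilde b_n=\partial_t a_n$, so that $h:=a_n-i\tilde b_n$ is holomorphic on the half-cylinder $[0,\infty)\times\R/\ell\Z$. The condition $b_n(0,\cdot)\equiv0$ (and $\chi\circ f\equiv1$ near $s=0$) makes $h$ real on $\{s=0\}$, and since $\chi\circ f(s)\to\infty$ while $a_n,b_n$ converge to constants, $h(s,\cdot)\to\nu\in\R$ as $s\to\infty$. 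In the coordinate $q=e^{-2\pi(s+it)/\ell}$, $h$ is thus a bounded holomorphic function on the punctured disc $\{0<|q|\le1\}$, real on $\{|q|=1\}$; removing the singularity at $q=0$, applying Schwarz reflection across $\{|q|=1\}$, and removing the singularity at $q=\infty$, one obtains a holomorphic function on $\C\P^1$, hence a constant. Therefore $a_n\equiv\nu$, $b_n\equiv0$, and $\ker D_{\mathrm{tang}}=\{(\nu,0):\nu\in\R\}$ is one-dimensional (the inclusion $\supseteq$ being a direct check).

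The normal subsystem is the crux, and I expect it to be the main obstacle, since no substitution turns it into an honest $\bar\partial$-equation. As $\frac{\partial^2 g_{nn}}{\partial x_i\partial x_l}(t)=2k\delta_{il}$ is $t$-independent, the normal system has $s$-dependent coefficients only; diagonalizing the monodromy $O\in O_{n-1}^+(\R)$ (combining each rotation block into one complex coordinate, handling $\pm1$ eigenvalues separately) and Fourier-expanding in $t$ with shifted frequencies $\omega_m\in\R$, each Fourier mode solves a linear ODE
$$
\frac{d}{ds}\begin{pmatrix}\alpha_m\\ \beta_m\end{pmatrix}=A_m(s)\begin{pmatrix}\alpha_m\\ \beta_m\end{pmatrix},\qquad
A_m(s)=\begin{pmatrix} kf(s) & -i\omega_m\\[2pt] i\omega_m & \dfrac{\chi\circ f(s)-1}{f(s)}\end{pmatrix},
$$
subject to $\beta_m(0)=0$ and $(\alpha_m,\beta_m)(s)\to0$ as $s\to\infty$. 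The matrix $A_m(s)$ is Hermitian, and here $k>0$ enters decisively: $kf(s)\to\infty$ and $\frac{\chi\circ f(s)-1}{f(s)}=1-\frac1{f(s)}\to1$, so $\lambda_{\min}(A_m(s))\to1$ and one can fix $s_0=s_0(m)$ with $\lambda_{\min}(A_m(s))\ge\tfrac12$ for $s\ge s_0$. Then $\frac{d}{ds}\big(|\alpha_m|^2+|\beta_m|^2\big)=2\langle A_m(\alpha_m,\beta_m),(\alpha_m,\beta_m)\rangle\ge|\alpha_m|^2+|\beta_m|^2$ on $[s_0,\infty)$, so a nonzero solution grows exponentially and cannot decay; by uniqueness for the ODE it vanishes identically. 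The single exceptional frequency $\omega_m=0$ (possible only if $O$ has eigenvalue $1$) is immediate: $\beta_m\equiv0$ from $\beta_m(0)=0$, and $\alpha_m'=kf(s)\alpha_m$ with $\int_0^s kf\to\infty$ forces $\alpha_m\equiv0$. Hence $\ker D_{\mathrm{norm}}=0$, so $\dim\ker D=1$, and by Proposition~\ref{prop:DtoDhat} the operator $\mathbf{D}_{u_{\gamma,g}}$ is surjective.
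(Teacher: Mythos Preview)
Your proof is correct. The tangential part (the $(a_n,b_n)$ system) is handled exactly as in the paper: the substitution $\tilde b_n=b_n/\chi\circ f$, the resulting Cauchy--Riemann system, Schwarz reflection, and Liouville.

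For the normal part, however, you take a genuinely different route. The paper introduces the multiplier $\rho(s)=\exp\big(\int_0^s\frac{1-\chi\circ f}{f}\big)$, sets $\tilde a_l=\rho a_l$, $\tilde b_l=\rho b_l$, and derives the scalar elliptic inequality $\Delta\|\tilde b'\|^2+g(s)\partial_s\|\tilde b'\|^2\ge 0$ with $g(s)=-\rho'/\rho-kf$. A maximum-principle argument with a barrier $h_\eps=\|\tilde b'\|^2+\eps e^{-\alpha s-\frac{\beta}{2}t^2}$ then forces $\tilde b'\equiv 0$, after which $a'$ is dispatched by the remaining first-order ODE. Your approach instead Fourier-expands in $t$ after diagonalising the monodromy $O$, and reduces to a family of $2\times 2$ linear ODEs with Hermitian coefficient matrix $A_m(s)$; the positivity $k>0$ makes $\lambda_{\min}(A_m(s))\to 1$, so $|\alpha_m|^2+|\beta_m|^2$ is eventually increasing, which is incompatible with the required decay unless the mode is identically zero. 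Both arguments hinge on the same mechanism (the term $kf(s)$ drives exponential growth that rules out nontrivial decaying solutions), but yours avoids the PDE barrier construction entirely and is arguably more transparent. The paper's approach, on the other hand, handles all $l<n$ at once via $\|\tilde b'\|^2$ without ever invoking the monodromy decomposition or a mode-by-mode analysis, and uses the boundary condition $b'(0,\cdot)=0$ in an essential way (as the anchor for the maximum principle), whereas in your argument the decay at infinity alone already suffices for the nonzero-frequency modes.
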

\noindent{\it Proof:} Notice that the coefficients of our partial differential equations
depend only on the second derivatives of our metric, so they coincide for two metrics which
are tangent of order $2$.  We can therefore easily compute these
{coefficients under the assumptions of proposition \ref{prop:transversality} and we find
$$
\frac{\partial^2 g_{nn}}{\partial x_i\partial x_j}(t)=\left\{
\begin{array}{ll}
{2}k & \text{ if } j=i\neq n,\\
0 & \text{ else.}
\end{array}
\right.
$$
The solutions  $z=(a,b)$ of \eqref{eq:system1} thus satisfy
\begin{align}\label{eq:system2}
\begin{split}
\ds \frac{\partial a_l}{\partial s} + \frac{\partial b_l}{\partial t} - k f(s)a_l  & = 0 \qquad \forall\,l\neq n \\
\ds \frac{\partial b_l}{\partial s}-\frac{\partial a_l}{\partial t}
+ \frac {1-\chi\circ f(s)}{f(s)} b_l & = 0 \qquad \forall\,l \neq n \\
\ds \frac{\partial b_n}{\partial s}-\chi\circ f(s) \frac{\partial a_n}{\partial t}
-\chi'\circ f(s)b_n & = 0 \\
\ds \frac{\partial a_n}{\partial s}+\frac 1{\chi \circ f(s)}\frac{\partial b_n}{\partial t} & = 0
\end{split}
\end{align}
We need several changes  of variables. First we define $\tilde a_n := a_n$,
$\tilde b_n:=\frac{b_n}{\chi\circ f}$.
Let $\rho(s):=\exp\left({\ds \int_0^s}\frac {1 - \chi\circ f(s')}{f(s')}ds'\right):
[0, \infty) \to \R $ be a solution of the differential equation
$$
\rho'(s)=\rho(s)\frac {1- \chi\circ f(s)}{f(s)}, \quad \rho(0) = 1,
$$
and put $\tilde a_l:=\rho(s)a_l(s,t)$ and $\tilde b_l:=\rho(s)b_l(s,t)$ for $l = 1, \ldots, n-1$.

Since $f(s)$ tends to $\infty$ when $s$ goes to $\infty$ and $\chi(r)=r$ at infinity,
we see that $\rho$ and $\nf 1{\chi\circ f}$ tend to $0$ at $\infty$. As a result,
the $\tilde a_l,\tilde b_l,\tilde a_n$ satisfy the same boundary conditions at $s=0,\infty$
as $a_l,b_l,a_n$ for $l\neq n$, but now $\tilde b_n(0,t)\equiv 0$ {\it and} $\tilde b_n(
\infty,t)=0$. Putting also $g(s) := -\frac {\rho'}{\rho}(s) - kf(s)$,
a straightforward computation shows that these functions satisfy now
\begin{eqnarray}\label{eq:systilde}
\begin{array}{l}
\ds  \frac{\partial \tilde a_l}{\partial s}+\frac{\partial \tilde b_l}{\partial t}+g(s) \tilde a_l=0\\
\ds \frac{\partial \tilde b_l}{\partial s}-\frac{\partial \tilde a_l}{\partial t}=0
\end{array}
\hspace{,2cm}\forall\,l\neq n, \hspace{1cm}&
\begin{array}{l}
\ds \frac{\partial \tilde a_n}{\partial s}+\frac{\partial \tilde b_n}{\partial t}=0 \\
\ds \frac{\partial \tilde b_n}{\partial s}-\frac{\partial \tilde a_n}{\partial t}=0\\
\end{array}
\end{eqnarray}
Let us first focus on our system for $l\neq n$. The functions $\tilde b_l$ satisfy
\[
\Delta \tilde b_l+g(s)\frac{\partial \tilde b_l}{\partial s} = 0,
\]
together with the pseudo-periodicity and boundary conditions
\[
\tilde b_l(0)=0,\; \tilde b_l(s)\underset{s \to \infty}\lra 0,\qquad \tilde b'(s,t+\ell)=O\tilde b'(s,t).
\]

Since $\Delta h^2=2h\Delta h+2\Vert \nabla h\Vert^2$, we get $\Delta \tilde b_l^2+g(s)
\frac{\partial \tilde b_l^2}{\partial s}=2\Vert \nabla \tilde b_l\Vert^2\geq 0$.
Summing these $l$ equations we get
\begin{equation}\label{eq:b'}
\Delta \Vert \tilde b'\Vert^2+g(s)\frac{\partial \Vert \tilde b'\Vert^2}{\partial s} \geq 0.
\end{equation}
Define now $h_\eps(s,t):=\Vert \tilde b' \Vert^2 + \eps e^{- \alpha s-\frac\beta 2 t^2}$. An
immediate computation gives
$$
\Delta h_\eps+g(s)\frac{\partial h_\eps}{\partial s} \geq \eps(\alpha^2 -\beta+\beta^2t^2
- g(s)\alpha)e^{-\alpha s-\frac\beta 2t^2}.
$$
We recall that $g(s) = -\frac {\rho'}{\rho}(s) - kf(s) = -\frac{1 - \chi\circ f}{f}(s) - kf(s)$,
that $f(s)\underset{s\to \infty}\lra +\infty$, and that $\chi(r)=r$ when $r\gg 1$. We therefore
see that $-g$ is bounded from below on $[0,\infty)$,
so the right hand side of this last inequality is positive if $\alpha$ is large enough, which
we assume henceforth. The function $h_\eps$ is positive by definition, and does not have any
maximum on $[0,\infty)\times \R$ (because its Laplacian is positive at each critical point). Let
$(s_n,t_n)$ be a maximizing sequence for $h_\eps$, that is $h_\eps(s_n,t_n)\to \sup h_\eps$.
Obviously, $s_n$ remains bounded because $h_\eps$ converges uniformly to $0$ as $s$ goes to
$\infty$. We also claim that thanks to the quasi-peridodicity  ($\tilde b'(s,t+\ell) =
O\tilde b'(s,t)$), we can assume that $t_n\in[0,\ell]$.  Indeed, if Frac$(\frac {t_n}\ell)$
denotes the fractional part of $\frac{t_n}\ell$ (whose sign is the same as $t_n$'s), we have
$$
h_\eps(s_n,t_n) = \Vert \tilde b'(s_n,t_n)\Vert^2+e^{-\alpha s_n^2-\frac\beta 2t_n^2}
 = \Vert \tilde b'(s_n,\ell\text{Frac}(\tfrac{t_n}\ell))\Vert^2 + e^{-\alpha s_n^2-\frac\beta 2t_n^2}
\leq h_\eps(s_n,\ell\text{Frac}(\tfrac{t_n}\ell)).
$$
Since $(s_n,t_n)$ cannot accumulate at an interior point, we see that $s_n\to 0$, so
$\sup h_\eps=\eps$. This implies that $h_\eps\leq \eps$, so $\Vert \tilde b'\Vert^2\leq \eps$.
Since this holds for all $\eps>0$, we conclude that $\tilde b'\equiv 0$, so $b'=0$. Thus,
using the second equation of the system \eqref{eq:system2}, we see that $a'=a'(s)$, and
from the first equation, that
$$
\frac{d a'(s)}{ds} - k f(s)a'(s)=0.
$$
This last equation gives
$$
a'(s) = a'(0)e^{k\int_0^sf(u)du}.
$$
Letting $s\to \infty$, and taking into account that $f(s) > 0$ and $a'(\infty)=0$, we thus
get $a'(0)=0$, and therefore $a'=0$.

The equations for $(\tilde a_n,\tilde b_n)$ are the standard Cauchy-Riemann equations, so
$\tilde h(z):=\tilde a_n(z)+i\tilde b_n(z)$ is a holomorphic function. By the condition
$\tilde b_n(0,t)=0$, Schwarz reflection provides an extension $h:\C\to \C$, and in view of
the boundary conditions and periodicity, this extension is bounded. By Liouville's theorem,
this function $h$ is constant, so $\tilde b_n\equiv 0$ and $\tilde a_n$ is a constant.
We therefore get that $\ker D$ is one-dimensional, being parameterized by
the sole value of the constant $\tilde a_n=a_n$. By proposition \ref{prop:DtoDhat}, $D$ is surjective. 
\cqfd

\subsection{A good Riemannian metric} \label{sec:goodmetric}
The aim of this paragraph is to construct a metric close to the reference metric
$g$, which satisfies the assumptions of proposition \ref{prop:transversality}.
A metric $g'$ is said to be $\eps$-close to a metric $g$ if $\forall\, q\in L$
and $\forall\,u,v\in T_qL$, $| g_q(u,v)-g_q'(u,v) | \leq \eps \| u \|_g \| v \|_g$.
This defines a topology on the space of metrics which we call the $\cc^0$-topology.

\begin{prop}\label{prop:goodmetric}
Let $g$ be a Riemannian metric on a manifold $L$ and $\beta\in H_1(L)$.
 For
all $\eps>0$ there exists a Riemannian metric $g_{\eps,\beta}$ with the
following properties:
\begin{enumerate}
\item $g_{\eps,\beta}$ is $\eps$-close to $g$ in the $\cc^0$-topology,
\item $g_{\eps,\beta}$ has a unique minimizing geodesic $\gamma$ in the
class $\beta$. Moreover, for some  generalized Fermi coordinates
$\phi:\R\times (-1,1)^{n-1}\looparrowright L$ near $\gamma$ and some $k > 0$,
$\phi^*g_{\eps,\beta}=(1 + k\Vert x'\Vert^2)\delta_{ij}+o(\Vert x'\Vert^2)$.
In other terms, $\phi^*g_{\eps,\beta}$ is tangent at order $2$ to a metric
with constant scalar curvature $-k$.
\end{enumerate}
As a result of proposition \ref{prop:transversality} and lemma \ref{le:uniquenessjg},
we see that there exists a unique solution to $\cp(J_{g_{\eps,\beta}},\beta)$
at which the differential of the operator $\bar \partial_{J_{g_{\eps,\beta}}}$
is surjective. 
\end{prop}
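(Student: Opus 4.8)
The plan is to construct $g_{\eps,\beta}$ from $g$ by two successive modifications, each supported in a thin tube around a chosen minimizing geodesic, and each leaving intact the structure needed to invoke proposition \ref{prop:transversality} and lemma \ref{le:uniquenessjg} at the end.

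First I would fix, using compactness of $L$, a closed geodesic $\gamma$ of minimal length in the class $\beta$ for $g$, and then make it the \emph{unique} minimizing geodesic by a conformal perturbation. Choose $\psi\le 0$ on $L$, supported in a tubular neighbourhood of $\gamma$, equal to $\eta(\mathrm{dist}_g(\cdot,\gamma)^2)$ near $\gamma$ with $\eta$ smooth, $\eta(0)=c<0$ a strict minimum, $\eta$ non-decreasing and vanishing away from $0$; then $\psi-c=O(\mathrm{dist}^2)$, so $\nabla\psi\equiv 0$ along $\gamma$ and $\gamma$ remains a geodesic of $g_0:=e^{2\psi}g$. Since $\psi\ge c$ with $\{\psi=c\}=\gamma$, every closed curve $c'$ in $\beta$ satisfies $\mathrm{Length}_{g_0}(c')=\int_{c'}e^{\psi}\,\mathrm{d}s_g\ge e^{c}\,\mathrm{Length}_g(c')\ge e^{c}\ell_g^{\min}(\beta)=\mathrm{Length}_{g_0}(\gamma)$, with equality forcing both $\psi\equiv c$ along $c'$ (hence $c'\subseteq\gamma$, i.e. $c'=\gamma$) and $\mathrm{Length}_g(c')=\ell_g^{\min}(\beta)$. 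So $\gamma$ is the unique minimizing geodesic of $g_0$ in $\beta$; write $\ell_0:=\mathrm{Length}_{g_0}(\gamma)$. Taking $|c|$ small makes $g_0$ as $\cc^0$-close to $g$ as we wish.

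Next I would correct the transverse $2$-jet. Build generalized Fermi coordinates $\phi:\R\times(-1,1)^{n-1}\looparrowright L$ for $g_0$ along $\gamma$ as in \S\ref{sec:transversality} and definition \ref{def:genfermi} (if the normal exponential map immerses only a thinner tube, precompose the transverse variables with a radial diffeomorphism that is the identity near $0$ and slows down near the boundary; this changes nothing near $\gamma$). Then $\phi^*g_0=\delta_{ij}+q_{ij}(x_n,x')+o(\|x'\|^2)$, where $q$ is quadratic in $x'$ and, as one checks, transforms equivariantly under the holonomy $O$ of $g_0$ around $\gamma$. Fix $k$ larger than $n$ times a bound on the second derivatives of $g_0$, so that the symmetric matrix $k\|x'\|^2\delta_{ij}-q_{ij}(x_n,x')$ is positive semidefinite for every $(x_n,x')$. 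With a cutoff $\chi_0=\chi_0(\|x'\|)$ equal to $1$ for $\|x'\|\le r_0/2$ and to $0$ for $\|x'\|\ge r_0$, the tensor $\chi_0(x')\,(k\|x'\|^2\delta_{ij}-q_{ij}(x_n,x'))\,\mathrm{d}x^i\mathrm{d}x^j$ is $O(\|x'\|^2)$, non-negative, supported in $\{\|x'\|<r_0\}$ and $O$-equivariant, hence descends to a non-negative symmetric $(0,2)$-tensor $\tau$ on $L$ with $\tau$ and $\nabla\tau$ vanishing along $\gamma$. Set $g_{\eps,\beta}:=g_0+\tau$, a metric for $r_0$ small and $\cc^0$-close to $g_0$. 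Since $\tau$ and $\nabla\tau$ vanish on $\gamma$, the metric $g_{\eps,\beta}$ has the same $1$-jet as $g_0$ along $\gamma$: $\gamma$ is still a $g_{\eps,\beta}$-geodesic, the frame defining $\phi$ stays parallel, $O$ stays the holonomy, and $\phi$ is still generalized Fermi for $g_{\eps,\beta}$; moreover on $\{\|x'\|<r_0/2\}$ one has $\phi^*g_{\eps,\beta}=(\delta+q+o)+(k\|x'\|^2\delta-q)=(1+k\|x'\|^2)\delta_{ij}+o(\|x'\|^2)$, which is the expansion in property 2. Because $\tau\ge 0$ and $\tau|_\gamma=0$, $\mathrm{Length}_{g_{\eps,\beta}}(c')\ge\mathrm{Length}_{g_0}(c')\ge\ell_0=\mathrm{Length}_{g_{\eps,\beta}}(\gamma)$ for every $c'$ in $\beta$, with equality forcing $\mathrm{Length}_{g_0}(c')=\ell_0$, hence $c'=\gamma$; so $\gamma$ is still the unique minimizing geodesic. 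This gives properties 1 and 2, after which proposition \ref{prop:transversality} yields surjectivity of $\mathbf{D}_{u_{\gamma,g_{\eps,\beta}}}$ while lemma \ref{le:uniquenessjg} gives $\cm(J_{g_{\eps,\beta}},\beta)=\{[u_{\gamma,g_{\eps,\beta}}]\}$, which is the final assertion.

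I expect the main obstacle to be the apparent conflict between normalizing the transverse $2$-jet and keeping $\gamma$ minimizing: the $2$-jet correction is additive and a priori sign-indefinite, so it could in principle create a competitor shorter than $\gamma$. The point that resolves it is that $k$ may be taken arbitrarily large; choosing it above the $\cc^2$-size of $g_0$ forces $\tau=\chi_0(k\|x'\|^2\delta-q)$ to be non-negative, so all lengths only increase while $\mathrm{Length}(\gamma)$ is unchanged (as $\tau|_\gamma=0$), and uniqueness descends for free from $g_0$ — no quantitative gap estimate is needed. The remaining work is bookkeeping: verifying the holonomy-equivariance that lets $\psi$ and $\tau$ descend from the (possibly multivalued) Fermi chart to honest objects on $L$, and checking that both modifications vanish to high enough order along $\gamma$ to preserve its being a geodesic and the Fermi normalization $(\phi^*g)_{ij}=\delta_{ij}+O(\|x'\|^2)$.
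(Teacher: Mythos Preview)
Your strategy is sound and parallels the paper's key idea: arrange that the new metric dominates a reference metric, with equality precisely on $\gamma$, so $\gamma$ stays the unique minimizer while the transverse $2$-jet is forced into the model form. The paper does this in a single formula rather than two steps: after a preliminary generic perturbation ensuring both uniqueness and that the primitive geodesic $\gamma'$ is simple, it sets
\[
g_\eps:=\rho_\eps(\|x'\|^2)\,(1+nC\|x'\|^2)\,\phi_*\delta_{ij}+(1-\rho_\eps(\|x'\|^2))\,(1+\eps)\,g,
\]
interpolating between the model metric near $\gamma$ and a slightly inflated $g$ away from it; the comparison $g_\eps\ge g$ (equality exactly on $\gamma$) follows from the Fermi-remainder bound $|h_{ij}|\le C\|x'\|^2$. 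Your two-step route (conformal for uniqueness, then additive $\tau\ge 0$ for the $2$-jet) is more modular and makes the uniqueness step explicit rather than citing genericity; the paper's formula is more direct and yields the \emph{exact} form $(1+nC\|x'\|^2)\delta_{ij}$ on a full neighbourhood of $\gamma$ rather than only up to $o(\|x'\|^2)$.

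There is, however, one genuine gap: both of your steps tacitly require the primitive geodesic $\gamma'$ underlying $\gamma$ to be \emph{simple} (embedded). In step~1, $\mathrm{dist}_g(\cdot,\gamma)^2$ fails to be smooth near a self-intersection of $\gamma'$, so $\psi$ is not smooth there. In step~2, holonomy-equivariance under the $\ell'$-shift lets $\tau$ descend only if the Fermi tube is a genuine covering of its image; at a self-intersection of $\gamma'$ two unrelated values of $x_n$ map to the same point of $L$, an identification not governed by $O$, so your ``bookkeeping'' does not close. In dimension $\ge 3$ a primitive minimizing geodesic need not be simple, so this is not vacuous. The fix is exactly what the paper does: first perturb $g$ in the $\cc^l$-topology so that the $\beta$-minimizer is unique and its primitive is simple (both are $\cc^l$-generic properties), then run your step~2 directly. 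Your conformal step~1 then becomes optional.
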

\noindent {\it Proof:}
Since the Riemannian metrics with a unique minimizing geodesic in class
$\beta$ are $\cc^l$-generic (for any $l$), we can assume that this
uniqueness property holds for $g$. Then the minimizing geodesic $\gamma(\beta)$ 
is the $k$-cover of some primitive geodesic $\gamma'$ in a class $\beta'$ such that 
$k\beta'=\beta$ (we say that a geodesic is primitive if it is not a multiple-cover of 
another one). It is then immediate to check that $\gamma(\beta')=\gamma'$. In 
dimension $2$, a primitive geodesic that minimizes the length in a homology class 
is always simple (i.e. injective). In dimension higher than $2$,  the property of a 
primitive geodesic being simple is $\cc^l$-generic (in the metric), so a further 
perturbation of $g$ ensures that $\gamma'$ is simple. Summarizing this preparatory 
discussion, we can slightly perturb $g$ so as to ensure that $\gamma := \gamma(\beta) 
= \gamma(k\beta')$ for some $k\geq 1$ is the unique $\beta$-minimizing geodesic, 
while $\gamma':=\gamma(\beta')$ is a simple closed geodesic. We denote $\ell'$ its length
and let $\phi:\R\times(-1,1)^{n-1} \looparrowright L$ be Fermi coordinates near $\gamma'$.
Then,
$$
\phi^*g=\delta_{ij}+h_{ij}(x_n,x'), \qquad |h_{ij}(x_n,x')|\leq C(x_n)\Vert x'\Vert^2.
$$
The pseudo-periodicity of $\phi$ has some immediate consequences.
\begin{itemize}
\its $\phi_*\Vert x'\Vert$ is a well-defined function locally near {$\gamma'$} on $L$.
With slight abuse of notation, we therefore understand $\Vert x'\Vert$ as a function
defined on both $\R\times (-1,1)^{n-1}$ and $L$ (near {$\gamma'$}).
\its Similarly, $\phi_*\delta_{ij}$ is a well-defined tensor locally near {$\gamma'$} on $L$,
because the ambiguity in multi-valuedness of $\phi$ is given by an orthogonal matrix
$O$ in the fiber direction, whose derivative is again $O$, which precisely preserves
the metric $\delta_{ij}$. By construction of the Fermi coordinates, $g=\phi_*\delta_{ij}+O(\Vert x'\Vert^2)$.
\its $h_{ij}(x_n+{\ell'},x')=h_{ij}(x_n,Ox')$, so our bound on $h_{ij}$ is in fact uniform
in $x_n$: $|h_{ij}(x_n,x')|$ $\leq C\Vert x'\Vert^2$  for some constant $C$ large enough.
\end{itemize}
Let now $\rho:[0,\infty) \to [0,1]$ be a smooth non-increasing function that equals $1$ near $0$,
with $\supp \rho=[0,1]$, and define $\rho_\eps(t):=\rho(\nf t\eps)$. The previous
remarks show that
$$
g_\eps:=\rho_\eps(\Vert x'\Vert^2) (1+nC\Vert x'\Vert^2)\phi_*\delta_{ij}+(1-
\rho_\eps(\Vert x'\Vert^2))(1+\eps)g
$$
is a well-defined metric on $L$. We claim that it has the required properties
provided $\eps$ is chosen small enough. Indeed, since both $g$ and $(1+nC\Vert
x'\Vert^2)\phi_*\delta_{ij}$ are tangent to $\phi_*\delta_{ij}$, the difference
$\Vert g-g_\eps\Vert_{\cc^0}$ is of order $\eps$ for $\eps$ small enough, so point
(1) of proposition \ref{prop:goodmetric} holds. Next, we prove that $\gamma$
is the unique $\beta$-minimizing geodesic for $g_\eps$.
Notice that
\begin{align*}
\phi^*g_\eps-\phi^*g
& = \rho_\eps(\|x'\|^2)\left((1+nC\|x'\|^2)\delta_{ij}-(\delta_{ij}+h_{ij}(x_n,x'))\right)
 + (1-\rho_\eps(\|x'\|^2))\eps g\\
& \geq \rho_\eps(\|x'\|^2)(nC\|x'\|^2\delta_{ij}-h_{ij}(x_n,x')),
\end{align*}
and the estimate $|h_{ij}(x_n,x')|\leq C\|x'\|^2$ holds. Thus, $\phi^*g_\eps\geq \phi^*g$
for $\eps$ small enough, with equality if and only if $x'=0$.
Thus, $g_\eps\geq g$ in the \nbd of $\im\gamma'=\im \gamma$ endowed with our Fermi coordinates,
with equality exactly on {$\im \gamma$}. On the other hand, if $\eps$ is small enough,
$g_\eps=(1+\eps)g$ outside this neighbourhood. Thus we see that $g_\eps\geq g$ on $L$,
with equality exactly on {$\im \gamma$}. Then $\ell_{g_\eps}(\gamma)=\ell_g(\gamma)=\ell$,
while if {$\gamma_2$} is any other closed connected curve in the class $\beta$,
$\ell_{g_\eps}({\gamma_2})>\ell_g({\gamma_2})\geq \ell$. We conclude that $\gamma$ is
indeed the unique $g_\eps$-minimizing geodesic  in the class $\beta$.
Moreover, in the multi-valued coordinates $\phi$ near $\gamma$, we have $\phi^*g_\eps
=(1+nC\Vert x'\Vert^2)\delta_{ij}$.
Finally, notice that although $\phi$ does not provide Fermi
coordinates for $g_\eps$ near {$\gamma'$}, it does provide generalized Fermi coordinates
because $\phi(x_n+{\ell'},x')=\phi(x_n,Ox')$ and $\phi^*g_\eps$ is indeed tangent to
$\delta_{ij}$ at order $2$ (see definition \ref{def:genfermi}). Point (2)
therefore holds for $g_\eps$. \cqfd

\begin{rem}\label{rk:goodmetric} We have proved more than stated. In fact,
we see that we can even choose a minimizing geodesic $\gamma$ for $g$ and
construct a \emph{deformation} of $g$ that achieves the conditions of
proposition \ref{prop:goodmetric}, for which $\gamma$ remains a geodesic
for the parameter of the deformation. This point is not really important
in our argument. It simply allows for an easy and explicit computation of
the index of the operator $\mathbf{D}_{u_{\gamma,g}}$ (corollary \ref{cor:index}).
\end{rem}

\subsection{Somewhere injectivity}\label{sec:somewhereinj}
We recall the context first. $(L,g)$ is a Riemannian manifold, $J\in \cj_{\cyl,g}^\infty$
is an almost complex structure on $T^*L$, $\beta \in H_1(L)$ is a class with a unique
minimal representative, which has a primitive geodesic (i.e. not a multiple cover).
This last assumption is essential in this section.
Let $\cm(J,\beta)$ be the set of $J$-holomorphic
maps from the punctured disc to $T^*L$, asymptotic at $0$ to a lift of the minimal geodesic in the class
$\beta$ and with boundary on $L$ (see the precise definition on
p. \pageref{thm:existence}). Recall that a map $u: D\priv \{0\}\to T^*L$ is said
to be {\it somewhere injective in a region $K\subset T^*L$} if there exists a point
$z\in D\priv \{0\}$ with $u(z)\in \mathrm{int}\,K$ and $u^{-1}(u(z))=\{z\}$. We call
such a point an injectivity point.
The aim of this section is to show that all elements of the moduli space
$\cm(J,\beta)$ have a somewhere injective point in a region where we will be
free to vary our almost complex structure.

\begin{lemma}[Somewhere injectivity]\label{le:injpoints}
For any $0<r<R< \infty$ every element $u\in \cm(J,\beta)$ has an injectivity point
in $T^*_RL\priv T^*_rL$.
\end{lemma}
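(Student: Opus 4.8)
The plan is to reduce the statement to two facts about an arbitrary $u\in\cm(J,\beta)$: first, that $u$ is globally somewhere injective, so that its injectivity points form a dense subset of $D\priv\{0\}$; and second, that $u$ genuinely passes through the annular region, i.e. $u^{-1}\big(\mathrm{int}(T^*_RL\priv T^*_rL)\big)$ is nonempty. Granting both, the dense set of injectivity points meets this nonempty open set, and any point of the intersection is an injectivity point of $u$ in $T^*_RL\priv T^*_rL$.

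First I would establish somewhere injectivity. Since $u$ is asymptotic at its unique puncture to the nonconstant Reeb orbit $\tilde\gamma$, it is nonconstant and of finite energy, so the structure theory of finite-energy punctured holomorphic curves (reducing on compact subsets to \cite{mcsa} and controlling the single cylindrical end by the asymptotic condition) yields a factorization $u=v\circ\phi$, where $\phi\colon D\priv\{0\}\to\Sigma'$ is a holomorphic branched cover of a connected surface with boundary, sending $\partial D$ to $\partial\Sigma'$ and the puncture to a puncture of $\Sigma'$, and $v$ is somewhere injective. An Euler-characteristic count---$\chi(D\priv\{0\})=0$ and $D\priv\{0\}$ has a single end---forces $\Sigma'$ to be a once-punctured disc, $\phi$ to be unbranched of some degree $k$ with local degree $k$ at the puncture, so that $\tilde\gamma$ is the $k$-fold iterate of the asymptotic orbit of $v$. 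Here the hypothesis that $\gamma$ is a \emph{primitive} geodesic is essential: if $\tilde\gamma(t_1)=\tilde\gamma(t_2)$ for distinct $t_1,t_2\in\R/\ell\Z$, then $\gamma$ and its velocity coincide at $t_1$ and $t_2$, so by uniqueness of geodesics $\gamma$ has a period in $(0,\ell)$; since the periods of a nonconstant closed geodesic form a subgroup $\tfrac{\ell}{m}\Z$ of $\R$, this makes $\gamma$ an $m$-fold cover with $m\geq 2$, contradicting primitivity. Hence $\tilde\gamma$ is a simple (embedded) Reeb orbit, which forces $k=1$ and $u=v$. The somewhere injective case of \cite{mcsa}, valid for curves with totally real boundary, then shows that the set $Z\subset D\priv\{0\}$ of points $z$ with $du(z)\neq 0$ and $u^{-1}(u(z))=\{z\}$ is open and dense.

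Next I would argue that $u^{-1}\big(\mathrm{int}(T^*_RL\priv T^*_rL)\big)\neq\emptyset$ using the radial coordinate $\varrho:=\|\cdot\|_g\colon T^*L\to[0,\infty)$, which is continuous and vanishes exactly on the zero section. On the connected set $D\priv\{0\}$ the function $\varrho\circ u$ is continuous, vanishes on $\partial D$ (since $u(\partial D)\subset L$), and tends to $+\infty$ at the puncture: the asymptotic condition $d\big(u(s,t),v_{\gamma,g}(s,t)\big)\to 0$ for an $\R^+_*$-invariant distance $d$, together with the fact that $v_{\gamma,g}(s,t)$ has radial coordinate $e^s$, forces $\varrho(u(s,t))/e^s\to 1$. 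By the intermediate value theorem $\varrho\circ u$ attains every value in $(r,R)$, and at such a point $z$ we have $u(z)\in\mathrm{int}(T^*_RL\priv T^*_rL)$ and, since $\varrho(u(z))>0$, also $z\in\mathrm{int}\,D$. Thus $u^{-1}\big(\mathrm{int}(T^*_RL\priv T^*_rL)\big)$ is a nonempty open subset of $\mathrm{int}\,D$, it meets the dense set $Z$, and any point of the intersection is the desired injectivity point.

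I expect the somewhere injectivity step to be the main obstacle: it requires the branched-cover structure theorem in the punctured-with-boundary setting---which one proves by reducing carefully to the compact statement of \cite{mcsa} on an exhaustion while using the asymptotics to handle the cylindrical end---and then the simplicity of $\tilde\gamma$, coming from primitivity of $\gamma$, to exclude nontrivial covers. The radial-coordinate argument and the density of injectivity points are routine by comparison.
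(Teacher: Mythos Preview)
Your strategy—factor $u=v\circ\phi$ through a simple curve, use primitivity of $\gamma$ and an Euler-characteristic count to force $\deg\phi=1$, then intersect the resulting dense set of injective points with the annular preimage—is clean in outline, but the factorization step is a genuine gap. The dichotomy ``either somewhere injective or a nontrivial branched cover of a simple curve'' holds for closed curves and for punctured curves \emph{without} boundary, but it \emph{fails} for $J$-holomorphic discs with Lagrangian boundary: Lazzarini's decomposition shows that a non-simple disc need not factor through a single simple disc via a branched cover. Your domain carries a Lagrangian boundary, so this obstruction is live, and your proposed reduction ``to the compact statement of \cite{mcsa} on an exhaustion'' does not address it—it is the boundary, not the puncture, that breaks the McDuff--Salamon argument. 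The simple-orbit asymptotics do give you injectivity near $0$, i.e.\ at \emph{large} radius, but nothing you have written propagates this to density of injective points down in $\{r<\|p\|_g<R\}$: in the open-closed argument one would run, the uncontrolled case is a sequence of multiply-hit points $z_n\to z$ whose partners $z_n'$ escape to $\partial D$, and your outline does not exclude a self-overlapping piece of $u$ near the zero section.

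The paper's proof avoids any global structure theorem by a direct two-stage argument. First it restricts $u$ to the connected component $\Om$ of $\{\|u\|_g>\eps\}$ containing the puncture, so that $v:=u_{|\Om}$ is \emph{proper} into $T^*L\setminus T^*_\eps L$ and carries no Lagrangian boundary; properness is exactly what makes the open-closed argument (with Micallef--White controlling the discrete sets $\cc(v)$ and $\cd(v)$) propagate injectivity of $v$ from a neighbourhood of $0$ along a path to $\partial\Om$, hence across the annulus. Second, a separate contradiction argument rules out that the \emph{other} components $\Om_i$ of $\{\|u\|_g>\eps\}$ (each compact, so each $u_{|\Om_i}$ proper with bounded image) hit those same image points, by showing this would force $\im v\subset\im u_{|\Om_1}$, which is impossible since $\|v\|_g$ is unbounded. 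Your intermediate-value step for entering the annulus is fine and mirrors the paper's; what needs replacing is the upstream structural input, and the paper's properness-based localization is precisely the missing ingredient.
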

\noindent {\it Proof:} Let $u:(D\priv \{0\},\partial D)\to (T^*L,L)$ be a
$J$-holomorphic map asymptotic to the lift of the geodesic $\gamma(\beta)$, which is 
primitive by assumption. Notice that this lift is then automatically injective.
Let $0<\eps<r$ be a regular value of $\|u\|_g$, so that the subset $\{\,\|u\|_g = \eps\,\}
\subset D\priv\{0\}$ is a finite union of closed circles. Let $\Om$ be the connected
component of a \nbd of $0$ in $\{\,\|u\|_g > \eps\,\}$ and $\Om_1,\dots,
\Om_n$ the other components (there are finitely many). Let $v:=u_{|\Om}$ and
$v_i:=u_{|\Om_i}$. Note that since $u(\partial D)\subset L$, none of these
components touches $\partial D$.\\ \newline
\textbf{Somewhere injectivity of $v$.} $v$ is a proper $J$-holomorphic map
 $\Om\to T^*L\priv T^*_\eps L$. We define the following subsets of $\Omega$,
\begin{align*}
\ci(v)& := \{\, z \, | \, v^{-1}(v(z))=\{z\}\, \}, \\
\cc(v)& := \{\, z \, | \, dv(z)=0 \, \}, \\
\cd(v)& := \{\, z \, | \, \exists\,z'\neq z, v(z)=v(z') \text{ and } v(\op(z))\neq v(\op(z')) \, \}.
\end{align*}
Another description of $\cd(v)$ is that the image of the restriction
of $v$ to {\it any} two \nbds of $z$ and $z'$ never coincides (this set corresponds
to the points where distinct branches of $v$ intersect).
It is well-known that the Micaleff-White theorem implies that the sets $\cc(v)$ and
$\cd(v)$ are discrete (in fact $\cc(v)$ is at least finite in our situation)
\cite{miwh,mcsa2}. Moreover, we have the following standard fact.
\begin{claim}
$\ci(v)\priv \cc(v)$ is open and $\ci(v)\cup \cd(v)$ is closed.
\end{claim}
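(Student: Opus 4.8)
The plan is to prove the two assertions separately, using repeatedly that $v$ is \emph{proper} onto $T^*L\priv T^*_\eps L$, that $v$ is non-constant (hence the preimage of any point is discrete, by the leading-order expansion of a $J$-holomorphic map), and the local structure of $J$-holomorphic curves near a point (Micallef--White), which in particular gives that $\cc(v)$ and $\cd(v)$ have no accumulation point in $\Om$ and that near any point the image of $v$ lies on an embedded $J$-holomorphic disc on which $v$ reads as a non-constant holomorphic function of one variable.

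For openness of $\ci(v)\priv\cc(v)$, fix $z_0\in\ci(v)\priv\cc(v)$. Since $dv(z_0)\neq 0$ and $v$ is $J$-holomorphic, $v$ is an immersion near $z_0$, hence a topological embedding on some neighbourhood $U_0$ of $z_0$, which we also shrink so that $U_0\cap\cc(v)=\emptyset$. I claim that, after a further shrinking, $U_0\subset\ci(v)$. If not, there is a sequence $z_n\to z_0$ together with points $z_n'\neq z_n$ and $v(z_n')=v(z_n)$. As $v(z_n')=v(z_n)\to v(z_0)$ and $v(z_0)$ lies in the interior of $T^*L\priv T^*_\eps L$, properness confines $(z_n')$ to a compact subset of $\Om$; a subsequential limit $z_0'\in\Om$ then satisfies $v(z_0')=v(z_0)$, so $z_0'=z_0$ because $z_0\in\ci(v)$, and hence $z_n'\to z_0$. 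But then $z_n,z_n'\in U_0$ for $n$ large with $v|_{U_0}$ injective, contradicting $z_n'\neq z_n$. Thus $U_0\subset\ci(v)\priv\cc(v)$.

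For closedness of $\ci(v)\cup\cd(v)$, take $z_0\in\overline{\ci(v)\cup\cd(v)}$; if $z_0\in\cd(v)$ we are done, so suppose $z_0\notin\cd(v)$. Since $\cd(v)$ has no accumulation point in $\Om$, a neighbourhood of $z_0$ is disjoint from $\cd(v)$, so $z_0$ is a limit of points of $\ci(v)$, say $z_n\to z_0$ with $z_n\in\ci(v)$; it remains to show $z_0\in\ci(v)$. Assume not: there is $z_0'\neq z_0$ with $v(z_0')=v(z_0)$. Because $z_0\notin\cd(v)$, the branches of $v$ at $z_0$ and $z_0'$ coincide; reading $v$ near each of these two points through the embedded disc supplied by the local structure theorem, one produces neighbourhoods $U\ni z_0$ and $U'\ni z_0'$ with $z_0\notin\overline{U'}$ and $v(U)=v(U')$. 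Then for $n$ large $z_n\in U$, hence $v(z_n)\in v(U)=v(U')$, so $v(z_n)=v(z_n')$ for some $z_n'\in U'$; since $z_0\notin\overline{U'}$ we have $z_n\notin U'$ for $n$ large, so $z_n'\neq z_n$, contradicting $z_n\in\ci(v)$. Hence $z_0\in\ci(v)$ and the claim follows.

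I expect the main obstacle to be the bookkeeping with second preimages in both parts: this is exactly where properness is indispensable, both to prevent the auxiliary sequence $(z_n')$ from escaping to $\partial\Om$ or to the puncture in the first part, and implicitly (together with non-constancy and the local normal form) to make sense of ``the branch of $v$ at a point'' and to arrange the matching neighbourhoods $U,U'$ with $z_0\notin\overline{U'}$ in the second part. Once those local models are set up, everything else is routine neighbourhood-chasing.
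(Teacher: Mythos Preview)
Your proof is correct and follows essentially the same approach as the paper. In both parts you and the paper use properness of $v$ to extract convergent subsequences of second preimages, and the discreteness of $\cc(v)$ and $\cd(v)$ (from Micallef--White) to reduce to limits of injective points; your second part spells out the ``matching neighbourhoods'' $v(U)=v(U')$ a bit more explicitly than the paper, which simply invokes the definition of $\cd(v)$ to conclude that $v(\op(z'))$ cannot coincide with $v(\op(z))$, but the argument is the same.
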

Indeed, since $\cc(v)$ is discrete, from a sequence of points $z_n$ in the
complement of $\ci(v)\priv \cc(v)$  that converges to a point $z\in \Om$, we
can either extract a constant subsequence in $\cc(v)$ or a subsequence in ${}^c\ci(v)$.
In the first case, $z\in \cc(v)\subset {}^c\big(\ci(v)\priv \cc(v)\big)$, so we
assume henceforth the latter and we replace $z_n$ by its subsequence, so
$z_n\notin \ci(v)$ $\forall n$. Thus, there exist points $z_n'\neq z_n$ in $\Om$
such that $v(z_n)=v(z_n')$. Since $v$ is proper, continuous, and $v(z_n)$ converges,
we can extract further so that $z_n'\to z'\in \Om$, with $v(z)=v(z')$.  If $z\neq z'$,
these points do not belong to $\ci(v)$, and if $z=z'$, $v$ is injective in no \nbd of
$z$, so $z\in \cc(v)$.

To see that $\ci(v)\cup \cd(v)$ is closed, it is enough to
prove that $\overline{\ci(v)}\subset \ci(v)\cup \cd(v)$ (because $\cd(v)$ is discrete).
Consider a sequence $z_n\in \ci(v)$ that converges to a point $z\in \Om\priv \ci(v)$.
Then, there exists $z'\neq z$ such that $v(z')=v(z)$, and since $z_n\in \ci(v)$,
$v(\op(z'))$ cannot coincide with $v(\op(z))$. {This finishes the proof of our claim.}

Since $\cc(v),\cd(v)$ are discrete their union is countable. So there exists a
smooth embedded path $\gamma\subset D$ emanating from the origin,
joining $\partial D$, that avoids $\cc(v)$ and $\cd(v)$ and such that
$\gamma \cap \Om$ is connected. It is then clear that
$\ci(v)\cap \gamma\cap \Om$ is both open and closed in $\gamma\cap \Om$.
Now since $v$ is asymptotic at $0$ to
the lift of a primitive geodesic, which is injective as we noticed already, a \nbd 
of $0$ in $\Om$ is contained in $\ci(v)$. We therefore conclude
that $\gamma\cap \Om\subset \ci(v)$ and by the intermediate value theorem
there exists an element $z\in \ci(v)$ with $v(z)=u(z)\in T^*_RL\priv T^*_rL$
(recall that  $\|v(z)\|_g$ goes to $\infty$ when $z$ tends to $0$ and goes
to $\eps<r$ when $z$ tends to $\partial \Om$). In fact, there exists even a
connected open sub-arc $\gamma' \subset \gamma\cap\Om\subset \ci(v)$, whose
image by $v$ lies in $T^*_R L \priv T^*_rL$. \\ \newline
\textbf{Somewhere injectivity of $u$.} We now conclude our proof by showing
that some point of $\gamma'$ is a point of injectivity for $u$ itself. We argue
by contradiction. Assume that this is not the case, then for every $z \in
\gamma'$ there exists a $j \in \{1, \ldots, n\}$ and a $\xi_j \in \Om_j$ such
that $v(z) = v_j(\xi_j)$. In particular for all $j$ the subsets $\{\,z \in \gamma'\,|\, \exists
\,\xi_j \in \Om_j: v(z) = v_j(\xi_j)\,\}$ are closed in $\gamma'$. Indeed,  if $z_n \in \gamma' \to z \in \gamma'$ is 
such that $\exists\, \xi_n \in \Om_j$ with $v(z_n) = v_j(\xi_n)$, then by properness of $v_j$, $\xi_n$ has a
subsequence converging to $\xi_* \in \Om_j$ and we have $v(z) = v_j(\xi_*)$.  
The union of these subsets covers $\gamma'$.
Since $\gamma'$ is open, this is only possible if each
subset is either empty or equals $\gamma'$.
Thus, one of the maps $v_i$ (say $v_1$) must satisfy $\im v_1\supset
v(\gamma')$. 
Let $\Om'\subset \Om$ be the connected component of $\gamma'$ in the set of points
$z\in \Om$ such that $v(z)=v_1(z')$ for some $z'\in \Om_1$. 
Then, by the same argument as above, $\Om'$ is closed in $\Om$ by properness of $v_1$.
Notice that since $\cc(v)$ is
finite, $\Om'\priv \cc(v)$ is also closed in $\Om\priv\cc (v)$. We claim that
$\Om' \priv \cc(v)$ is also open in $\Om \priv \cc(v)$. Indeed,
if $z\in \Om'\priv \cc(v)$,
there is by definition  a sequence of distinct points $z_n\in \Om'$ that converges
to $z$ (because $\Om'$ is the connected component of an open arc). 
Notice that the Micaleff-White theorem shows that the preimage of any point
$v(z_n)$ is finite, so we can assume that the $v(z_n)$ are distinct as well. By
definition of $\Om'$, there are distinct  points $z_n'\in \Om_1$ such that
$v_1(z_n')=v(z_n)\to v(z)$. Now since $z\in \Om$, we have $\|v(z)\|_g>\eps$, and since
$v_1$ is proper into $\{\|p\|_g > \eps\}$, the sequence $z_n'$ is compactly contained in
$\Om_1$. Thus, we can assume that $z_n'$ converges to $z'\in \Om_1$. Then, $v(z)=
v_1(z')$, and using
\cite[Lemma 2.4.3]{mcsa} ($z\notin \cc(v)$), we get that $z\in \textrm{int}\,
\Om' \priv \cc(v)$.
Thus $\Om'\priv \cc(v)$ is open and closed in $\Om\priv \cc(v)$, which is connected,
so $\Om'\priv \cc(v)=\Om\priv \cc(v)$. Since $\Om'$ is also closed in $\Om$, we get that $\Om'=\Om$.
But this is impossible, since $\| v_1 \|_g$ is bounded while $\| v \|_g$ is not. \cqfd


\subsection{Compactness results for punctured holomorphic disks} \label{sec:compactnessI}
We state compactness results for sequences of punctured holomorphic disks
in cotangent bundles. In this section we will use results from symplectic
field theory and various constructions therein, such as the notion of holomorphic
buildings and the splitting of symplectic manifolds along a contact
hypersurface (stretching the neck). We give details where
possible, but for the sake of the exposition we refer the reader to
\cite{boelhowize} and \cite{abbas} for the precise definitions and notions
in symplectic field theory. In order to make this section more readable
we adapt the notation to the setting of punctured disks in cotangent
bundles, even though the results may hold in more general settings.

Our setting is as follows. For a closed Riemannian manifold $(L, g)$
we denote by $M \subset T^*L$ the unit cotangent bundle and by $\cw_g$ the
open unit disk bundle, i.e. $\partial \cw_g = M$. Then $(M, \alpha :=
\lambda|_M)$ is a contact manifold and $(T^*L, d\lambda)$ has a
cylindrical end $E$ given by $(E := T^*L \backslash \cw_g, d\lambda)
\simeq ([1,\infty) \times M, d(r \alpha))$. As before we denote by
$L \subset T^*L$ the zero-section and note that the above identification
extends to $(T^*L\backslash L ,d\lambda)\simeq ((0, \infty) \times M,
d(r \alpha))$.

\subsubsection{Energy of holomorphic curves in cotangent bundles}\label{sec:energy}
Let $J \in \mathcal{J}^{\infty}_{\mathrm{cyl},g}$ be an almost complex structure
on $T^*L$ that is cylindrical on the whole end $E$. Let $(S,j)$ be a compact
Riemann surface (possibly with boundary) and let $Z \subset S$ be a finite subset
of interior punctures. For a $J$-holomorphic map $F:(S \backslash Z, j) \to
(T^*L, J)$ we write $F = (a, f): U \subset S \backslash Z \to [1, \infty) \times
M$ on the subset $U := F^{-1}(E)$. In $(T^*L, \omega := d\lambda)$ we define the
$\omega$-energy of $F$ as
\begin{equation}\label{def:sympl_area_curve}
\mathcal{E}_{\omega}(F) = \int\limits_{F^{-1}(\cw_g)} F^*\omega
+ \int\limits_{F^{-1}(E)} f^*{d\alpha}.
\end{equation}
Let
$\Lambda := \left\{\, \varphi \in C_{c}^{\infty}([1,\infty))
\,\big|\, \varphi \geq 0, \int_1^{\infty} \varphi(s) ds = 1 \,\right\}$. We
define the $\alpha$-energy to be
\[
\mathcal{E}_{\alpha}(F) = \sup\limits_{\varphi \in \Lambda}
\; \int\limits_{F^{-1}(E)} \varphi \circ a \, da \wedge f^*\alpha
\]
The total energy of $F$ is then defined as the sum
\[
\mathcal{E}(F) = \mathcal{E}_{\omega}(F) + \mathcal{E}_{\alpha}(F).
\]

In order to apply the SFT compactness theorem we must find a uniform upper
bound on the energy of a sequence of punctured holomorphic curves. Compared with
the lemmas of these kind present in \cite{boelhowize}, for instance proposition 6.3,
the following estimation allows us to consider a sequence of almost complex
structures that are all cylindrical, but not fixed at infinity.

\begin{lemma}\label{lem:energy_bound}
Let $J$ be an almost complex structure on $T^*L$ that is cylindrical on the
end $E$. Let $\gamma$ be a closed unit speed geodesic on $L$ and $\tilde \gamma$ its lift
to $M$. For every $J$-holomorphic curve $u : (D \backslash \{ 0 \},
\partial D) \to (T^*L, L)$ asymptotic to $\tilde\gamma$ at $0$ we have
\[
\mathcal{E}(u) \leq {3}\alpha(\tilde \gamma) = 3\ell_g(\gamma).
\]
\end{lemma}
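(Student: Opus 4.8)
The plan is to bound each of the two pieces $\mathcal E_\omega(u)$ and $\mathcal E_\alpha(u)$ separately, by $\alpha(\tilde\gamma)$ and $2\alpha(\tilde\gamma)$ respectively, using Stokes' theorem on the punctured disk together with the asymptotic behaviour of $u$ at $0$. First I would treat the $\omega$-energy. Since $\omega = d\lambda$ and $u$ has boundary on the zero section $L$, on which $\lambda$ vanishes, Stokes' theorem applied to the region $D(0,1)\priv D(0,\eps)$ gives
\[
\int_{D\priv D(0,\eps)} u^*\omega = \int_{\partial D} u^*\lambda - \int_{\partial D(0,\eps)} u^*\lambda = -\int_{\partial D(0,\eps)} u^*\lambda.
\]
As $\eps\to 0$ the asymptotic condition ($u$ converges to the trivial cylinder $v_{\gamma,g}$ over $\tilde\gamma$) forces $\int_{\partial D(0,\eps)} u^*\lambda \to -\int_{\tilde\gamma}\alpha = -\ell_g(\gamma)$, with the sign determined by the orientation of the puncture. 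Hence $\int_D u^*\omega = \ell_g(\gamma)$, and since $\omega(\cdot,J\cdot)$ is positive the same computation with $f^*d\alpha$ on $U=u^{-1}(E)$ shows $\mathcal E_\omega(u) = \ell_g(\gamma) = \alpha(\tilde\gamma)$. (One must note that $d\alpha$ and $d\lambda$ agree on the cylindrical end under the identification, so the two integrals in \eqref{def:sympl_area_curve} assemble into a single Stokes computation.)

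Next I would bound $\mathcal E_\alpha(u) = \sup_{\varphi\in\Lambda}\int_{u^{-1}(E)}\varphi\circ a\, da\wedge f^*\alpha$. Fix $\varphi\in\Lambda$ and let $\Psi(s) := \int_1^s\varphi(\sigma)\,d\sigma$, so $\Psi$ is non-decreasing from $0$ to $1$ and $d(\Psi\circ a) = \varphi\circ a\, da$. Then $\varphi\circ a\, da\wedge f^*\alpha = d\big((\Psi\circ a)\, f^*\alpha\big) - (\Psi\circ a)\, f^*d\alpha$ on $u^{-1}(E)$. The second term contributes at most $\mathcal E_\omega(u) = \alpha(\tilde\gamma)$ in absolute value since $0\le\Psi\le 1$ and $f^*d\alpha\ge 0$. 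For the exact term, apply Stokes on the compact region $u^{-1}(\{1\le r\le R\})$ for large $R$: the boundary component over $r=1$ contributes nothing because $\Psi(1)=0$, the component near the puncture (where $a\to\infty$) contributes $\int_{\tilde\gamma}\alpha = \ell_g(\gamma)$ in the limit since $\Psi\circ a\to 1$ there, and the remaining boundary lies in $\{r=1\}$ or goes away as $R\to\infty$. This yields a bound of $2\alpha(\tilde\gamma)$ for $\mathcal E_\alpha(u)$ uniformly in $\varphi$, hence $\mathcal E(u)\le 3\alpha(\tilde\gamma)$.

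The main obstacle I anticipate is making the limiting arguments at the puncture fully rigorous: one needs to know that $u$ is genuinely asymptotic to the trivial cylinder $v_{\gamma,g}$ in a strong enough sense (exponential convergence with derivatives, as recorded in the functional-analytic setup of \S\ref{sec:transversality} and the appendix) so that the integrals $\int_{\partial D(0,\eps)} u^*\lambda$ and $\int_{\{a=\mathrm{const}\}} \Psi\circ a\, f^*\alpha$ converge to the asserted values — a priori the curve could oscillate. A secondary technical point is bookkeeping of orientations and the gluing of the two regions $u^{-1}(\cw_g)$ and $u^{-1}(E)$ along $u^{-1}(M)$, which should be handled by choosing a regular value of the radial coordinate $r\circ u$ so that $u^{-1}(M)$ is a finite union of smooth circles. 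Everything else is a routine application of Stokes' theorem combined with the sign conventions already fixed in the excerpt.
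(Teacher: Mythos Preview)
Your overall strategy --- split $\mathcal E=\mathcal E_\omega+\mathcal E_\alpha$ and bound each piece by Stokes --- is viable, but the $\mathcal E_\omega$ step as written contains a real error. You claim $\int_{D\priv\{0\}} u^*\omega=\ell_g(\gamma)$ by evaluating $\int_{\partial D(0,\eps)} u^*\lambda$ and letting $\eps\to 0$. This integral \emph{diverges}: on the end $\lambda=r\alpha$ with $r\to\infty$ at the puncture, so $\int_{\partial D(0,\eps)} u^*\lambda \approx a(\eps)\cdot\alpha(\tilde\gamma)\to\infty$. The Liouville form is unbounded, and $\int u^*\omega$ is infinite. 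Relatedly, your parenthetical that ``$d\alpha$ and $d\lambda$ agree on the cylindrical end'' is false: $d\lambda=dr\wedge\alpha+r\,d\alpha\neq d\alpha$ there. What \emph{is} true, and what actually makes the computation work, is that the \emph{primitives} $\lambda$ and $\alpha$ agree on the interface $M=\{r=1\}$ (since $\lambda|_M=\alpha$). If you split along $u^{-1}(M)$ and apply Stokes separately to $\int_{u^{-1}(\cw_g)}u^*d\lambda$ (a compact region, boundary $\partial D\cup u^{-1}(M)$) and $\int_{u^{-1}(E)}f^*d\alpha$ (boundary $u^{-1}(M)$ and the puncture, where now $f^*\alpha$ is bounded), the $u^{-1}(M)$ contributions cancel and you recover $\mathcal E_\omega(u)=\alpha(\tilde\gamma)$. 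Your $\mathcal E_\alpha$ argument is essentially sound (and in fact the term $-(\Psi\circ a)f^*d\alpha$ is $\leq 0$, so you even get $\mathcal E_\alpha\leq\alpha(\tilde\gamma)$, better than claimed).

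The paper avoids this splitting by building a single exact $2$-form $\omega_h=d(h\alpha)$ with \emph{bounded} primitive: $h(t)=\int_0^t\tilde\phi$, where $\tilde\phi$ equals $1$ on $(0,1-\delta]$, dominates the given $\phi$ on $[1,\infty)$, and has $\int\tilde\phi\leq 3$. Then $\omega_h=\omega$ near $L$, $u^*\omega_h\geq 0$ everywhere, and a direct comparison gives $\mathcal E_\phi(u)\leq\int u^*\omega_h=h(\infty)\alpha(\tilde\gamma)\leq 3\alpha(\tilde\gamma)$ by a single Stokes computation with a convergent boundary term. This is the standard device for taming divergent Liouville primitives at cylindrical ends; your approach arrives at the same place once the interface bookkeeping is done correctly, but the paper's packaging makes the convergence at the puncture transparent.
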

\noindent{\it Proof:}
Let $u:(D\priv\{0\}, \partial D) \to (T^*L,L)$ be a $J$-curve asymptotic to
$\tilde\gamma$ at $0$. Recall that $T^*L = \cw_g\cup E$, where $E \simeq
([1,\infty) \times M,d(r\alpha))$ and $\cw_g \priv L\simeq ((0, 1)
\times M, d(r\alpha))$, that $J$ is $d\lambda$-compatible on $T^*L$ and
compatible with $\alpha$ on $E$.
These compatibilities imply that $d\alpha(\cdot,J\cdot)\geq 0$ and
$dr \wedge \alpha(\cdot, J\cdot)\geq 0$ on $E_{-\delta}:=[1 -\delta,\infty)
\times M$ for some $\delta>0$ small enough. We will exploit this
non-negativity below, under the wording that {\it $\alpha$ is tamed by $J$}.

On $u^{-1}(E_{-\delta})$ we put $u = (a,\tilde u)\in [1-\delta,\infty)\times M$.
The total energy of $u$ is then given by $\mathcal{E}(u)=\sup\{\,\mathcal{E}_\phi(u)
\, | \, \phi\in \Lambda \,\}$, where
$$
\mathcal{E}_\phi(u) = \int\limits_{u^{-1}(\cw_g)}u^*\om + \int\limits_{u^{-1}(E)}
\tilde u^*d\alpha + \phi\circ a \,da\wedge \tilde u^*\alpha.
$$
Let us fix $\phi\in \Lambda$ (thus $\phi: [1, \infty) \to [0, \infty)$ has
compact support and $\int_1^\infty \phi=1$) and
consider a smooth function $\tilde \phi: (0, \infty) \to [0, \infty)$ such
that $\tilde \phi(t) = 1$ for $t \in (0, 1-\delta]$, $\tilde \phi(t)\geq 1$
for $t \leq 1$, $\tilde \phi(t)\geq \phi(t)$ for $t > 1$,
$\tilde \phi = 0$ near $\infty$, and $\int_{0}^{\infty} \tilde \phi \leq 3$
(this is possible because $\int_{0}^1 1 dt = \int_{1}^{\infty}\phi(t)dt
= 1$).
Define then $h(t) := \int_{0}^t \tilde \phi(s)ds$ and $\om_h :=
d(h\alpha)$. Since $h(t) = t$ for $t < 1-\delta$, $\om_h$ coincides with
$\om=d\lambda$ on $(0, 1-\delta]\times M$ and in particular extends to
the zero section. We claim that
\begin{align}
 0 & \leq u^*\om_h, \label{eq:omhpositive}\\
 \mathcal{E}_\phi(u) & \leq \int u^*\om_h. \label{eq:controlenergy}
\end{align}
Let us first see how to conclude. By \eqref{eq:omhpositive} we have
$$
\int_{D\priv \{0\}}u^*\om_h = \underset{\eps\to 0}\lim\int_{D\priv D_\eps} u^*\om_h
 = \underset{\eps\to 0}\lim\int_{D\priv D_\eps} u^*d(h\alpha)
 = - \underset{\eps\to 0}\lim\int_{\partial D_\eps}h\circ a\,\tilde u^*\alpha.
$$
Since $h$ is constant near $\infty$, $h(\infty) \leq 3$ and $u$ is asymptotic
to $\tilde\gamma$ at $0$. 
Note that the orientation of $\partial D_{\eps}$ corresponds to $-\tilde \gamma:=\tilde \gamma(-t)$ in the limit as $\eps\to 0$ (see p.\pageref{eq:defv}).
Now using \eqref{eq:controlenergy} we get
$$
\mathcal{E}_\phi(u)\leq \int u^*\om_h=- h(\infty) \int_{-\tilde \gamma} \alpha
=h(\infty)\alpha(\tilde \gamma)\leq 3\ell_g(\gamma).
$$
Since this holds for all $\phi$, we have our desired bound on $\mathcal{E}(u)$.

It remains to establish our claim. For \eqref{eq:omhpositive}, notice that
$\om_h=d\lambda$ on $(0,1-\delta]\times M$, so $u^*\om_h\geq 0$ on
$u^{-1}\big(L\cup (0,1-\delta)\times M\big)$. On $[1-\delta, \infty)\times M$,
$\alpha$ being tamed by $J$, it is easy to see that $\tilde u^*d\alpha\geq 0$
and $da\wedge \tilde u^*\alpha\geq 0$. Thus, since $h'\geq 0$, we get
$$
u^*\om_h=u^*d(h\alpha)= h(a)\tilde u^*d\alpha+h'(a)da\wedge \tilde u^*\alpha\geq 0.
$$
We now prove \eqref{eq:controlenergy}.
\begin{align*}
\mathcal{E}_\phi(u)
&\ds = \int_{u^{-1}(\cw_g)}u^*\om +
\int_{u^{-1}(E)} (\tilde u^*d\alpha+\phi\circ a \ds \, da\wedge\tilde u^*\alpha) \\
&\ds = \int_{ u^{-1}(\cw_g \priv E_{-\delta})} u^*\om + \int_{u^{-1}(\cw_g \cap E_{-\delta})} u^*d(r\alpha) +
\int_{ u^{-1}(E)}\big(\tilde u^*d\alpha+\phi\circ a \ds \,da\wedge\tilde u^*\alpha\big) \\
&\ds = \int_{ u^{-1}(\cw_g \priv E_{-\delta})} u^*\om_h +
\int_{u^{-1}(\cw_g \cap E_{-\delta})} \big(a\,\tilde u^*d\alpha + da \wedge \tilde u^*\alpha\big) \\
&\quad + \int_{u^{-1}(E)}\big(\tilde u^*d\alpha+\phi\circ a \, da\wedge\tilde u^*\alpha\big)
\end{align*}
As we already noticed, since $u$ is $J$-holomorphic for a $J$ which tames $\alpha$
on $E_{-\delta}$, we have $\tilde u^*d\alpha\geq 0$ and $da\wedge \tilde u^*\alpha\geq 0$,
so the inequalities $h(t)\geq t$ and $h'(t)\geq 1$  for $0< t <1$, $h(t)\geq 1$ for $t \geq 1$,
$h'(t)\geq \phi$ on $t \geq 1$ imply that
\begin{align*}
\mathcal{E}_\phi(u) & \ds \leq \int_{u^{-1}(\cw_g \priv E_{-\delta})} u^*\om_h
+ \int_{u^{-1}(\cw_g \cap E_{-\delta})} \big(h(a)\tilde u^*d\alpha+h'(a)\, da\wedge \tilde u^*\alpha\big) \\
& \quad + \int_{u^{-1}(E)}\big( h(a)\tilde u^*d\alpha+h'(a) \, da\wedge\tilde u^*\alpha\big)\\
& \ds \leq \int_{u^{-1}(\cw_g \priv E_{-\delta})} u^*\om_h
+ \int_{u^{-1}(E_{-\delta})}u^*d(h\alpha) \;\;=\;\; \int u^*\om_h.
\end{align*}
This concludes the proof of \eqref{eq:controlenergy} and thus of our lemma. \cqfd

If $J$ is only compatible with $\alpha$ on $[K,\infty)\times M$ for a constant
$K > 1$, a rescaling provides the estimate $\ce(u)\leq 3K\ell_g(\gamma)$. Thus lemma
\ref{lem:energy_bound} implies the following statement.
\begin{corollary}\label{cor:energy_bound}
Let $J_n$ be a sequence of almost complex structures in $\cj_{\cyl,g}^\infty$
which are compatible with $\alpha$ outside a fixed compact set. Then there
exists a  constant $C > 0$ that depends only on the fixed compact set, such that every $J_n$-holomorphic curve
$u_n: (D\priv\{0\},\partial D) \to (T^*L,L)$ asymptotic to $\tilde\gamma$
at $0$ satisfies the energy bound $\ce(u_n)\leq C$.
\end{corollary}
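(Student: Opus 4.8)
The plan is to reduce this to Lemma \ref{lem:energy_bound} by the rescaling indicated in the remark preceding the statement; the energy $\ce$ here is understood relative to the hypersurface along which all the $J_n$ are cylindrical. First I would observe that a compact subset of $T^*L$ is bounded in the radial coordinate $r=\|\cdot\|_g$, so one may fix $K\geq 1$ with the given compact set contained in $\{r<K\}$. Then every $J_n$ is compatible with $\alpha$ on the whole end $\{r\geq K\}\simeq[K,\infty)\times M$, i.e.\ it is genuinely cylindrical there with respect to the hypersurface $\{r=K\}$, whose contact form is $\alpha^{(K)}:=\lambda|_{\{r=K\}}=K\alpha$ and along which the asymptotic Reeb orbit $\tilde\gamma$ has $\int_{\tilde\gamma}\alpha^{(K)}=K\ell_g(\gamma)$.

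I would then obtain the bound in either of two equivalent ways. (i) Directly: rerun the proof of Lemma \ref{lem:energy_bound} with the cut-off level $1$ replaced everywhere by $K$ and $\alpha$ by $\alpha^{(K)}$. One builds a comparison form $\omega_h=d(h\,\alpha^{(K)})$ with $h(t)=t$ for $t<K-\delta$ — so that $\omega_h$ extends over $L$, agrees with $d\lambda$ near the zero section, and $u_n^*\omega_h=u_n^*d\lambda\geq 0$ by $d\lambda$-compatibility — with $h,h'\geq 0$ and $h(\infty)\leq 3K$, using that $\alpha^{(K)}$ is tamed by $J_n$ on $\{r\geq K-\delta\}$; the same Stokes computation and sign estimates as in the lemma give $\ce(u_n)\leq h(\infty)\,\alpha^{(K)}(\tilde\gamma)\leq 3K\,\ell_g(\gamma)$. (ii) By rescaling the curve: apply the fiberwise dilation $\psi_{1/K}\colon(q,p)\mapsto(q,p/K)$, a conformal symplectomorphism fixing $L$ acting by $(r,m)\mapsto(r/K,m)$ on $T^*L\priv L$. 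Then $\psi_{1/K}\circ u_n$ is holomorphic for $(\psi_{1/K})_*J_n\in\cj_{\cyl,g}^\infty$, which is now $\alpha$-cylindrical on the whole of $\{r\geq 1\}$, and it is still asymptotic to $\tilde\gamma$ at $0$ — the dilation fixes the $M$-component $\tilde\gamma$ of the asymptotic cylinder and only reparametrizes the cylindrical variable $s$ by a map asymptotic to the holomorphic translation $s\mapsto s-\log K$. Lemma \ref{lem:energy_bound} applied to $\psi_{1/K}\circ u_n$ gives energy $\leq 3\ell_g(\gamma)$, and transferring back picks up exactly the conformal factor $K$.

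Either way, $\ce(u_n)\leq 3K\,\ell_g(\gamma)=:C$. Since $K$ depends only on the fixed compact set and $\ell_g(\gamma)=\ell_g(\gamma(\beta))$ is fixed, $C$ is uniform in $n$ and in the curve $u_n$, as claimed. I expect no real obstacle: the only point needing care is the bookkeeping around the cut-off level — recognizing that the relevant energy is the one adapted to $\{r=K\}$, so that the proof of Lemma \ref{lem:energy_bound} transfers verbatim up to the rescaling of the constant from $3\ell_g(\gamma)$ to $3K\ell_g(\gamma)$.
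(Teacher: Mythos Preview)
Your proposal is correct and matches the paper's approach exactly: the paper's proof of this corollary is the single sentence that ``a rescaling provides the estimate $\ce(u)\leq 3K\ell_g(\gamma)$'' when $J$ is $\alpha$-compatible only on $[K,\infty)\times M$, and your options (i) and (ii) simply unpack this. One harmless slip in the bookkeeping of (i): writing $\omega_h=d(h\,\alpha^{(K)})$ with $h(t)=t$ for $t<K-\delta$ makes $\omega_h=K\,d\lambda$ near $L$ and forces $h(\infty)\geq K-\delta$, so your Stokes bound $h(\infty)\cdot\alpha^{(K)}(\tilde\gamma)$ is of order $K^2\ell_g(\gamma)$ rather than $3K\ell_g(\gamma)$ --- the clean direct route is $\omega_h=d(h\alpha)$ with $h(r)=r$ for $r<K-\delta$ and $h(\infty)\leq 3K$, but either way a uniform constant $C$ results and the corollary follows.
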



\subsubsection{Compactness for punctured holomorphic disks I}\label{sec:compactness1}

\begin{theorem}\label{thm:sft_compactness}
Let $(L, g)$ be a closed Riemannian manifold which has a unique minimizing geodesic
$\gamma$ in the class $\beta \in H_1(L;\Z)$. Let $J_n \in J_{\cyl,g}^\infty$ be a sequence of
almost complex structures on $T^*L$ that are fixed outside a compact subset and
converge to an almost complex structure $J \in J_{\cyl,g}^\infty$ in the
$C^{\infty}$-topology. Assume $u_n: (D \backslash \{ 0 \}, \partial D) \to
(T^*L, L)$ is a sequence of $J_n$-holomorphic curves asymptotic to
$\tilde\gamma$ at 0. Then, after restricting to a subsequence, $u_n$ converges in
$C^{\infty}_{\mathrm{loc}}$ to a $J$-holomorphic map $u: (D \backslash \{ 0 \},
\partial D) \to (T^*L, L)$ asymptotic to $\tilde\gamma$ at $0$.
\end{theorem}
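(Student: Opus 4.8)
\noindent\emph{Proof idea.}
The plan is to feed the sequence $(u_n)$ into the SFT compactness machinery (cf. \cite{boelhowize,abbas}) and then use exactness together with the uniqueness of the minimizing geodesic to rule out every way in which the limiting holomorphic building could fail to be a single curve of the required type. First I would record the a priori energy bound: since all the $J_n$ agree with a fixed cylindrical almost complex structure outside one fixed compact set, Corollary~\ref{cor:energy_bound} (which rests on Lemma~\ref{lem:energy_bound} and the energy notions of \S\ref{sec:energy}) gives a uniform bound $\ce(u_n)\le C$. Next I would record the exactness constraints: $\om=d\lambda$ is exact, $\lambda_{|L}=0$, and $L$ is disjoint from $M$; hence there are no non-constant $J$-holomorphic spheres in $T^*L$, no non-constant $J$-holomorphic discs with boundary on $L$ (both would have vanishing symplectic area), and no Reeb chords of $M$ available to $L$. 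In the SFT compactness theorem this forbids all sphere and disc bubbling, as well as any breaking along the boundary $\partial D$.

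Applying that theorem, after passing to a subsequence (and composing with rotations of $D$, the only source reparametrisations available) $u_n$ converges to a holomorphic building whose bottom level is a $J$-holomorphic map $v_0:(D\priv\{0\},\partial D)\to(T^*L,L)$ carrying a single interior positive puncture asymptotic to the lift $\tilde\gamma_N$ of a connected closed geodesic $\gamma_N$, and whose upper part is a (possibly empty) chain of $N$ nontrivial $J$-holomorphic cylinders in the symplectisation $\big((0,\infty)\times M,d(r\alpha)\big)$, through which the orbit $\tilde\gamma$ at the top is connected down to $\tilde\gamma_N$. Here the absence of bubbling, together with the fact that the source is a disc with exactly one interior puncture (whose degenerating neck is an annulus, not a pair of pants), forces the bottom level to carry a single puncture and each upper level to be a single cylinder. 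Write $\gamma=\gamma_0,\gamma_1,\dots,\gamma_N$ for the underlying closed geodesics, read from the top down, so that $\gamma_{i-1}$ is the positive-puncture orbit and $\gamma_i$ the negative-puncture orbit of the $i$-th cylinder.

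The heart of the argument is then a bookkeeping step. Projecting the $i$-th cylinder to $L$ exhibits a cylinder in $L$ between $\gamma_{i-1}$ and $\gamma_i$, so that $[\gamma_0]=[\gamma_1]=\dots=[\gamma_N]=\beta$ in $H_1(L)$; and the usual Stokes/$d\alpha$-monotonicity for holomorphic cylinders in a symplectisation gives $\ell=\ell_g(\gamma_0)\ge\ell_g(\gamma_1)\ge\dots\ge\ell_g(\gamma_N)$. On the other hand $\gamma_N$ is a connected closed geodesic in the class $\beta$, so $\ell_g(\gamma_N)\ge\ell_g^{\min}(\beta)=\ell$. Hence $\ell_g(\gamma_N)=\ell$, i.e. $\gamma_N$ is a minimizing geodesic in the class $\beta$, and by the uniqueness hypothesis $\gamma_N=\gamma$, $\tilde\gamma_N=\tilde\gamma$. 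Consequently $\ell_g(\gamma_i)=\ell$ for every $i$, each cylinder has both asymptotic orbits equal to $\tilde\gamma$ and $\int d\alpha=0$ over it, so each is the degree-one trivial cylinder over $\tilde\gamma$; since a level of a stable holomorphic building cannot consist solely of trivial cylinders, this forces $N=0$. There is thus no breaking, the building reduces to its bottom level $u:=v_0$, and the convergence $u_n\to u$ is $\cc^\infty_\loc$ on $D\priv\{0\}$ (and asymptotically cylindrical near the puncture), which is the assertion.

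I expect two points to be the main obstacles. The first is a matter of setup and references: invoking SFT compactness in precisely this non-compact cotangent-bundle setting, for punctured discs with boundary on the zero section and for a moving (though cylindrical-at-infinity) family of almost complex structures, and checking that the energy functionals of \S\ref{sec:energy} together with Lemma~\ref{lem:energy_bound} are exactly what a BEHWZ-type compactness argument consumes. The second, and conceptually the real content, is the rigidity in the bookkeeping step: it is the invariance of the homology class of the asymptotic Reeb orbit under breaking, combined with the hypothesis that $\beta$ has a \emph{unique} minimizing geodesic, that prevents any energy from escaping into the symplectisation and thereby kills all breaking.
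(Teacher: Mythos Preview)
Your overall strategy matches the paper's: uniform energy bound via Corollary~\ref{cor:energy_bound}, SFT compactness, then a homology-plus-length argument to force all upper levels to be trivial cylinders over $\tilde\gamma$. The one genuine gap is the sentence asserting that ``the absence of bubbling, together with the fact that the source is a disc with exactly one interior puncture [\dots], forces the bottom level to carry a single puncture and each upper level to be a single cylinder.'' This is not what SFT compactness gives you. Exactness of $(T^*L,d\lambda)$ with $\lambda_{|L}=0$ does kill closed sphere and disc bubbles, and hence nodal degenerations of the domain, so the glued surface $\overline S$ is indeed an annulus. But that topological constraint alone does not prevent the top-level component $v_N$ in $\R\times M$ from being a sphere with one positive puncture at $\tilde\gamma$ and \emph{several} negative punctures $\tilde\gamma_1,\dots,\tilde\gamma_l$, with all but one of the $\tilde\gamma_i$ capped off further down by finite-energy planes. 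Such planes have strictly positive $d\alpha$- or $\om$-area and are \emph{not} excluded by your exactness argument; they are not ``bubbles'' in the sense you have ruled out.

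The paper closes this exactly with the bookkeeping you already have, but applied at the top floor \emph{before} assuming any chain structure: since $\overline S$ is an annulus, the subbuilding below all but one of the $\tilde\gamma_i$ is a topological disc, so those $\gamma_i$ are null-homologous in $L$, while the remaining one represents $\beta$. The Stokes inequality
\[
0\;\le\;\int v_N^*d\alpha\;=\;\ell_g(\gamma)-\sum_{i=1}^l\ell_g(\gamma_i),
\]
together with $\ell_g(\gamma_i)\ge\ell_g^{\min}(\beta)=\ell_g(\gamma)$ for the $\beta$-representative and $\ell_g(\gamma_i)>0$ for the null-homologous ones, forces $l=1$ and $\gamma_1=\gamma$; hence $v_N$ is the trivial cylinder over $\tilde\gamma$, and one iterates downward. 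In short, your chain-of-cylinders picture is the \emph{conclusion} of the homology-plus-length step, not an input to it; once you reorganise the argument that way it coincides with the paper's.
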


\noindent{\it Proof:}
Since all $u_n$ are asymptotic to $\tilde\gamma$ at $0$, by corollary
\ref{cor:energy_bound} the energy $\mathcal{E}(u_n)$ is uniformly bounded.
We now apply the SFT compactness theorem for punctured holomorphic curves
with boundary in symplectic cobordisms \cite{boelhowize,abbas}. It implies
that there exists a $N \in \N$ such that our sequence $u_n: (D \backslash \{0\},
\partial D) \to (T^*L, L)$ converges to a stable holomorphic building of
height $N$. This is given by the following data in our situation.

\begin{enumerate}
\item[(i)] $v_0: (S_0 \backslash Z_0, \partial S_0, j_0) \to (T^*L, L, J)$ is
a proper $J$-holomorphic map from a compact Riemann surface
with boundary $S_0$ with a finite set of punctures
$Z_0 \subset S_0 \backslash \partial S_0$
to the almost complex manifold $(T^*L, J)$ such that $v_0(\partial S_0) \subset L$
and $v_0$ has finite energy. Note that $S_0$ may have multiple components.
\item[(ii)] For $k = 1, \ldots, N$ we have holomorphic maps $v_k:
(S_k \backslash Z_k, j_k) \longrightarrow (\R \times M, \widehat{J})$
from closed Riemann surfaces $S_k$ with a finite set of punctures $Z_k \subset
S_k$ to the symplectization $\R \times M$. Here the almost complex structure
$\widehat{J}$ is $\R$-invariant and equals $J$ where $J$ is translation invariant in
the end $[1, \infty) \times M$. Furthermore we have decoration maps
$\Phi_k$ for $k = 1, \ldots, N$ that glue all of the negative punctures of
$Z_k$ to the positive punctures of $Z_{k-1}$. More precisely, let
$\overline{S}_k$ denote the oriented blow-up of $S_k$ at the points
$z \in Z_k$. The conformal action $j_k$ defines a circle action at
every boundary circle of $\overline{S}_k$ and a decoration $\Phi_k$
is a choice of map from the negative boundary circles of
$\overline{S}_k$ to the positive boundary circles of
$\overline{S}_{k-1}$ {that anti-commutes with the circle action
($\Phi_k(e^{it}\xi)=e^{-it}\Phi_k(\xi)$).}
\item[(iii)] Denote by $\overline{S} := \overline{S}_0 \cup_{\Phi_1}
\overline{S}_1 \cup \ldots \cup_{\Phi_N} \overline{S}_N$
the piecewise smooth surface obtained by gluing together all blow ups
$\overline{S}_k$ of $S_k$ at their punctures via the decoration maps
$\Phi_k$. Then $\overline{S}$ has no nodal points and is homeomorphic
to $\widehat{D_0} := \R_{\geq 0} \times S^1 \cup \{ \infty \} \times S^1$,
which is homeomorphic to the oriented blow-up of $D$ at $0$.
This holds because $T^*L$ and $\R\times M$ are exact symplectic manifolds,
so there can be neither bubbling of spheres, nor of discs with boundary on $L$
(in a more general context with possible bubbling, this point holds only if
we add nodal points to our discussion). Furthermore there exists a
diffeomorphism $G: T^*L = \overline{\cw}_g \cup_M E \to \cw_g$ such that
the compositions $G \circ v_0$ and $\pi \circ v_k$ for all $k$ fit
together into a continuous map $\overline{S} \to \overline{\cw}_g$ that
maps $\{\infty\} \times S^1$ to $\tilde\gamma$, where $\pi: \R \times M \to M$
is the projection.
\end{enumerate}
\begin{figure}[h!]
\begin{center}
\input 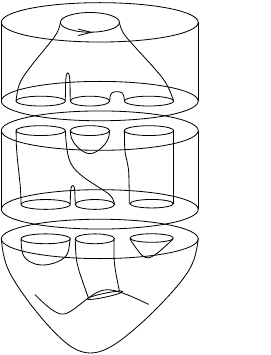_t
\end{center}
\caption{A holomorphic building appearing as a limit of $u_n$}
\label{fig:sft}
\end{figure}

We now analyze the limit holomorphic building. We start by considering
the top floor of the building, i.e. the holomorphic curve given by
$v_N: (S_N \backslash Z_N, j_N) \longrightarrow (\R \times M, \widehat{J})$.
By point (iii) the piecewise smooth surface $\overline{S}$ is
homeomorphic to $\widehat{D_0}$, hence only one component
of $S_N$ has a positive puncture in $Z_N$. This implies that there are no
other components of $S_N$, since the maximum principle asserts that
every punctured holomorphic curve in $\mathbb{R}\times M$ must have at
least one positive puncture.
The genus of $\overline{S}$ is 0, thus we have $S_N = \mathbb{C}P^1$,
$Z_N = \{ z_0, z_1, \ldots, z_l \}$ is a finite set of punctures and
$v_N: (\mathbb{C}P^1 \backslash \{ z_0, z_1, \ldots, z_l \}, j_N) \to
(\mathbb{R}\times M, \widehat{J})$ is
a $\widehat{J}$-holomorphic curve asymptotic to the Reeb orbit
$\tilde\gamma$ at the positive puncture, say
$z_0$, and asymptotic to Reeb orbits ${\tilde \gamma_1, \ldots,
\tilde \gamma_l}$ at the negative punctures.

Considering now the map given by the composition of
$\R\times M \twoheadrightarrow M \subset T^*L$
with the natural projection $\pi:T^*L\to L$, the image of
$v_N(S_N\priv Z_N)$ provides
a cobordism between $\gamma = \pi(\tilde\gamma)$ and $\cup_{i=1}^l\gamma_i :=
\cup_{i=1}^l \pi(\tilde \gamma_i)$, so $\sum [\gamma_i] =[\gamma]=\beta$. On the
other hand, since $v_N$ is $\widehat J$-holomorphic, we have
$$
0\leq \ca(v_N) := \int_{S_N\priv Z_N}v_N^*d\alpha 
= \int_{\partial (S_N\priv Z_N)}v_N^*\alpha
= \int_{{\tilde\gamma}}\alpha-\sum_{i=1}^l \int_{{\tilde\gamma_i}} \alpha
= \ell_g(\gamma) - \sum_{i = 1}^l \ell_g(\gamma_i).
$$
Notice now that, $\overline S$ being homeomorphic to a cylinder, the
building provides a capping of all but one of the $\tilde\gamma_i$.
Projecting these cappings to $L$, we deduce that only one of the $\gamma_i$
is homologically non-trivial. Thus, one of the $\gamma_i$ represents the
class $\beta$, while all the others vanish in homology. Since $\gamma$ is
the unique minimal geodesic in class $\beta$, we infer that
$\{\gamma_1,\dots,\gamma_l\}=\{\gamma\}$, that $\ca(v_N)=0$, so that
$v_N$ is a trivial cylinder above the Reeb orbit $\tilde \gamma$. Iterating
this argument for the floors below, we see that the stable holomorphic
building consists only of the holomorphic map $v_0:S_0\priv Z_0\to T^*L$
with a unique puncture ($\# Z_0=1$) at which $v_0$ is asymptotic to
$\tilde\gamma$. By point (iii) above, $S_0$ is a disc, so
$$
v_0: (D \backslash \{ 0 \}, \partial D) \to (T^*L, L)
$$
is $J$-holomorphic and asymptotic to $\tilde\gamma$ at $0$.\cqfd


\subsubsection{The splitting construction}\label{sec:framesplit}

In the following we will use a well-known splitting construction, also
called stretching the neck in the literature \cite{boelhowize}. In this
section we review this construction and explain our viewpoint on the
different objects introduced by the theory. We insist on our definition
of the completion of a symplectic cobordism, which might slightly
differ from the usual one.
\paragraph{Symplectic cobordisms.}
A symplectic cobordism is a compact symplectic manifold $(X,\om)$ with
boundaries $M^\pm$ of positive (resp. negative) contact type:
in a \nbd of $M^\pm$, $\om=d\lambda_\pm$ is exact, and the corresponding
Liouville vector field, defined by $\om(Y_\pm,\cdot)=\lambda_\pm$, points
outside $X$ for positive boundaries and inside $X$ for negative ones. Then $\alpha_\pm :=
\lambda_{\pm|M^\pm}$ is a contact form on $M^\pm$. One of the boundaries may be empty but not both.
It is called an {\it exact symplectic cobordism} if $\om$ has a global
primitive $\lambda$ on $X$ and $\lambda_\pm=\lambda$. This should not
be confused with the case where $(X,\om)$ is an exact symplectic manifold
and $\lambda_{\pm} \neq \lambda$.
We illustrate these points using the main examples that will appear
in the sequel. Let $(L,g)$ be a closed Riemannian manifold.
\begin{itemize}
\its $(\overline{\cw(L,g,r)},d\lambda)$ is an exact symplectic cobordism with
$M^-=\emptyset$, $M^+=\partial \cw(L,g,r)$.
\its For $r > r'$, $\overline{\cw(L,g,r)}\priv \cw(L,g,r')$ is an exact symplectic
cobordism with $M^+=\partial \cw(L,g,r)$ and $M^-=\partial \cw(L,g,r')$.
\its If $L'\subset T^*L$ is a Lagrangian submanifold and $\cw'\subset
\cw(L,g,r)$ is a Weinstein \nbd of $L'$, $\overline{\cw(L,g,r)}\priv \cw'$ is
exact as a symplectic manifold since it lies in $T^*L$, but it is an
exact symplectic cobordism only if $L'$ is an exact Lagrangian submanifold
of $T^*L$.
\end{itemize}
\paragraph{Cylindrical ends and completions.}
In a \nbd of a  boundary component of a symplectic cobordism (say positive),
the transverse Liouville vector field provides a coordinate $r$ and a symplectic
identification between this \nbd and $(M^+\times (1-\eps,1],d(r\alpha_+))$.
One can then glue {\it a cylindrical end} $M^+\times [1,\infty)$ to $X$
and obtain a {\it completion} of $X$ along $M^+$ (glue $M^-\times (0,1]$ for
a negative component). From a set theoretic point of view this completion
is unambiguous: $\wdt X := X \sqcup M^+\times (1,\infty)$. We now want to
give details on the structures involved. First we describe the completion
from the differential viewpoint more precisely. Let $f^+:(1-\eps,1)\to
(1-\eps,\infty)$ be a diffeomorphism that coincides with the identity near
$1-\eps$ (respectively, $f^-:(1,1+\eps)\to (0,1+\eps)$ is the identity near $1+\eps$). 
We get a bijection $\Phi:X\priv M^+\to \wdt X$ given by
$\Phi=\id$ outside $M^+\times(1-\eps,1)$ (or $M^-\times (1,1+\eps)$)
and defined in this region by $\Phi(x,r)=(x,f^+(r))$. Since $f^+$ is a
diffeomorphism, we get a natural structure of smooth manifold on $\wdt X$
which makes $\Phi$ a diffeomorphism. Then $\wdt X$ can be endowed with
the symplectic form $\wdt \om := \Phi_*\om$, {\it which we do} (the symplectic forms which
arise this way are all isomorphic).
We insist on this last point: a perhaps more common way to define a symplectic
form on the completion is to take $d(r\alpha)$ on the ends,
which enlarges the symplectic structure.
For instance the completion  of $(\overline{\cw(L,g,1)}, d\lambda)$ along its boundary in this way
is $(T^*L,d\lambda)$ and has infinite volume. 
We choose a different path here.
For us in this paper, the
completion is endowed with a symplectic structure which makes $\wdt X$
symplectomorphic to $X\priv M^+$. On the other hand, on $\wdt X$ we now
have dilations on the end $M^+\times (1,\infty)$ (contractions if we
complete a negative end), and we say that an almost complex structure
on $\wdt X$ is cylindrical at infinity if it is compatible with $\wdt\om$
on $\wdt X$ and compatible with $\alpha_+$ at infinity, 
in a similar way as already defined for the cotangent bundle: outside a compact set
of $\wdt X$
it sends $r\frac\partial {\partial r}$ to the Reeb vector field of
$\alpha_+$ and is invariant by the dilation on the positive end (by the
contractions on the negative end, respectively). In other terms, the completion $\wdt X$ coincides with
$X\priv M^+$ from a symplectic point of view, but it has very specific
almost complex structures, which will usually be denoted with a tilde.
This completion can be made along several boundaries at a time, and
by {\it the completion} of $X$ we mean its completion along all boundary
components. We say that an almost complex structure on $X\priv M^+$ is cylindrical
at infinity if it coincides with the pull-back of a cylindrical almost
complex structure on the completion $\wdt X$ of $X$. Similarly, a
holomorphic curve in $X\priv M^+$ is said to be asymptotic to some Reeb orbit
of a boundary component if it is when pushed forward to the completion
$\wdt X$. The following example will be useful in the sequel.
\begin{example}\label{ex:switch}
Since the completion of $\overline{\cw(L,g,r)}$ is $T^*L$ from the almost complex point of view,
the statement of theorem \ref{thm:existence}
for almost complex structures $J \in \cj^{\infty}_{cyl, g}$ is equivalent to the existence
of punctured holomorphic discs in $\cw(L,g,r)$
with boundary on $L$ and asymptotic to the lift of some minimal geodesic
in class $\beta$ to $\partial \cw(L,g,r)$ for
almost complex structures on $\cw(L,g,r)$ cylindrical at infinity.
We will freely switch from
one setting to the other in the following.
\end{example}

\paragraph{Stretching the neck.} Let now $X$ be a symplectic cobordism
and $M\subset \mathrm{int}\, X$ be a (separating) closed contact type hypersurface. As
previously, a collar \nbd of $M$ in $\mathrm{int}\, X$ is symplectomorphic to $(M\times
(1-\eps,1+\eps),d(r\alpha))$. We start with an almost complex structure
$J$ that is compatible with $\om$, cylindrical at infinity on $\mathrm{int}\, X$ and
compatible with $\alpha$ in our collar \nbd. For small $\eps > 0$ we
define a collection of manifolds $X_\eps$ by replacing our collar
\nbd in $\mathrm{int}\, X$ with a very large collar $M\times (\eps,1/\eps)$, by a procedure
analogous to the one described in the previous paragraph.
We extend $J$ to an almost complex structure $J_{\eps}$ on
$X_\eps$ by choosing $J_{\eps}$ to be dilation invariant in the
collar. There is a diffeomorphism between $X_\eps$ and $\mathrm{int}\,X$ 
and $X_\eps$ can be endowed with the pull-back symplectic form. 
Again, $X_\eps$ coincides with $\mathrm{int}\,X$ from the symplectic point of view, but
the almost complex structures compatible with $\alpha$ in this larger
collar are different. In our context, neck stretching consists in
letting the parameter $\eps$ go to zero and studying the behaviour
of sequences of $J_{\eps}$-holomorphic curves in $X_\eps$. The 
compactness
theorem of \cite{boelhowize} related to this splitting can be summarized in an imprecise way as follows:
\begin{thm*}[Bourgeois et al.]
Let $J_n:=J_{\eps_n}$ be a sequence of almost complex structures on a symplectic
cobordism, cylindrical and fixed at infinity, that stretch the neck
of a closed contact type hypersurface $M$. Assume that $M$ splits $X$ into $X_+\cup X_-$,
with $\partial X_+=M^+\cup M$ and $\partial X_-=M^-\cup M$. Let
$u_n:\Sigma\to X$ be a sequence of $J_n$-holomorphic curves
which satisfy some energy bound.
Then, after extraction, $u_n$
converges to a holomorphic building with finite energy: a collection of punctured
holomorphic curves in $M^+\times (0,+\infty)$, $\wdt X_+$, $M\times (0,\infty)$,
$\wdt X_-$, $M^-\times (0,\infty)$, asymptotic at their punctures to Reeb orbits and 
organized in levels (see the more
detailed descriptions in our specific situations below).
\end{thm*}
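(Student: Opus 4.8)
The plan is to deduce this from the SFT compactness theorem of Bourgeois--Eliashberg--Hofer--Wysocki--Zehnder \cite{boelhowize} (see also \cite{abbas}), the only point requiring attention being the minor modification caused by our convention that the completions keep the symplectic form bounded rather than enlarging it on the ends; since the relevant a priori estimates are about $d\alpha$ and $da\wedge\alpha$ on the cylindrical parts, this change affects none of the compactness arguments. I would present the argument as a sketch, the details being exactly those of \cite{boelhowize}.

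First I would invoke the uniform energy bound assumed in the statement (supplied in our situation by lemma~\ref{lem:energy_bound} and corollary~\ref{cor:energy_bound}) together with the monotonicity inequality for $J$-holomorphic curves: this bounds the number of non-trivial components that can appear in the limit and the number of bubbles, and, after adding a uniformly bounded number of auxiliary marked points to stabilize the domains, it controls the conformal structure of the thick parts of the domains. Away from the neck region $M\times(\eps_n,1/\eps_n)$ and away from the finitely many points where $|du_n|$ blows up, elliptic bootstrapping then yields $C^\infty_{\loc}$-convergence to a $J$- (respectively $\widehat J$-) holomorphic limit; at a blow-up point a rescaling extracts a non-constant finite-energy plane or disc, recorded as a nodal component. (In the exact applications below this step is vacuous: such a bubble would carry positive $d\lambda$-energy while being null-homologous, or while bounding the exact Lagrangian $L$, which is impossible; hence no bubbling occurs and no nodal points are needed.)

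The core of the argument is the neck analysis. As $\eps_n\to 0$ the collar $M\times(\eps_n,1/\eps_n)$ is exhausted, and I would use the a priori energy bound to show that only a finite, uniformly bounded number of distinct $\R$-translation scales survive along it. For each such scale the translated curves converge in $C^\infty_{\loc}$, after passing to a subsequence, to a non-trivial finite-energy curve in one of the symplectizations $M^+\times\R$, $M\times\R$, $M^-\times\R$, or to a curve in a completion $\wdt X_\pm$; between consecutive scales the energy tends to $0$, so the pieces are glued along closed Reeb orbits. Hofer's asymptotic analysis then gives exponential convergence of each finite-energy puncture to a closed Reeb orbit of the relevant contact form, and the decoration maps $\Phi_k$ record how the positive punctures of one level are matched with the negative punctures of the next. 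Assembling the levels and using additivity of the energy under this decomposition yields the holomorphic building, with finite total energy and no energy lost.

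The hard part will be precisely this neck analysis together with Hofer's asymptotic lemma: ruling out energy concentrating in infinitely many distinct scales along the growing neck, and establishing the exponential approach to trivial cylinders over Reeb orbits near each puncture. These constitute the technical heart of \cite{boelhowize}, to which I would refer for the complete proof. In the applications that follow the picture simplifies considerably, since exactness forbids all bubbling and the relevant Reeb orbits --- lifts of closed geodesics of $g$ --- are completely explicit, so the only work left will be bookkeeping of homology classes and lengths, as carried out in the specific compactness statements below.
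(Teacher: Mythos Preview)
The paper does not prove this statement: it is quoted as a black-box result from \cite{boelhowize} (with \cite{abbas} as an additional reference), introduced by ``The compactness theorem of \cite{boelhowize} related to this splitting can be summarized in an imprecise way as follows.'' There is no proof in the paper to compare against.

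Your proposal is a reasonable high-level sketch of the BEHWZ argument (energy bound, bubbling analysis, neck-stretching with finitely many translation scales, Hofer's asymptotic lemma, assembly into a building), and it correctly flags the neck analysis and asymptotic convergence as the technical core. This is more than the paper itself provides; the paper simply invokes the theorem and then, in the specific compactness statements that follow (theorems~\ref{thm:sft_compactness} and~\ref{thm:neckstretchcompact}), analyzes the resulting buildings in the cotangent-bundle setting. So your write-up is fine as an expository gloss, but for the purposes of matching the paper you should simply cite \cite{boelhowize,abbas} and move on.
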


\begin{rems}\label{rk:energy}
\begin{itemize}
\its The statement above makes sense, and still holds, when $M=\emptyset$.
Then, the sequence $J_n$ does not stretch any neck, and the
components of the limit building take values in $M^+\times (0,+\infty)$,
$\wdt X$ and $M^-\times (0,+\infty)$.  This is exactly the compactness 
statement of theorem \ref{thm:sft_compactness}.\vspace{-,2cm}
\its We do not wish to discuss here the notion of energy of a curve or 
a holomorphic building in a cobordism. What we need to know for the 
rest of the paper is as follows and can also be found in \cite{boelhowize}:
in $T^*L$ this notion coincides with the energy that we defined 
in \S \ref{sec:energy}. The buildings that arise as a limit of curves with 
finite energy have finite energy. Then any component of such a building 
has itself finite energy. And finally, punctured holomorphic curves with finite energy 
are asymptotic in a $\cc^l$-sense to trivial cylinders over closed Reeb orbits.
\end{itemize}
\end{rems}

\paragraph{Symplectic area of a building in a split manifold.}
In the setting defined above, there is a natural concept of
symplectic area for a holomorphic building $B$ with finite energy 
in a split manifold, denoted henceforth $\ca(B)$,
which is very close to the notion of symplectic energy \cite{boelhowize}.
This is simply the sum of the symplectic areas of the components in
the different pieces of the split manifolds, where
\begin{itemize}
\its the symplectic area of a component in a completion ($\wdt X_+,
\wdt X_-$ in the previous statement) is computed with the symplectic
form. We recall that int$\,X$ and $\wdt X$ are symplectomorphic.
This quantity is a positive number.
\its the symplectic area of a component in a cylindrical piece
$M\times (0,\infty)$ is computed by integration of $\pi^*d\alpha$ 
(where $\pi:M\times (0,\infty)\to M$ is the projection). Notice that
$\pi^*d\alpha$ is not a symplectic form on $M\times (0,\infty)$. This 
is however a non-negative number when evaluated on a holomorphic 
curve that vanishes if and only if the component is a trivial cylinder 
over a Reeb orbit.
\end{itemize}
The name symplectic area is justified by the following result,
proved in \cite[Proposition 9.4]{boelhowize} in the language
of symplectic energy.
\begin{prop}\label{prop:limitarea}
Let $u_n$ be a sequence of $J_n$-holomorphic curves for a sequence
of almost complex structures that stretch the neck of a closed contact 
type hypersurface (possibly empty). Then, the symplectic area of the limit 
building is the limit of the symplectic area of the curves $u_n$.
\end{prop}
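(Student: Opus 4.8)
The plan is to deduce the statement from \cite[Proposition 9.4]{boelhowize} after matching conventions, and to record the mechanism behind it, which is a level-by-level application of Stokes' theorem. First one checks that, under the identification of the completions $\wdt X_\pm$ with $\mathrm{int}\,X_\pm$ from \S\ref{sec:framesplit}, the quantity $\ca(B)$ defined above coincides with the $\om$-part of the SFT energy used in \cite{boelhowize}: the discrepancy between our convention (placing $\Phi_*\om$ on the cylindrical ends) and theirs (placing $d(r\alpha)$ there) affects only the $\lambda$-part of the energy, since on any cylindrical piece the $\om$-energy of a finite-energy curve is obtained by integrating $d\alpha$ against the $M$-component, independently of the chosen $r$-coordinate. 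With this identification the proposition is exactly \cite[Proposition 9.4]{boelhowize}, so it remains to recall why it holds.

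The guiding principle is that symplectic area is a homological quantity. For a finite-energy component $v$ in a cylindrical level $M\times(0,\infty)$, Stokes' theorem together with the asymptotic convergence of $v$ to trivial cylinders over Reeb orbits (see Remark \ref{rk:energy}) gives
\[
\int v^*\pi^*d\alpha \;=\; \sum_{p\in\mathrm{pos}}\alpha(\Gamma_p)\;-\;\sum_{q\in\mathrm{neg}}\alpha(\Gamma_q),
\]
where the $\Gamma$'s denote the asymptotic Reeb orbits; for a component in a completion $\wdt X_\pm$ the same computation, carried out with a primitive of $\om$ on each cylindrical end, expresses $\int v^*\om$ as such an alternating sum of actions plus a term depending only on the relative homology class of $v$ and on the fixed outermost asymptotics and boundary conditions. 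Summing these identities over all components of $B$, the decoration maps pair each positive puncture of one level with a negative puncture of the next level and carry the same Reeb orbit, so the corresponding action terms cancel telescopically; what survives is precisely the asymptotic and boundary data shared by all of the $u_n$. Performing the \emph{same} computation on each $u_n$ — now slicing $X_{\eps_n}$ along level sets $\{r=c\}$ inside the long neck $M\times(\eps_n,1/\eps_n)$ (and along the ends of the completion in the degenerate case $M=\emptyset$) — expresses $\ca(u_n)$ in terms of the same data together with the integrals of $\om$ over the pieces of $u_n$ mapped into the neck.

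It therefore remains to show that these neck contributions are asymptotically accounted for by the cylindrical-level components of the limit building, i.e.\ that no symplectic area leaks into portions of the neck where $u_n$ is close to a long trivial cylinder over a Reeb orbit of small net action. This is exactly the content of the neck-stretching compactness analysis of \cite{boelhowize}: the uniform gradient estimates there force $u_n$, on the part mapped into the neck, to be $C^\infty_{\mathrm{loc}}$-close to the components $v_k$ of $B$ away from the long almost-trivial cylinders, which in turn contribute only their Reeb actions to the Stokes boundary terms. I expect this to be the main obstacle, and the only genuinely analytic input, so I would invoke it rather than reprove it; passing to the limit in the two Stokes expressions then yields $\ca(B)=\lim_n\ca(u_n)$. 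In the exact setting that actually occurs in this paper — boundary on the zero section, which is exact Lagrangian, and fixed asymptotics — the argument shortens considerably: a global primitive $\lambda$ of $\om$ makes $\ca(u_n)$ depend only on the prescribed boundary and asymptotic data, hence be constant in $n$, so that only the value carried by $B$, supplied by the telescoping of actions above, needs to be checked.
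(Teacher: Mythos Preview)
Your proposal is correct and takes essentially the same approach as the paper: the paper simply cites \cite[Proposition 9.4]{boelhowize} without proof, noting only that it is stated there in the language of symplectic energy. Your additional discussion of convention matching and the telescoping Stokes mechanism is more than the paper provides, but is consistent with it.
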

For a subbuilding $B' \subset B$ the symplectic area $\ca(B')$ has
an easy geometric interpretation, which we will not use:
it simply represents, up to a small error that goes to $0$ with
$n \to \infty$, the symplectic area of the restriction of $u_n$ to a
subset that converges to this subbuilding (see \cite{boelhowize} for 
the precise definition of this convergence). We will use however
the following fact: in an exact setting (to be made precise in the
following statement) the symplectic area of a building
depends only on its asymptotic Reeb orbits.

\begin{lemma}\label{le:areasubb}
Let $(X,\om)$ be a symplectic cobordism between $(M^+,\alpha_+)$ and $(M^-,\alpha_-)$
such that $\om = d\lambda$ is exact. Let $(M,\alpha)$ be a closed contact
type hypersurface that splits $X$ in $X_+$ and $X_-$. Let $B$ be a finite energy 
holomorphic building with components in the floors $M^-\times (0,\infty)$, 
$M^+\times (0,\infty)$, $M\times (0,\infty)$,  $\wdt X_-$ and $\wdt X_+$.
We call $\{\gamma_i^+\}$ and $\{\gamma_i^-\}$ the positive and negative Reeb 
orbits to which $B$ is asymptotic at its positive and negative punctures.
Then the symplectic area of $B$ is
$$
\ca(B)=\sum_i\int_{\gamma_i^+}\lambda - \sum_j \int_{\gamma_j^-}\lambda.
$$
\end{lemma}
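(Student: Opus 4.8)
\noindent{\it Proof.} The plan is to evaluate $\ca(B)$ component by component via Stokes' theorem and then to cancel, in pairs, the contributions coming from the Reeb orbits interior to the building. The first ingredient is that exactness of $(X,\om=d\lambda)$ propagates to all the pieces: each completion $\wdt X_\pm$ carries the global primitive $\wdt\lambda_\pm:=\Phi_*\lambda$ of its symplectic form, where $\Phi$ is the diffeomorphism of \S\ref{sec:framesplit} identifying $\mathrm{int}\,X_\pm$ with $\wdt X_\pm$; and every symplectization floor $M'\times(0,\infty)$ (with $M'\in\{M^+,M,M^-\}$) carries the primitive $\pi^*\alpha'$ of the area form $\pi^*d\alpha'$, where $\pi$ is the projection onto $M'$ and $\alpha':=\lambda_{|M'}$. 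Since $\ca(B)$ is by definition the sum of the symplectic areas of the components of $B$, it suffices to compute each of them. Note also that for a Reeb orbit $\gamma$ of $(M',\alpha')$ the expression $\int_\gamma\lambda$ appearing in the statement is, via $\lambda_{|M'}=\alpha'$, exactly the action $\int_\gamma\alpha'$.

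Second, I would treat a single component $F:(S\priv Z)\to\wdt X_\pm$ (or into a symplectization floor), with $Z=Z^+\sqcup Z^-$ its positive and negative punctures. By Remark \ref{rk:energy}, $F$ has finite energy and is $\cc^l$-asymptotic at each puncture $z$ to a trivial cylinder over a closed Reeb orbit $\gamma_z$. Removing a small disk around each puncture (and, if $S$ has boundary --- as in our applications, where $F(\partial S)$ lies on an exact Lagrangian submanifold of $X$, so that $\lambda$ is exact there and the boundary integral of $F^*\wdt\lambda_\pm$ vanishes), Stokes' theorem together with the asymptotic analysis at the punctures yields
$$
\ca(F)=\int_{S\priv Z}d(F^*\wdt\lambda_\pm)=\sum_{z\in Z^+}\int_{\gamma_z}\lambda-\sum_{z\in Z^-}\int_{\gamma_z}\lambda,
$$
and the same identity holds for components in a symplectization floor with $\wdt\lambda_\pm$ replaced by $\pi^*\alpha'$.

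Third, I would sum these identities over all components of $B$. The decoration maps glue each negative puncture of a component in a given floor to a positive puncture of a component in the adjacent lower floor, along a common Reeb orbit with matching orientation; the two resulting terms, $-\int_\gamma\lambda$ and $+\int_\gamma\lambda$, cancel. The only surviving contributions are those attached to the positive punctures at the very top of $B$, which are the $\gamma_i^+$, and to the negative punctures at the very bottom, which are the $\gamma_j^-$. This gives exactly $\ca(B)=\sum_i\int_{\gamma_i^+}\lambda-\sum_j\int_{\gamma_j^-}\lambda$.

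The step I expect to require the most care is the asymptotic bookkeeping at the punctures. Because this paper uses the convention in which the completion of a cobordism is symplectically a \emph{bounded} collar (rather than the usual half-infinite symplectization), the primitive $\wdt\lambda_\pm$ restricts along a cylindrical end over $M'$ to $r'(r)\,\alpha'$ on the slices, with a radial factor $r'(r)$ that tends to the boundary value $1$ as one approaches the end; one must verify this so that the limiting loop integral recovers the honest Reeb action $\int_\gamma\alpha'$ and not a rescaled version of it. On symplectization floors the analogous factor is simply absent, since $\pi^*\alpha'$ restricts to $\alpha'$ on every slice. Granted this, the Stokes computation, the vanishing of the exact-Lagrangian boundary terms, and the telescoping are all routine in view of the compactness theory recalled in \S\ref{sec:framesplit} and \cite{boelhowize}. \cqfd
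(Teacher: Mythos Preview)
Your proof is correct and follows essentially the same route as the paper: verify the formula floor by floor via Stokes' theorem (exploiting the global primitive $\lambda$ on the completions and $\pi^*\alpha'$ on the symplectization floors), then observe that the contributions at matched punctures cancel telescopically. The paper's only technical difference is that for a symplectization component it makes the Stokes step rigorous by pulling back through a diffeomorphism $M'\times(0,1)\to M'\times(0,\infty)$ so that the curve extends smoothly to the blown-up compact surface, whereas you invoke the $\cc^l$-asymptotics directly; your extra remark about the radial factor in the completion tending to $1$ is a point the paper leaves implicit.
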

\noindent {\it Proof:} The proof is almost immediate. The formula is additive with 
respect to the decomposition of the building in different floors, hence we only need
to verify the formula for each floor. 

For the components in $\wdt X_+$ or $\wdt X_-$ we can apply Stokes' theorem. 
For a component in $M\times (0,\infty)$ 
given by $u: \Sigma\priv Z \to M\times (0,\infty)$, 
where $Z := \{z_i\}$ is a set of punctures of the closed Riemann surface $\Sigma$ and 
$\Gamma := \{\gamma_i^+\} \cup \{\gamma_j^-\}$ their corresponding asymptotic closed 
Reeb orbits, we proceed as follows. 
Let $f:(0,1)\to (0,\infty)$ be an increasing diffeomorphism and $F:=\id\times f: 
M\times(0,1)\to M\times (0,\infty)$ the induced diffeomorphism. By the asymptotic 
properties of the holomorphic buildings with finite energy, the pull-back 
$F^*u: \Sigma\priv Z \to M\times (0,1)$ can be considered as a smooth map of the 
compact surface $\hat \Sigma$ obtained by blowing-up the punctures to $M\times [0,1]$. 
This map is therefore amenable to Stokes theorem. Moreover, $F^*(\pi^*d\alpha) = 
(\pi\circ F)^*d\alpha=\pi^*d\alpha=\pi^*d\lambda$ because $d\alpha_{|M}=\om_{|M} 
= d\lambda_{|M}$. Thus,
$$
\begin{array}{ll}
\ds \int_{\Sigma\priv Z} u^*\pi^*d\alpha & \ds =\int_{\textrm{int}\,\hat \Sigma} (F^*u)^*F^*\pi^*d\alpha 
= \int_{\textrm{int}\,\hat \Sigma} (F^*u)^*\pi^*d\lambda=\int_{\hat \Sigma} 
(F^*u)^*\pi^*d\lambda=\int_{\partial \hat \Sigma} (\pi\circ u)^*\lambda \\
&\ds = \sum_{\gamma\in \Gamma}\int_{\gamma}\lambda=\sum_i\int_{\gamma_i^+}\lambda - \sum_j \int_{\gamma_j^-}\lambda.
\end{array}
$$


\subsubsection{Compactness for punctured holomorphic disks II}\label{sec:sft_compactnessII}

We recall the setting: $g$ is a Riemannian metric on $L$ and $M=\{\|p\|_g=1\}$.
Let  $\cw'$ be a smoothly bounded open neighborhood of the zero section 
such that $(M' := \partial \cw', \alpha' )$ is a closed hypersurface of contact type.
Assume that the projection $\pi: T^*L \to L$ induces an isomorphism
$\pi_*: H_1(\cw') \to H_1(L)$. We consider a sequence $J_n \in \mathcal{J}_{\mathrm{cyl}, g}^{\infty}$ 
of almost complex structures which stretch the neck near $M'$, in particular these almost complex 
structures are cylindrical and fixed in the complement of a compact set of $T^*L$.
For a chosen class $\beta \in H_1(L)$
and ${\tilde\gamma}$ a lift of a minimal geodesic $\gamma$ representing $\beta$,
let $u_n: D \backslash \{ 0 \} \to T^*L$ be a sequence of
$J_n$-holomorphic punctured disks such that $u_n(\partial D) \subset L$,
$[u_n(\partial D)] = \beta$ and $u_n$ is asymptotic to $\tilde \gamma$ at 0. We write $\widetilde{T^*L \backslash \cw'}$ for
the completion of $T^*L \backslash \cw'$ with a negative cylindrical end
$(0,1) \times M'$ and $\widetilde{\cw'}$ for the completion of $\cw'$ with
a positive cylindrical end $(1,\infty) \times M'$.
Note that these completions are symplectomorphic to the underlying spaces.
Recall also that the splitting process provides  almost
complex structures, each denoted $\wdt J$, on $(0,\infty)\times M$, $\widetilde{T^*L \backslash \cw'}$, 
$(0,\infty)\times M'$ and $\widetilde{\cw'}$. These structures are cylindrical at
infinity in $\widetilde{T^*L\priv \cw'}$ and $\widetilde{\cw'}$ for the contact form $\alpha'$, and 
cylindrical in $(0,\infty)\times M'$ (meaning that it is compatible with $\alpha'$ and $\R_*^+$-invariant).

\begin{theorem}\label{thm:neckstretchcompact}
The sequence $(u_n)_{n \in \N}$ converges to a holomorphic building with the following 
properties: \vspace*{-,2cm}
\begin{itemize}
\its it has no component in $(0,\infty)\times M$ (in other terms, the top floor of the holomorphic
building is $\widetilde{T^*L \backslash \cw'}$), \vspace*{-,2cm}
\its it contains, as a subbuilding, a disc with one positive puncture with boundary on $L$ and
asymptotic at the puncture to a Reeb orbit $\tilde \gamma'$ in $M'$ that projects
  to a representative of $\beta\in H_1(L)$, \vspace*{-,2cm}
 \its if $\widetilde{T^*L\priv \cw'}$ is exact (i.e. $\alpha'=\lambda_{|M'}$) we can be more precise: 
the bottom level contains
a $\widetilde{J}$-holomorphic map $v_{0}: (D \backslash \{0 \}, \partial D) \to
(\widetilde{\cw'}, L)$ that is asymptotic to a Reeb orbit ${\tilde \gamma'}$ in $M'$
that projects to  to a representative of $\beta\in H_1(L)$).
\end{itemize}
\end{theorem}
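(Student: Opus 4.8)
The plan is to feed the sequence $(u_n)$ into the SFT compactness machinery, just as in the proof of Theorem~\ref{thm:sft_compactness}, and then to read off the disc from the annular topology of the limit building together with the exactness of $T^*L$ and of $L$. Since each $u_n$ is asymptotic to $\tilde\gamma$ at $0$ and the $J_n$ are cylindrical and fixed outside a fixed compact set of $T^*L$, Corollary~\ref{cor:energy_bound} bounds $\mathcal{E}(u_n)$ uniformly, so the splitting/compactness theorem of Bourgeois et al.\ \cite{boelhowize} yields, after passing to a subsequence, a stable finite-energy holomorphic building $B$ to which $u_n$ converges, whose floors are, from top to bottom, symplectization levels over $(M,\alpha)$, the piece $\widetilde{T^*L\priv\cw'}$, symplectization levels over $(M',\alpha')$, and the piece $\widetilde{\cw'}$. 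Because $T^*L$ and its cobordism pieces are exact and $L$ is exact, there is no bubbling of spheres or of discs with boundary on $L$; hence the glued piecewise-smooth domain $\overline S$ is an annulus, homeomorphic to the oriented blow-up of $D$ at $0$, with one boundary circle over $\tilde\gamma\subset M$ (the blow-up of the puncture $0$) and one boundary circle $\partial D$ mapping to a loop in $L$ of class $\beta$ (there being no boundary bubbling, $u_n|_{\partial D}$ converges and its homology class is preserved in the limit).

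I would then eliminate the symplectization levels over $M$ exactly as in Theorem~\ref{thm:sft_compactness}: the component of $B$ carrying the blown-up puncture $0$ is a punctured curve in $\R\times M$ with a single positive puncture asymptotic to $\tilde\gamma$ and negative punctures asymptotic to Reeb orbits $\tilde\gamma_i$; projecting to $L$ shows $\sum[\gamma_i]=\beta$, the annular structure of $\overline S$ caps all but one of the $\gamma_i$ from below and so makes them null-homologous, and minimality of $\gamma$ together with $0\le\int v^*d\alpha=\ell_g(\gamma)-\sum\ell_g(\gamma_i)$ forces this curve to be a trivial cylinder over $\tilde\gamma$. Iterating, a whole symplectization level over $M$ would consist of trivial cylinders only, contradicting stability of $B$; hence there is no such level, the top floor of $B$ is $\widetilde{T^*L\priv\cw'}$, and it carries a distinguished component $w$ asymptotic to $\tilde\gamma$ at its unique positive puncture. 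This is the first assertion.

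For the second assertion I would analyse the combinatorics of $B$. Cutting $\overline S$ along the Reeb orbits shared by consecutive floors decomposes the annulus into the domains of the components of $B$, and since every embedded circle in an annulus is separating, the dual graph of this decomposition is a tree. Let $v_0$ be the component of $B$ in $\widetilde{\cw'}$ carrying $\partial D$; it has boundary on $L$ of class $\beta$ and finitely many positive punctures $\delta_1,\dots,\delta_p$ on $M'$, with $p\ge1$ since a disc with boundary on $L$ and no puncture would be a forbidden disc bubble. Exactly one puncture, say $\delta_1$, lies on the unique path in the tree from $v_0$ to $w$; for $j\ge2$ the subbuilding $B_j$ attached to $v_0$ along $\delta_j$ is a connected planar subsurface of $\overline S$ with a single outer boundary circle, hence a disc, so projecting $B_j$ to $L$ gives $[\pi(\delta_j)]=0$. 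Consequently $\beta=\sum_j[\pi(\delta_j)]=[\pi(\delta_1)]$, and the connected subbuilding $B':=v_0\cup\bigcup_{j\ge2}B_j$ is, as a surface, a disc with a single interior puncture and with boundary on $L$, asymptotic at that puncture to the Reeb orbit $\tilde\gamma':=\delta_1$ on $M'$, whose projection represents $\beta$; the hypothesis that $\pi_*\colon H_1(\cw')\to H_1(L)$ is an isomorphism is what lets us transport these homological identities between $\cw'$ and $L$.

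Finally, for the third assertion assume $\widetilde{T^*L\priv\cw'}$ is exact. Then Lemma~\ref{le:areasubb} shows that the symplectic area of any subbuilding of $B$ equals the sum of the $\lambda$-actions of its positive asymptotic orbits minus that of its negative ones (the boundary on $L$ contributing $0$), while each component, being holomorphic, has non-negative area. Hence no subbuilding can cap a positive puncture from above, since such a subbuilding would have strictly negative area; applied to $v_0$ this rules out the punctures $\delta_j$ with $j\ge2$, so $p=1$ and $v_0$ itself is a $\widetilde J$-holomorphic map $(D\priv\{0\},\partial D)\to(\widetilde{\cw'},L)$ asymptotic to $\tilde\gamma'$ at the puncture, with $\pi(\tilde\gamma')$ representing $\beta$. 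The main obstacle throughout is the bookkeeping in these last two steps — keeping the annular/tree structure of the limit building and the homology classes of the orbits one cuts along under control — whereas the SFT compactness input itself is standard and is quoted directly from \cite{boelhowize}.
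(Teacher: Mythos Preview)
Your proof is correct and follows essentially the same route as the paper: energy bound from Corollary~\ref{cor:energy_bound}, SFT compactness, elimination of the $M$-symplectization levels by the minimality argument of Theorem~\ref{thm:sft_compactness}, and construction of the punctured-disc subbuilding by attaching to the boundary-carrying component $v_0$ all the capping subbuildings $B_j$ except the one on the tree path to $\tilde\gamma$. The only minor variation is in the exact case: the paper rules out the extra caps $B_j$ by looking at their \emph{maximal floor} (the maximum principle forces it into $\widetilde{T^*L\priv\cw'}$, and Stokes in this exact piece kills a component with only negative punctures), whereas you compute the total area of $B_j$ globally via Lemma~\ref{le:areasubb}; both are the same exactness/Stokes mechanism, and your version just requires noting that the component-by-component proof of that lemma applies equally to subbuildings with asymptotics on the internal hypersurface $M'$.
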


\noindent{\it Proof:}
Since all $u_n$ are asymptotic to $\tilde\gamma$ at $0$, by corollary \ref{cor:energy_bound}
the energy $\mathcal{E}(u_n)$ is uniformly bounded. The SFT compactness theorem for
split symplectic manifolds \cite{boelhowize, abbas} implies that our sequence
$u_n: (D \backslash \{0\}, \partial D, j) \to (T^*L, L, J_n)$  converges to a
stable holomorphic building. In our situation this is given by the following data.

\begin{enumerate}
\item[(i)] $v_0: (S_0 \backslash Z_0, \partial S_0, j_0) \to (\widetilde{\cw'}, L,
\widetilde{J})$ is a proper $\widetilde{J}$-holomorphic map from a compact Riemann
surface $S_0$ with boundary $\partial S_0$ with a finite set of punctures $Z_0 \subset
S_0 \backslash \partial S_0$ to the almost complex manifold $(\widetilde{\cw'},
\widetilde{J})$ with finite energy. Moreover, although $S_0$ may have several
components, $\partial S_0$ is a unique circle, $v_0(\partial S_0)\subset L$, and
since $(u_n)_*[\partial D] = \beta$, $v_0(\partial S_0)$ represents the class $\beta$ in $H_1(L)$.
The almost complex structure $\widetilde J$ is cylindrical at infinity on
$\widetilde{\cw'}$ (for the contact form $\alpha'$).
\item[(ii)]
For $k = 1, \ldots, {p-1}$ we have holomorphic maps
\[
v_k: (S_k \backslash Z_k, j_k) \longrightarrow ((0,\infty) \times M', \widetilde J),
\]
from closed Riemann surfaces $S_k$ with a finite set of punctures $Z_k \subset S_k$
to the symplectization $(0,\infty) \times M'$. The almost complex structure $\widetilde J$
is cylindrical with respect to the contact form $\alpha'$. As in section
\ref{sec:compactness1}, there are decoration maps $\Phi_k$ for $k = 1, \ldots, p-1$ that
glue all of the negative punctures of $Z_k$ to the positive punctures of $Z_{k-1}$
in the corresponding oriented blow-ups $\overline{S}_{k}$ and $\overline{S}_{k-1}$.
\item[(iii)] $v_p: (S_{p} \backslash Z_{p}, j_{p})\to
(\widetilde{T^*L \priv \cw'}, \widetilde{J})$ is a $\widetilde{J}$-holomorphic
map from a closed Riemann surface $S_p$ with a finite set of
punctures $Z_{p} \subset S_{p}$ to the almost complex manifold $(\widetilde{T^*L
\priv \cw'}, \widetilde{J})$ such that $v_{p}$ has finite energy. The almost
complex structure $\widetilde{J}$ is cylindrical at infinity at both ends of
$\widetilde{T^*L\priv \cw'}$. Furthermore, similar to (ii), there is a decoration map
$\Phi_{p}$ that glues all of the negative punctures of $Z_{p}$ to the positive
punctures of $Z_{p-1}$ in the corresponding oriented blow-ups $\overline{S}_{p}$ and
$\overline{S}_{p-1}$.
\item[(iv)] For $k = {p+1, \ldots, q}$ we have holomorphic maps
\[
v_k: (S_k \backslash Z_k, j_k) \longrightarrow ((0,\infty) \times M, \widetilde J)
\]
from closed Riemann surfaces $S_k$ with a finite set of punctures $Z_k \subset
S_k$ to the symplectization $(0,\infty) \times M$. As before, $\widetilde J$ is cylindrical
on $(0,\infty)\times M$ with respect to the contact form $\alpha$. Similar to (ii), there are
decoration maps $\Phi_k$ for $k = p+1, \ldots, q$ that glue all of the negative
punctures of $Z_k$ to the positive punctures of $Z_{k-1}$ in the corresponding
oriented blow-ups $\overline{S}_k$ and $\overline{S}_{k-1}$
\item[(v)] Denote by
\[
\overline{S} := \overline{S}_0 \cup_{\Phi_1} \overline{S}_1 \cup \ldots \cup_{\Phi_{p}}
\overline{S}_{p} \cup_{\Phi_{p+1}} \ldots \cup_{\Phi_q} \overline{S}_q
\]
the piecewise smooth surface obtained by gluing together
all blow ups $\overline{S}_k$ at their punctures via the decoration maps
$\Phi_k$. Then $\overline{S}$ has no nodal point and is homeomorphic to
$\widehat{D_0} := \R_{\geq 0} \times S^1 \cup \{ \infty \} \times S^1$,
which is homeomorphic to the oriented blow-up of $D$ at $0$. Again, this is
due to the exactness of the manifold (not to be confused with exactness of the cobordism),
which prevents the formation of sphere or disc bubbles by lemma \ref{le:areasubb}.
\end{enumerate}
\begin{figure}[h!]
\begin{center}
\input 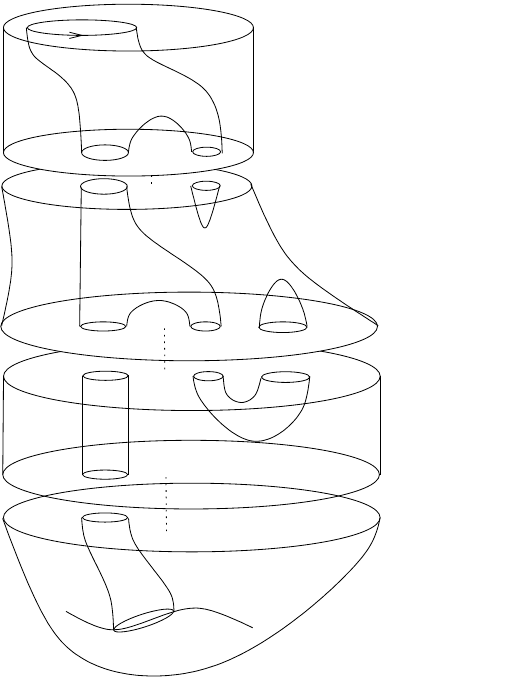_t
\end{center}
\caption{A limit holomorphic building when stretching the neck of $M'$}
\label{fig:sft2}
\end{figure}

We now analyze the limit holomorphic building. Notice first that the argument 
for the top floor in the proof of theorem \ref{thm:sft_compactness} applies 
word for word and proves the first assertion of our theorem: there is no component 
of the limit holomorphic building in  $(0,\infty)\times M$.

Now by a subbuilding we mean a collection of connected components $(S_{i}^j)$
of the $S_i$ together with the maps $v_i^j:=v_{i|S_i^j}$. We say that a
subbuilding is connected if the surface obtained by gluing the $\overline{S}_i^j$
{\it via} the corresponding decoration maps is connected.  We concentrate first on the lowest level of the
limit building, i.e. we consider only the map $v_0: (S_0 \backslash Z_0,
\partial S_0, j_0) \to (\widetilde{\cw'}, L, \widetilde{J})$ and restrict to
the component of $S_0$ containing the boundary $\partial S_0$. We label this
component by $S_0^{\partial}$ and write $v_0^{\partial}$ for the restricted map.
Since the domain of our sequence of maps is given by $D \backslash \{ 0 \}$,
we see that $S_0^{\partial}\backslash Z_0^{{\partial}} = D \backslash
\{ z_1, \ldots, z_l \}$ and $v_0^{\partial}$ is asymptotic to a $\tilde \gamma_i'$
at the puncture $z_i$.
For each puncture $z_i$, let $B(z_i)$ denote the maximal connected subbuilding
which has $\tilde \gamma_i'$ as a single  negative puncture.
Since $\overline{S}$ is homeomorphic to an annulus, exactly one of the $B(z_i)$,
say $B(z_1)$, is a topological annulus, while all the other ones are topological
discs. The subbuilding composed of $S_0^\partial$ and the $B(z_i)$,
$i=2,\dots,n$ is the required subbuilding: it is indeed a topological disc with one
positive puncture at $z_1$ and with boundary on $L$. Moreover, its projection to
$L$ - well-defined because each floor of the decomposition of $T^*L$ naturally
projects to $L$ - provides a homotopy between $\pi(\tilde{\gamma_1}')$ and
$v_0(\partial S_0)$, which represents the class $\beta$ in $H_1(L)$.

Finally, when $\wdt{T^*L\priv \cw'}$ is an exact cobordism we  show that the $B(z_i)$,
$i\geq 2$, simply do not appear. Indeed, for each $i\geq 2$, $B(z_i)$ has a
maximal floor, whose components therein have only negative punctures.
By the maximal principle these maximal floors cannot lie in $(0,\infty)\times M$
nor $(0,\infty)\times M'$, so they have to be contained in $\widetilde{T^*L \backslash
\cw'}$. Now $\widetilde{T^*L \backslash \cw'}$ is an exact
cobordism by assumption and the
almost complex structure $\widetilde J$
tames the exact symplectic form. By Stokes' Theorem components with
only negative punctures do not exist in $\widetilde{T^*L \backslash \cw'}$,
since they would have non-positive symplectic area. Hence we may rule out the
existence of such holomorphic subbuildings and we see that $v_{0}^{\partial}$
has only one (positive) puncture, whose asymptotic we call $\tilde \gamma'
\subset M'$. Proving that $[\tilde \gamma']=\beta$
is similar to the non-exact case above. \cqfd


\subsection{Proof of theorem \ref{thm:existence}}\label{sec:proofexistence}

Let $g$ be a Riemannian metric on $L$ satisfying the assumptions of theorem
\ref{thm:existence} and let $\eps > 0$. We divide the proof into two steps.
We first prove our theorem for the Riemannian metric $g':=g_{\eps,\beta}$
provided by proposition \ref{prop:goodmetric}. In particular it is $\eps$-close
to $g$ in the $\cc^0$-norm. We consider the metric $g$ itself in the second step.

\paragraph{Step 1: proof of theorem \ref{thm:existence} for those $J$ that coincide
with $J_{g'}$ at infinity.}
Let $J_{g'}\in \cj_{\cyl,g'}^\infty$ be the almost complex structure as
defined in section \ref{sec:jpart}. Let $\cj_{g'}$ denote in this paragraph the set of
almost complex structures on $T^*L$ that coincide with $J_{g'}$ outside
a compact subset. We first show that the set $\cm(J,\beta)$
(defined on p. \pageref{thm:existence}) contains at least one element
for all $J \in \cj_{g'}$. Recall that by construction $g'$ has a unique minimizing
geodesic $\gamma(\beta)$ of the form $\gamma(\beta')^k$ for some primitive geodesic
$\gamma(\beta')$, where $k\geq 1$ and $\beta = k \beta'$.
Lemma \ref{le:injpoints} then implies  that the space $\cm(J,\beta')$ and its subspace 
consisting of somewhere injective curves coincide for all $J \in \cj_{\cyl,
g'}^\infty$.

The proof relies on the compactness theorem \ref{thm:sft_compactness} and the
Fredholm theory for punctured holomorphic discs with boundary on a Lagrangian
as summarized in the following lemma. Although it may be considered folklore
we explain this lemma in appendix \ref{sec:fredholm}.
\begin{lemma}\label{le:fredholmproof}
There is a subset $\cj_{g'}^{\reg}\subset \cj_{g'}$, dense in the
$\cc^\infty$-topology, such that for all $J \in \cj_{g'}^{\reg}$ the set
$\cm(J,{\beta'})$ is a manifold of dimension $0$. Moreover, if $J_0,J_1\in
\cj_{g'}^{\reg}$,
for a generic smooth path $\{J_t\}_{t\in [0,1]}$ in $\cj_{g'}$
that interpolates between $J_0$ and $J_1$, the space
$\cup_{t\in[0,1]}\cm(J_t,{\beta'})$ is a smooth manifold of dimension $1$.
\end{lemma}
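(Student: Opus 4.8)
The plan is to follow the standard Floer-theoretic transversality package, adapted to the setting of punctured holomorphic discs with boundary on a Lagrangian, using the somewhere-injectivity result (lemma \ref{le:injpoints}) as the essential input that makes generic transversality available.

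First I would set up the universal moduli space. Fix the functional-analytic framework from the appendix: a Banach manifold $\cb^{k,p,\delta}$ of maps $D\priv\{0\}\to T^*L$ with boundary on $L$ and the prescribed asymptotics at $0$, and the Banach bundle $\ce^{k-1,p,\delta}\to\cb^{k,p,\delta}$ of complex-antilinear bundle maps. Since all members of $\cj_{g'}$ agree with $J_{g'}$ outside a fixed compact set $K_0$, parametrize perturbations by a Banach (or $\cc^\infty$ with an auxiliary Floer $\cc^\eps$-norm) manifold $\cj_{g'}$ of almost complex structures supported in $K_0$. Consider the section $\bar\partial: \cb^{k,p,\delta}\times\cj_{g'}\to\ce^{k-1,p,\delta}$, $(u,J)\mapsto du + J(u)\circ du\circ j$, and the universal moduli space $\cm^{\text{univ}}:=\bar\partial^{-1}(0)$. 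The key point is that $\cm^{\text{univ}}$ is a Banach manifold: one must show the linearization $(\xi,Y)\mapsto \mathbf{D}_u\xi + Y(u)\circ du\circ j$ is surjective at every solution. By lemma \ref{le:injpoints} every $u\in\cm(J,\beta')$ has an injectivity point $z_*$ with $u(z_*)$ in the region $T_R^*L\priv T_r^*L$, which lies inside $K_0$ provided $K_0$ is chosen large enough (it contains the neck and the disc-bundle up to a radius beyond which all our $J$'s are cylindrical and the curves cannot escape by the energy bound of corollary \ref{cor:energy_bound} and the maximum principle). Around such an injectivity point the standard argument — perturb $J$ in a small neighbourhood of $u(z_*)$ disjoint from $u(D\priv(\text{small nbhd of }z_*))$, then invoke unique continuation and the Carleman-type estimate — shows the cokernel of $\mathbf{D}_u$ is killed, so the universal linearization is onto. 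Hence $\cm^{\text{univ}}$ is a smooth Banach manifold.

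Next I would apply the Sard–Smale theorem to the projection $\Pi:\cm^{\text{univ}}\to\cj_{g'}$. Its fibre over $J$ is $\cm(J,\beta')$, and $\Pi$ is Fredholm of index equal to the index of $\mathbf{D}_u$, which is $1$ by the Fredholm theory of the appendix but drops to $0$ after quotienting by the $S^1$-reparametrization action (one works with the unparametrized moduli space $\cm=\widehat\cm/S^1$, or equivalently fixes an asymptotic marker to rigidify the domain — here the disc $D$ has automorphism group $S^1$ only, acting on the puncture's marker, so the parametrized index $1$ becomes $0$ after this normalization). Regular values $J\in\cj_{g'}^{\reg}$ form a dense $G_\delta$, and for such $J$ the fibre $\cm(J,\beta')$ is a smooth $0$-manifold. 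Density in the $\cc^\infty$-topology rather than merely in the Banach-space completion is obtained by the usual Taubes trick of exhausting $\cj_{g'}$ by the $\cc^\eps$-Banach spaces and taking a countable intersection. For the path statement, one runs the identical argument for the parametrized section over $\cb^{k,p,\delta}\times[0,1]\times\cj_{g'}$ with endpoints fixed at $J_0,J_1$: the universal space is again a Banach manifold (surjectivity of the linearization still follows from the injectivity point, since the curve has an injectivity point for each $t$), and Sard–Smale applied to the projection to the space of paths, now Fredholm of index $1$ after the $S^1$-normalization, yields for generic paths that $\cup_{t\in[0,1]}\cm(J_t,\beta')$ is a smooth $1$-manifold.

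The main obstacle will be the transversality argument at the injectivity point itself — specifically, ensuring that the somewhere-injective point provided by lemma \ref{le:injpoints} can be placed inside the region where $J$ is free to vary (this is why the lemma was stated with the flexible radii $r,R$), and then carrying out the standard but delicate local perturbation plus unique-continuation argument in the punctured, Lagrangian-boundary setting. A secondary technical point is the bookkeeping of the index under the $S^1$-action and the asymptotic weight $\delta$: one must confirm that the kernel/cokernel are $\delta$-independent for small $\delta$ (as noted in \S\ref{sec:transversality}) so that the Fredholm index used in Sard–Smale is the geometric one. Both of these are routine given the appendix's Fredholm theory and lemma \ref{le:injpoints}, so I would state them and refer to the standard references (e.g. \cite{mcsa2}) for the perturbation lemma, giving full details only where the punctured-disc setting genuinely differs from the closed-curve case.
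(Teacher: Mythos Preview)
Your proposal is correct and follows essentially the same route as the paper's appendix: set up the universal moduli space over $\cb^{k,p,\delta}\times\cj_{g'}$, use the somewhere-injectivity lemma \ref{le:injpoints} (with the flexible radii $r,R$ chosen so that the injective point lands in the region where $J$ is free to vary) together with the Hahn--Banach/unique-continuation argument to establish surjectivity of the universal linearization, apply Sard--Smale to the projection, invoke the Taubes trick for $\cc^\infty$-density, and pass from the parametrized index $1$ (corollary \ref{cor:index}) to dimension $0$ via the $S^1$-reparametrization quotient. The parametrized version for paths is handled identically in both.
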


Proposition \ref{prop:goodmetric} shows that $\cm(J_{g'},{\beta'})$ consists of exactly
one element and $J_{g'}\in \cj_{g'}^{\reg}$. Let $J\in \cj_{g'}^{\reg}$ and $\{J_t\}$
be a regular path between $J_{g'}$ and $J$ consisting of almost complex structures which
coincide with $J_{g'}$ outside a compact set, as defined in lemma \ref{le:fredholmproof}.
Then $\cup_{t\in [0,1]}\cm(J_t, {\beta'})$ is a one-dimensional cobordism between
$\cm(J_{g'},{\beta'})$ and $\cm(J,{\beta'})$ by lemma \ref{le:fredholmproof} and is compact
by theorem \ref{thm:sft_compactness}. Both ends of the cobordism therefore have the
same - odd - parity, so $\cm(J,{\beta'})$ is non-empty when $J$ is regular. Finally,
if $J\in \cj_{g'}$ does not belong to $\cj_{g'}^{\reg}$, let $J_n\in \cj_{g'}^{\reg}$
be a sequence of regular almost complex structures that converge to $J$ in the
$\cc^\infty$-topology. The previous argument shows that there exist elements
$u_n\in \cm(J_n,{\beta'})$ for all $n$. By theorem \ref{thm:sft_compactness} we
can extract from $u_n$ a subsequence that converges to an element $u \in
\cm(J,{\beta'})$, thus $\cm(J,{\beta'})$ is non-empty. {Finally, if $u\in \cm(J,\beta')$,
 its $k$-cover $u(z^k)\in \cm(J,\beta)$, so $\cm(J,\beta)$ itself is indeed non-empty. }This completes
the first step of the proof.

\paragraph{} Before we prove theorem \ref{thm:existence} for $g$ itself, let us
recall that by example \ref{ex:switch}, establishing the existence of a
punctured holomorphic disc asymptotic to a lift of a geodesic $\gamma$ at
the puncture for all structures in $\cj^\infty_{\cyl, h}(T^*L)$ is equivalent
to establishing their existence for all elements of $\cj_{\cyl,h}^\infty
(\cw(L,h,r))$ (the almost complex structures that are cylindrical at infinity
in $\cw(L,h,r)$). We will freely switch from one problem to the other in the
remaining of this paper. In particular the previous theorems (compactness, neck
stretching) are applicable in this new setting.

\paragraph{Step 2: from $g'$ to $g$.} Let now $J\in \cj_{\cyl,g}^\infty(\cw(L,g,1))$
Since $g'$ is $\eps$-close to $g$ in the $\cc^0$-topology,
$\cw(L,g',(1-\eps)^{-\nf 12}):=\{\Vert  p\Vert_{g'}<(1-\eps)^{-\nf 12}\}\Supset
\cw(L,g,1)$. We can therefore consider a sequence of almost complex structures
$J_n$ that are cylindrical at infinity in
$\cw(L,g',(1-\eps)^{-\nf 12})$, stretch the neck along $\partial \cw(L,g,1)$, and
converge to $J$ in $\cc^{\infty}_{\mathrm{loc}}$ on $\cw(L,g,1)$.
By the discussion above, the first step of our proof guarantees that for every
$n$, there exists a punctured $J_n$-holomorphic  disc $u_n:(D\priv \{0\},\partial D)
\to (\cw(L,g',(1-\eps)^{-\nf 12}), L)$ asymptotic to  $\tilde\gamma'$ (the lift
of the unique $g'$-minimizing curve in class $\beta$).
Applying theorem \ref{thm:neckstretchcompact} to the sequence of  punctured
holomorphic discs $u_n$, we get a limit holomorphic building
$(v_0,\dots,v_p,\dots,v_q)$ as described in the section \ref{sec:sft_compactnessII}.
Since we have an exact symplectic cobordism, the domain of $v_0$ contains a punctured
disc $D\priv \{0\}$ and $v_0^\partial:=v_{0| D\priv\{0\}}$
is asymptotic at the puncture to a Reeb orbit $\tilde \gamma\subset \partial \cw(L,g,1)$,
whose projection to $L$ is a geodesic $\gamma$ that represents the class $\beta$.
Gluing all the components of the building together, except for $(v_0^\partial, D\priv\{0\})$,
we get a subbuilding which is a punctured $2$-sphere with one positive puncture
asymptotic to $\tilde \gamma'$ and one negative puncture asymptotic to
$\tilde \gamma$. By lemma \ref{le:areasubb} (we are indeed in the exact setting),
the symplectic area of this subbuilding is therefore
$$
0 < (1-\eps)^{-\nf 12}\ell_{g'}(\gamma')-\ell_g(\gamma),
$$
so $\ell_g(\gamma)<(1-\eps)^{-\nf 12}\ell_{g'}(\gamma')$. On the other hand,
since $g'$ is $\eps$-close to $g$ in the $\cc^0$-topology, we get
$$
\ell_g^{\min}(\beta) \leq \ell_g(\gamma) < \frac{1}{(1-\eps)^{\nf 12}} \ell_{g'}^{\min}(\beta)
\leq \sqrt{\frac{1+\eps}{1-\eps}}\ell_g^{\min}(\beta).
$$
Since $g$ has a discrete length spectrum in the class $\beta$, we can take
$\eps$ much smaller than the gap between the length of the $\beta$-minimizing
$g$-geodesic and the other elements in this spectrum. In view of the estimation
above, we see that $\gamma$ must then be a $\beta$-minimizing geodesic.

Notice also that, as stated in remark \ref{rk:existence}, if $g$ does not
have a discrete length spectrum in class $\beta$, the proof above shows
the existence of a punctured holomorphic disc with boundary on $L$, now
not necessarily asymptotic to the minimal geodesic in class $\beta$, but
still asymptotic to a geodesic in class $\beta$ with length $\eps$-close
to the minimal length.
\cqfd


\section{$\cc^0$-rigidity of the area homomorphism and the Maslov class}\label{sec:rigspec}

\subsection{The Maslov class of a Lagrangian}\label{sec:defmaslov}

We first define the Maslov index $\mu^\tau_L$ of a Lagrangian submanifold $L$. Let $u:(D,\partial D)\lra (M,L)$ 
be a disc with boundary on a Lagrangian submanifold $L\subset M$ and let $\tau$ be a 
Lagrangian distribution of $u^*TM$. Then $\mu^\tau_L(u)$ is the Maslov index of the loop 
of Lagrangian subspaces $u_{}^*TL\subset u^*TM$ relative to $\tau_{|\partial D}$. Since 
the second homotopy group of the Lagrangian Grassmannian vanishes, this index does not 
depend on the choice of $\tau$ nor the representative of $[u]$ in $\pi_2(M,L)$. We therefore get a 
map $\mu_L:\pi_2(M,L)\to \Z$ that assigns to each class this index.

When a symplectic manifold has a globally defined Lagrangian distribution $\tau$ - e.g. 
cotangent bundles with their vertical distributions - we can define a map $\mu^\tau_L : 
H_1(L)\to \Z$ by simply assigning to each class $\beta \in H_1(L)$ the Maslov index of 
the loop $t\mapsto T_{\gamma(t)}L$ relative to $\tau$ for an arbitrary choice of
representative $\gamma$ of $\beta$. Notice that, in this case, the Maslov class 
depends on the choice of $\tau$. If $\tau'$ is another choice of Lagrangian distribution 
on $M$, one can compute a class $\mu(\tau,\tau'):H_1(M)\to \Z$ by computing the relative 
Maslov index of loops $\tau_{\gamma(t)}$ relative to $\tau'$. Then it is easy to check that
for every $\beta \in H_1(L)$,
\begin{equation}\label{eq:transfermaslov}
\mu^{\tau'}_L(\beta) = \mu_L^{\tau}(\beta) + \mu(\tau,\tau')(\iota_*\beta), 
\end{equation}
where $\iota: L \hra M$. In the case of a Lagrangian submanifold in a cotangent bundle
we take $\tau$ to be the vertical distribution and we sometimes simply write $\mu_L$.

\subsection{Area homomorphisms and Maslov classes of closeby Lagrangian submanifolds}

We now aim at explaining the proofs of theorems \ref{thm:rigspec} and \ref{thm:closeclose}.
We recall the context. Given a closed Riemannian manifold $(L,g)$ and a class $\beta\in H_1(L)$,
we denote by $\ell_g^{\min}(\beta)$ the minimal length of loops that
represent the class $\beta$. The Riemannian structure endows
the zero section of the cotangent bundle with a basis of \nbds denoted
$\cw(L,g,\eps)$. By the Weinstein \nbd theorem, whenever $L$ has a Lagrangian
embedding into a symplectic manifold $M$, this embedding can be extended to
a symplectic embedding of $\cw(L,g,\eps)$. We simply denote such data by
$L\subset \cw(L,g,\eps)\subset M$. We recall theorem \ref{thm:closeclose} and then
prove theorem \ref{thm:rigspec}.

\setcounter{thm-repeat}{2}
\begin{thm-repeat}
Let $(L,g)$ and $(L',g')$ be two closed Riemannian manifolds and $\iota:
L'\hra  (T^*L,d\lambda)$ a Lagrangian embedding such that $\pi_* \circ \iota_*:
H_1(L')\to H_1(L)$ is an isomorphism. Assume that $\iota$ extends to a symplectic
embedding $\ci$ of a neighborhood of $L'$ such that
$$
L\subset  \ci\big(\cw(L',g',r')\big)\subset\cw(L,g,r)\subset T^*L
$$
for some $r,r'>0$. Then for all $\beta' \in H_1(L';\Z)$ we have \vspace{-,3cm}
\begin{itemize}
\item[a)] $\left| \iota^*\lambda (\beta')\right|\leq  r\ell_g^{\min}(\pi_*\circ \iota_*\beta')$,\vspace{-,2cm}
\item[b)] $\mu_{\iota(L')}(\iota_*\beta')=0$.
\end{itemize}

\end{thm-repeat}
\noindent{\it Proof of theorem \ref{thm:rigspec}:}
Let $h:(M,\om)\to (M',\om')$  be a symplectic homeomorphism that takes a
closed Lagrangian submanifold $L\subset M$ to a smooth, hence Lagrangian,
submanifold $L'\subset M'$. Let $g,g'$ be Riemannian metrics on $L,L'$,
respectively. By the Weinstein \nbd theorem there exists $\eps_0,\eps_0'$
such that $\cw(L,g,\eps_0)\subset M$  and $\cw(L',g',\eps'_0)\subset M'$
and we can even assume (by decreasing $\eps_0$ if necessary) that
$h\big(\cw(L,g,\eps_0)\big)\Subset \cw(L',g',\eps_0')$. Fix $\eps<\eps_0$
and choose a sequence of symplectic diffeomorphisms $f_n$ that approximate
$h$. For $n$ sufficiently large we have
$$
f_n\big(\cw(L,g,\eps)\big) \,\Subset\, \cw(L',g',\eps_0').
$$
Since moreover $h(L)=L'$, there exists $0 < \eps'' < \eps' \leq \eps_0'$ such
that $\cw(L',g',\eps'') \Subset h(\cw(L,g,\eps)) \Subset \cw(L',g',\eps')$
and thus $\cw(L',g',\eps'') \Subset f_n(\cw(L,g,\eps)) \Subset \cw (L',g',\eps')$
for all $n$ large enough. Since $\{\cw(L,g,\eps)\}_{\eps>0}$ is a basis of
\nbds of $L$, $\eps'$ can be chosen to tend to $0$ when $\eps$ goes to $0$.
Putting all this together we therefore get
$$
L' \; \subset \; f_n(\cw(L,g,\eps)) \; \Subset \; \cw(L',g',\eps')
$$
for all $n$ sufficiently large.

We now prove theorem \ref{thm:rigspec}.a). Let $\sigma\in H_2(M,L;\Z)$ be 
represented by a smooth surface $\Sigma$
with boundary on $L$ and $\beta := \partial \sigma\in H_1(L;\Z)$ be represented
by $\partial \Sigma$. Then $f_n(\Sigma)$ is a smooth surface with boundary
on $f_n(L)$ and
$$
\ca_{\om'}(f_n(\Sigma))=\ca_\om(\Sigma)=\ca_\om^L(\sigma),
$$
because $f_n$ is a symplectic diffeomorphism. Moreover, since $f_n(L)\subset
\cw(L',g',\eps') \subset (T^*L',d\lambda')$,
for $n$ large enough we have
$$
\ca_{\om'}^{L'}(h_*\sigma) = \ca_{\om'}(f_n(\Sigma)) + f_n^*\lambda'(\partial \sigma).
$$
This can be seen as follows: since $f_n\to h$ in the $\cc^0$-norm, for
$\pi': T^*L' \to L'$ we see that $\pi'\circ f_n : L\to L'$ is $\cc^0$-close
to $h_{|L}$. So $\pi'\circ f_{n|L}$ induces an isomorphism in homology and
$\pi'_*\circ f_{n\,*}= h_{*} : H_1(L)\to H_1(L')$. Then we can extend
$f_n(\Sigma)$ by a straight cylinder (in the coordinates provided by
$\cw(L',g',\eps_0')\supset f_n(L)$) that connects $f_n(\partial \Sigma)
\subset f_n(L)$ to $\pi'\big(f_n(\partial \Sigma)\big) \subset L'$
(see \cite[Lemma 5.1]{buop} for the details). The $\om'$-area of this
cylinder is given by $f_n^*\lambda'(\partial \sigma)$. Rewriting we thus obtain
$$
\ca_{\om'}^{L'}(h_{*}\sigma)=\ca_\om^L(\sigma)+f_n^*\lambda'(\beta).
$$
By theorem \ref{thm:closeclose}.a),
$$
|f_n^*\lambda'(\beta)| \;\leq\; \eps'\ell_{g'}^{\min}(\pi'_* \circ {f_n}_* \beta),
$$
and therefore
$$
\big|\ca_\om^L(\sigma) - \ca_{\om'}^{L'}(h_{*}\sigma)\big|
\leq \eps' \ell_{g'}^{\min}(h_*\beta).
$$
Since this holds for all $\eps'$ sufficiently small, in the limit we
get $\ca_\om^L(\sigma)=\ca_{\om'}^{L'}(h_*\sigma)$.

We prove theorem \ref{thm:rigspec}.b). Let $D \in \pi_2(M,L)$, 
$[\gamma]:=\partial D \in \pi_1(L)$ and $\beta := [\gamma] \in H_1(L)$. As 
in the previous paragraph we consider $f_n$ also as an embedding into a 
Weinstein \nbd of $L'$ for $n$ large enough. An easy computation 
based on \eqref{eq:transfermaslov} shows that
$$
\mu_L(D,M) = \mu_{f_n(L)}(f_{n*}D,M') = \mu_{L'}(h_*D,M') 
- \mu_{f_n(L)}^{\tau'}(f_{n*}\beta,T^*L'),
$$
where $\tau'$ is the vertical fiber distribution on $T^*L'$ and in our notation
we have specified in which manifolds the different Maslov indices are 
computed. By theorem \ref{thm:closeclose}.b)  we have 
$\mu_{f_n(L)}^{\tau'}(f_{n*}\beta, T^*L') = 0$, so $\mu_L(D)=\mu_{L'}(h_*D)$. \cqfd


\subsection{Proof of theorem \ref{thm:closeclose}.a)}\label{sec:proofa}

Let $\iota : L' \hra T^*L$ be a Lagrangian embedding which extends to a
symplectic embedding $\ci : \cw(L',g',r') \hra T^*L$ such that
$$
L,\iota(L') \;\subset\; \ci(\cw(L',g',r')) \;\subset\; \cw(L,g,r)
\;\subset\; T^*L.
$$
We fix $\beta'\in H_1(L';\Z)$ and write $\beta := \pi_* \circ \iota_*\beta'$.
We aim at proving that $| \iota^*\lambda (\beta')|\leq  r\ell_g^{\min}
(\beta)$. We can obviously slightly perturb the metric $g$ to prove
the theorem, so we assume henceforth that there is a unique minimizing geodesic $\gamma$ in the class $\beta$.
Notice that $\iota^*\lambda (\beta')$ is the symplectic area of the cylinder
\fonction{C_{\iota\circ\gamma'}}{[0,1]\times \R/\ell\Z}{\cw(L, g, r)}{(s,t)}{s\,\iota\circ\gamma'(t),}
where $\gamma' : \R/\ell\Z \to L'$ is any smooth curve that represents the class
$\beta'$.

Let $J_n$ be a sequence of almost complex structures on $\cw(L,g,r)$ that are
cylindrical at infinity (near $\partial \cw(L,g,r)$), and that stretch
the neck along the contact type hypersurface $\ci(\partial \cw(L',g',r')) \subset \cw(L,g,r)$. By theorem
\ref{thm:existence} there exists for each $n$ a $J_n$-holomorphic map
$$
u_n:(D\priv \{0\},\partial D) \lra (\cw(L,g,r), L)
$$
which is asymptotic to the lift $\tilde \gamma$ of the minimal geodesic $\gamma$
at $0$.
An easy computation shows that the symplectic area $\ca(u_n)= r \ell_g^{\min}(\beta)$.
By theorem \ref{thm:neckstretchcompact} a subsequence of $(u_n)$ converges to
a holomorphic building that contains, as a subbuilding $B$,
a punctured disc with boundary on $L$ that represents the class $\beta$ and one
positive puncture asymptotic to $\ci(\tilde \gamma')$ in $\ci(\partial\cw(L',g',r'))$, the lift of a geodesic
$\gamma': \R/\ell' \Z \to L'$ that represents the class $\beta'$.
By proposition \ref{prop:limitarea}
the total symplectic area of the building is $r\ell_{g}^{\min}(\beta)$ and by
non-negativity of the area of each subbuilding we get $\ca(B) \leq
r\ell_g^{\min}(\beta)$.

Now $\cw(L,g,r)$ is an exact symplectic manifold with
$L$ as an exact Lagrangian submanifold, so by lemma \ref{le:areasubb} we can compute the area of $B$
by means of any piecewise smooth cylinder 
that connects $\ci(\tilde \gamma')$ to a curve on $L$ in the class $\beta$ in $\cw(L,g,r)$.
Such a cylinder can be obtained by concatenation of two cylinders: one
is given by
\fonction{\ci\circ C_{\tilde{\gamma}'}}{[0,1] \times \R/\ell'\Z}{ \cw(L,g,r)}{(s,t)}{
\ci\big(s\tilde \gamma'(t)\big),}
and the other one by $C_{\iota\circ\gamma'}$.
Notice that $\ca(\ci\circ{C}_{\tilde{\gamma}'}) \geq 0$ because it is the image by the symplectomorphism $\ci$
of a trivial cylinder on a  Reeb orbit of $\partial \cw(L' , g', r')$. The area of the second
cylinder is $\iota^*\lambda([\gamma']) = \iota^*\lambda(\beta')$,  as already
noticed.
\begin{center}
\begin{figure}[h!]
\hspace*{4cm}\input 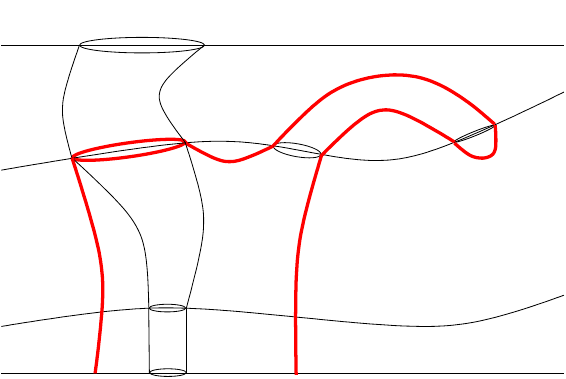_t
\caption{Estimating $|\iota^*\lambda([\gamma'])|$: $r\ell_{g}(\gamma)>\ca(B) = \ca\big(\ci(C_{\tilde \gamma'})\big) 
+ \ca\big(C_{\iota\circ \gamma'}\big)\geq 0+\iota^*\lambda([\gamma'])$.}
\label{fig:break-cyl}
\end{figure}
\end{center}
We therefore obtain
$$
r \ell_g^{\min}(\beta) \geq \ca(B) = \ca(\ci\circ C_{\tilde{\gamma}'})
+ \iota^*\lambda(\beta') \geq \iota^*\lambda(\beta').
$$
Considering the class $-\beta'$ instead of $\beta'$ in the last inequality,
we have $\ell_g^{\min}(-\beta) = \ell_g^{\min}(\beta)$, thus
only the sign of $\iota^*\lambda(\beta')$ changes and we get
$$
\big| \iota^*\lambda(\beta') \big| \leq r\ell_g^{\min}(\pi_*\circ \iota_*\beta').\hspace*{2cm} 
$$
\cqfd


\subsection{Proof of theorem \ref{thm:closeclose}.b)}

We recall the assumptions of the theorem:
$\iota : L' \hra T^*L$ is a Lagrangian embedding which extends to a
symplectic embedding $\ci : \cw(L',g',r') \hra T^*L$ such that
$$
L,\iota(L') \;\subset\;\cw':= \ci(\cw(L',g',r')) \;\subset\; \cw := \cw(L,g,r)\subset T^*L.
$$
Without loss of generality we assume in the following that we have perturbed $g$ so 
that there is a unique minimizing geodesic $\gamma$ in class $\beta$. Again $\beta'\in H_1(L';\Z)$, 
$\beta := \pi_* \circ \iota_*\beta'$ and we aim at proving that $\mu_{\iota(L')}(\iota_*\beta')=0$. 
As before, we consider a sequence of almost complex structures $J_n$ on $\cw$ that are
cylindrical at infinity and that stretch the neck along the contact type hypersurface 
$\partial \cw' \subset T^*L$. By theorem \ref{thm:existence} there exists for each $n$ a 
$J_n$-holomorphic map $u_n:(D\priv \{0\},\partial D) \lra (\cw, L)$ which is asymptotic to the 
lift $\tilde \gamma$ of $\gamma$ at $0$. 
Letting $n$ to infinity, we apply theorem \ref{thm:neckstretchcompact} and get a limit 
holomorphic building. We focus henceforth on the unique component in the top floor 
$\cw\priv \cw'$ with a positive puncture. Since $L\subset \cw'$ and $\pi:\cw'\to L$ induces 
an isomorphism in homology, theorem \ref{thm:neckstretchcompact} ensures that this 
component is a map $v:S^2\priv Z\to \cw\priv \cw'$, where $Z \subset S^2$ is a finite
set of punctures. Here $v$ is asymptotic at the unique positive puncture $z^+\in Z$ 
to the curve $\tilde \gamma$ and at the negative punctures 
$z_i^- \in Z$ to curves $\ci(\tilde \gamma_i')$. The important point here is that $\tilde \gamma_i'$ 
are lifts of geodesics of $L'$ and that $\sum [\gamma_i']=\beta'$ 
(this is seen as usual by considering $\pi\circ v$ as a cobordism between $\pi(\tilde \gamma)=\gamma$ 
and $\sum \pi\circ \iota (\gamma_i')$). In particular, using lemma \ref{le:areasubb} the area 
of $v$ is given by $\ca(v)  = 
r\ell_{g}^{\min}(\beta) - \sum\limits_i \lambda(\ci(\tilde \gamma_i'))$. Moreover we have 
$$
\begin{array}{ll}
\lambda\big(\ci(\tilde \gamma_i')\big)& =[\lambda-\ci_*\lambda']\big(\ci(\tilde \gamma_i')\big)+\ci_*\lambda'\big(\ci(\tilde \gamma_i')\big)\\
& = [\ci^*\lambda-\lambda'](\tilde \gamma_i')+\lambda'(\tilde \gamma_i')\\
& =[\ci^*\lambda-\lambda'](\gamma_i')+r'\ell_{g'}(\gamma_i') \text{ \hspace{.5cm}(because $\ci^*\lambda-\lambda'$ is closed)}\\
& =\iota^*\lambda([ \gamma_i' ]) + r' \ell_{g'}(\gamma_i').
\end{array}
$$
Thus, 
$$
\begin{array}{ll}
0 < \ca(v)  = 
r\ell_{g}^{\min}(\beta) - \sum\limits_i \lambda(\ci(\tilde \gamma_i')) 
 & = r\ell_{g}^{\min}(\beta) - \sum\limits_i \left( \iota^*\lambda([ \gamma_i' ]) + r' \ell_{g'}(\gamma_i') \right) \\
 & = r \ell_g^{\min}(\beta) - \iota^*\lambda(\beta') - \sum\limits_i r'\ell_{g'}(\gamma_i').
 \end{array}
$$
This gives a bound on the total length of the multi-curve $\{\gamma_i'\}$ that depends 
only on $\beta$. Since this multi-curve represents a given homology class, we see that, 
provided $g'$ is chosen generically, it belongs to some finite set that depends only on $\beta$. 
Thus $v$ belongs to a set of maps whose asymptotics belong to a given finite set. Without 
loss of generality we assume that $\beta$ is a primitive class, so $v$ is somewhere injective. 
Moreover, we have total freedom in the choice of $J$ in some compact subset of $\cw\priv\overline{\cw'}$, 
which all possible $v$ must pass through. By standard transversality arguments we can therefore 
assume that the index of $v$ is non-negative. This index is given by formula \eqref{eq:index-Du} 
in the appendix,
$$
\mathrm{ind}(v) = n\chi(S^2\priv Z)+2c_1^\tau ( v^*T(T^*L) ) + \mu_\cz^\tau(\tilde \gamma ) 
- \sum_i\mu_\cz^\tau(\ci(\tilde\gamma_i')) +\# Z.
$$
The precise definitions of these quantities are recalled in the appendix. The symbol $\tau$ 
denotes an arbitrary choice of a Lagrangian distribution in the fiber bundle $u^*T(T^*L)$ 
and the different quantities are Chern classes and Maslov indices of various 
objects, computed with respect to this choice. Now the vertical fiber distributions provide natural
global Lagrangian distributions $\tau,\tau'$ in $T(T^*L)$ and $T(T^*L')$. One easily sees
that for this choice $c_1^\tau$ automatically vanishes. Moreover, the computation in the proof of
corollary \ref{cor:index} shows that $\mu_\cz^\tau(\tilde\gamma)=0$. This can also be seen 
by recalling that this quantity is the Morse index of the geodesic $\gamma$, which is minimal. 
It is clear that
$$
\mu_\cz^\tau(\ci(\tilde\gamma_i')) = \mu_\cz^{\tau'}(\tilde\gamma_i') + \mu_{\iota(L')}(\iota_*[\gamma_i']),
$$
because the last term of the equality represents the Maslov class of the loop 
$\ci_*\tau'_{|\im(\iota \circ  \gamma_i')}$ 
relative to $\tau$, which coincides with the Maslov class of $\ci_*\tau'_{|\im(\ci \circ \wdt \gamma_i')}$ relative to $\tau$.  
Also, $\mu_\cz^{\tau'}(\tilde\gamma_i')$ is non-negative as the Morse index of a geodesic. Putting 
$\# Z=:k+2$ we therefore get
\begin{align*}
0 \leq \mathrm{ind}(v) & = -nk - \sum_i \mu_\cz^{\tau'}(\tilde\gamma_i') - \sum_i \mu_{\iota(L')}(\iota_*[\gamma_i'])+k+2 \\
 & \leq 2-(n-1)k-\mu_{\iota(L')}(\iota_*\beta').
\end{align*}
Since $n\geq 1$, 
we get $\mu_{\iota(L')}(\iota_*\beta')\leq 2$. Replacing $\beta$ by $-\beta$ (which is still primitive) we see 
that $|\mu_{\iota(L')}(\iota_*\beta')|\leq 2$. In other terms, there is a constant bound for the Maslov class 
of any such Lagrangian embedding. But then an argument by Polterovich shows that this Maslov class 
vanishes \cite{polterovich}. Indeed,  consider a $3$-fold cover  $\psi:\hat L\to L$ and $\psi':\hat L'\to L'$ 
associated to the classes $\beta$ and $\beta'$. The map $\psi$ lifts to a symplectic covering 
$\Psi:T^*\hat L\to T^*L$, and since $(\pi\circ \iota)_*\beta'=\beta$, the embedding $\iota$ lifts to a 
Lagrangian embedding $\hat \iota:\hat L'\hra T^*\hat L$. By the previous analysis we see that
$$2\geq |\mu_{\hat \iota(\hat L')}(\hat \iota_*\hat \beta')|=3|\mu_{\iota(L')}(\iota_*\beta')|\in 3\N.$$
We therefore conclude the vanishing of this Maslov class. \cqfd


\subsection{Proof of proposition \ref{prop:nlctoriglag}}
Let $h:M\to M'$ be a symplectic homeomorphism that takes a Lagrangian submanifold $L$ to a 
 Lagrangian submanifold $L'$. We assume that conjecture \ref{conj:nl-} holds for $T^*L'$ 
(considering $h^{-1}$ instead of $h$, this amounts to assuming that it holds for $T^*L$). 
Let $f_n:M\to M'$ be a sequence of symplectic diffeomorphisms that approximate $h$ and put 
$L_n:=f_n(L)$. As explained in section \ref{sec:proofa}, we have 
$$
L' \subset f_n(\cw(L,g,\eps))\Subset \cw(L',g',\eps')\Subset \cw(L',g',1)\subset M'
$$ 
for any $\eps'\ll1$, $\eps$ small enough compared with $\eps'$ and $n$ large enough. We need to find a symplectic diffeomorphism 
of $M'$ that takes $L_n$ to $L'$. We proceed as follows. Denote by $\lambda'$ the natural 
Liouville form on $\cw(L',g',1)\subset M'$ and $\pi:\cw(L',g',1)\to L'$ the projection. 
For $n$ large enough
it induces an isomorphism $\pi_*:H_1(L_n)\to H_1(L')$.
The proof of theorem \ref{thm:closeclose}.a)
shows that the cohomology class $a_n:=[\lambda'_{|L_n}]\circ \pi_*^{-1}\in H^1(L')$ satisfies 
$\Vert a_n\Vert \leq C\eps'$. Moreover, the symplectic areas of the discs $D, h(D)$ and $f_n(D)$ with boundaries on $L, L'$ and $L_n$, respectively, coincide by theorem \ref{thm:rigspec}.a), so if $\delta:H_2(M',L')\to H_1(L')$ 
is the connecting morphism, $a_{n | \mathrm{Im}\delta}$ vanishes.  As a result, together with the exact sequence 
$H^1(M')\overset r\lra H^1(L')\overset {\delta^*}\lra H^2(M',L')$, we see that 
$a_n\in \im(r)$ and hence $a_n$ is the restriction to $H_1(L')$ of a cohomology 
class $A_n\in H^1(M')$. Let now $\theta_n$ be a closed $1$-form on $M'$ that represents 
$A_n$. Since  $\cw(L',g',1)$ retracts to $L'$, we can chose $\theta_n:=\pi^*\eta_n$ in $\cw(L',g',1)$
where $\eta_n$ is a closed $1$-form on $L'$ that represents the class $a_n$. Since $a_n$ is 
$\eps'$-small, we can choose $\eta_n$, and hence $\theta_n$ in $\cw(L',g',1)$ to be 
$\eps'$-small (in the uniform norm). Consider now the symplectic vector field on $M'$ defined by 
$$
\om'(X_n,\cdot)=-\theta_n,
$$ 
and its time $1$-map $\Phi_n$. Since $\theta_n$ is small on $\cw(L',g',1)$, we have 
$\Phi_n^t(\cw(L',g',\eps'))\subset \cw(L',g',1)$ for all $0 \leq t \leq 1$, so 
$\Phi_{n|\cw(L',g',\eps')}$ coincides with the map $(q,p)\mapsto (q,p-\eta_n(q))$. 
One thus verifies without difficulty that $[\lambda'_{|\Phi_n(L_n)}]$  vanishes. Thus, 
$\Phi_n(L_n)$ is an exact Lagrangian submanifold in $\cw(L',g',1)$ when considered in $T^*L'$. 
It is also easy to see that for $\eps_0$ chosen small enough but fixed (so that $\eps'\ll \eps_0$), 
$\Phi_n\circ f_n(\cw(L,g,\eps_0))$ is a Weinstein \nbd of $\Phi_n(L_n)$ that contains the zero section.
If, as we assume in the statement of our proposition, conjecture \ref{conj:nl-} holds for $T^*L'$, 
there exists a Hamiltonian diffeomorphism $\phi$ in $T^*L'$ which takes $\Phi_n(L_n)$ to $L'$. 
A classical argument even shows that our Hamiltonian isotopy can also be modified so as 
to be supported in $\cw(L',g',1)$. As a result $\phi$ can be seen as a Hamiltonian 
diffeomorphism of $M$ with support in $\cw(L',g',1)$, and the symplectomorphism 
$\phi\circ \Phi_n\circ f_n:M\to M'$ takes $L$ to $L'$. \cqfd


\section{Some embedding problems}\label{sec:applications}

\subsection{A symplectic order on Riemannian metrics}
The set of Riemannian metrics $\met(M)$ on any
manifold $M$ is endowed with a partial order defined by $g \leq g'$ if and
only if $g_x(v,v)\leq g_x'(v,v)$ for all $x\in M$ and all $v\in T_xM$. This
amounts to saying that for all $x\in M$, for all $p\in T_x^*M$,
$\Vert p\Vert_{g'}\leq \Vert p\Vert_g$, which in turn means that
$$
\cw_g\subset \cw_{g'} \hspace{,5cm}(\cw_g:=\cw(M,g, 1)=\{\,\Vert p\Vert_g < 1\,\}).
$$
In this perspective it is tempting to define a symplectic order on the
space of Riemannian metrics in the following way.
\begin{definition}
We say that $g\prec_\om g'$
if there exists a symplectic
embedding  $\Phi:\cw_g\hra \cw_{g'}$ with $\Phi(M)=M$ and
such that $\Phi_{|M}$ is homotopic
to the identity, where $M$ is identified with the zero section of the
cotangent bundle $T^*M$.
\end{definition}
This relation is only a preorder because any diffeomorphism  $f$ isotopic to
the identity  lifts to a symplectic diffeomorphism between
$\cw_g$ and $\cw_{f_*g}$  that preserves the zero section, so $f_*g \prec_\om g$ and
$g \prec_\om f_*g$ for all $g$. Thus $\prec_\om$ induces another
preorder $\prec_\om$ on $\met(M)_{/\diff^0(M)}$. Knowing whether this
new relation is now a partial order is related to subtle problems known
as rigidity of metrics (see e.g. \cite{crkl,becoga} or \cite{courte} for
a contact analogue). The persistence of punctured holomorphic discs asymptotic
to lifts of minimizing geodesics immediately implies a rigidity of this
preorder. To state it recall that if $\beta\in H_1(M)$ we have defined
$$
\ell_g^{\min}(\beta) = \min\{ \, \ell_g(\gamma) \;| \; \gamma\in \cc^1(S^1,M),\;
[\gamma]=\beta\in H_1(M) \, \}.
$$
\begin{theorem}\label{thm:symporderg}
Let $M$ be a closed manifold endowed with two Riemannian metrics $g$ and
$g'$ such that $g \prec_\om g'$. Then for all
$\beta\in H_1(M)$ we have
$$
\ell_g^{\min}(\beta)\leq \ell_{g'}^{\min}(\beta).
$$
\end{theorem}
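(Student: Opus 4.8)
The plan is to deduce Theorem \ref{thm:symporderg} directly from Theorem \ref{thm:closeclose}.a) (equivalently, from the existence result Theorem \ref{thm:existence} combined with the neck-stretching compactness of \S\ref{sec:compactnessI}), exactly in the spirit of the proof of Theorem \ref{thm:closeclose}.a) given in \S\ref{sec:proofa}. First I would unwind the definition of $g \prec_\om g'$: there is a symplectic embedding $\Phi : \cw_g \hra \cw_{g'}$ with $\Phi(M)=M$ and $\Phi_{|M}$ homotopic to the identity. After composing with the lift of a diffeomorphism isotopic to the identity (which changes neither side of the claimed inequality, since $\ell_g^{\min}(\beta)$ is a diffeomorphism invariant and the two preorders descend to $\met(M)_{/\diff^0(M)}$), we may assume $\Phi_{|M} = \mathrm{id}_M$. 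Then $\iota := \Phi : M \hra \cw_{g'} \subset T^*M$ is a Lagrangian embedding of the zero section with $\pi \circ \iota = \mathrm{id}_M$, so $\pi_* \circ \iota_* : H_1(M) \to H_1(M)$ is the identity, in particular an isomorphism.

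Next, fix $\beta \in H_1(M)$ and, perturbing $g$ slightly as in \S\ref{sec:proofexistence} (which changes $\ell_g^{\min}(\beta)$ by an arbitrarily small amount), assume $g$ has a unique minimizing geodesic $\gamma$ in class $\beta$. I would apply Theorem \ref{thm:closeclose}.a) — which only requires $L \subset \ci(\cw(L',g',r')) \subset \cw(L,g,r)$ — with the roles $L = L' = M$, the embedding $\ci = \Phi$ scaled to radius $r'$ slightly less than $1$, and $\cw(L,g,r) = \cw_{g'}$ (so $r$ slightly bigger than $1$; one may in fact take $r'=r=1$ up to an $\eps$ that one sends to $0$). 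Note $\cw_g \subset \cw_{g'}$ follows from nothing but one needs the containment $\Phi(\cw(M,g,r')) \subset \cw_{g'}$, which is automatic since $\Phi$ maps into $\cw_{g'}$; the only subtlety is arranging the strict nesting $M \subset \Phi(\cw(M,g,r')) \subset \cw_{g'}$ with $r'$ close to $1$, which holds because $\Phi(M)=M$ and $\Phi$ is an open embedding. The conclusion of Theorem \ref{thm:closeclose}.a) is
\[
\bigl| \iota^*\lambda(\beta') \bigr| \leq r\, \ell_{g'}^{\min}(\pi_* \circ \iota_* \beta')
\]
for all $\beta' \in H_1(M;\Z)$; taking $\beta' = \beta$ and noting $\pi_* \circ \iota_* = \mathrm{id}$, this reads $|\iota^*\lambda(\beta)| \leq r\,\ell_{g'}^{\min}(\beta)$.

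The remaining point is to identify the left-hand side. Since $\iota = \Phi$ and $\Phi$ is symplectic with $\Phi(M)=M$ and $\Phi_{|M}=\mathrm{id}$, for any loop $\gamma'$ representing $\beta$ the quantity $\iota^*\lambda(\beta)$ is the symplectic area of the cylinder $C_{\iota\circ\gamma'}(s,t) = s\,\Phi(\gamma'(t))$; but because $\Phi$ is a symplectomorphism of a neighborhood of the zero section fixing it, one shows $\iota^*\lambda = \Phi^*\lambda_{|M}$ and $\Phi^*\lambda - \lambda_{|\cw(M,g,r')}$ is closed (both primitives of $\Phi^*\om_{g'} = \om_g$), hence $\Phi^*\lambda_{|M} = \lambda_{|M} + (\text{exact})$. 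On the zero section $\lambda_{|M}=0$, so in fact $\iota^*\lambda(\beta) = \int_{C_{\iota\circ\gamma}} \om_{g'}$ and one computes directly, as in \S\ref{sec:proofa}, that choosing $\gamma$ to be the minimizing $g$-geodesic this area equals $r'\ell_g(\gamma) = r'\ell_g^{\min}(\beta)$ (with $r'=1$ in the limit). Combining, $\ell_g^{\min}(\beta) \leq \frac{r}{r'}\ell_{g'}^{\min}(\beta)$, and letting $r',r \to 1$ gives $\ell_g^{\min}(\beta) \leq \ell_{g'}^{\min}(\beta)$. I expect the main (if minor) obstacle to be bookkeeping the radii and the homotopy condition on $\Phi_{|M}$ to legitimately put ourselves in the hypotheses of Theorem \ref{thm:closeclose}; the analytic heavy lifting is entirely contained in the already-proven existence and compactness theorems, so no new hard input is needed.
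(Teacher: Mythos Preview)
There is a genuine gap: your identification of $\iota^*\lambda(\beta)$ with $r'\ell_g^{\min}(\beta)$ is wrong. Since $\Phi(M)=M$ is the zero section of $T^*M$ and the Liouville form $\lambda=p\,dq$ vanishes identically on the zero section, we have $\iota^*\lambda=(\Phi_{|M})^*\lambda\equiv 0$. Equivalently, the cylinder $C_{\iota\circ\gamma}(s,t)=s\cdot\Phi(\gamma(t))$ is degenerate (it is constant in $s$) and has zero symplectic area. So Theorem~\ref{thm:closeclose}.a) applied as a black box gives only the trivial inequality $0\le r\,\ell_{g'}^{\min}(\beta)$.

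You have confused $C_{\iota\circ\gamma'}$ with $\ci\circ C_{\tilde\gamma'}$. In the proof of Theorem~\ref{thm:closeclose}.a) in \S\ref{sec:proofa} the area of the subbuilding $B$ is decomposed as $\ca(B)=\ca(\ci\circ C_{\tilde\gamma'})+\iota^*\lambda(\beta')$, and then the first term is \emph{discarded} via $\ca(\ci\circ C_{\tilde\gamma'})\ge 0$. In your situation the second term vanishes and it is precisely the first term that carries the information: $\ca(\ci\circ C_{\tilde\gamma'})=\ca(\Phi(C_{\tilde\gamma'}))=r'\ell_g(\gamma')$, since $\Phi$ is symplectic and $C_{\tilde\gamma'}$ is the trivial cylinder over the lift of a $g$-geodesic in $\cw(M,g,r')$. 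Thus the statement of Theorem~\ref{thm:closeclose}.a) is too weak for your purpose; you need to re-enter its proof and keep the term that was thrown away. That is exactly what the paper does: it stretches the neck along $\partial\Phi(\cw_g)$ inside $\cw_{(1+\eps)g'}$, and since $\Phi(M)=M$ puts us in the exact setting of lemma~\ref{le:areasubb}, the area of the resulting disc subbuilding is $\int_{\tilde\gamma}\lambda-\int_{\partial D}\lambda=\ell_g(\gamma)-0=\ell_g(\gamma)$, giving $\ell_g(\gamma)\le(1+2\eps)\ell_{g'}^{\min}(\beta)$.
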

\noindent{\it Proof:}
Let $g,g'$ be two Riemannian metrics on $M$ with $g\prec_\om g'$. Let
$\Phi:\cw_g\hra \cw_{g'}$ be a symplectic embedding such that $\Phi_{|M}$
is homotopic to the identity.
For any $\eps > 0$ the map $\Phi$ embeds $\cw_{g}$ compactly into $\cw_{(1+\eps)g'}$.
Let then $J_n
$ be a sequence of cylindrical
almost complex structures on $\cw_{(1+\eps)g'}$ which stretch the neck of
$\partial \Phi(\cw_g)$. By theorem \ref{thm:existence}, completed by
remark \ref{rk:existence}, there exists a $J_n$-holomorphic map
$u_n:(D\priv \{0\},\partial D)\to ({\cw_{(1+\eps)g'}}, M)$, asymptotic to a lift
$\tilde{\gamma}'$ of a geodesic $\gamma'$ in the class $\beta$ that
satisfies $\ell_{(1+\eps)g'}(\gamma')\leq (1+2\eps)\ell_{g'}^{\min}(\beta)$.
A brief calculation shows that $\ca(u_n) = \ell_{(1+\eps)g'}(\gamma')$.
Since $\Phi_{|M}$
is homotopic to the identity, the projection $\pi: \partial \Phi(\cw_g)
\to M$ induces an isomorphism of homotopy groups, hence in homology,
so we can apply theorem \ref{thm:neckstretchcompact} exactly as in the second
step of the proof of theorem \ref{thm:existence} (\S \ref{sec:proofexistence})
We get a limit holomorphic building $B$ that satisfies $\ca(B) =
\ell_{(1+\eps)g'}(\gamma')$. The holomorphic building contains a subbuilding $B'$
which is a disc with boundary on $M$ and with one positive puncture asymptotic
to a Reeb orbit $\tilde\gamma$ of $\partial\Phi(\cw_g)$,
whose projection to $M$ is a geodesic $\gamma$ that represents the class $\beta$.
Since $\Phi(M)=M$, we can apply lemma \ref{le:areasubb} in the exact setting and
the symplectic area of the subbuilding $B'$ is
\[
\ca(B') = \ell_g(\gamma).
\]
We have $\ca(B) \geq \ca(B')$ and thus
\[
0\leq \ell_{(1+\eps)g'}(\gamma')-\ell_g(\gamma)
\leq (1+2\eps)\ell_{g'}^{\min}(\beta)-\ell_g(\gamma).
\]
Thus, $\ell_g^{\min}(\beta)\leq \ell_g(\gamma)\leq (1+2\eps)\ell_{g'}^{\min}
(\beta)$. Since this estimate holds for all $\eps$ we obtain the desired
inequality. \cqfd

It might be worth mentioning that theorem \ref{thm:symporderg} can also be proved with 
symplectic homology, and might not be new at all (see \cite{absc,sawe} for the computation of the symplectic homology and spectral invariants of $\cw(L,g,r)$). This paragraph is only meant for showing that the holomorphic punctured discs provided by theorem \ref{thm:existence} allow to recover some of the quantitative invariants of $\cw(L,g,r)$ obtained by  spectral invariants associated to its symplectic homology.


\subsection{A Poisson bracket invariant}

In the previous section we saw that theorem \ref{thm:existence} allowed us to
study
relative embeddings of Weinstein \nbds of Lagrangian submanifolds
one into another
(relative means here that the Lagrangian submanifold has to be fixed by the
symplectic embedding).
It is also natural to consider symplectic embeddings of Weinstein \nbds of
the zero section into a general symplectic manifold and ask about the
maximal symplectic size of such Weinstein neighbourhoods.
In this general setting there are no punctured holomorphic discs of the ambient
manifolds that can be exploited (although this may happen in some particular cases).
The aim of this section is to define a monotone symplectic invariant of
a Lagrangian embedding and to compute it explicitly for the zero section in
certain Weinstein \nbds in the cotangent bundle.
This invariant is associated to a pair $(L,a)$ given by an embedded Lagrangian
submanifold $L$ and a primitive integral cohomology class $a \in H^1(L;\Z)$.
It is similar to the one defined in \cite{engame} and is based on the Poisson
bracket invariants of \cite{buenpo}.
By computing this invariant for a Lagrangian submanifold $L$ in a general
symplectic manifold $M$ and using monotonicity, our computations here will provide
information on the symplectic sizes of Weinstein neighborhoods of $L$ in $M$.
Estimating the size of Weinstein \nbds is a natural question and has been already
considered in the literature, see for instance \cite{zehmisch,cimo}.

We start with the definition of our Poisson bracket invariant and then show how
it relates to Lagrangian embeddings.
Let $L \subset (M, \omega)$ be a connected closed Lagrangian submanifold, where
$M$ is not necessarily compact. We associate to each non-zero primitive cohomology
class $a \in H^1(L;\Z)$ a Poisson bracket invariant in the following way. By the
de Rham isomorphism we can represent $a$ by a closed 1-form $\theta$ on $L$.
Now choose a base point $x_0 \in L$. Define a function $\Theta: L \to \R/\Z$ as
follows. For $x \in L$ set
\[
\Theta(x) := \int_{\gamma_x} \theta \, \mod 1,
\]
where $\gamma_x$ is any smooth path in $L$ from $x_0$ to $x$. Since a different
choice of path changes the integral by an integral value, this map is well defined.
Consider the four sets given by
\[
X_0 := \Theta^{-1}([0,\tfrac{1}{4}]),
\quad Y_{0} := \Theta^{-1}([\tfrac{1}{4}, \tfrac{1}{2}]),
\quad X_1 := \Theta^{-1}([\tfrac{1}{2}, \tfrac{3}{4}]),
\quad Y_{1} := \Theta^{-1}([\tfrac{3}{4}, 1]).
\]
Then we have $X_0 \cap X_1 = Y_0 \cap Y_1 = \emptyset$ and $L = X_0 \cup Y_0
\cup X_1 \cup Y_1$. Following the definition in \cite{buenpo}
we consider the set $\mathcal{F}(\theta,x_0)$ of all pairs $(H,K)$, where
$H, K \in C^{\infty}_c(M)$ such that $H|_{\op(X_0)} =0$, $H|_{\op(X_1)} = 1$,
$K|_{\op(Y_0)}= 0$ and $K|_{\op(Y_1)} = 1$. We then set
\[
pb^+(L, \theta,x_0) := \inf\limits_{\mathcal{F}(\theta,x_0)} \max\limits_M
\; \{ H,K \}\,\in\,[0,\infty).
\]
We now define (cf. \cite{engame})
\[
bp(L,a) := \frac{1}{\inf\, \{\, pb^+(L, \theta,x_0) \;|\; [\theta] = a,
\,x_0\in L \,\}} \,\in\,(0,\infty].
\]
We also write $bp(L, a, M)$ when we want to emphasize the ambient symplectic manifold.
When $a$ runs through the set of primitive classes of $H^1(L;\Z)$ the numbers
$bp(L, a)$ provide a set of quantitative invariants for Lagrangian submanifolds. More
precisely, if $\varphi: (M, \omega) \to (M',\omega')$ is a symplectomorphism that maps
a Lagrangian submanifold $L$ to $L' := \varphi(L)$, then $bp(L,\varphi^*a',M) = bp(L',a', M')$.
These invariants are obviously monotone: if two symplectic manifolds $M$ and $M'$
contain Lagrangian submanifolds $L$ and $L'$ and there exists a relative symplectic
embedding $f: (M,L) \hra (M',L')$, then
$$
bp(L,(f_{|L})^*a',M) \leq bp(L',a',M')
$$
for all primitive $a' \in H^1(L'; \Z)$. A version of these invariants was
introduced and computed in several examples in \cite{engame}. There a list of
properties of $bp$ is given and the definition is extended to include
non-primitive $a$, which is also possible in our case. In the rest of this
section we  compute these invariants for the zero section in
Weinstein neighborhoods in cotangent bundles. 
Before we state our result we recall the stable norm of a cohomology class together 
with an estimate of this number using closed curves.
\begin{definition}
For a Riemannian manifold $(L,g)$ we endow its cohomology group $H^1(L;\R)$ with the
stable norm
$$
\Vert a\Vert_\st := \inf\limits_{[\theta]=a} \, \max_{q\in L} \,\Vert \theta(q)\Vert_g.
$$
Then (\cite[Proposition 4.35]{gromov3}),
$$
\Vert a \Vert_\st= \sup\left\{\,\frac{a([\gamma])}{\ell_g(\gamma)} \;\Big|\;
\gamma\in \cc^1(S^1,L),\; a([\gamma]) \,> 0 \right\}.
$$
\end{definition}
Here is the main result of this section.
\begin{theorem}\label{thm:pb4L}
Let $(L,g)$ be a closed Riemannian manifold and $\cw(L,g,r) := \{\, \Vert p \Vert_g
< r\,\} \subset T^*L$. Let also $a \in H^1(L;\Z)$ be a primitive class. Then we
have
$$
bp(L,a,\cw(L,g,r)) = \frac{r}{\Vert a\Vert_\st} 
= r \,\inf \left\{\,\frac{\ell_g(\gamma)}{a([\gamma])} \;\Big|\;
\gamma\in \cc^1(S^1,L),\; a([\gamma]) \, > 0 \right\}.
$$
\end{theorem}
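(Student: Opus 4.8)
The plan is to establish the two inequalities separately, with the lower bound coming from the persistence of punctured holomorphic discs (Theorem \ref{thm:existence}) and the upper bound from an explicit construction of admissible pairs $(H,K)$. The equality $\Vert a\Vert_\st = \sup\{a([\gamma])/\ell_g(\gamma)\}$ is already quoted, so it suffices to show $bp(L,a,\cw(L,g,r)) = r/\Vert a\Vert_\st$. First I would fix a closed $1$-form $\theta$ representing $a$ and a base point $x_0$, producing $\Theta:L\to\R/\Z$ and the four sets $X_0,Y_0,X_1,Y_1$ as in the definition; I would unwind exactly which geometric quantity controls $pb^+(L,\theta,x_0)$.

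For the upper bound $bp(L,a,\cw(L,g,r))\geq r/\Vert a\Vert_\st$, equivalently $\inf pb^+(L,\theta,x_0)\leq \Vert a\Vert_\st/r$, I would build explicit $(H,K)$ on $\cw(L,g,r)$ out of functions that are (up to rescaling) pulled back from the base via $\pi$, or more precisely functions of the fiberwise-linear function $p\mapsto p(\operatorname{grad}_g\text{-lift of }\theta)$, arranged so that $H$ interpolates between $0$ on $\op(X_0)$ and $1$ on $\op(X_1)$ while $K$ interpolates between $0$ on $\op(Y_0)$ and $1$ on $\op(Y_1)$. Choosing $\theta$ near-optimal for the stable norm so that $\Vert\theta(q)\Vert_g\leq \Vert a\Vert_\st+\varepsilon$ everywhere, a direct computation of $\{H,K\}=\omega(X_H,X_K)$ in Darboux/cotangent coordinates shows $\max\{H,K\}\leq (\Vert a\Vert_\st+\varepsilon)/r$ after the functions are supported in $\cw(L,g,r)$; letting $\varepsilon\to 0$ gives the bound. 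This is the routine direction and mirrors the computations in \cite{engame,buenpo}.

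For the lower bound $bp(L,a,\cw(L,g,r))\leq r/\Vert a\Vert_\st$, i.e. $pb^+(L,\theta,x_0)\geq \Vert a\Vert_\st/r$ for every $(\theta,x_0)$, I would argue by contradiction: given $(H,K)\in\mathcal{F}(\theta,x_0)$ with $\max_M\{H,K\}$ small, I would feed a suitable Hamiltonian flow into a punctured holomorphic disc. Concretely, pick $\beta\in H_1(L;\Z)$ with $a(\beta)>0$ realizing (nearly) the stable norm as a minimizing geodesic $\gamma(\beta)$ after a small perturbation of $g$; apply Theorem \ref{thm:existence} to get, for a cylindrical-at-infinity $J$ on $\cw(L,g,r)$ (using Example \ref{ex:switch}), a $J$-holomorphic punctured disc $u$ with boundary on $L$, asymptotic to the lift of $\gamma(\beta)$, of symplectic area exactly $r\,\ell_g^{\min}(\beta)$. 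The monotonicity/energy estimate of Lemma \ref{lem:energy_bound} and Stokes' theorem bound this area, while on the other hand a Poisson-bracket argument à la \cite{buenpo}: the four sets $X_0,Y_0,X_1,Y_1$ pulled back by $u$ tile $\partial D\cong S^1$ into arcs on which $H$ and $K$ are locally constant, so the loop $\gamma=u|_{\partial D}$, which winds $a(\beta)$ times around $\R/\Z$ under $\Theta$, forces the chord/bracket estimate to produce area at least $a(\beta)/\max\{H,K\}$. Combining, $r\,\ell_g^{\min}(\beta)\geq a(\beta)/\max_M\{H,K\}$, hence $\max_M\{H,K\}\geq a(\beta)/(r\,\ell_g^{\min}(\beta))$; taking the sup over $\beta$ and using the stable-norm formula yields $pb^+(L,\theta,x_0)\geq \Vert a\Vert_\st/r$.

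The main obstacle is making the second half rigorous: the clean statement "the punctured disc realizes a chord between $X_0$-type and $X_1$-type regions forcing a bracket lower bound" requires care, since the standard $pb^+$ lower bounds in \cite{buenpo,engame} are phrased via Floer-theoretic or bar-code machinery, whereas here I want to extract the estimate purely from the existence of one punctured disc with controlled area and boundary winding. I expect the correct route is to run the argument not with a fixed disc but with the disc in the manifold deformed by the time-$t$ flows of $H$ and $K$ (a figure-eight / quadrangle homotopy), tracking how the symplectic area changes under these Hamiltonian pushoffs — the area defect is governed by $\int\{H,K\}$ — and using the boundary condition together with the nonvanishing of $a(\beta)$ to show the disc cannot be swept off $L$ unless $\max\{H,K\}$ is at least the claimed value. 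Pinning down this quantitative figure-eight estimate, and checking it is compatible with the exactness/SFT-compactness framework already set up in \S\ref{sec:existence}, is where the real work lies.
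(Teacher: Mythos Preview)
Your labels ``upper'' and ``lower'' are swapped (since $bp = 1/\inf pb^+$, the inequality $bp \geq r/\Vert a\Vert_\st$ is the \emph{lower} bound on $bp$), but you have correctly paired each direction with its tool.

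For the holomorphic-disc direction your figure-eight/Hamiltonian-flow plan is not what the paper does, and the obstacle you flag is genuine: there is no direct way to extract $a(\beta)/\max\{H,K\}$ as an area lower bound from a single fixed punctured disc. The paper bypasses this entirely. Given $(H,K)\in\cf(\theta,x_0)$, one observes (following \cite{engame}) that $[H\,dK|_L] = a$ and that $\om_s := \om + s\,dH\wedge dK$ is an exact symplectic form on $\cw(L,g,r)$ for every $s\in I:=[0,1/\max_M\{H,K\})$. Since $H,K$ are compactly supported, $\om_s=\om$ near $\partial\cw(L,g,r)$, so one may choose a generic family $J_s$ compatible with $\om_s$ and $g$-cylindrical at infinity; the transversality and compactness of \S\ref{sec:existence} then carry over unchanged and produce, for each $s\in I$, a $J_s$-holomorphic punctured disc $u_s$ asymptotic to $\tilde\gamma(\beta)$. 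Stokes gives
\[
0 < \int u_s^*\om_s = \int u_s^*d\lambda + s\int u_s^*dH\wedge dK = r\,\ell_g^{\min}(\beta) - s\,a(\beta),
\]
hence $1/\max_M\{H,K\} \leq r\,\ell_g^{\min}(\beta)/a(\beta)$; varying $(H,K,\theta,x_0)$ and then $\beta$ with $a(\beta)>0$ yields $bp\leq r/\Vert a\Vert_\st$. No Hamiltonian isotopies enter.

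For the explicit-construction direction, neither of your ans\"atze works as stated: functions pulled back from $L$ via $\pi$ Poisson-commute, and a function of the fiberwise-linear form $p\mapsto p(\nabla\theta)$ vanishes on the zero section and so cannot equal $1$ on $X_1\subset L$. The paper takes $H(q,p) = \chi(\Vert p\Vert_g)\, h\circ\Theta(q)$ and $K(q,p) = \chi(\Vert p\Vert_g)\, k\circ\Theta(q)$ with $\chi$ a radial cutoff and $h,k:\R/\Z\to[0,1]$ suitable profiles; then $\{H,K\}$ factors as $(\chi^2/2)'\cdot(h'k-hk')\circ\Theta\cdot\theta_q(v)$ for a unit vector $v$, giving $\max\{H,K\}\leq 2\Vert a\Vert_\st/r + O(\eps)$. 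Removing the spurious factor of $2$ requires one more trick: replace the symmetric quarter-partition of $\R/\Z$ by the asymmetric one $[0,\eps],[\eps,2\eps],[2\eps,\tfrac12],[\tfrac12,1]$, which is admissible in the definition of $bp$ (it amounts to a different choice of representative $\theta$) and lets $\Vert h'k-hk'\Vert_{\cc^0}$ approach $2$ rather than $4$.
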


By monotonicity these numerical invariants provide bounds for the symplectic size 
of Weinstein neighborhoods of Lagrangian embeddings. To put it in a general framework, 
consider for a Lagrangian submanifold $L\subset (M,\om)$ with a given Riemannian metric 
$g$ on $L$ the quantity 
$$
c_{(M,L)}(L,g):=\sup\,\{\, r>0 \; | \; \big(\cw(L,g,r),L\big)\overset \om \hra (M,L) \,\}. 
$$ 
This is a relative version of the \emph{embedding capacity} $c^{(M,\om)}(L,g)$ defined 
and studied in \cite[p. 9]{cimo} (the embedding of $L$ for $c^{(M,\om)}(L,g)$
is not fixed). We have the inequality} $c_{(M,L)}(L,g)\leq c^{(M,\om)}(L,g)^{-1}$.

\begin{corollary}\label{cor:pb4L}
Let $L\subset M$ be a Lagrangian submanifold and $g$ a metric on $L$. If for $r > 0$
there is a relative symplectic embedding $(\cw(L,g,r), L) \hra (M,L)$, then for all
primitive classes $a\in H^1(L;\Z)\priv\{0\}$ we have
$$
r \;\leq\; bp(L,a,M)\cdot \Vert a\Vert_\st.
$$
\end{corollary}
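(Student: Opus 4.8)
The plan is to obtain Corollary \ref{cor:pb4L} as a purely formal consequence of the exact computation in Theorem \ref{thm:pb4L} together with the monotonicity of the invariant $bp$ under relative symplectic embeddings; all of the pseudoholomorphic input has already been absorbed into the proof of Theorem \ref{thm:pb4L}, so nothing analytic remains to be done here.

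Concretely, suppose $f:(\cw(L,g,r),L)\hra (M,L)$ is a relative symplectic embedding. As in the Weinstein neighbourhood theorem we may assume that $f$ restricts to the identity on the zero section $L$. Fix a primitive class $a\in H^1(L;\Z)\priv\{0\}$. Applying the monotonicity property of $bp$ to $f$, with target $(M',L')=(M,L)$ and class $a$, gives
\[
bp\big(L,a,\cw(L,g,r)\big)\;\leq\; bp(L,a,M).
\]
By Theorem \ref{thm:pb4L} the left-hand side equals exactly $r/\Vert a\Vert_\st$, the stable norm being taken with respect to $g$ (this is where Theorem \ref{thm:existence} is used, via the proof of Theorem \ref{thm:pb4L}). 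Hence $r/\Vert a\Vert_\st\leq bp(L,a,M)$, and multiplying through by $\Vert a\Vert_\st>0$ (the inequality being vacuous when $bp(L,a,M)=\infty$) yields
\[
r\;\leq\; bp(L,a,M)\cdot\Vert a\Vert_\st,
\]
which is the assertion.

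I do not expect a genuine obstacle: once Theorem \ref{thm:pb4L} is granted the argument is immediate. The only point that deserves a sentence is the bookkeeping around $f_{|L}$. If one does not normalise $f$ to be the identity on $L$, then $f_{|L}$ is merely a diffeomorphism of $L$, monotonicity produces the pulled-back class $(f_{|L})^{*}a$ on the source, and one records that $(f_{|L})^{*}a$ is again primitive and that the value of the stable norm of $(f_{|L})^{*}a$ with respect to $g$ agrees with that of $a$ with respect to $(f_{|L})_{*}g$; equivalently, precomposing $f$ with the cotangent lift of $(f_{|L})^{-1}$ turns it into an embedding $\cw(L,g',r)\hra(M,L)$ that is the identity on $L$, only relabelling the metric. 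In either formulation the displayed inequality is unaffected, so this causes no difficulty.
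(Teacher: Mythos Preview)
Your argument is correct and follows the paper's approach: monotonicity of $bp$ under relative symplectic embeddings combined with Theorem \ref{thm:pb4L}. One point the paper stresses that you slightly obscure: only the \emph{lower} bound $bp(L,a,\cw(L,g,r))\geq r/\Vert a\Vert_{\st}$ (Step II of the proof of Theorem \ref{thm:pb4L}, the explicit construction of test functions) is needed here, so contrary to your parenthetical remark the corollary does not actually rely on Theorem \ref{thm:existence} or any pseudoholomorphic input---the hard part lies entirely in bounding $bp(L,a,M)$ from above in applications.
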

\noindent In other terms,
\begin{equation}\label{eq:low-bound-emb-cap}
c_{(M, L)}(L,g)\leq \inf\limits_{a \in H^1(L) \backslash \{0\}}\;\;  bp(L, a, M) \cdot\Vert a\Vert_\st.
\end{equation}
This corollary relies on the lower bound for $bp(L,a, \cw(L,g,r))$ which, as will be
clear from the proof, only requires soft techniques. The hard part in using this corollary
therefore really lies in obtaining an upper bound for $bp(L,a,M)$. 

As an illustration of equation \eqref{eq:low-bound-emb-cap} we show that $c_{(\C \P^n, L_\cliff)}(\T^n,g) = \frac 1{\sqrt{n}(n+1)}
$ for the Clifford torus  $L_\cliff\subset (\C \P^n, \omega)$, where $\omega$ is the Fubini-Study symplectic form normalized 
such that the class $[\C \P^1]$ has area 1. We endow $\T^n = \R^n/{\Z^n}$ with the flat metric $g$ and consider 
the parametrization of $L_\cliff$ by $\T^n$ given by $(t_1,\dots,t_n)\mapsto [e^{2i\pi t_1}:\dots:e^{2i\pi t_n}:1]$.
The standard Hamiltonian $\T^n$-action on $\C \P^n$ gives us a moment map $\Phi: \C \P^n \to (\R^n)^*$ whose image 
is the simplex
\[
\Delta := \{\, (x_1, \ldots, x_n) \in (\R^n)^* \; | \; x_1, \ldots, x_n \geq 0,\; 0 \leq x_1 + \ldots + x_n \leq 1  \,\}, 
\]
and $L_{\cliff} = \Phi^{-1}(\mathbf{x})$ for $\mathbf{x}:=\big(\frac 1{n+1},\dots,\frac 1{n+1}\big)$. 
Note that we have a canonical isomorphism $(\Z^n)^*
\simeq H^1(\T^n)$  that we identify with $H^1(L_\cliff)$ via our specific embedding. 
Then, for the class $a := (1, \ldots, 1) \in H^1(\T^n)$ one can show that $bp(L_\cliff, a, \C\P^n) \leq 1/(n(n+1))$ by 
applying an adapted version of theorem 1.4 
together with the computation in theorem 2.16 of \cite{engame}.
We also easily see that for the flat metric we have $\Vert a\Vert_\st = \sqrt{n}$, so 
\[
c_{(\C \P^n, L_\cliff)}(\T^2,g)\leq \frac 1{\sqrt{n}(n+1)}.
\]

Now note that  $\Phi^{-1}(\mathrm{int}\,\Delta)$ is symplectomorphic to 
$\T^n \times \mathrm{int}\,\Delta \subset T^*\T^n$, and that there is an explicit symplectic embedding 
of $\cw(\T^n, g, r_{\max})$ into $\T^n\times \mathrm{int}\,\Delta$  relative to $\T^n\times\{\mathbf{x}\}$, with 
$r_{\max}:=\frac 1{\sqrt n(n+1)}=d_{\text{eucl}}(\mathbf{x},\partial \Delta)$. This provides a relative 
symplectic embedding of $\big(\cw(\T^n,g,r_{\max}),0_{\T^n}\big)$ into $(\C\P^n,L)$, thus we obtain
the lower bound $c_{(\C \P^n, L)}(\T^n,g) \geq  \frac1{\sqrt{n}(n+1))}$. 

\medskip
\noindent{\it Proof of theorem \ref{thm:pb4L}:} As is common with Poisson bracket
invariants (cf. \cite{buenpo, engame}), we obtain an upper bound from the persistence
of holomorphic punctured discs under a deformation of the almost complex structure
(theorem \ref{thm:existence}). For the lower bound we provide an explicit
construction. \newline\newline
\noindent\textbf{Step I: upper bound.} 
Consider first a curve $\gamma$ in a class
$\beta\in H_1(L)$ that satisfies $a(\beta) > 0$. We can freely perturb
$g$ in the $\cc^0$-topology for this proof, so we can assume that $g$ has a unique
minimal geodesic $\gamma(\beta)$ in the class $\beta$ and a unique $J_g$-holomorphic
map $u_{\gamma,g}$, as defined on page \pageref{page:defG}, which is Fredholm regular.
Fix a closed $1$-form $\theta$ that represents the class $a$ and a base point $x_0 \in L$.
This choice fixes the sets $X_0, Y_0, X_1, Y_1$ as above. Let $(H, K) \in \mathcal{F}
(\theta,x_0)$ be a pair of smooth functions supported in $\cw(L,g,r')$ for some
$r' < r$. Since $H, K$ are constant in neighborhoods of $X_i$ and $Y_i$ we have
$dH \wedge dK \equiv 0$ on $L$ and hence $H dK$ is a closed 1-form on $L$.
Following the calculation in \cite[Theorem 3.4]{engame} we see that
$[H dK|_L] = a \in H^1(L)$ and the exact form
$$
\om_s:=\om + s dH\wedge dK
$$
is symplectic for all $s \in I := [0, 1/ \max_M \{ H, K \})$. Note that $\omega_s$
coincides with the symplectic form $\omega = d\lambda$ near the boundary. Thus there
exists a generic smooth family of almost complex structures $\{J_s\}_{s \in I}$
in $\cw(L,g,r)$ compatible with $\om_s$, $g$-cylindrical at infinity and
starting at $J_0 = \Phi^*J_g$ (where $\Phi: \cw(L,g,r) \to T^*L$ is the map defined in 
section \ref{sec:framesplit}). Since there is a unique $J_0$-holomorphic punctured disc
asymptotic to $\tilde \gamma(\beta)$ and it is regular, there is a family $\{u_s\}$ of
$J_s$-holomorphic punctured discs asymptotic to $\tilde \gamma(\beta)$ for small
$s\geq 0$. The proof of theorem \ref{thm:existence} (section \ref{sec:proofexistence})
shows that this family persists for all $s\in I$. Indeed, the main ingredient is the compactness theorem \ref{thm:sft_compactness},
which only relies on the fact that $\tilde \gamma(\beta)$ has least action among the
lifts of the curves in class $\beta$ and this still holds for $\om_s$.
Thus, for all $s \in I$ there exists a $J_s$-holomorphic map $u_s:(D\priv \{0\},
\partial D) \to (\cw(L,g,r),L)$ which is asymptotic to $\tilde \gamma(\beta)$.
Then using Stokes' Theorem we have
$$
0 < \int u_s^*\om_s = \int u_s^* d\lambda + s\int u_s^*dH\wedge dK
= r\ell_g(\gamma(\beta)) - s a(\beta).
$$
Thus we have $s < \tfrac{r\ell_g(\gamma(\beta))}{a(\beta)} = \tfrac{r\ell_g^{\min}
(\beta)}{a(\beta)}$. This holds for all $s \in I$, hence
\[
\frac{1}{\max_M \{ H, K \}} \leq \frac{r\ell_g^{\min}(\beta)}{a(\beta)}.
\]
Varying $(H, K) \in \mathcal{F}(\theta,x_0)$ as well as $\theta$ in the class of $a$
and the choice of base point $x_0 \in L$, we see that
$$
bp(L, a, \cw(L,g,r)) \leq r\frac{\ell_g^{\min}(\beta)}{a(\beta)}
\leq r \frac {\ell_g(\gamma)}{a([\gamma])}.
$$

\noindent\textbf{Step II: lower bound.} It is enough to find good functions.
We first prove the weaker inequality
$$
bp(L,a,\cw(L,g,r))\geq \frac{r}{2 \Vert a \Vert_\st},
$$
because the proof is more visual. We then get rid of the constant $\tfrac{1}{2}$.
Figure \ref{fig:functions} represents the different functions that appear in the
proof.
\begin{figure}[h]
\begin{center}
\input 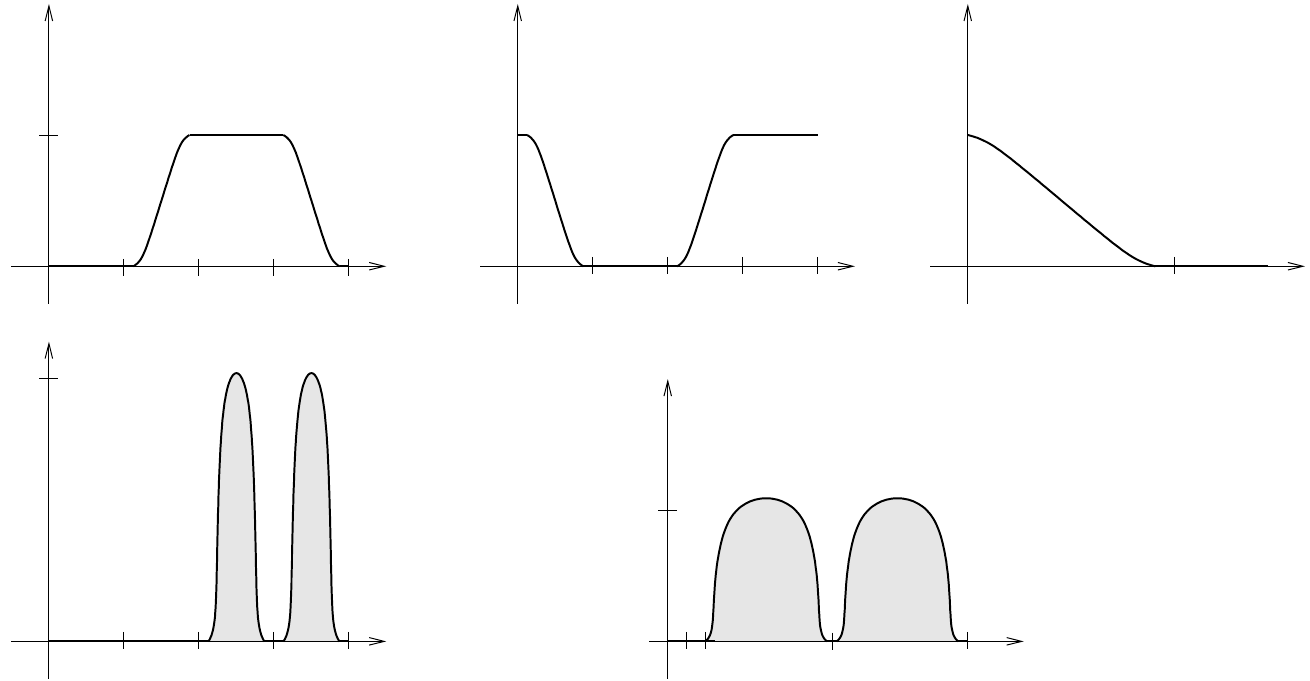_t
\caption{The almost optimal functions for the $bp$-invariant.}
\label{fig:functions}
\end{center}
\end{figure}

Let $\theta$ be a closed $1$-form that represents the class $a$ with
$\Vert \theta \Vert_g < \Vert a \Vert_\st + \eps$ and $\Theta$ its
$\R{/\Z}$-valued primitive as defined above. Let $h, k : \R{/\Z}\to [0,1]$
be smooth functions such that $h$ equals $0$ on $[0,\tfrac{1}{4}]$ and
$1$ on $[\tfrac{1}{2},\tfrac{3}{4}]$ and $k$ equals $0$ on
$[\tfrac{1}{4},\tfrac{1}{2}]$ and $1$ on $[\tfrac{3}{4},1]$. Let also $\chi:
[0,r) \to [0,1]$ be a smooth, compactly supported function that equals
$1$ near $0$. We define the compactly supported functions $H, K: \cw(L,g,r)
\to [0,1]$ as follows. For canonical coordinates $(q,p)\in T^*L$ we set
\begin{align*}
H(q,p) & := \chi(\Vert p\Vert_g)\, h\circ \Theta(q) \\
K(q,p) & := \chi(\Vert p\Vert_g)\, k\circ \Theta(q).
\end{align*}
Then
$$
\{ H, K\} = \chi(\Vert p\Vert_g)\, h \circ \Theta(q) \{ \chi(\Vert p\Vert_g),
k \circ \Theta(q) \} + \chi(\Vert p\Vert_g)\, k \circ \Theta(q)
\{h\circ \Theta(q),\chi(\Vert p\Vert_g)\}.
$$
Now for any function $f(q)$ we have
$$
\{ f(q), \chi(\Vert p\Vert_g)\} = \sum\limits_{i = 1}^{n} \frac {\partial f(q)}{\partial q_i}
\frac{\partial \chi(\Vert p\Vert_g)}{\partial p_i} =
\chi'(\Vert p\Vert_g) \sum\limits_{i = 1}^{n} \frac{\partial f(q)}{\partial q_i}
\frac{\partial \Vert p\Vert_g}{\partial p_i}.
$$
For a point $q_0 \in L$ we choose coordinates in a neighborhood such that the metric $g$
satisfies $g_{ij}(q_0) = \delta_{ij}$. Then at $q_0$ have $\tfrac{\partial \Vert p\Vert_g}{\partial p_i}
= \tfrac{p_i}{\Vert p\Vert_g}$. Applying this to our case we get at $q_0$
\begin{align*}
\{ H, K \}(q_0)  & = \chi(\Vert p\Vert_g) \chi'(\Vert p\Vert_g)(h' k - h k')\circ \Theta(q_0)
\sum\limits_{i = 1}^{n} \frac{\partial \Theta(q_0)}{\partial q_i}\frac{p_i}{\Vert p\Vert_g}
\end{align*}
Note that $\frac{\partial \Theta(q)}{\partial q_i} = \theta_q(\partial_{q_i})$
and $\sum_i \tfrac{p_i}{\Vert p \Vert_g}\partial_{q_i} := v$ has norm $\Vert v \Vert_g = 1$
for $p \neq 0$. By our choice of $\theta$ we have $|\theta_{q_0}(v)| < \| a \|_{\mathrm{st}}
+ \eps$. We therefore get
\begin{align*}
\{ H, K \}(q_0) & \leq \Big|\Big(\tfrac{\chi^2}{2}\Big)'\Big| \cdot|(h'k - h \, k')\circ  \Theta(q_0)|
 \cdot \,(\Vert a\Vert_\st+\eps).
\end{align*}
Now notice that because of the properties of $h,k$ (see also figure \ref{fig:functions}),
$$
h'k - h \,k' =
\left\{
\begin{array}{ll}
h' & \text{ on } [\frac 34, 1],\\
-k' & \text{ on } [\frac 12,\frac 34], \\
0 & \text{ else. } \\
\end{array}\right.
$$
Taking into account that the only constraint on $h', k'$ is that their integral equals $\pm 1$
on $[\frac 34,1]$ and $[\frac 12,\frac 34]$, respectively, we see that for
a good choice of $h, k$ we have $\Vert h'k - h \,k' \Vert_{\cc^0} \leq 4+\eps$. Also, since
the only constraint on $\chi$ is that $\chi$ varies from $1$ to $0$ within the interval
$[0, r)$, we see that for a good choice of $\chi$ (for which $\chi^2$ is almost linear) we
can ensure that $\Vert \big(\nf {\chi^2}2\big)' \Vert_{\cc^0}\leq \frac 1{2r} + \eps$.
Altogether we get
\begin{equation}\label{eq:pblowbound}
\{H,K\}(q_0) \leq \left(\frac 1{2r} + \eps\right)(4+\eps)\,(\Vert a\Vert_\st + \eps)
\leq \frac{2 \,\Vert a\Vert_\st}{r} + C\eps
\end{equation}
for $\eps\ll 1$ and a constant $C$. The choice of $q_0 \in L$ was arbitrary and
$\eps > 0$ can be chosen arbitrarily small. Thus,
$$
bp(L,a,\cw(L,g,r))\geq \frac{r}{2 \,\Vert a\Vert_\st}.
$$

Finally, in order to get rid of the constant $\frac{1}{2}$ we use an equivalent
definition of $bp(L, a, M)$. Namely for the function $\Theta$ one can choose the four
sets
\[
X_0' := \Theta^{-1}([0,\eps]),\quad Y_0' := \Theta^{-1}([\eps,2\eps]),
\quad X_1' := \Theta^{-1}([2\eps, \tfrac{1}{2}]),\quad Y_1' := \Theta^{-1}([\tfrac{1}{2},1]),
\]
and choose the functions $H, K \in C^{\infty}_c(M)$ to satisfy $H_{|\op(X_0')} = 0$,
$H_{|\op(X_1')} = 1$, $K_{|\op(Y_0')} = 0$ and $K_{|\op(Y_1')} = 1$ in the definition of
$bp(L, a, M)$. Indeed, if $\rho: \R/\Z \to \R/\Z$ is a diffeomorphism that equals
the identity near $0$ and sends the intervals
$[0,\eps]$, $[\eps,2\eps]$, $[2\eps,\tfrac{1}{2}]$ and $[\tfrac{1}{2},1]$ to
$[0,\tfrac{1}{4}]$, $[\tfrac{1}{4},\tfrac{1}{2}]$, $[\tfrac{1}{2},\tfrac{3}{4}]$
and $[\tfrac{3}{4},1]$, then
as in \cite[Section 3.2]{engame} one can show that $\rho \circ \Theta =: \Theta'
= \int\theta'$ for a closed 1-from $\theta'$ representing the class $a$. Thus pairs
of functions in $\cf(\theta', x_0)$ are equivalent to pairs $(H,K)$ with the
corresponding values on $X_i'$ and $Y_j'$.
Since for $bp(L, a, M)$ the infimum is taken over all $[\theta] = a$, we see that
this definition is equivalent to the original one.

For this extremalization problem the result is slightly better, since now $h'k - h k'$
is supported on $[2\eps,\tfrac{1}{2}] \cup [\tfrac{1}{2}, 1]$ and we can choose
the slopes of $h$ and $k$ to be close to $\pm \tfrac{1}{2}$ on these intervals.
For a suitable choice of $h$ and $k$ we therefore have $\Vert h'k - h k'\Vert_{\cc^0}
\leq 2+\eps$, and the inequality \eqref{eq:pblowbound} becomes
\begin{equation}
\{H,K \} \leq \left( \frac 1{2r} + \eps \right)(2+\eps)(\Vert a\Vert_\st + \eps)
\leq \frac{\Vert a\Vert_\st}{r} + C\eps
\end{equation}
Since $\eps > 0$ can be chosen arbitrarily small, we indeed conclude that
$bp(L,a,\cw(L,g,r))\geq r \Vert a\Vert_{\st}^{-1}$.
\cqfd


\appendix
\section{Transversality for punctured holomorphic disks}\label{sec:fredholm}
In this section we prove generic transversality for moduli spaces of
punctured pseudoholomorphic curves with boundary in symplectic cobordisms.
This result is essentially folklore and the statement follows mostly from
standard results in the literature where parts have already been done
(see \cite{dragnev,mcsa,wendl,howize2}). Nevertheless, as far as we are aware,
transversality has not been explicitly written for punctured surfaces with totally real
boundary conditions in symplectic cobordisms of arbitrary dimension, so we provide a
proof for the sake of completeness. This is mostly a matter of compiling the different
sources cited above. For the case without boundary we refer the reader to the excellent
exposition by Wendl \cite{wendl2}.

We fix notation. Let $(X, \omega)$ be a symplectic cobordism. Thus $(X,
\omega)$ is a symplectic manifold containing a compact domain $K \subset
X$ with contact boundary such that $(X \backslash \mathrm{int}(K), \omega)$
is symplectomorphic to the union $([0,\infty) \times M_+, d
(e^r \alpha_+)) \sqcup ((-\infty, 0] \times M_-, d (e^r \alpha_-))$
for suitable contact manifolds $(M_+, \alpha_+)$ and $(M_-, \alpha_-)$.
Here $r$ denotes the $\mathbb{R}$-coordinate. The contact manifolds
$M_{\pm}$ may contain several components and one, but not both, may be
empty. Let $L \subset (X, \omega)$ be a compact Lagrangian submanifold
contained in $\mathrm{int}(K)$. We fix a Riemannian metric $g$
that is translation-invariant outside a compact set containing $K$ and
such that $L$ is totally geodesic.
Let $\gamma$ be a non-degenerate Reeb orbit of a contact manifold $(M, \alpha)$
with period $T > 0$. Define $Z_+ := [0, \infty) \times S^1$ and $Z_- := (-\infty,0]
\times S^1$ with the coordinates $(s,t)$ and conformal structure $j \partial_s
= \partial_t$. We say that a map $v = (a, u): Z_+ \to \R \times M$ is
positively asymptotic to $\gamma$, if $\lim_{s \to \infty} a(s,t) = \infty$
and $\lim_{s \to \infty} u(s,t) = \gamma( T t)$. We say a map $v' = (a', u')
: Z_- \to \R \times M$ is negatively asymptotic to $\gamma$, if
$\lim_{s \to -\infty} a'(s,t) = -\infty$ and $\lim_{s \to -\infty} u'(s,t)
= \gamma(T t)$.
Here we assume uniform convergence of the limits. For a fixed Riemann surface
$(\Sigma, j)$ (possibly with boundary) let $\Gamma = \{ z_1, \ldots, z_{l+m} \}
\subset \mathrm{int}(\Sigma)$ be a set of punctures. Given a family of non-degenerate
Reeb orbits $\mathcal{O}_{\Gamma} = \{ \gamma_1^+, \ldots, \gamma_l^+,
\gamma_1^-, \ldots, \gamma_m^- \}$ in $M_{\pm}$, we say that a map
$\phi: \Sigma \backslash \Gamma \to X$ is asymptotic to the family
$\mathcal{O}_{\Gamma}$, if for all $i$ there exists a disk neighborhood
$\mathcal{U}_{i}$ of $z_i$ in $\Sigma$, a biholomorphism $\psi_i: Z_\pm
\to \mathcal{U}_{i}\backslash \{ z_i \}$ such that $\phi\circ \psi_i :
Z_\pm\to X$ is asymptotic to $\gamma_i^{\pm}$.

Let us fix an almost complex structure $J_\cyl$ on $X$ that is
compatible with $\alpha_\pm$ outside a compact \nbd of $K$. Denote by
$\cj^\infty_\cyl(J_\cyl)$ the space of smooth almost complex structures
on $X$ that are compatible with $\om$ and coincide with $J_\cyl$ outside
this fixed compact \nbd of $K$. We endow this space with the
$C^{\infty}$-topology. Then $\cj^\infty_\cyl(J_\cyl)$ is a contractible
space. In order to lighten notation we omit the $J_\cyl$ and only write
$\cj^\infty_\cyl$. In this appendix, this notation will {\it always} refer
to $\cj_\cyl^\infty(J_\cyl)$. For a chosen $J \in \cj^\infty_\cyl$
and family $\mathcal{O}_{\Gamma}$ we define the moduli space of
punctured Riemann surfaces with boundary on $L$,
\begin{equation*}
\mathcal{M}(\mathcal{O}_{\Gamma}, J) := \left\{ u: (\Sigma \backslash
\Gamma, \partial \Sigma; j) \longrightarrow (X, L; J) \;\; \bigg| \;\;
 \begin{array}{c}
      du + J \circ du \circ j = 0, \; u(\partial \Sigma) \subset L,  \\
      \quad u  \text{ is asymptotic to } \mathcal{O}_{\Gamma}
    \end{array}
\right\}.
\end{equation*}
We also consider the subset $\mathcal{M}^*(\mathcal{O}_{\Gamma}, J)$
consisting of curves in $\mathcal{M}(\mathcal{O}_{\Gamma}, J)$ that
are somewhere injective in $\mathrm{int}(K)$, i.e. elements $u$ for
which there exists a $z \in \mathrm{int}(\Sigma \backslash \Gamma)$
such that $du(z) \neq 0$, $u(z) \in \mathrm{int}(K)$ and $u^{-1}(u(z))
= \{ z \}$. The main result of this section is as follows.
\begin{theorem}\label{thm:transversality_disks}
There exists a subset $\mathcal{J}^{\mathrm{reg}} \subset
\cj^\infty_\cyl$ such that the following holds:
\begin{enumerate}
\item If $J \in \mathcal{J}^{\mathrm{reg}}$, then $\mathcal{M}^*(
\mathcal{O}_{\Gamma}, J)$ is a smooth manifold of dimension
\begin{align}\label{eq:dimension_punctured_surfaces}
\mathrm{dim}(\mathcal{M}^*(\mathcal{O}_{\Gamma}, J)) & =
n\chi(\dot{\Sigma}) + 2c_1^{\tau} (u^*TX) + \mu^{\tau}(u^*TX, u^*TL) \nonumber \\
& \quad + \sum\limits_{i = 1}^l \mu_{\mathrm{CZ}}^{\tau}(\gamma_i^+)
- \sum\limits_{j = 1}^m \mu_{\mathrm{CZ}}^{\tau}(\gamma_j^-) + \# \Gamma,
\end{align}
locally around the element $u \in \mathcal{M}^*(\mathcal{O}_{\Gamma}, J)$.
Here $\tau$ denotes a trivialization of $u^*TX$ on the cylindrical ends
and boundary.
\item The subset $\mathcal{J}^{\mathrm{reg}}$ is of second category in
$\cj^\infty_\cyl$.
\end{enumerate}
\end{theorem}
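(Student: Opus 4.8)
The plan is to follow the now-standard Fredholm package for pseudoholomorphic curves, adapted to punctured surfaces with totally real boundary in a symplectic cobordism, essentially by assembling the pieces already in the literature (\cite{dragnev,mcsa,wendl,howize2}). First I would set up the correct Banach manifold of maps: fix weighted Sobolev spaces $W^{k,p,\delta}$ on $\dot\Sigma = \Sigma\priv\Gamma$ with exponential weights $\delta>0$ at the punctures (chosen smaller than the spectral gap of the asymptotic operators of the non-degenerate Reeb orbits in $\cO_\Gamma$), together with a finite-dimensional space of asymptotic ``exponential-decay-corrected'' vector fields at each puncture that accounts for the $\#\Gamma$ term in the index; the boundary $\partial\Sigma$ is mapped to $L$, which by the choice of $g$ (with $L$ totally geodesic) gives a well-behaved totally real boundary condition. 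The section $\bar\partial_J$ of the corresponding Banach bundle $\ce^{k-1,p,\delta}$ has linearization $\mathbf{D}_u$ which is Fredholm; its index is computed by the Riemann–Roch formula for punctured surfaces with boundary, which is exactly \eqref{eq:dimension_punctured_surfaces}. I would cite Dragnev \cite{dragnev} and Wendl \cite{wendl,wendl2} for the Fredholm property and index formula, noting that the boundary contributes the relative Maslov term $\mu^\tau(u^*TX,u^*TL)$ and each puncture contributes its Conley–Zehnder index plus $1$.

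Next comes the transversality argument proper. I would form the universal moduli space $\cm^*(\cO_\Gamma)^{\text{univ}} := \{(u,J) : J\in\cj^\infty_\cyl,\ u\in\cm^*(\cO_\Gamma,J)\}$, working with a space of almost complex structures of class $C^\ell$ or a Floer-type Banach space of $C^\infty$ perturbations supported in a fixed compact subset of $\mathrm{int}(K)$, so that the cylindrical-at-infinity constraint is preserved. The key step is to show that the universal linearized operator $(\mathbf{D}_u, \, Y\mapsto \tfrac12 Y(u)\circ du\circ j)$ is surjective at every $(u,J)$ with $u$ somewhere injective in $\mathrm{int}(K)$. This is the standard argument: if the image were a proper closed subspace, one finds a nonzero element $\eta$ in the cokernel (an $L^q$ section), uses the somewhere-injective interior point $z_0$ with $du(z_0)\neq 0$, $u(z_0)\in\mathrm{int}(K)$, $u^{-1}(u(z_0))=\{z_0\}$, and constructs a compactly supported perturbation $Y$ concentrated near $u(z_0)$ for which $\langle \eta, Y(u)\circ du\circ j\rangle\neq 0$, contradicting $\eta$ being in the cokernel; unique continuation for $\mathbf{D}_u^*$ propagates this to force $\eta\equiv 0$. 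Then the implicit function theorem makes $\cm^*(\cO_\Gamma)^{\text{univ}}$ a Banach manifold, and the projection to $\cj^\infty_\cyl$ (or its $C^\ell$ version) is Fredholm of index given by \eqref{eq:dimension_punctured_surfaces}; the Sard–Smale theorem yields a second-category set $\cj^{\reg}$ of regular values, and a Taubes-type argument upgrades the $C^\ell$ statement to $C^\infty$ by taking a nested intersection over energy/genus bounds. The dimension formula holds locally around each $u$ by elliptic regularity (the kernel is independent of $k,p,\delta$).

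I expect the main obstacle to be the careful handling of the boundary punctures and the totally real boundary condition simultaneously: one must verify that the somewhere-injective point can always be taken in $\mathrm{int}(K)$ (not on the cylindrical ends, where perturbations are forbidden, nor on $\partial\Sigma$, where the perturbation must preserve the condition that $L$ is totally real) — this is why Lemma \ref{le:injpoints} in the body is needed — and that the unique continuation and local perturbation arguments go through unchanged near, but not on, the boundary. The asymptotic analysis at the punctures (exponential convergence to trivial cylinders, the right Sobolev completions, matching the Conley–Zehnder contributions to the index) is technical but entirely standard for non-degenerate Reeb orbits; I would lean on \cite{dragnev} and the exposition of \cite{wendl2}. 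The rest — Sard–Smale, the Taubes $C^\infty$ trick, elliptic regularity for the local dimension statement — is routine and I would only sketch it.
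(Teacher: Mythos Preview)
Your proposal is correct and follows essentially the same approach as the paper: weighted Sobolev setup with a finite-dimensional augmentation at the punctures, Fredholm property and index via Riemann--Roch for punctured surfaces with totally real boundary, universal moduli space over $C^\ell$ almost complex structures, surjectivity via the somewhere-injective point and unique continuation argument, Sard--Smale, and the Taubes trick to pass to $C^\infty$. One small remark: in this appendix the punctures are all interior ($\Gamma\subset\mathrm{int}\,\Sigma$), so there are no boundary punctures to worry about, and the somewhere-injective point in $\mathrm{int}(K)$ is built into the definition of $\cm^*(\cO_\Gamma,J)$ rather than supplied by Lemma~\ref{le:injpoints}.
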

We refer the reader to the end of section \ref{sec:crtypeop} for the
definitions of the topological invariants $c_1^\tau$, $\mu^\tau$ and
$\mu_\textrm{CZ}^\tau$. We call the elements $J \in \mathcal{J}^{\mathrm{reg}}$
\emph{regular}. For our applications we need to discuss the
dependence of $\mathcal{M}^*(\mathcal{O}_{\Gamma}, J)$ under variations
of $J \in \mathcal{J}^{\mathrm{reg}}$. For a smooth path $\{J_t\}_{t
\in [0,1]} \subset \cj^\infty_\cyl$ define
\[
\mathcal{W}^*(\mathcal{O}_{\Gamma}, \{J_t \} ) = \{\, (t, u)
\ \big| \ t \in [0,1], u \in \mathcal{M}^*( \mathcal{O}_{\Gamma},
J_t) \,\}.
\]
For two regular $J_0, J_1 \in \mathcal{J}^{\mathrm{reg}}$ we denote by
$\mathcal{J}(J_0, J_1)$ the space of all smooth paths in $\cj^\infty_\cyl$
connecting $J_0$ to $J_1$.

\begin{theorem}\label{thm:homotopy_transversality_disks}
There exists a subset $\mathcal{J}^{\mathrm{reg}}(J_0, J_1)
\subset \mathcal{J}(J_0, J_1)$ such that the following holds:
\begin{enumerate}
\item If $\{J_t \}_{t \in [0,1]} \in \mathcal{J}^{\mathrm{reg}
}(J_0, J_1)$, then $\mathcal{W}^*(\mathcal{O}_{\Gamma},
\{J_t \} )$ is a smooth oriented manifold with boundary
\[
\partial \mathcal{W}^*(\mathcal{O}_{\Gamma}, \{J_t \} ) =
\mathcal{M}^*( \mathcal{O}_{\Gamma}, J_0) \cup
\mathcal{M}^*( \mathcal{O}_{\Gamma}, J_1).
\]
\item The set $\mathcal{J}^{\mathrm{reg}}(J_0, J_1)$ is of
second category in $\mathcal{J}(J_0, J_1)$.
\end{enumerate}
\end{theorem}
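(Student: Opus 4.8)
The statement is the parametric counterpart of Theorem~\ref{thm:transversality_disks}, and the plan is to repeat the universal moduli space construction with the path parameter included; I only indicate the changes. First I would make $\mathcal{J}(J_0,J_1)$ into a separable Banach manifold: following Floer's classical device, restrict to smooth paths $\{J_t\}_{t\in[0,1]}$ in $\cj^\infty_\cyl$ that agree with $J_0$ for $t$ near $0$ and with $J_1$ for $t$ near $1$, and model the variations on a separable Banach space of the Floer $C^\varepsilon$-type which is dense in the space of $C^\infty$-paths and on which Sard--Smale applies. With $\mathcal{B}^{k,p,\delta}$ the Banach manifold of maps $(\dot\Sigma,\partial\Sigma)\to(X,L)$ asymptotic to $\mathcal{O}_\Gamma$ (with exponential weight $\delta$ at the punctures, $kp>2$) that is used in the proof of Theorem~\ref{thm:transversality_disks}, I would then take the Banach space bundle $\mathcal{E}\to[0,1]\times\mathcal{B}^{k,p,\delta}\times\mathcal{J}(J_0,J_1)$ of $(0,1)$-forms, equipped with the section $(t,u,\{J_s\})\mapsto du+J_t(u)\circ du\circ j$, and define the universal parametric moduli space $\widetilde{\mathcal{W}}^*(\mathcal{O}_\Gamma)$ as the set of its zeros that are somewhere injective in $\mathrm{int}(K)$.

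The heart of the matter is the surjectivity of the linearization of this section at a zero $(t,u,\{J_s\})$. For $t\in(0,1)$ the structure $J_t$ may be perturbed freely, and the argument is as in Theorem~\ref{thm:transversality_disks}: an element $\eta$ orthogonal to the image lies, by considering trivial $J$-variations, in the cokernel of the ordinary operator $\mathbf{D}_u$, while orthogonality to the variations $Y\mapsto Y(u)\circ du\circ j$ with $Y$ supported near $u(z)$ --- for a point $z$ with $du(z)\neq0$, $u(z)\in\mathrm{int}(K)$ and $u^{-1}(u(z))=\{z\}$ --- forces $\eta$ to vanish near $z$; being a (weak, hence smooth) solution of the formal adjoint equation, a first-order elliptic system, $\eta$ then vanishes identically by Aronszajn's unique continuation theorem. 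At $t=0$ and $t=1$ the structure is fixed, but $J_0,J_1\in\mathcal{J}^{\mathrm{reg}}$ means $\mathbf{D}_u$ is already surjective there, so the full linearization, which additionally contains the $\partial_t$-direction, stays surjective. By the implicit function theorem $\widetilde{\mathcal{W}}^*(\mathcal{O}_\Gamma)$ is then a Banach manifold with boundary lying over $t\in\{0,1\}$.

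Next I would apply Sard--Smale to the projection $\Pi\colon\widetilde{\mathcal{W}}^*(\mathcal{O}_\Gamma)\to\mathcal{J}(J_0,J_1)$, which is Fredholm of index one more than the number in \eqref{eq:dimension_punctured_surfaces}. Its regular values form a set $\mathcal{J}^{\mathrm{reg}}(J_0,J_1)$ of second category, and for $\{J_t\}$ in this set the fibre $\Pi^{-1}(\{J_t\})=\mathcal{W}^*(\mathcal{O}_\Gamma,\{J_t\})$ is a smooth manifold whose dimension exceeds that of $\mathcal{M}^*(\mathcal{O}_\Gamma,J_0)$ by one; because the paths are constant near $t=0,1$ it is in fact a manifold with boundary $\mathcal{M}^*(\mathcal{O}_\Gamma,J_0)\cup\mathcal{M}^*(\mathcal{O}_\Gamma,J_1)$. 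Elliptic regularity and a bootstrap, exactly as in the non-parametric case, show that the elements are smooth and that the space does not depend on $(k,p,\delta)$. The orientation is supplied by the standard coherent orientation of the determinant line bundle of the relevant family of Cauchy--Riemann type operators on punctured surfaces with totally real boundary (cf.\ the literature cited at the beginning of this appendix), and it restricts on the two boundary components to the orientations of $\mathcal{M}^*(\mathcal{O}_\Gamma,J_0)$ and $\mathcal{M}^*(\mathcal{O}_\Gamma,J_1)$, up to the customary sign.

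The main obstacle is precisely this surjectivity step at interior $t$: one must ensure that the somewhere-injective point can be taken inside $\mathrm{int}(K)$, where the perturbation of $J$ is unconstrained, and that the unique-continuation argument for the cokernel element survives the presence of both the punctures and the Lagrangian boundary --- this is where the weighted Sobolev framework and the exponential convergence of $u$ to its asymptotic Reeb orbits are needed, the latter to propagate the infinite-order vanishing and to keep $\eta$ in the correct function space. A secondary, more bookkeeping-type point is to equip $\mathcal{J}(J_0,J_1)$ with a separable Banach manifold structure that is simultaneously $C^\infty$-dense and compatible with Sard--Smale, which is handled by the usual Floer $C^\varepsilon$-device.
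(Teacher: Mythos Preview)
Your proposal is correct and follows essentially the same universal moduli space strategy as the paper: build a parametric universal space, verify surjectivity of the linearization (using somewhere-injectivity in $\mathrm{int}(K)$ for interior $t$ and regularity of $J_0,J_1$ at the endpoints), then apply Sard--Smale to the projection onto the space of paths.

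The one genuine technical difference is how you make the space of paths amenable to Sard--Smale. You invoke Floer's $C^\varepsilon$-norm to put a separable Banach manifold structure directly on a dense subspace of smooth paths. The paper instead works with $C^l$-paths $\mathcal{J}^l(J_0,J_1)$, applies Sard--Smale there for $l$ sufficiently large, and then passes to smooth paths via the Taubes trick (the decomposition into sets $\mathcal{J}^{\mathrm{reg}}_N$ of structures regular for curves satisfying uniform quantitative bounds, each of which is open and dense). Both routes are standard and equivalent in outcome; the $C^\varepsilon$-device is more direct but requires setting up the Floer norm, while the $C^l$ route recycles the machinery already built in the paper's proof of Theorem~\ref{thm:transversality_disks}. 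Your additional remarks on constant-near-endpoints paths and on coherent orientations fill in points the paper leaves implicit.
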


The proof of theorem~\ref{thm:transversality_disks} follows the standard
line of arguments in the literature, see e.g. \cite{mcsa}. We first recall
various notions from the general theory of Cauchy-Riemann type operators
on punctured Riemann surfaces. In the functional analytical setup we explain
the Banach manifolds and bundles involved in our setting. We then introduce
the universal moduli space as a Banach submanifold of the aforementioned
Banach manifolds and use this construction to prove
theorem~\ref{thm:transversality_disks}. Adaptations of these arguments
then provide a proof of theorem~\ref{thm:homotopy_transversality_disks}


\subsection{CR type operators on Hermitian bundles over punctured surfaces}\label{sec:crtypeop}

Let $(\Sigma, j)$ be a compact Riemann surface of genus $g$ with
$m \geq 0$ boundary components. Choose a non-empty finite set $\Gamma
\subset \mathrm{int}(\Sigma)$ of positive and negative interior punctures,
we write $\Gamma = \Gamma^+ \cup \Gamma^-$. The punctured surface is then
denoted $\dot{\Sigma} = \Sigma \backslash \Gamma$. Now for every
puncture $z \in \Gamma^{\pm}$ we choose a closed neighborhood $\mathcal{U}_z
\subset \Sigma$ of $z$ together with a biholomorphic map $\varphi_z:
(\dot{\mathcal{U}}_z, j) \to (Z_{\pm}, i)$, where $\dot{\mathcal{U}}_z :=
\mathcal{U}_z\backslash \{z \}$ is the punctured neighborhood and $Z_+ =
[0, \infty)\times S^1$, $Z_- = (-\infty, 0]\times S^1$ are complex
cylinders. Note that the assumption that $\phi_z$ is a biholomorphism
implies that $\phi_z(w)\to \pm \infty$ when $w\to z$.
The union of punctured neighborhoods $\dot{\mathcal{U}}_z$ will be
called the cylindrical ends of $\dot{\Sigma}$.

Let $(E, \omega, J) \to (\dot{\Sigma}, j)$ be a smooth Hermitian vector
bundle of rank $n$ over the punctured surface. By Hermitian structure
we mean that $(E, \omega)$ is a symplectic vector bundle and $J$ is an
$\omega$-compatible almost complex structure. The inner product is then
given by
\[
\langle \cdot, \cdot \rangle_E := \omega(\cdot, J \cdot) + i \omega( \cdot,
\cdot).
\]
We call a trivialization $\Phi$ of $E$ near $z \in \Gamma^{\pm}$
\emph{admissible}, if $\Phi: E|_{\dot{\mathcal{U}_z}} \to Z_{\pm} \times
\mathbb{C}^n$ is a unitary bundle isomorphism which projects to the
biholomorphism $\varphi_z$. Here $\mathbb{C}^n$ is identified with the
standard Hermitian vector space. Note that every Hermitian vector bundle
$E$ over $\dot{\Sigma}$ has admissible trivializations over the
cylindrical ends. Furthermore, there exist admissible trivializations
that extend over $\dot{\Sigma}$, since $\Gamma \neq \emptyset$ and
thus $\dot{\Sigma}$ has the homotopy type of a 1-dimensional
cellular complex. For a choice of admissible trivialization $\Phi$
near all $z\in \Gamma$ we fix a volume form $dvol$ on $\dot{\Sigma}$
such that $dvol$ is equal to $ds \wedge dt$ in the cylindrical
coordinates induced by $\Phi$.

Let $(E', \omega', J')$ and $(E, \omega, J)$ be two Hermitian vector
bundles over $\dot{\Sigma}$. We denote by $\mathrm{Hom}^{1,0}(E', E)$ and
$\mathrm{Hom}^{0,1}(E', E)$ the corresponding complex vector bundles
consisting of complex linear and antilinear bundle maps $E' \to E$.
Sometimes we include the complex structures $J$, $J'$ in the notation
to provide more clarity. In the following we will often consider the bundle
$F := \mathrm{Hom}^{0,1}(T\dot{\Sigma}, E)$ (and sections thereof). In
particular this bundle inherits a Hermitian structure
$(F,\overline{\omega}, \overline{J})$ by setting
\[
\overline{J}\eta = J\circ \eta, \qquad \overline{\omega}_p(\eta, \eta')
= \frac{\omega_p(\eta(v), \eta'(v))}{dvol(v, jv)},
\]
for $p \in \dot{\Sigma}$, $\eta, \eta' \in F$ and any non-zero choice
of $v \in T_p\dot{\Sigma}$.
Furthermore, any admissible trivialization $\Phi$ of $(E, \omega, J)$ induces an
admissible trivialization of $(F, \overline{\omega}, \overline{J})$ via
\[
F|_{\dot{\mathcal{U}}_z} \to
Z_{\pm} \times \mathbb{C}^n, \qquad \eta \longmapsto \Phi(\eta(
\partial_s)),
\]
where $\partial_s$ is the vector field on $T\dot{\Sigma}|_{\dot{
\mathcal{U}}_z}$ arising from $\phi_z$. On $E$ and $F$ we define the
$L^2$-inner product to be
\begin{align*}
\langle \eta, \xi \rangle_{L^2(E)} &:= \int\limits_{\dot{\Sigma}}
\omega( \eta, J \xi ) \, {dvol} \quad \text{ for }  \eta, \xi \in C^{\infty}_0(E), \\
\langle \nu, \rho \rangle_{L^2(F)} &:= \int\limits_{\dot{\Sigma}}
\overline{\omega}( \nu, \overline{J} \rho ) \, {dvol} \quad \text{ for }
 \nu, \rho \in C^{\infty}_0(F).
\end{align*}

We recall the theory of Cauchy-Riemann type operators on Hermitian
bundles over punctured surfaces. We begin with the notion of an
asymptotic operator.

Let $(S^1 \times \mathbb{R}^{2n}, \omega_0, J_0) \to S^1$ be the
standard Hermitian vector bundle of complex rank $n$. An
\emph{asymptotic operator} $\mathbf{A}$ on $S^1 \times \mathbb{R}^{2n}$ is
any real linear differential operator
\[
\mathbf{A}:  C^{\infty}(S^1, \mathbb{R}^{2n}) \longrightarrow
C^{\infty}(S^1, \mathbb{R}^{2n}), \quad \eta \longmapsto
- J_0 \partial_t \eta - S(t)\eta,
\]
where $S: S^1 \to \mathrm{End}(\mathbb{R}^{2n})$ is {any}
smooth loop of symmetric matrices. Equivalently, an asymptotic
operator $\mathbf{A}$
is any operator of the form $-J_0 \nabla_t$, where $\nabla$ is {a}
symplectic connection on $(S^1 \times \mathbb{R}^{2n}, \omega_0)$.
For the real $L^2$-bundle metric given by
\[
\langle \eta, \eta' \rangle_{L^2} := \int\limits_{S^1} \omega_0
(\eta(t), J_0 \eta'(t)) dt,
\]
the operator $\mathbf{A}$ is symmetric. If we consider
$\mathbf{A}$ as a bounded
linear operator $H^1(S^1, \mathbb{R}^{2n}) \to L^2(S^1,
\mathbb{R}^{2n})$, then $\mathbf{A}$ is Fredholm with
index equal to $0$ (see e.g. \cite{wendl2}) 
We say that $\mathbf{A}$ is \emph{non-degenerate}, if
its spectrum $\sigma(\mathbf{A})$ does not contain 0. In this case
the operator $\mathbf{A}: H^1(S^1, \mathbb{R}^{2n}) \to L^2(S^1,
\mathbb{R}^{2n})$ induces an isomorphism.

We recall the standard $\overline{\partial}$ operator for smooth
functions on $(\dot{\Sigma},j)$,
\[
\overline{\partial}: C^{\infty}(\dot{\Sigma}, \mathbb{C})
\longrightarrow \Omega^{0,1}(\dot{\Sigma}, \mathbb{C}), \qquad
f \longmapsto df + i \circ df \circ j.
\]

\begin{definition}
Let $(E, \omega, J)$ be a smooth Hermitian vector bundle over $(\dot{
\Sigma},j)$. A real linear first order differential operator
$\mathbf{D}: \Gamma(E) \to \Gamma(\mathrm{Hom}^{0,1}(T\dot{\Sigma}, E))$
is called of \emph{Cauchy-Riemann type}, if it satisfies
\[
\mathbf{D}(f\eta) = \overline{\partial}(f) \eta + f \mathbf{D}\eta
\]
for every $\eta \in \Gamma(E)$ and $f \in C^{\infty}(\dot{\Sigma},
\mathbb{C})$. For a puncture $z \in \Gamma$ we say that $\mathbf{D}$ is
asymptotic to an asymptotic operator $\mathbf{A}_z$ at $z$, if in
an admissible trivialization near $z$ the operator $\mathbf{D}$
takes the form
\[
\mathbf{D}\eta(s,t){\tfrac{\partial}{\partial s}}
= \partial_s \eta(s,t) + J_0 \partial_t \eta(s,t) + S(s, t) \eta(s,t),
\]
where $S(s,t)$ is a smooth family in $\mathrm{End}_{\mathbb{R}}
(\mathbb{C}^n)$ such that $S(s,t)$ converges uniformly as $s \to
\pm \infty$ to a smooth loop of symmetric matrices $S_z(t)$, where
$-J_0 \partial_t - S_z(t)$ is the expression of $\mathbf{A}_z$
in the trivialization.
\end{definition}

Let $k \geq 1$ and $p > 2$. For a smooth Hermitian vector bundle
$(E, \omega, J) \to (\dot{\Sigma},j)$ with fixed admissible trivialization
at the cylindrical ends we consider the topological vector space
$W^{k,p}_{\mathrm{loc}}(E)$ of $W^{k,p}_{\mathrm{loc}}$-sections
of $E$. We define the Banach space
\[
W^{k,p}(E) := \{ \eta \in W^{k,p}_{\mathrm{loc}}(E)\;|\; \eta_z \in
W^{k,p}(\mathrm{int}(Z_{\pm}), \mathbb{C}^n) \;\, \forall\, z \in \Gamma^{
\pm} \},
\]
where $\eta_z$ is $\eta|_{\dot{\mathcal{U}}_z}$ in coordinates induced
by the admissible trivialization and the area form $dvol$ is
used to define the $W^{k,p}$-norm of $\eta_z$. By choosing a compact
set $C$ that contains $\dot{\Sigma} \backslash \bigcup \dot{\mathcal{U}}_z$
we obtain a Banach space norm by summing the corresponding $W^{k,p}$-norms
over $C$ and the cylindrical ends. {Norms} that arise in this way are
equivalent. Now for a smooth totally real subbundle
$\Lambda \subset E|_{\partial\Sigma}$ we consider the Banach subspace
\[
W^{k,p}_{\Lambda}(E) := \{ \eta \in W^{k,p}(E) \;|\; \eta(\partial
\Sigma) \subset \Lambda \}.
\]
We recall the following statement on Cauchy-Riemann type operators, see
\cite{schwarz-thesis, wendl}.

\begin{theorem}\label{thm:cauchy-riemann_op_fredholm}
Let $\mathbf{A}_z$ be asymptotic operators for each $z \in \Gamma$ and let
\[
\mathbf{D}: W^{k,p}_{\Lambda}(E) \longrightarrow W^{k-1,p}(
\mathrm{Hom}^{0,1}(T\dot{\Sigma},E))
\]
be a Cauchy-Riemann type operator asymptotic to $\mathbf{A}_z$ for each $z$.
Then $\mathbf{D}$ is a Fredholm operator if all $\mathbf{A}_z$ are
non-degenerate. Moreover, $\mathrm{ind}(\mathbf{D})$ and $\ker(\mathbf{D})$
are independent of $k$ and $p$.
\end{theorem}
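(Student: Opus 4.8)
The final statement to prove is Theorem~\ref{thm:cauchy-riemann_op_fredholm}, the Fredholm property and index/kernel independence for Cauchy-Riemann type operators asymptotic to non-degenerate asymptotic operators, with totally real boundary conditions on a punctured surface.

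\medskip

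\noindent\textbf{Plan of proof.} The statement is, as the excerpt says, essentially standard, so the plan is to reduce it to known local models and patching. First I would observe that Fredholmness is a \emph{local} property in the following sense: one can write $\mathbf{D}$ as a compact perturbation of an operator that splits as a direct sum over the cylindrical ends $\dot{\mathcal{U}}_z$ and a compact core piece. Concretely, choose a smooth cutoff partition subordinate to the decomposition $\dot\Sigma = C \cup \bigcup_z \dot{\mathcal U}_z$, where $C$ is compact with totally real boundary $\Lambda$, and on each $\dot{\mathcal U}_z$ the operator, in the admissible trivialization, takes the model cylindrical form $\partial_s + J_0\partial_t + S(s,t)$ with $S(s,t)\to S_z(t)$ uniformly. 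The key analytic input is the well-known fact (see \cite{schwarz-thesis,wendl,wendl2}) that on the half-cylinder $Z_\pm$ the model operator $\partial_s + J_0\partial_t + S_z(t)$, acting $W^{k,p}(Z_\pm,\C^n)\to W^{k-1,p}(Z_\pm,\C^n)$, is \emph{semi-Fredholm} precisely when the asymptotic operator $\mathbf A_z = -J_0\partial_t - S_z(t)$ is non-degenerate, i.e.\ $0\notin\sigma(\mathbf A_z)$; non-degeneracy guarantees exponential decay estimates that make the model invertible up to a finite-dimensional defect. The compact core piece, being a Cauchy-Riemann type operator on a compact surface with totally real boundary, is Fredholm by the classical Riemann-Roch theory for such operators. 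A standard gluing/patching argument (cutting off solutions, using a priori elliptic estimates of the form $\|\eta\|_{W^{k,p}} \le c(\|\mathbf D\eta\|_{W^{k-1,p}} + \|\eta\|_{L^p(\text{cpt})})$ together with the Rellich compact embedding on the core) then assembles these into the statement that $\mathbf D$ has finite-dimensional kernel and closed range of finite codimension, hence is Fredholm.

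\medskip

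For the independence of $\ker(\mathbf D)$ (and hence of the index, once surjectivity of the formal adjoint onto the relevant annihilator is established) on $k$ and $p$, the argument is elliptic regularity combined with the exponential decay at the punctures. Since $\mathbf D$ is a first-order elliptic operator with smooth coefficients, any $\eta\in W^{k,p}_\Lambda(E)$ with $\mathbf D\eta = 0$ is automatically smooth in the interior and up to the totally real boundary (here one uses the standard Schwarz-type reflection / doubling trick across the totally real boundary, exactly as invoked in Lemma~\ref{le:goodintersectionJg} of the main text). At the punctures, non-degeneracy of $\mathbf A_z$ forces any kernel element to decay exponentially — this is the asymptotic analysis of Hofer-Wysocki-Zehnder type, or equivalently a spectral-gap argument for the translation-invariant model — so $\eta$ lies in $W^{k',p'}$ for every $k'\ge 1$, $p'>2$, and the kernel is the same Fréchet space regardless of the chosen exponents. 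The same reasoning applied to the cokernel (identified via the $L^2$-pairings $\langle\cdot,\cdot\rangle_{L^2(E)}$, $\langle\cdot,\cdot\rangle_{L^2(F)}$ introduced above with a formal adjoint Cauchy-Riemann type operator, again asymptotic to a non-degenerate operator) shows the index is likewise independent of $(k,p)$.

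\medskip

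\noindent\textbf{Main obstacle.} The routine part is the patching and the compact-core Riemann-Roch; the genuinely delicate step is the analysis on the cylindrical ends: establishing that non-degeneracy of $\mathbf A_z$ is exactly what makes the model half-cylinder operator semi-Fredholm, with the correct exponential weights, and that this survives the passage from the translation-invariant model $S_z(t)$ to the genuine coefficient $S(s,t)$ with $S(s,t)\to S_z(t)$. This requires the standard but somewhat technical machinery of exponential decay estimates and the identification of the Fredholm index contribution of each end with $\pm\mu^\tau_{\mathrm{CZ}}$ of $\mathbf A_z$ (Conley-Zehnder index), which is where the topological terms in \eqref{eq:dimension_punctured_surfaces} ultimately originate. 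I would handle this by citing the detailed treatments in \cite{schwarz-thesis,wendl,wendl2,dragnev,howize2}, since reproducing the full argument would only duplicate the literature; the only new bookkeeping is the presence of the totally real boundary $\Lambda$, which enters solely through the compact core and is handled by the classical boundary Riemann-Roch theory together with the doubling trick, introducing no new asymptotic phenomena.
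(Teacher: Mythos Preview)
Your proposal is a correct and reasonable outline of the standard proof of this Fredholm result. Note, however, that the paper itself does not prove this theorem: it is stated there as a known fact with the citation ``see \cite{schwarz-thesis, wendl}'' and invoked as a black box. Your sketch is essentially what one finds in those references (patching over a compact core with totally real boundary plus model operators on the cylindrical ends, with non-degeneracy of $\mathbf A_z$ giving the semi-Fredholm estimates on the ends, and elliptic regularity plus exponential decay for the $(k,p)$-independence), so there is nothing to compare against in the paper.
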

In order to compute the index of the Fredholm operator $\mathbf{D}$
we briefly recall certain topological invariants.

Let $(E, J) \to (\Sigma, j)$ be a complex vector bundle over a compact
Riemann surface $\Sigma$ with boundary $\partial \Sigma$. Let $\tau
: E|_{\partial \Sigma} \to \partial \Sigma \times \mathbb{C}^n$ be a
trivialization over the boundary. We define the \emph{relative first
Chern class} with respect to the trivialization $\tau$ as follows.
If $(E,J)$ is a line bundle, then $c_1^{\tau}(E)$ counts the zeros
(with signs) of a generic smooth section that is non-zero and constant
over $\partial \Sigma$ with respect to $\tau$. For two complex vector
bundles $(E_1, J_1)$ and $(E_2, J_2)$ with trivializations $\tau_1$ and
$\tau_2$ over $\partial \Sigma$ we set $c_1^{\tau_1 \oplus \tau_2}(E_1
\oplus E_2) = c_1^{\tau_1}(E_1) + c_1^{\tau_2}(E_2)$. Since every
complex vector bundle over a Riemann surface splits into a sum of line bundles this uniquely
determines $c_1^{\tau}(E)$ for all complex vector bundles.
The definition then extends to complex vector bundles over punctured Riemann surfaces
where $\tau$ equals a chosen admissible trivialization on the cylindrical
ends. For a totally real subbundle $\Lambda
\subset E|_{\partial \Sigma}$ the trivialization $\tau$ also defines a
\emph{Maslov index} $\mu^{\tau}(E, \Lambda) \in \mathbb{Z}$, see
\cite{mcsa}.

Now let $\mathbf{A}_z$ be an asymptotic operator for $z \in \Gamma$.
In the admissible trivialization we have $\mathbf{A}_z = - J_0
\partial_t - S(t)$. Let $\Psi(t) \in \mathrm{Sp}(n)$ be a smooth loop
of symplectic matrices satisfying the differential equation $\dot{\Psi}
(t) = J_0 S(t) \Psi(t)$. If $\mathbf{A}_z$ is non-degenerate, then
$\Psi(1)$ does not have 1 as an eigenvalue and we can consider its
Conley-Zehnder index $\mu_{\mathrm{CZ}}^{\tau}(\Psi(t))$, see
\cite{salamon}. We then set the Conley-Zehnder index of
$\mathbf{A}_z$ to be $\mu_{\mathrm{CZ}}^{\tau}(\mathbf{A}_z) := \mu_{
\mathrm{CZ}}^{\tau}(\Psi(t))$.

We recall the following statement from \cite{wendl}.

\begin{theorem}\label{thm:cauchy-riemann_op_index}
Let $\mathbf{A}_z$ be non-degenerate asymptotic operators for each
$z \in \Gamma$ and let
\[
\mathbf{D}: W^{k,p}_{\Lambda}(E) \longrightarrow W^{k-1,p}(
\mathrm{Hom}^{0,1}(T\dot{\Sigma},E))
\]
be a Cauchy-Riemann type operator asymptotic to $\mathbf{A}_z$ for each $z$.
Then the Fredholm index of $\mathbf{D}$ is
\begin{equation}\label{eq:index-general-D}
\mathrm{ind}(\mathbf{D}) = n \chi(\dot{\Sigma}) + 2 c_1^{\tau}(E) +
\mu^{\tau}(E, \Lambda) + \sum\limits_{z \in \Gamma^+}
\mu_{\mathrm{CZ}}^{\tau}(\mathbf{A}_z) - \sum\limits_{z \in \Gamma^-}
\mu_{\mathrm{CZ}}^{\tau}(\mathbf{A}_z),
\end{equation}
where $n = \mathrm{rank}_{\mathbb{C}}(E)$.
\end{theorem}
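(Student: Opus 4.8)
The plan is to reduce Theorem~\ref{thm:cauchy-riemann_op_index} to two by-now classical inputs: the Riemann--Roch index formula for Cauchy--Riemann type operators on \emph{compact} bordered Riemann surfaces with totally real boundary conditions (as in \cite{mcsa}), and the linear gluing theorem for Fredholm Cauchy--Riemann type operators along cylindrical ends that are asymptotic to a common \emph{non-degenerate} asymptotic operator. Fredholmness of $\mathbf D$ is already provided by Theorem~\ref{thm:cauchy-riemann_op_fredholm}, so only the index computation remains.

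First I would observe that, with the asymptotic operators $\mathbf A_z$ held fixed, the set of Cauchy--Riemann type operators on $(E,\omega,J)$ asymptotic to the $\mathbf A_z$ is affine, hence connected, and the Fredholm index is locally constant on it; so it suffices to compute $\mathrm{ind}(\mathbf D)$ for one convenient representative. Fixing admissible trivializations over the cylindrical ends and an extension of them over all of $\dot\Sigma$ (possible since $\dot\Sigma$ is homotopy equivalent to a graph), one deforms $\mathbf D$ to an operator which near each puncture equals the model $\partial_s + J_0\partial_t + S_z(t)$ and which globally respects a splitting of $E$ into complex line bundles. Both sides of \eqref{eq:index-general-D} are additive under such direct sums once the trivializations and asymptotic data are chosen compatibly, so the problem reduces to the line bundle case $n=1$.

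Next I would carry out the capping construction. For each puncture $z\in\Gamma^\pm$ I pass to the oriented blow-up, glue a disk $D_z$ along the resulting boundary circle, and extend $E$ and $\mathbf D$ over $D_z$ by a model operator $\mathbf D_z^{\mathrm{cap}}$ whose restriction to the collar is asymptotic, from the opposite side, to $\mathbf A_z$; this is possible because the line bundle over $D_z$ is classified by a single integer, fixed by prescribing the winding of a nonvanishing section relative to $\tau$. Since each $\mathbf A_z$ is non-degenerate, $\ker\mathbf A_z=0$, and the linear gluing theorem yields $\mathrm{ind}(\hat{\mathbf D}) = \mathrm{ind}(\mathbf D) + \sum_{z\in\Gamma}\mathrm{ind}(\mathbf D_z^{\mathrm{cap}})$, where $\hat{\mathbf D}$ is a Cauchy--Riemann operator on the compact bordered surface $\hat\Sigma := \dot\Sigma\cup_z D_z$ with the unchanged totally real boundary condition $\Lambda$ and a trivialization $\hat\tau$ restricting to $\tau$ on $\partial\Sigma$. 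Applying the bordered Riemann--Roch formula to $\hat{\mathbf D}$ gives $\mathrm{ind}(\hat{\mathbf D}) = n\chi(\hat\Sigma) + 2c_1^{\hat\tau}(\hat E) + \mu^\tau(\hat E,\Lambda)$, and it remains to bookkeep the changes: $\chi(\hat\Sigma) = \chi(\dot\Sigma) + \#\Gamma$ (each disk cap raises the Euler characteristic by $1$), $c_1^{\hat\tau}(\hat E) = c_1^{\tau}(E) + \sum_z w_z$ with $w_z$ the winding built into the cap, and a direct computation of $\mathrm{ind}(\mathbf D_z^{\mathrm{cap}})$ on the disk. The definition of $\mu_{\mathrm{CZ}}^\tau(\mathbf A_z)$ is precisely calibrated so that these contributions reassemble into $+\mu_{\mathrm{CZ}}^\tau(\mathbf A_z)$ for $z\in\Gamma^+$ and $-\mu_{\mathrm{CZ}}^\tau(\mathbf A_z)$ for $z\in\Gamma^-$, producing \eqref{eq:index-general-D}.

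The main obstacle is not conceptual but the careful matching of sign and trivialization conventions: the Conley--Zehnder index, the relative first Chern class, and the Maslov index each depend on the admissible trivialization, while the total expression does not, so one must check that every change-of-trivialization term cancels, and one must pin down $\mathrm{ind}(\mathbf D_z^{\mathrm{cap}})$ with the correct orientation and winding. For these routine but delicate verifications we follow the conventions and computations of \cite{wendl,schwarz-thesis,mcsa}, to which we refer for the remaining details.
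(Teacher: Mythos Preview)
The paper does not actually prove this theorem: it is stated as a result recalled from \cite{wendl} (and implicitly \cite{schwarz-thesis}), with no argument given beyond the citation. Your proposal therefore cannot be compared against a proof in the paper, but it does sketch precisely the standard argument that one finds in those references: deform to a convenient operator, split into line bundles, cap off the punctures with model disk operators, apply linear gluing together with the bordered Riemann--Roch formula on the resulting compact surface, and then bookkeep Euler characteristics, relative Chern numbers, and cap indices so that the Conley--Zehnder terms fall out. This is correct in outline and is the approach of the cited literature; the only caveat is that, as you yourself note, the sign and trivialization conventions are where all the content lies, and your sketch ultimately defers those to the same sources the paper cites, so the proposal is more a roadmap to the existing proof than an independent one.
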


Note that the dependence of equation~\eqref{eq:index-general-D}
on the choice of trivialization $\tau$ cancels out.
In subsequent sections we will consider Banach spaces of sections
of $E$ with exponential weights at the punctures. Let $(E, \omega,
J) \to (\dot{\Sigma}, j)$ be a Hermitian vector bundle over a
punctured Riemann surface equipped with an admissible trivialisation.
For $k \in \mathbb{N}, p > 2$ and $\delta \in \mathbb{R}$ we define the
Banach space
\[
W^{k,p,\delta}_{\Lambda}(E) \subset W^{k,p}_{\mathrm{loc}}(E)
\]
to be the space of sections $\eta \in W^{k,p}_{\mathrm{loc}}(E)$
whose restriction to the neighborhood of a puncture $z \in \Gamma^{\pm}$,
$\eta_{z}: Z_{\pm} \to \C^n$, satisfies
\[
\| e^{\pm \delta s}\eta_{z} \|_{W^{k,p}(Z_{\pm})} < \infty,
\]
and $\eta(\partial \Sigma) \subset \Lambda$.
Thus for $\delta = 0$ we recover the Banach space $W^{k,p}_{\Lambda}(E)$
from before and for $\delta > 0$ the sections in $W^{k,p,\delta}_{\Lambda}
(E)$ are guaranteed to have exponential decay at infinity. Note that
for $k = 0$, the dual space $(L^{p, \delta}(E))^*$ is isomorphic to
$L^{q, -\delta}(E)$ for $1/p + 1/q = 1$ via the $L^2$-inner product.


\subsection{Functional analytic setup}\label{app:func_an_setup}

We describe the Banach spaces used in the proof of
Theorems~\ref{thm:transversality_disks} and~\ref{thm:homotopy_transversality_disks}.
Let $k \in \N$, $p > 2$ and $\delta > 0$ and let $\mathcal{O}_{\Gamma} =
\{\gamma_z \}_{z \in \Gamma}$ be a collection of Reeb orbits of $M_{\pm}$,
one for every puncture $z \in \Gamma = \Gamma^{+} \cup \Gamma^{-}$.
We define the space \label{p:bkpd}
\[
\mathcal{B}^{k, p, \delta} := \mathcal{B}^{k, p, \delta}(\dot{\Sigma},
\partial \Sigma; X, L; \mathcal{O}_{\Gamma})
\]
to consist of maps $u: \dot{\Sigma} \to X$ of class $W^{k,p}_{\mathrm{loc}}$
which satisfy $u(\partial \Sigma) \subset L$ and have asymptotically
cylindrical behavior approaching $\gamma_z$ at the puncture $z \in
\Gamma^{\pm}$. Basically, one can view $u$ as a decreasing perturbation
of $y(s,t):=(Ts,\gamma_z(Tt))$ up to a shift in the cylindrical ends. 
To be precise, this means that in cylindrical coordinates $(s,t) \in
Z_{\pm}$ near $z$, there exist constants $s_0$ and $t_0$ such that for
sufficiently large $|s|$ we have
\[
u(s + s_0, t+ t_0) = \exp_{y(s,t)}h(s,t),
\]
where $h \in W^{k,p,\delta}(y^*T H_{\pm})$ and the exponential map is
defined with respect to any $\R$-invariant metric on $H_{\pm} :=
\R\times M_{\pm}$. The condition $kp > 2$ implies via the
Sobolev embedding theorem that $\mathcal{B}^{k, p, \delta}
\hookrightarrow C^0(\dot{\Sigma}, X)$.
Even though $\dot{\Sigma}$ is non-compact, we can still give the space
$\mathcal{B}^{k, p,\delta}$ the structure of a smooth, separable and metrizable
Banach manifold by generalizing the results of \cite{eliasson}.
\[
T_u\mathcal{B}^{k, p, \delta} = W^{k,p, \delta}_{\Lambda}(u^*TX) \oplus V,
\]
where the summands are defined as follows. $\Lambda$ is the Lagrangian
subbundle
\[
\Lambda := (u|_{\partial \Sigma})^*TL \longrightarrow \partial \Sigma,
\]
so that sections $v \in W^{k,p, \delta}_{\Lambda}(u^*TW)$ are required to
decay exponentially near the puncture and satisfy $v(\partial \Sigma)
\subset \Lambda$. $V$ is a $2(\#\Gamma)$-dimensional real vector space with basis
given by non-canonical choices of two sections $\dot{\Sigma} \to u^*TW$
supported in $\mathcal{U}_z$ for every puncture $z \in \Gamma$ and
asymptotic to the vector fields $\partial_r, R_{\alpha_{\pm}}$ of
$T(\R \times M_{\pm})$. In particular, $V$ contains vector fields that
are asymptotically parallel to orbit cylinders $y(s, t) = (Ts, \gamma_z(Tt))$
in the cylindrical ends.

Let $J \in \mathcal{J}^{\infty}_\mathrm{cyl}$. Note that by \cite[Theorem 1.5]{howize},
for a finite family of non-degenerate
orbits $\mathcal{O}_{\Gamma}$ there exists an open set $I \subset \R$
containing 0, such that for all $\delta \in I$ maps satisfying $du +
J\circ du \circ i = 0$ and the asymptotic condition at $\mathcal{O}_{\Gamma}$
automatically satisfy the exponential decay estimate near each puncture.
Hence for $\delta$ sufficiently small the space $\mathcal{B}^{k, p, \delta}$
contains all pseudoholomorphic maps asymptotic to a chosen family of Reeb
orbits.

For $l \geq k$ consider now the space $\mathcal{J}^l$
of almost complex
structures of class $C^l$ on $X$ that are cylindrical at infinity. Since
$\mathcal{J}^l$ consists of almost complex structures that are translation
invariant outside of a compact set, we can give $\mathcal{J}^l$ the
structure of a smooth separable Banach manifold by using the translation
invariant metric. For $J \in \mathcal{J}^l$ the tangent space
$T_J\mathcal{J}^l$ consists of compactly supported $C^l$-sections $Y$ of the smooth bundle
$\mathrm{End}(TX, J, \omega) \to X$ that satisfy
\[
YJ + JY = 0, \qquad \omega(Yv, w) + \omega(v, Yw) = 0.
\]
The first equation is derived from the condition $J^2 = -\mathrm{Id}$ and
the second equation comes from the compatibility of $J$ with $\omega$. The
space of such sections is a Banach space and provides a local chart
containing $J$ via the mapping $Y \mapsto J \exp (-JY)$.

We now consider the bundle $\mathcal{E}^{k-1, p, \delta} \to \mathcal{B
}^{k, p,\delta} \times \mathcal{J}^l$, whose fiber over $(u, J)$ is given by
\[
\mathcal{E}^{k-1, p, \delta}_{(u,J)} := W^{k-1, p, \delta}\left(\mathrm{Hom}^{0,1}
\left((T\dot{\Sigma}, j), (u^*TX, J) \right)\right),
\]
the space of complex-antilinear bundle maps. One can show that $\mathcal{E}^{
k-1, p, \delta}$ has the structure of a $C^{l-k}$ Banach space bundle, see
the proof of Proposition 3.2.1 in \cite{mcsa}. The map $\mathcal{F}: \mathcal{B}^{k, p,\delta}\times
\mathcal{J}^l \to \mathcal{E}^{k-1, p, \delta}$ given by
\begin{equation}\label{eq:def-section}
\mathcal{F}(u,J) := du + J\circ du \circ j
\end{equation}
then defines a $C^{l-k}$-section of the bundle,
because $\mathcal{F}(u,J) \in \mathcal{E}^{k-1, p, \delta}$
satisfies the exponential weighting condition when
$J$ is translation invariant on the ends. The zeros of this section is given by
the union of the moduli spaces $\mathcal{M}(\mathcal{O}_{\Gamma}, J)$ for
$J \in \mathcal{J}^l$. For $(u, J)$ such that $\mathcal{F}(u,J) = 0$
the vertical differential $D\mathcal{F}(u, J): T_u \mathcal{B}^{k, p,
\delta}\times T_J\mathcal{J}^l \to \mathcal{E}^{k-1, p, \delta}_{(u, J)}$
of the section $\mathcal{F}$ is given by  \cite[Section 3.2]{mcsa}:
\[
D\mathcal{F}(u, J)(\xi, Y) := \nabla\xi + J\circ \nabla \xi \circ j +
(\nabla_{\xi} J) \circ du \circ j + Y(u) \circ du \circ j,
\]
where $\nabla$ is any symmetric connection on $X$. For fixed $J \in \mathcal{J}^l$
we also consider the restriction of $\mathcal{F}$ to $\mathcal{B}^{k, p, \delta}$
and the associated vertical differential $\mathbf{D}_u: T_u \mathcal{B}^{k, p, \delta} \to
\mathcal{E}^{k-1, p, \delta}_{(u, J)}$ at the zero section,
\[
\mathbf{D}_u(\xi) =  \nabla\xi + J\circ \nabla \xi \circ j + (\nabla_{\xi} J)
\circ du \circ j.
\]
The smooth manifold structure of $\mathcal{M}^*(\mathcal{O}_{\Gamma}, J)$
depends on the properties of the map $\mathbf{D}_u$ as a Fredholm operator.
We continue with the analysis of this operator. Let $\tau$ be an
admissible trivialization of $u^*TX$ on the cylindrical ends and boundary
of $\dot{\Sigma}$.

\begin{prop}\label{prop:Du_fredholm}
For $\delta > 0$ sufficiently small the operator $\mathbf{D}_u:
T_u \mathcal{B}^{k, p,\delta} \to \mathcal{E}^{k-1, p, \delta}_{(u, J)}$
is Fredholm and has index
\begin{equation}\label{eq:index-Du}
\mathrm{ind}(\mathbf{D}_u) = n\chi(\dot{\Sigma}) + 2c_1^{\tau}
(u^*TX) + \mu^{\tau}(u^*TX, \Lambda) + \sum\limits_{i = 1}^m
\mu_{\mathrm{CZ}}^{\tau}(\gamma_i^+) - \sum\limits_{j = 1}^n
\mu_{\mathrm{CZ}}^{\tau}(\gamma_j^-) + \# \Gamma.
\end{equation}
\end{prop}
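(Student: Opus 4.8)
The plan is to reduce Proposition~\ref{prop:Du_fredholm} to the abstract Fredholm statement for Cauchy-Riemann type operators on Hermitian bundles over punctured surfaces, namely Theorems~\ref{thm:cauchy-riemann_op_fredholm} and~\ref{thm:cauchy-riemann_op_index}. The operator $\mathbf{D}_u$ is already known to be of Cauchy-Riemann type: the leading term $\nabla\xi + J\circ\nabla\xi\circ j$ satisfies the Leibniz rule $\mathbf{D}_u(f\xi)=\bar\partial(f)\xi + f\mathbf{D}_u\xi$, and the zeroth-order term $(\nabla_\xi J)\circ du\circ j$ is a bundle map, so the Leibniz identity is unaffected. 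Thus the first task is purely bookkeeping: identify the relevant bundle as $E := u^*TX$ with its Hermitian structure $(\omega, J)$, the totally real boundary subbundle as $\Lambda := (u|_{\partial\Sigma})^*TL$, and read off the asymptotic operators at each puncture.

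First I would verify that $\mathbf{D}_u$, when restricted to the exponentially weighted space $W^{k,p,\delta}_\Lambda(u^*TX)$ (the first summand of $T_u\mathcal{B}^{k,p,\delta}$), is asymptotic at each puncture $z\in\Gamma^\pm$ to the asymptotic operator $\mathbf{A}_z$ associated to the corresponding Reeb orbit $\gamma_z$. This is where the non-degeneracy hypothesis on the Reeb orbits enters: since each $\gamma_z$ is a non-degenerate Reeb orbit of the contact form $\alpha_\pm$, the linearization of the Reeb flow gives a loop of symplectic matrices whose time-$1$ map has no eigenvalue $1$, so $\mathbf{A}_z$ is non-degenerate in the sense defined above, and for $\delta>0$ smaller than the smallest absolute value of the spectrum of the $\mathbf{A}_z$'s the weighted operator on $W^{k,p,\delta}_\Lambda$ is Fredholm by Theorem~\ref{thm:cauchy-riemann_op_fredholm}. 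Its index is then given by formula~\eqref{eq:index-general-D}, i.e. $n\chi(\dot\Sigma) + 2c_1^\tau(u^*TX) + \mu^\tau(u^*TX,\Lambda) + \sum_{z\in\Gamma^+}\mu_{\mathrm{CZ}}^\tau(\mathbf{A}_z) - \sum_{z\in\Gamma^-}\mu_{\mathrm{CZ}}^\tau(\mathbf{A}_z)$, which is the claimed right-hand side of~\eqref{eq:index-Du} \emph{without} the final $\#\Gamma$ term.

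The remaining step — and the one point requiring genuine care — is to account for the extra finite-dimensional summand $V$ in $T_u\mathcal{B}^{k,p,\delta} = W^{k,p,\delta}_\Lambda(u^*TX) \oplus V$. The space $V$ has real dimension $2\#\Gamma$, spanned near each puncture by sections asymptotic to $\partial_r$ and $R_{\alpha_\pm}$, which account for the freedom in the shift parameters $s_0$ and $t_0$ in the asymptotic representation $u(s+s_0,t+t_0)=\exp_{y(s,t)}h(s,t)$. One checks that $\mathbf{D}_u$ maps $V$ into the target $\mathcal{E}^{k-1,p,\delta}_{(u,J)}$ (its image is a correction lying in the weighted space because these model vector fields are covariantly constant along the trivial cylinder up to exponentially small error), and that enlarging the domain by $V$ increases the Fredholm index by exactly $\dim V = 2\#\Gamma$; however, at each puncture only the component of $V$ transverse to the kernel of the asymptotic operator contributes, and the standard spectral-flow bookkeeping (the relation between weighted and unweighted Conley-Zehnder indices, $\mu_{\mathrm{CZ}}$ for the weight-$\delta$ operator versus $\mu_{\mathrm{CZ}}$ for weight $0$, as in Wendl's exposition \cite{wendl2}) shows that the net effect is to add $+1$ per puncture, i.e. $+\#\Gamma$, rather than $+2\#\Gamma$. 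This is the main obstacle: keeping straight which shift directions are "already counted" by the weighted index and which genuinely enlarge the Fredholm problem. Once this accounting is done correctly, one arrives at index $= n\chi(\dot\Sigma) + 2c_1^\tau(u^*TX) + \mu^\tau(u^*TX,\Lambda) + \sum\mu_{\mathrm{CZ}}^\tau(\gamma_i^+) - \sum\mu_{\mathrm{CZ}}^\tau(\gamma_j^-) + \#\Gamma$, which is precisely~\eqref{eq:index-Du}. Finally, independence of the index from $k$, $p$ and the choice of small $\delta$ follows from the corresponding statements in Theorem~\ref{thm:cauchy-riemann_op_fredholm} together with elliptic regularity.
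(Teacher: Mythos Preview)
Your overall strategy (reduce to Theorems~\ref{thm:cauchy-riemann_op_fredholm} and~\ref{thm:cauchy-riemann_op_index}, then correct by $\dim V$) is the right one, and it is exactly what the paper does. But there is a genuine error in your execution: the asymptotic operator $\mathbf{A}_z$ on the full bundle $u^*TX$ is \emph{not} non-degenerate, even though the Reeb orbit $\gamma_z$ is. Near each puncture one has a splitting $T(\R\times M_\pm)\simeq \beta\oplus\xi_\pm$ where $\beta=\langle\partial_r,R_{\alpha_\pm}\rangle$ is the trivial complex line bundle. The non-degeneracy of $\gamma_z$ only controls the $\xi_\pm$-block of the asymptotic matrix $S_z(t)$; on the $\beta$-block one computes $S_z(t)|_\beta=0_{2\times 2}$, so $\mathbf{A}_z = -J_0\partial_t - (0_{2\times 2}\oplus S_{\gamma_z}(t))$ has $0$ in its spectrum with a two-dimensional kernel (the constant sections along $\partial_r$ and $R$). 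In particular your sentence ``for $\delta>0$ smaller than the smallest absolute value of the spectrum of the $\mathbf{A}_z$'s'' is empty, and Theorem~\ref{thm:cauchy-riemann_op_fredholm} does not apply directly to give the index you wrote down.

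This is precisely why the exponential weight is needed and why the bookkeeping works differently from what you describe. The paper conjugates $\mathbf{D}_u|_{W^{k,p,\delta}_\Lambda}$ by the multiplication isomorphisms $\Phi_\delta,\Psi_\delta$ to an operator $\mathbf{D}_u'$ on the \emph{unweighted} space $W^{k,p}_\Lambda$; the effect on the asymptotics is $\mathbf{A}_z\mapsto\mathbf{A}_z'=\mathbf{A}_z\pm\delta\,\mathrm{Id}$, which for small $\delta>0$ is genuinely non-degenerate. On the $\beta$-line the perturbed CZ index is $\mp 1$, so $\mu_{\mathrm{CZ}}^\tau(\mathbf{A}_z')=\mp 1+\mu_{\mathrm{CZ}}^\tau(\gamma_z)$, and formula~\eqref{eq:index-general-D} yields $\mathrm{ind}(\mathbf{D}_u')=\text{RHS of }\eqref{eq:index-Du}-2\#\Gamma$. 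Enlarging the domain by the finite-dimensional space $V$ then adds the full $\dim V=2\#\Gamma$ to the index (there is no ``only the transverse component contributes'' subtlety here: adding any finite-dimensional summand to the domain of a Fredholm operator raises the index by its dimension). The $-\#\Gamma$ and the $+\#\Gamma$ you were trying to balance separately are in fact a $-2\#\Gamma$ from the shifted CZ indices and a $+2\#\Gamma$ from $V$.
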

\noindent{\it Proof:}
Since $T_u \mathcal{B}^{k, p, \delta} = W_{\Lambda}^{k,p, \delta}(u^*TX) \oplus
V$ and $V$ is finite dimensional, it suffices to prove that $\mathbf{D}_u$
restricted to $W_{\Lambda}^{k,p, \delta}(u^*TX)$ is Fredholm. By abuse of
notation let $\mathbf{D}_u$ be the restricted operator. We proceed by
showing that $\mathbf{D}_u$ is conjugate to a Cauchy-Riemann type operator
with non-degenerate asymptotic operators at the punctures.

Let $z \in \Gamma^{\pm}$ be a puncture and suppose 
$\mathbf{D}_u(\cdot)\tfrac{\partial}{\partial s}$ 
takes the form $\overline{\partial} + S(s,t)$ in the coordinates of the admissible
trivialization on the cylindrical end of $z$, where $S(s,t)$ converges
uniformly as $s \to \pm\infty$ to a smooth loop of symmetric matrices
$S_{z}(t)$. A computation reveals (see the proof of Theorem 3.6 of \cite{dragnev})
that $S_{z}(t) = 0_{2 \times 2} \oplus S_{\gamma_z}(t)$, where we use
the splitting in the trivialization arising from $T(\mathbb{R} \times
M_{\pm}) \simeq \beta \oplus \xi_{\pm}$. Here $\beta$ is the complex
line bundle generated by $\partial_r$ and the Reeb vectorfield $R_{
\alpha_{\pm}}$ and $\xi_{\pm}$ is the pullback of the contact
distribution on $M_{\pm}$. The matrix $S_{\gamma_z}(t)$ is
derived from the linearization of the Reeb flow at $\gamma_z$
and is symmetric and non-singular since $\gamma_z$ is non-degenerate.
Now since $S_z(t)$ is zero on the complex line bundle $\beta$, this implies
that the associated asymptotic operator $\mathbf{A}_z := -J_0 \partial_t
- S_z(t)$ is degenerate. In particular the operator (for $\delta = 0$)
\[
\mathbf{D}_u: W_{\Lambda}^{k,p}(u^*TX) \longrightarrow
W^{k-1, p}(\mathrm{Hom}^{0,1}(T\dot{\Sigma}, u^*TX))
\]
is not Fredholm. This issue is resolved by considering spaces with
exponential weights.

Let $(E, \omega, J)$ be a Hermitian vector bundle over $\dot{\Sigma}$.
For $\delta \in \R$ pick a smooth function $f: \dot{\Sigma}
\to \mathbb{R}$ such that $f(\pm s, t) = {\mp \delta s}$ on the
cylindrical ends. Then we obtain Banach space isomorphisms
\[
\Phi_{\delta}: W^{k,p}(E) \to W^{k,p, \delta}(E), \;\; \eta \mapsto e^f \eta, \qquad
\Psi_{\delta}: W^{k-1,p, \delta}(E) \to W^{k-1,p}(E), \;\; \theta \mapsto e^{f} \theta.
\]
Returning to our setting we consider the bounded linear map
\[
\mathbf{D}_u' := \Psi_{\delta}^{-1} \mathbf{D}_u \Phi_{\delta}: W^{k,p}_{\Lambda}
(u^*TX) \to W^{k-1, p}(\mathrm{Hom}^{0,1}(T\dot{\Sigma}, u^*TX)).
\]
We see that $\mathbf{D}_u'$ is a linear Cauchy-Riemann type operator on
$\dot{\Sigma}$. Moreover a short calculation reveals that at a puncture
$z \in \Gamma^{\pm}$ the operator $\mathbf{D}_u'$ takes the form
\[
\mathbf{D}_u' \eta = \overline{\partial} \eta + (S(s,t) \mp \delta \,
\mathrm{Id}_{2n\times 2n}) \eta
\]
in the trivialization and is therefore asymptotic to the operator
\[
\mathbf{A}'_z := -J_0 \partial_t - S_z(t) \pm \delta \, \mathrm{Id}_{2n\times 2n}
= \mathbf{A}_z \pm \delta \, \mathrm{Id}_{2n\times 2n}.
\]
The spectrum of $\mathbf{A}_z$ is discrete.
Thus for $\delta > 0$ chosen small enough, we can assume
that $\ker(\mathbf{A}'_z)$ remains trivial for all $z \in \Gamma^{\pm}$.
The Conley-Zehnder index for $z \in \Gamma^{\pm}$ then computes to
(see \cite{wendl2})
\[
\mu^{\tau}_{\mathrm{CZ}}(\mathbf{A}'_z) = \mp 1 + \mu^{\tau}_{\mathrm{CZ}}
(\gamma_z). 
\]
Applying Theorem~\ref{thm:cauchy-riemann_op_fredholm}
and Theorem~\ref{thm:cauchy-riemann_op_index} we see that $\mathbf{D}_u'$
is Fredholm with index equal to the right hand side of
equation~\eqref{eq:index-Du} $- \ 2\# \Gamma$. We then include the
dimension of $V$ to obtain the index given by \eqref{eq:index-Du}.\cqfd

\begin{corollary}\label{cor:index}
Let $(L,g)$ be a Riemanian manifold and $(X,\om) := (T^*L,d\lambda)$,
seen as a symplectic cobordism with one positive end $M \times [0,\infty)$,
where $M=\partial \cw_g$ is the unit sphere bundle associated to the metric
$g$. Let $J_g$ be the almost complex structure defined in section \ref{sec:jpart},
$\gamma$ a closed geodesic of minimal length in its homology class, and
$u_{\gamma,g}$ the unique element of $\cm(J_g,\beta)$ asymptotic to
$\tilde \gamma$ modulo reparametrization. Then
$$
\mathrm{ind} (\mathbf{D}_{u_{\gamma,g}}) = 1.
$$
\end{corollary}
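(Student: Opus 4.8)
The plan is to specialize the index formula of Proposition~\ref{prop:Du_fredholm}, i.e. equation \eqref{eq:index-Du}, to our concrete situation. Here $\dot\Sigma = D\priv\{0\}$ is a disc with one interior puncture, so $\chi(\dot\Sigma) = 0$ and $\#\Gamma = 1$. There is exactly one positive puncture (at $0$, asymptotic to $\tilde\gamma$) and no negative punctures, so the formula reduces to
\[
\mathrm{ind}(\mathbf{D}_{u_{\gamma,g}}) = 2c_1^{\tau}(u_{\gamma,g}^*T(T^*L)) + \mu^{\tau}(u_{\gamma,g}^*T(T^*L),\Lambda) + \mu_{\mathrm{CZ}}^{\tau}(\tilde\gamma) + 1,
\]
for any admissible trivialization $\tau$ along the boundary and the cylindrical end. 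The strategy is to pick the trivialization coming from the vertical Lagrangian distribution of $T^*L$ (the one used throughout \S\ref{sec:defmaslov}), pulled back by $u_{\gamma,g}$, and to show that with respect to this $\tau$ each of the first three terms vanishes, leaving $\mathrm{ind} = 1$.

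First I would treat $c_1^\tau$. Since $T(T^*L)$ restricted to the zero section splits canonically as $TL\oplus T^*L$ and the vertical distribution gives a global Lagrangian distribution $\tau$ on all of $T^*L$, the pullback $u_{\gamma,g}^*T(T^*L)$ carries a complex trivialization built from $\tau$ and $J_g\tau$ over the whole punctured disc (not just near the boundary and puncture), because $u_{\gamma,g}$ has image in the single orbit cylinder over $\tilde\gamma$ and there $J_g$ maps horizontal to vertical. Hence the relative Chern class with respect to this $\tau$ is $0$. Next, $\mu^\tau(u_{\gamma,g}^*T(T^*L),\Lambda)$: along $\partial D$ the map $u_{\gamma,g}$ takes values in $L$, $\Lambda = (u_{\gamma,g}|_{\partial D})^*TL$, and $TL$ is exactly the horizontal Lagrangian, which is the "$J_g$-image" of the vertical $\tau$. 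Relative to $\tau$ the loop of boundary Lagrangians $\Lambda_t$ is therefore constant (equal to $J_g\tau$ pointwise in the trivialization), so its Maslov index is $0$. Finally I would compute $\mu_{\mathrm{CZ}}^\tau(\tilde\gamma)$: in the trivialization coming from $\tau$ the linearized Reeb flow along $\tilde\gamma$ is conjugate to the linearized (co)geodesic flow, whose Conley–Zehnder index relative to the vertical distribution equals the Morse index of the geodesic $\gamma$ as a critical point of the energy functional on the loop space; since $\gamma$ is length-minimizing in its homology class this Morse index is $0$. (Alternatively one may invoke Remark~\ref{rk:goodmetric}: deforming $g$ through metrics keeping $\gamma$ a minimizing geodesic, the index is locally constant, and for the explicitly computed kernel in Proposition~\ref{prop:transversality} — which is $1$-dimensional with a surjective operator — the index is manifestly $1$.) Combining, $\mathrm{ind}(\mathbf{D}_{u_{\gamma,g}}) = 0 + 0 + 0 + 1 = 1$.

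The main obstacle is the correct bookkeeping of the Conley–Zehnder index $\mu_{\mathrm{CZ}}^\tau(\tilde\gamma)$: one must be careful that the trivialization used is precisely the admissible one compatible with the vertical distribution, that the degenerate direction $\beta$ (generated by $\partial_r$ and the Reeb field) contributes the "$+1$" already accounted for by the shift $\mathbf A'_z = \mathbf A_z + \delta\,\mathrm{Id}$ in the proof of Proposition~\ref{prop:Du_fredholm}, and that the remaining contribution from the contact-distribution part $\xi_+$ genuinely equals the geodesic Morse index and hence vanishes for a minimizing $\gamma$. Once the identification of Reeb linearization with Jacobi fields along $\gamma$ is set up carefully, this is standard; the cleanest route is to combine it with Remark~\ref{rk:goodmetric}, which reduces the whole computation to the explicit model metric of Proposition~\ref{prop:transversality} where both the kernel and the surjectivity — and therefore the index — are computed by hand.
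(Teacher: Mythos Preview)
Your proposal is correct and follows essentially the same route as the paper: both specialize the index formula \eqref{eq:index-Du} to $\dot\Sigma=D\priv\{0\}$, choose a trivialization coming from the natural horizontal/vertical Lagrangian splitting along $u_{\gamma,g}$ so that $c_1^\tau$ and $\mu^\tau$ vanish, and then argue that $\mu_{\mathrm{CZ}}^\tau(\tilde\gamma)=0$. The only difference is emphasis: you put the Morse-index identification first and the deformation to the model metric of Proposition~\ref{prop:transversality} as an alternative, whereas the paper does the reverse---it deforms (via Remark~\ref{rk:goodmetric}) to the metric $g_{ij}=(1+k\|x'\|^2)\delta_{ij}$ and computes the linearized geodesic flow explicitly there, mentioning the Morse-index interpretation only later in \S\ref{sec:rigspec}.
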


\noindent{\it Proof:} Recall that  $\im u_{\gamma,g} = \{\,s\tilde \gamma(t) \;|\;
(s,t)\in [0,\infty) \times \R/\ell \Z \,\}$, where $\tilde \gamma(t) =
(\gamma(t),\dot \gamma(t)^\sharp)$ is the natural lift of $\gamma$ to $M$.
Choose an orthonormal basis $(\dot \gamma(t),e_2(t),\dots, e_n(t))$ along
$\gamma$ and lift these vectors to vectors 
$(\dot{\tilde\gamma}(t),E_1(s,t),\dots,E_n(s,t))$, where $E_j(s,t)$ is a
horizontal vector in $T_{u_{\gamma,g}(s,t)} T^*L$. Since $H$ is totally real
for the structure $J_{g}$, we get a complex splitting of $T^*L$ along
$u_{\gamma,g}$ given by
$$
u_{\gamma,g}^*T(T^*L)(s,t) = \langle R,\tfrac\partial{\partial r} \rangle
\oplus \langle E_1(s,t),J_g E_1(s,t) \rangle \oplus \dots \oplus
\langle E_n(s,t), J_g E_n(s,t)\rangle.
$$
Let $\tau$ be the symplectic trivialization of $u_{\gamma,g}^*T(T^*L)$
induced by this decomposition. Then, $c_1^\tau(u_{\gamma,g}^*T(T^*L))$
and $\mu^\tau(u_{\gamma,g}^*T(T^*L),TL)$ obviously vanish. Since
$\chi(D\priv\{0\})=0$, we get
$$
\mathrm{ind}(\mathbf{D}_{u_{\gamma,g}}) = \mu_{\text{CZ}}^\tau
(\tilde \gamma) + 1.
$$
It remains to show that $\mu_\text{CZ}^\tau(\tilde \gamma)=0$. We recall
that this is the Maslov index of the path of symplectic matrices given by
the linearization of the Reeb flow (hence the cogeodesic flow) along $\gamma$.
Notice that any deformation $g_\eps$ of the metric that leaves $\gamma$ a
geodesic of fixed length induces a continuous deformation of
$\mathbf{D}_{u_{\gamma,g}}$ among Fredholm operators defined on the same
Banach space (because $u_{\gamma,g_\eps} = u_{\gamma,g}$ for all $\eps$
under this assumption). These
Fredholm operators thus all have the same index. By proposition \ref{prop:goodmetric},
completed by remark \ref{rk:goodmetric}, we can therefore assume for our
computation of $\mathrm{ind}(\mathbf{D}_{u_{\gamma,g}})$ that $g$ has the
very particular form achieved by proposition \ref{prop:goodmetric}: in
Fermi coordinates near $\gamma$ we have $g_{ij}=(1+k\|x'\|^2)\delta_{ij}$.
Now for this special metric the derivative of the geodesic flow can be
computed explicitly. It preserves the horizontal and the vertical
distributions. In the basis $(\dot \gamma(t),e_2(t),\dots,e_n(t))$
of the horizontal distribution it has the form
$$
\left(\begin{matrix}
1& 0\\
0&k\id
\end{matrix}\right).
$$
The vanishing of the Conley-Zehnder index readily follows.
\cqfd

\subsection{A universal moduli space}
Choose an integer $l \geq 2$ and a real number $p > 2$, let $k \in \{
1, \ldots, l \}$. We define the universal moduli space
\begin{equation*}
\mathcal{M}^*(\mathcal{O}_{\Gamma}, \mathcal{J}^l) := \left\{ (u, J)
\;\; \Bigg| \;\;
 \begin{array}{c}
      J \in \mathcal{J}^l, \; u \in \mathcal{M}(\mathcal{O}_{\Gamma}, J),  \\
      u \text{ has an injective point mapped to } \mathrm{int}(K)
    \end{array}
\right\}.
\end{equation*}
\begin{prop}\label{prop:universal_moduli_banach}
The universal moduli space $\mathcal{M}^*(\mathcal{O}_{\Gamma},
\mathcal{J}^l)$ is a $C^{l-k}$-Banach submanifold of $\mathcal{B}^{
k, p,\delta} \times \mathcal{J}^l$.
\end{prop}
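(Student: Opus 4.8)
The plan is to follow the standard Sard--Smale scheme for proving that a universal moduli space is a Banach manifold, as carried out for instance in \cite{mcsa} and \cite{dragnev,wendl}, adapted to our setting of punctured discs with totally real boundary in a symplectic cobordism. First I would consider the section $\mathcal{F}: \mathcal{B}^{k,p,\delta} \times \mathcal{J}^l \to \mathcal{E}^{k-1,p,\delta}$ defined in \eqref{eq:def-section}, which is a $C^{l-k}$-section of the Banach space bundle, and observe that $\mathcal{M}^*(\mathcal{O}_\Gamma, \mathcal{J}^l)$ is precisely the subset of $\mathcal{F}^{-1}(0)$ consisting of pairs $(u,J)$ for which $u$ has an injective point mapped to $\mathrm{int}(K)$. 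By the implicit function theorem for Banach manifolds it suffices to show that the vertical differential $D\mathcal{F}(u,J)$ is surjective with a complemented kernel at every such pair. Since $D\mathcal{F}(u,J)(\xi,Y) = \mathbf{D}_u(\xi) + Y(u)\circ du \circ j$ and $\mathbf{D}_u$ is Fredholm by Proposition \ref{prop:Du_fredholm}, its image is closed and of finite codimension, so surjectivity of $D\mathcal{F}(u,J)$ amounts to showing that the image of the map $Y \mapsto Y(u)\circ du \circ j$ together with $\mathrm{im}(\mathbf{D}_u)$ spans the whole fiber; equivalently, that any nonzero element $\eta$ of the cokernel, i.e. any $\eta \in L^{q,-\delta}$ (with $1/p+1/q=1$) that is $L^2$-orthogonal to $\mathrm{im}(\mathbf{D}_u)$, cannot also be $L^2$-orthogonal to $Y(u)\circ du\circ j$ for all admissible $Y \in T_J\mathcal{J}^l$.

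The core of the argument is therefore the usual unique-continuation-plus-perturbation step. I would first note that $\eta \in \mathrm{coker}(\mathbf{D}_u)$ satisfies a first-order elliptic equation of Cauchy--Riemann type (the formal adjoint equation), hence by elliptic regularity $\eta$ is smooth, and by the Carleman-type unique continuation theorems (Aronszajn, see \cite{mcsa}) its zero set is discrete unless $\eta \equiv 0$. Next, using that $u$ has a somewhere injective point $z_0$ with $u(z_0) \in \mathrm{int}(K)$, I would invoke the structure of somewhere injective punctured curves: the set of injective points mapping into $\mathrm{int}(K)$ at which both $du \neq 0$ and $\eta \neq 0$ is open and dense in a neighbourhood of $z_0$ (the non-injective points, branch points and zeros of $\eta$ all being contained in countable/discrete sets by the Micaleff--White results already invoked in Section \ref{sec:somewhereinj} and by unique continuation). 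Picking such a point $z_1$, I would construct a variation $Y \in T_J\mathcal{J}^l$ supported in a small ball around $u(z_1) \in \mathrm{int}(K)$ — so that the constraint $Y = J_\mathrm{cyl}$ outside a compact neighbourhood of $K$ is respected — with $Y$ satisfying $YJ+JY=0$ and $\omega(Yv,w)+\omega(v,Yw)=0$, and chosen so that $\langle \eta, Y(u)\circ du \circ j\rangle_{L^2} \neq 0$. This is the classical pointwise linear algebra computation: at an injective point where $du \neq 0$ and $\eta \neq 0$ one can solve for such a $Y$ because the map $Y \mapsto \frac{1}{2}(Y \circ du \circ j - J \circ Y \circ du)$ hits enough directions; the only subtlety compared to the closed case is that we must ensure the support avoids both the boundary $L$ and the cylindrical ends, which is automatic since $u(z_1)$ lies in $\mathrm{int}(K)$ and $z_1$ is an interior point. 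Finally, since the kernel of $D\mathcal{F}(u,J)$ contains $\ker \mathbf{D}_u$ which is finite dimensional and $D\mathcal{F}(u,J)$ is surjective with Fredholm restriction $\mathbf{D}_u$, the kernel is a Banach space that splits (the $\mathcal{J}^l$-direction contributes an infinite-dimensional but complemented piece), so the implicit function theorem applies and $\mathcal{M}^*(\mathcal{O}_\Gamma,\mathcal{J}^l)$ is a $C^{l-k}$-Banach submanifold.

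I expect the main obstacle to be the verification that the somewhere-injective point can be chosen in the \emph{interior} $\mathrm{int}(K)$ while simultaneously avoiding branch points, double points, the punctures, and the zero set of $\eta$ — this requires combining Lemma \ref{le:injpoints}-type reasoning (which gives injectivity points in a prescribed shell away from both the zero section and infinity) with the discreteness statements for $\cc(v)$, $\cd(v)$ and $Z(\eta)$, and then observing that a nonempty open set of injective points survives the removal of these countable sets. The rest — the functional-analytic bundle structure of $\mathcal{E}^{k-1,p,\delta}$, the $C^{l-k}$-regularity of $\mathcal{F}$, the Fredholm property, the linear algebra producing $Y$ — is by now routine and can be cited from \cite{mcsa,dragnev,wendl}, with only cosmetic changes to accommodate the totally real boundary condition and the exponential weights $\delta > 0$ at the punctures. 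A secondary (purely bookkeeping) point is to record that $\delta$ must be taken small enough that $\ker(\mathbf{A}'_z)$ stays trivial at every puncture, which has already been arranged in Proposition \ref{prop:Du_fredholm}, so that $\mathcal{F}$ genuinely lands in the exponentially weighted target space.
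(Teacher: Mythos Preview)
Your proposal is correct and follows essentially the same approach as the paper: reduce to surjectivity of $D\mathcal{F}(u,J)$, pick a nonzero cokernel element $\eta$, use elliptic regularity and the Carleman similarity principle to get discrete zeros, then construct a compactly supported $Y$ near an injective point in $\mathrm{int}(K)$ to reach a contradiction. Two small remarks: the paper dispatches what you call the ``main obstacle'' in one line (injective points form an open set and the zeros of $\eta$ are isolated, so some injective point mapping to $\mathrm{int}(K)$ has $\eta\neq 0$), without any appeal to Lemma~\ref{le:injpoints} or Micaleff--White; and the paper treats the case $k=1$ first and then bootstraps to general $k$ via elliptic regularity, a step you do not mention explicitly.
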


\begin{proof}
The universal moduli space $\mathcal{M}^*(\mathcal{O}_{\Gamma},
\mathcal{J}^l)$ is a subset of the zero set of the $C^{l-k}$-section
$\mathcal{F}$ (\ref{eq:def-section}) of the Banach space bundle
$\mathcal{E}^{k-1, p, \delta}$. By showing that the vertical
differential $D\mathcal{F}$ is surjective on $\mathcal{M}^*(
\mathcal{O}_{\Gamma}, \mathcal{J}^l)$, this provides us with
the structure of a $C^{l-k}$-Banach submanifold via the infinite
dimensional implicit function theorem.

First note that, as in the proof of Proposition 3.2.1 in \cite{mcsa2}, we can give 
$\mathcal{E}^{k-1, p, \delta}$ the structure of a Banach space bundle of class $C^{l-k}$.
The map $\mathcal{F}$ defines a $C^{l-k}$-section
of the bundle $\mathcal{E}^{k-1, p, \delta}$ such that the zero set of
$\mathcal{F}$ contains $\mathcal{M}^*(\mathcal{O}_{\Gamma}, \mathcal{J
}^l)$. We now show that the operator $D\mathcal{F}(u, J)$ is surjective
for every pair $(u, J) \in \mathcal{M}^*(\mathcal{O}_{\Gamma}, \mathcal{
J}^l)$.

We prove surjectivity first in the case $k = 1$, we consider the
operator $D\mathcal{F}(u, J): T_u \mathcal{B}^{1, p,\delta} \times
T_J \mathcal{J}^l \to L^{p, \delta}(\mathrm{Hom}^{0,1}(T\dot{\Sigma},
u^*TX))$, where $D\mathcal{F}(u, J)(\xi, Y) = \mathbf{D}_u(\xi) +
Y \circ du \circ j$. By Proposition~\ref{prop:Du_fredholm} the map
$\mathbf{D}_u$ is Fredholm and thus has a closed range, by a standard
result in functional analysis this implies that the range of
$D\mathcal{F}(u, J)$ is closed as well. Thus we prove that the image of
$D\mathcal{F}(u, J)$ is dense. If the image of $D\mathcal{F}(u, J)$ is
not dense, then by the Hahn-Banach theorem, there exists a non-zero
section $\eta \in L^{q, -\delta}(\mathrm{Hom}^{0,1}(T\dot{\Sigma},
u^*TX))$ for $1/p + 1/q = 1$ which annihilates the image of
$D\mathcal{F}(u, J)$. In particular this implies
\[
\langle D\mathcal{F}(u, J)(\xi, Y), \eta \rangle_{L^2} = 0,
\]
for every $(\xi, Y) \in T_u \mathcal{B}^{1, p,\delta}\times T_J
\mathcal{J}^l$, which in turn implies
\begin{equation}\label{eq:image_vert_diff}
\langle \mathbf{D}_u(\xi), \eta \rangle_{L^2} = 0, \quad \langle
Y \circ du \circ i, \eta \rangle_{L^2} = 0,
\end{equation}
for all elements in the domain. In particular this implies that
$\eta$ is a weak solution to the formal adjoint equation
$\mathbf{D}_u^*(\eta) = 0$. By elliptic regularity  $\eta$ is continuous
and applying the Carleman similarity principle \cite{mcsa} we
see that $\eta$ has isolated zeros. Now the set of injective points
of $u$ is open, so there
exists a $z_0 \in \dot{\Sigma}$ such that $z_0$ is an injective point of
$u$ that maps to $\mathrm{int}(K)$ and $\eta(z_0) \neq 0$. By Lemma 3.2.2
of \cite{mcsa} there exists a $Y \in T_J \mathcal{J}^l$ such that
$Y \circ du \circ j(z_0) = \eta(z_0)$, so the inner product $\langle
Y \circ du \circ i, \eta \rangle_{L^2} > 0$ in some neighborhood of $z_0$.
Now multiply $Y$ with a non-negative bump function $\beta$ on $X$
which has support in the aforementioned neighborhood. Then we have
$\langle (\beta Y) \circ du \circ i, \eta \rangle_{L^2} > 0$ since $z_0$ is
an injective point, which violates
(\ref{eq:image_vert_diff}) and hence surjectivity in the case $k = 1$ holds.

In the general case assume $\theta \in W^{k-1,p, \delta}(\mathrm{Hom}^{0,1}
(T\dot{\Sigma}, u^*TX))$ for $k \geq 2$. Then $\theta \in L^{p, \delta}$ and
by surjectivity for $k = 1$ there exists $(\xi, Y) \in T_u \mathcal{B}^{1,
p,\delta}\times T_J \mathcal{J}^l$ such that $\mathbf{D}_u(\xi) + Y \circ du \circ i
= \theta$. $Y$ is of class $C^l$, so
$Y \circ du \circ i$ is of class $W^{k-1, p, \delta}$ and thus $\mathbf{D}_u
(\xi) = \theta - Y \circ du \circ i$ is of class $W^{k-1, p, \delta}$. Elliptic
regularity then implies that $\xi$ is of class $W^{k, p,\delta}$. This proves
that $D\mathcal{F}(u, J)$ is surjective for general $k$.

Now, since $\mathbf{D}_u$ is a Fredholm operator, by Lemma A.3.6 of
\cite{mcsa} $D\mathcal{F}(u, J)$ has a right inverse. The
infinite dimensional implicit function theorem then implies
that $\mathcal{M}^*(\mathcal{O}_{\Gamma}, \mathcal{J}^l)$ is
a $C^{l-k}$-Banach submanifold of $\mathcal{B}^{k,
p,\delta} \times \mathcal{J}^l$.
\end{proof}

\begin{definition}
An almost complex structure $J \in \mathcal{J}^{\infty}_{\mathrm{Cyl}}$
is called \emph{regular}, if the operator $\mathbf{D}_u$ is onto for every
$u \in \mathcal{M}^*(\mathcal{O}_{\Gamma}, J)$. We denote by $\mathcal{
J}^{\mathrm{reg}} \subset \mathcal{J}^{\infty}_{\mathrm{Cyl}}$ the subset
of all regular almost complex structures on $X$.
\end{definition}

\begin{proof}[Proof of Theorem \ref{thm:transversality_disks} (i)]
We prove that $\mathcal{M}^*(\mathcal{O}_{\Gamma}, J)$ is a
smooth manifold of the prescribed dimension around $u$.
Let $J \in \mathcal{J}^{\mathrm{reg}}$ and $u \in \mathcal{M}^*(\mathcal{O
}_{\Gamma}, J)$. By Proposition \ref{prop:elliptic_regularity}
$u$ is smooth. For an integer $k \geq 1$
and $p > 2$ we consider the section $\mathcal{F}$ for $J$ fixed in a
trivialization of $\mathcal{E}^{k-1,p, \delta}$ over a neighborhood
of $\mathcal{N}(u)$ of $u \in \mathcal{B}^{k,p, \delta}$.
We can identify $\mathcal{N}(u)$ with a neighborhood $U$ of $0$ in
$T_u \mathcal{B}^{k,p, \delta}$ and consider the restricted map
\[
\mathcal{F}_u := \mathcal{F}_{|U} : U \longrightarrow
W^{k-1,p, \delta}(\mathrm{Hom}^{0,1}(T\dot{\Sigma}, (u^*TX, J))).
\]
Then $\mathcal{F}_u$ is a smooth map between Banach spaces
and the differential $d\mathcal{F}_u(0) = \mathbf{D}_u$ is surjective
by assumption. By the infinite dimensional implicit function
theorem \cite{smale2}, $\mathcal{F}_u^{-1}(0)$
intersects a sufficiently small neighborhood of $0$ in a smooth
finite dimensional submanifold of dimension
$\mathrm{ind}(\mathbf{D}_u)$. The image of this
submanifold under the map $\xi \mapsto \exp_u(\xi)$ is a smooth
submanifold of $\mathcal{B}^{k,p, \delta}$ that agrees with a
neighborhood of $u \in \mathcal{M}^*(\mathcal{O}_{\Gamma}, J)$.
Hence $\mathcal{M}^*(\mathcal{O}_{\Gamma}, J)$ is a smooth
submanifold of $\mathcal{B}^{k,p, \delta}$ of dimension
$\mathrm{ind}(\mathbf{D}_u)$ locally around $u$.
\end{proof}

\begin{prop}\label{prop:proj_fredholm}
The projection $\mathcal{M}^*(\mathcal{O}_{\Gamma}, \mathcal{J}^l)
\to \mathcal{J}^l$ is a nonlinear Fredholm map and for $l$ large
enough the set of regular values in $\mathcal{J}^l$ is dense.
\end{prop}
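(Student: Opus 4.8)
The plan is to follow the standard Sard--Smale argument, which is by now routine once the universal moduli space has been shown to be a Banach manifold (Proposition \ref{prop:universal_moduli_banach}). First I would consider the projection
\[
\Pi: \mathcal{M}^*(\mathcal{O}_{\Gamma}, \mathcal{J}^l) \longrightarrow \mathcal{J}^l, \qquad (u,J)\longmapsto J,
\]
restricted to the $C^{l-k}$-Banach submanifold $\mathcal{M}^*(\mathcal{O}_{\Gamma}, \mathcal{J}^l)\subset \mathcal{B}^{k,p,\delta}\times \mathcal{J}^l$. The differential of $\Pi$ at a point $(u,J)$ has kernel isomorphic to $\ker \mathbf{D}_u$ and cokernel isomorphic to $\mathrm{coker}\,\mathbf{D}_u$; this is the content of the usual lemma identifying the vertical differential $D\mathcal{F}(u,J)(\xi,Y) = \mathbf{D}_u\xi + Y\circ du\circ j$ with the data defining $\Pi$ (see \cite[Lemma A.3.6]{mcsa} and the proof of \cite[Proposition 3.2.1]{mcsa}). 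Concretely: $\xi\in\ker d\Pi(u,J)$ iff $(\xi,0)$ is tangent to $\mathcal{M}^*$, i.e. iff $\mathbf{D}_u\xi=0$; and a cokernel element is represented by $\eta$ annihilating $\mathrm{im}\,\mathbf{D}_u$, which by the argument in Proposition \ref{prop:universal_moduli_banach} using the somewhere injective point already lies in $\mathrm{coker}\,\mathbf{D}_u$. Since $\mathbf{D}_u$ is Fredholm by Proposition \ref{prop:Du_fredholm}, this shows $d\Pi(u,J)$ has finite-dimensional kernel and cokernel of the same dimensions, hence $\Pi$ is a nonlinear Fredholm map of index $\mathrm{ind}(\mathbf{D}_u)$, which is the number in \eqref{eq:index-Du}.

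Next I would invoke the Sard--Smale theorem: a $C^r$ Fredholm map between second-countable Banach manifolds, with $r$ strictly larger than the Fredholm index (and at least $1$), has a residual — hence dense — set of regular values. Here $\Pi$ is of class $C^{l-k}$, so I need $l-k > \max(0,\mathrm{ind}(\mathbf{D}_u))$; this is arranged by taking $l$ large enough relative to $k$ (recall $k$ is fixed with $kp>2$, while $l$ is a free parameter $\geq k$). Therefore the set $\mathcal{J}^{l,\mathrm{reg}}$ of regular values of $\Pi$ in $\mathcal{J}^l$ is dense (in fact residual). For $J\in \mathcal{J}^{l,\mathrm{reg}}$, the fact that $J$ is a regular value of $\Pi$ translates exactly into the statement that $\mathbf{D}_u$ is onto for every $(u,J)\in\mathcal{M}^*(\mathcal{O}_\Gamma,\mathcal{J}^l)$, i.e. $J$ is regular in the sense of the preceding definition; this is the standard reformulation of regular value in terms of surjectivity of the vertical differential. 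This already proves both assertions of the proposition at the finite regularity level $C^l$.

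The main subtlety — and the step I expect to require the most care — is passing from $\mathcal{J}^l$ (finite regularity) to $\mathcal{J}^\infty_{\mathrm{cyl}}$ (smooth almost complex structures), and simultaneously checking that one can control the cylindrical-at-infinity constraint, so that the perturbations $Y$ used in the surjectivity argument of Proposition \ref{prop:universal_moduli_banach} can be taken supported in $\mathrm{int}(K)$, compatible with $\omega$, and without disturbing the fixed behaviour at infinity. For the smoothing, the plan is to run Taubes' trick: one does not directly Sard--Smale the $C^\infty$ space, but instead writes $\mathcal{J}^{\mathrm{reg}}$ (the smooth regular structures) as the intersection of the subsets $\mathcal{J}^{\mathrm{reg}}_N$ of smooth $J$ that are regular for all curves passing through a fixed exhausting family of compact subsets $K_N$ of the relevant moduli stratum, or equivalently regular for curves of bounded energy. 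Each $\mathcal{J}^{\mathrm{reg}}_N$ is shown to be open (by an elementary argument: the regularity property is open, using that $\mathbf{D}_u$ varies continuously and Fredholmness is stable, together with compactness of the relevant piece of moduli space as in \S\ref{sec:compactnessI}) and dense (by applying the finite-regularity result above together with a local smoothing argument showing that a $C^l$-regular structure can be $C^\infty$-approximated by smooth ones that remain regular on the relevant compact piece, using that $\mathcal{J}^\infty_{\mathrm{cyl}}$ is $C^\infty$-dense in $\mathcal{J}^l$). Then $\mathcal{J}^{\mathrm{reg}} = \bigcap_N \mathcal{J}^{\mathrm{reg}}_N$ is residual in $\mathcal{J}^\infty_{\mathrm{cyl}}$ by Baire, which gives statement (2) of Theorem \ref{thm:transversality_disks} and in particular the density asserted in the present proposition. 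Here one must also verify that the exhaustion by energy (or by the compact sets $K_N$) is legitimate: for our applications the asymptotic orbits $\mathcal{O}_\Gamma$ are fixed, the domain $\dot\Sigma$ has fixed topological type, and the energy bound of Corollary \ref{cor:energy_bound} already confines all relevant curves, so only finitely many strata are in play and the exhaustion argument is clean. The remaining routine points — that $\mathcal{F}$ is genuinely $C^{l-k}$ as a section, that the weights $\delta>0$ can be chosen uniformly small, and that elliptic regularity upgrades weak solutions of $\mathbf{D}_u^*\eta=0$ to continuous sections with isolated zeros — are all handled exactly as in \cite{mcsa,wendl}.
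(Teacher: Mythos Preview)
Your argument is correct and follows essentially the same route as the paper: identify $\ker d\Pi \cong \ker \mathbf{D}_u$ and $\mathrm{coker}\, d\Pi \cong \mathrm{coker}\, \mathbf{D}_u$ via \cite[Lemma A.3.6]{mcsa}, then apply Sard--Smale with $l$ large enough. Your third paragraph on the Taubes trick is superfluous here---it is the content of the separate proof of Theorem~\ref{thm:transversality_disks}(ii), whereas the present proposition concerns only the finite-regularity space $\mathcal{J}^l$.
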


\begin{proof}
The projection $\pi: \mathcal{M}^*(\mathcal{O}_{\Gamma}, \mathcal{J}^l)
\to \mathcal{J}^l$ is, by Proposition \ref{prop:universal_moduli_banach}
for $k = 1$, a $C^{l-1}$-map
between separable $C^{l-1}$-Banach manifolds. The tangent space at $(u, J)$
\[
T_{(u, J)}\mathcal{M}^*(\mathcal{O}_{\Gamma}, \mathcal{J}^l) \subset
T_u \mathcal{B}^{1, p, \delta} \times T_J \mathcal{J}^l,
\]
consists of all pairs $(\xi, Y)$ such that $\mathbf{D}_u (\xi) + Y(u) \circ du
\circ j = 0$. The derivative $d\pi (u, J)$ is just the projection
$(\xi, Y) \mapsto Y$. Thus the kernel of $d\pi(u, J)$ is isomorphic
to the kernel of $\mathbf{D}_u$. By a standard result in functional analysis
\cite[Lemma A.3.6]{mcsa} the cokernel of $d\pi(u, J)$ is
also isomorphic to the cokernel of $\mathbf{D}_u$. It follows that $d\pi(u, J)$
is a Fredholm operator with the same index as $\mathbf{D}_u$. Moreover the
operator $d\pi(u, J)$ is onto precisely when $\mathbf{D}_u$ is onto. This implies
that a regular value $J$ of $\pi$ is an almost complex structure with
the property that $\mathbf{D}_u$ is onto for every somewhere injective
curve $u \in \mathcal{M}^*(\mathcal{O}_{\Gamma}, J) = \pi^{-1}(J)$. In
other words,
\begin{equation}\label{def:reg-acs-of-regularity-l}
\mathcal{J}^{\mathrm{reg}, l} := \{ J \in \mathcal{J}^l \;|\;
\mathbf{D}_u \text{ is onto for all } u \in \mathcal{M}^*(\mathcal{O}_{\Gamma}, J) \},
\end{equation}
the set of regular almost complex structures of class $C^l$ is the
set of regular values of $\pi$. By the Sard-Smale theorem
\cite{smale2}, this set
is of second category in the sense of Baire. Here we use the fact
that $\mathcal{M}^*(\mathcal{O}_{\Gamma}, \mathcal{J}^l)$ and the projection $\pi$
are of class $C^{l-1}$ and we can apply Sard-Smale whenever $l - 2 \geq
\mathrm{ind}(\mathbf{D}_u)$. Thus the set  $\mathcal{J}^{\mathrm{reg},l}$
is dense in $\mathcal{J}^l$ with respect to the $C^l$-topology for
$l$ sufficiently large.
\end{proof}

\begin{proof}[Proof of Theorem \ref{thm:transversality_disks} (ii)]
We must show that the set $\mathcal{J}^{\mathrm{reg}}$ is of second
category in $\mathcal{J}^{\infty}_{\mathrm{Cyl}}$.

We fix metrics on $X$ and $\dot{\Sigma}$ that are translation invariant
on the cylindrical ends and we denote by $\mathrm{dist}(\cdot, \cdot)$
the induced distance functions. For $N \in \N$ consider the set
\[
\mathcal{J}^{\mathrm{reg}}_N := \mathcal{J}^{\mathrm{reg}}_N
(\mathcal{O}_{\Gamma}) \subset \mathcal{J}^{\infty}_{\mathrm{Cyl}}
\]
of all smooth almost complex structures $J$ such that the operator
$\mathbf{D}_u$ is onto for every $J$-holomorphic curve
$u \in \mathcal{M}^*(\mathcal{O}_{\Gamma}, J)$ that satisfies
\begin{enumerate}
\item[(i)] $\sup\limits_{z \in \dot{\Sigma}} | du(z) | \leq N$;
\item[(ii)] there exists a $z_0 \in \dot{\Sigma}$ such that
\[
\mathrm{dist}(u(z_0), X \backslash \mathrm{int}(K)) \geq \frac{1}{N},
\quad |du(z_0)| \geq \frac{1}{N}, \quad
\inf\limits_{z \in \dot{\Sigma} \backslash \{ z_0 \}} \frac{\mathrm{dist}
(u(z_0), u(z))}{\mathrm{dist}(z_0, z)} \geq \frac{1}{N}.
\]
\end{enumerate}
Note that the set of such $u$ has a somewhere injective point
mapped to $\mathrm{int}(K)$. Furthermore, every asymptotically
cylindrical $J$-holomorphic curve with an injective point mapped to
$\mathrm{int}(K)$ satisfies these conditions for some value of
$N \in \N$.

We claim that $\mathcal{J}^{\mathrm{reg}}_N$ is open and dense
in $\mathcal{J}^{\infty}_{\mathrm{Cyl}}$. We first show that this
set is open, which is equivalent to the complement being closed.
Assume we have a sequence $J_{\nu} \notin \mathcal{J}^{\mathrm{reg}}_N$,
$J_{\nu} \to J$ in $C^{\infty}$. This means that for every $\nu$
there exists a $J_{\nu}$-holomorphic $u_{\nu}$ and a $z_{\nu} \in
\dot{\Sigma}$ that satisfy conditions (i) and (ii) and such that
$\mathbf{D}_{u_{\nu}}$ is not surjective.
Since the first derivatives of $u_{\nu}$ are uniformly bounded
and (ii) is a closed condition, by a standard elliptic
bootstrapping argument \cite[Theorem B.4.2]{mcsa}
there exists a subsequence $u_{\nu_i}$ that converges uniformly
with all derivatives to a smooth $J$-holomorphic curve $u$ that
satifisfies conditions (i) and (ii). Since the operators
$\mathbf{D}_{u_{\nu_i}}$ are not surjective,
it follows that $\mathbf{D}_u$ is not surjective either. This shows that
$J \notin \mathcal{J}^{\mathrm{reg}}_N$ and thus
$\mathcal{J}^{\mathrm{reg}}_N$ is open in the $C^{\infty}$-topology.
We now prove that $\mathcal{J}^{\mathrm{reg}}_N$ is dense in $\mathcal{J
}^{\infty}_{\mathrm{Cyl}}$. Let $\mathcal{J}^{\mathrm{reg}, l}_N \subset
\mathcal{J}^l$ be the set of all $J \in \mathcal{J}^l$ such that
the operator $\mathbf{D}_u$ is onto for every $u \in \mathcal{M}^*(
\mathcal{O}_{\Gamma}, J)$ that satisfies conditions (i) and (ii).
Now note that
\[
\mathcal{J}^{\mathrm{reg}}_N = \mathcal{J}^{\mathrm{reg}, l}_N \cap
\mathcal{J}^{\infty}_{\mathrm{Cyl}}.
\]
Let $J \in \mathcal{J}^{\infty}_{\mathrm{Cyl}} \subset \mathcal{J}^l$.
Now by Proposition~\ref{prop:proj_fredholm} the set
$\mathcal{J}^{\mathrm{reg}, l}$ of \eqref{def:reg-acs-of-regularity-l} is
dense in $\mathcal{J}^l$ for large $l$, so there exists a
sequence $J_l \in \mathcal{J}^{\mathrm{reg}, l}$ such that
\[
\| J_l - J \|_{C^l} \leq 2^{-l}.
\]
Since $J_l \in \mathcal{J}^{\mathrm{reg}, l}_N$ and $\mathcal{J}^{
\mathrm{reg}, l}_N$ is open in the $C^l$-topology,
there exists an $\varepsilon_l$ such that for every $J' \in \mathcal{J}^l$,
\[
\| J_l - J' \|_{C^l} < \varepsilon_l  \Longrightarrow J' \in
\mathcal{J}^{\mathrm{reg}, l}_N.
\]
Choose $J_l' \in \mathcal{J}^{\infty}_{\mathrm{Cyl}}$ to be any smooth
element such that
\[
\| J_l' - J_l \|_{C^l} \leq \mathrm{min}\{ \varepsilon_l, 2^{-l} \}.
\]
Then $J_l' \in \mathcal{J}^{\mathrm{reg}, l}_N \cap \mathcal{J}^{\infty
}_{\mathrm{Cyl}} = \mathcal{J}^{\mathrm{reg}}_N$ converges to $J$
in the $C^{\infty}$-topology. This shows that the set of $\mathcal{J}^{
\mathrm{reg}}_N$ is dense in $\mathcal{J}^{\infty}_{\mathrm{Cyl}}$
as claimed. Thus $\mathcal{J}^{\mathrm{reg}}$ is the intersection of
a countable number of open dense sets $\mathcal{J}^{\mathrm{reg}}_N$,
$N \in \N$, and is so of second category.
\end{proof}

\begin{proof}[Proof of Theorem \ref{thm:homotopy_transversality_disks}]
Applying the ideas of the proof of Theorem~\ref{thm:transversality_disks}
(i) to the bundle $\mathcal{E}^{k-1, p, \delta} \to \mathcal{B}^{k,p, \delta}
\times [0,1]$ gives us the manifold structure of $\mathcal{W}^*(
\mathcal{O}_{\Gamma}, \{ J_t \})$.
For (ii) we proceed analogous to the proof of
Theorem~\ref{thm:transversality_disks}. Define $\mathcal{J}^l(J_0,
J_1)$ to be the space of $C^l$-homotopies in $\mathcal{J}^l$ from
$J_0$ to $J_1$. Consider the universal moduli space
$\mathcal{W}^*(\mathcal{O}_{\Gamma}, \mathcal{J}^l(J_0,J_1))$.
One can show that this space is a $C^{l-1}$-Banach manifold and that
the projection map to $\mathcal{J}^l(J_0,J_1)$ is a $C^{l-1}$
Fredholm map. By Sard-Smale, the regular values of this map is
dense for $l$ sufficiently large and produces the desired subset
of regular homotopies. The conclusion to the case of smooth
homotopies is then as in the proof of
Theorem~\ref{thm:transversality_disks}.
\end{proof}


{\footnotesize
\bibliographystyle{alpha}
\bibliography{biblio2}
}

{\small
\begin{minipage}{0.45\linewidth}
\noindent Cedric Membrez\\
School of Mathematical Sciences\\
Tel Aviv University\\
Tel Aviv 69978, Israel\\
\texttt{ckmembrez@gmail.com}\\
\end{minipage}
\hfill
\begin{minipage}{0.6\linewidth}
\noindent Emmanuel Opshtein,\\
Institut de Recherche Math\'ematique Avanc\'ee\\
UMR 7501, Universit\'e de Strasbourg et CNRS\\
7 rue Ren\'e Descartes\\
67000 Strasbourg, France\\
\texttt{opshtein@unistra.fr}
\end{minipage}
}

\end{document}